%
%
%
%
%
\RequirePackage{fix-cm}
\documentclass{svjour3bis}                     
\smartqed  
\usepackage{graphicx}
\usepackage{mathptmx}      
%
%
%
\journalname{myjournal}
%


\usepackage{enumitem}
\usepackage{tikz}
\usepackage{comment}
\usepackage{amsmath}
\usepackage{stmaryrd}
\usepackage{bbm}
\usepackage{esint}
\usepackage{marginnote}
\usepackage{amssymb}
\usepackage[hidelinks]{hyperref}

\renewcommand{\L}[1]{\mathbf{L^{\pmb #1}}}
\newcommand{\C}[1]{\mathbf{C^{\pmb #1}}}
\newcommand{\Cc}[1]{\mathbf{C_{\rm c}^{#1}}}
\newcommand{\Lloc}[1]{\mathbf{L_{loc}^{\pmb #1}}}
\renewcommand{\geq}{\geqslant}
\renewcommand{\leq}{\leqslant}
\DeclareMathOperator{\TV}{TV}
\DeclareMathOperator{\sign}{sign}
\DeclareMathOperator\supp{supp}

\begin{document}
\allowdisplaybreaks
\newcommand{\delaynewpage}[1]{\enlargethispage{#1\baselineskip}}

\title{On existence, stability and many-particle approximation of solutions of 1D Hughes model with linear costs
}

\titlerunning{On existence, stability and approximation of Hughes' model}        

\author{Boris Andreianov \and Massimiliano~D.\ Rosini \and Graziano Stivaletta}

\institute{B.\ Andreianov \at
Institut Denis Poisson (CNRS UMR7013), Universit{\'e} de Tours, Universit{\'e} d'Orl{\'e}ans, Parc Grandmont 37200 Tours, France
\at
Peoples' Friendship University of Russia (RUDN University) 6 Miklukho-Maklaya St, Moscow, 117198, Russian Federation \\
\email{boris.andreianov@univ-tours.fr}
\and
M.D.\ Rosini \at
Department of Mathematics and Computer Science, University of Ferrara I-44121 Italy
\at
Uniwersytet Marii Curie-Sk\l odowskiej, Plac Marii Curie-Sk\l odowskiej 1 20-031 Lublin, Poland
\\
\email{massimilianodaniele.rosini@unife.it}
\and
G.\ Stivaletta \at
Department of Information Engineering, Computer Science and Mathematics, University of L'Aquila I-67100, Italy
\\
\email{graziano.stivaletta@graduate.univaq.it}
}

\date{July 17, 2021}

\maketitle

\delaynewpage{3}
\begin{abstract}
This paper deals with the one-dimensional formulation of Hughes model for pedestrian flows in the setting of entropy solutions, which authorizes  non-classical shocks at the location of the so-called turning curve.
We consider linear cost functions, whose slopes $\alpha\geq0$ correspond to different crowd behaviours.

We prove	 existence and partial well-posedness results in the framework of entropy solutions.
The proofs of existence are based on a a sharply formulated many-particle approximation scheme with careful treatment of interactions of particles with the turning curve, and on local reductions to the well-known Lighthill-Whitham-Richards model.
For the special case of $\mathbf{BV}$-regular entropy solutions without non-classical shocks, locally Lipschitz continuous dependence of such solutions on the initial datum $\overline{\rho}$ and on the cost parameter $\alpha$ is proved. 
Differently from the stability argument and from existence results available in the literature, our existence result allows for the possible presence of non-classical shocks. 
First, we explore convergence of the many-particle approximations under the assumption of uniform space variation control.
Next, by a local compactness argument that permits to circumvent the possible absence of global $\mathbf{BV}$ bounds, we obtain existence of solutions for general measurable data.

Finally, we illustrate numerically that the model is able to reproduce typical behaviours in case of evacuation.
Special attention is devoted to the impact of the parameter $\alpha$ on the evacuation time.

\keywords{Pedestrian flow \and Hughes model \and conservation law \and moving interface \and stability \and existence \and many-particle approximation \and
``hydrodynamic'' limit
}
\subclass{58J45 \and 35L65 \and 35R05 \and 90B20}
\end{abstract}

\section{Introduction}

In recent years, the modelling of large human crowds attracted considerable scientific interest.
This is due to its potential applications in structural engineering and architecture, see for instance \cite{MR3308728,MR3076426,MR3642940,MR3932134} and the references therein.

\subsection{Microscopic and macroscopic modeling of pedestrian flows}
Several models for pedestrian flow are already available in the literature, see again \cite{MR3308728,MR3076426,MR3642940,MR3932134} and the references therein.
Models for pedestrian motion split into two groups: macroscopic and microscopic modeling.
Differently from the microscopic modeling, macroscopic modeling is suited for the derivation of general results, such as for the evacuation time optimization, and is useful in understanding realistic crowds involving large numbers of pedestrians.
On the other hand, crowd dynamics are essentially microscopic and it is therefore easier to  motivate microscopic rather than some macroscopic assumptions.
Moreover, macroscopic modeling relies on the continuum assumption,
which, unlike in fluid dynamics, is not fully justified in the present framework with large but not huge number of agents.

We address the one-dimensional version of Hughes model \cite{Hughes02}. Our main goal is
to provide an original existence result for general data and, at the same time, to validate the passage to the limit from a well-assessed microscopic follow-the-leader model to a macroscopic model for pedestrian flow. This is similar to taking the hydrodynamic limit of Boltzmann equations. As a matter of fact, a byproduct of our analysis is a further justification of the Hughes model through the hydrodynamic limit procedure; another byproduct of our analysis is the introduction of a microscopic (many-particle) Hughes model with a sharp description of the particle switching dynamics.
Here we do not require any assumption on the smallness (not even on the finiteness) of the total variation of the initial datum. To the best of authors' knowledge, we prove the first existence result for Hughes model dealing with non-classical shocks.
Our secondary goal is to initiate the analysis of stability of solutions to the Hughes model; this part of the paper does not deal with non-classical shocks. Our last goal is to numerically explore the influence on the crowd evacuation time of the cost parameter $\alpha$ that encodes the agents' sensitivity to crowdedness.

\subsection{A state of the art} Hughes' model treats the crowd as a fluid made of ``thinking'' particles.
It was introduced by R.L.~Hughes in 2002 \cite{Hughes02}.
The model is given by a non-linear conservation law with discontinuous flux, coupled with an eikonal equation via a cost functional, see \eqref{e:modela} below.
Analytical study and numerical approximation of discontinuous-flux conservation laws (which should be seen, in a more precise way, as conservation laws with inner interfaces, cf. \cite{MR3416038}) is by now quite well developed, see in particular \cite{MR2195983,MR2086124,MR2291816,MR2807133,MR3416038,MR1770068,MR1961002,MR2024741,MR2209759,MR2300671,MR3425264} and the references therein. However, even in the simplest case of space dimension one, Hughes' model features a moving interface, whose dynamics is coupled to the solution of the conservation law. For this reason, existence and uniqueness analysis for the Hughes' model appears to be challenging.  
In higher dimension, further difficulties stem from a richer structure of singularities of solutions of the eikonal equation.
We stress that, for the unique viscosity solution of the eikonal equation, no more than Lipschitz continuity can be expected.
These various difficulties motivated the development of several attempts to study, both analytically and numerically, the Hughes' model and its regularized variants.

Let us briefly recall the existence results for the Hughes model available in the literature.
In \cite{MR2737207} the authors present an existence and uniqueness theory for a regularized version of the Hughes  model.
Also in \cite{MR3199781,MR3460619,MR3823842} the authors considered a modified version of the model.
To the best of author's knowledge, there are only two existence results for the original formulation of the one-dimensional Hughes' model. Both of them require restrictive assumptions on the initial data, needed in order to exclude the appearance of non-classical shocks. The first of these results is obtained in \cite{AmadoriGoatinRosini}, where
the authors follow the wave-front tracking approach \cite{MR303068}.
The second existence result is obtained in \cite{DiFrancescoFagioliRosiniRussoKRM}.
There, the authors exploit the fact that Hughes' model is based upon  the Lighthill-Whitham-Richards model (abbreviated to ``LWR model'' in the sequel) \cite{LWR1,LWR2} for vehicular traffic; more precisely, in space dimension one it can be seen as two exemplars of LWR model subject to elaborated coupling across the turning curve.
This makes it possible to apply to the Hughes  model the particular instance of ``follow-the-leader'' many-particle approximation of the LWR model developed and analyzed  in \cite{DiFrancescoRosini,DiFrancescoFagioliRosini-BUMI} (see also the different analytical approach of \cite{HoldenRisebro1,HoldenRisebro2}).
At last, we recall that various numerical approaches to the Hughes model can be found in the literature, e.g., \cite{HUANG2009127,MR3055243,MR3698447,MR3277564,MR3619091,MR3149318,MR3177723}.

\subsection{The model}
The one-dimensional Hughes model \cite{Hughes02} describes evacuation of a bounded and crowded corridor, parametrized by $x \in \mathfrak{C} := (-1,1)$, through two exits located at the extremities of the corridor, i.e., at $x = \pm1$.
The model is given by the scalar conservation law with discontinuous flux coupled with the eikonal equation
\begin{subequations}
\label{e:model}
\begin{align}
& \rho_{t}-\left( \rho \, v( \rho )\frac{\phi_{x}}{|\phi_{x}|}\right)_{x} = 0,&
&|\phi_{x}| = c( \rho ),&
&x \in \mathfrak{C},\ t > 0.
\label{e:modela}
\end{align}
Here $t \geq 0$ denotes the time, $x \in \mathfrak{C}$ is the space variable, $\rho = \rho(t,x) \in [0, \rho_{\max}]$ stands for the macroscopic (averaged) crowd density, with $\rho_{\max}>0$ being the maximal density.
The map $\rho \mapsto v( \rho )$ is the absolute value of the velocity, and is assumed to be decreasing, as higher velocities correspond to lower densities.
The map $\rho \mapsto c( \rho )$ is the \emph{running cost} and is assumed to be increasing since densely crowded regions lead to an augmentation of travel time.

Beside \eqref{e:modela}, we consider the homogeneous Dirichlet boundary conditions at the exits
\begin{align}
& \rho(t,-1) = \rho(t,1) = 0,&
&\phi(t,-1) = \phi(t,1) = 0,&
&t > 0,
\label{e:modelb}
\end{align}
and initial condition
\begin{equation}
\rho (0,x) = \overline{\rho}(x),
\label{e:modelc}
\end{equation}
\end{subequations}
assigned for $x \in\mathfrak{C}$. Concerning the initial datum $\overline{\rho}$, we assume that
\begin{align}
\tag{I}\label{I}
\overline{\rho} \colon \mathfrak{C} \to [0, \rho_{\max}] \; \hbox{ is measurable},
\qquad
L &:= \left\|\overline{\rho}\right\|_{\L{1}(\mathfrak{C})} > 0,&
R_{\max} &:= \left\|\overline{\rho}\right\|_{\L{ \infty}(\mathfrak{C})} \leq \rho_{\max}.
\end{align}
Since $\overline{\rho}$ attains its values in $[0,R_{\max}]$ and the problem obeys a maximum principle (see \cite{ElKhatibGoatinRosini}), also the solution $\rho$ attains its values in $[0,R_{\max}]$.

The choice \eqref{e:modelb} of boundary conditions for $\rho$ allows for a reformulation of the problem. Indeed, the homogeneous Dirichlet condition at the exits can be seen as the open-end condition, and permits to suppress the boundary while extending the solution to the whole space $\mathbb{R}$ (see Proposition~\ref{p:refo}).

In the literature, the running cost function $c$ and the velocity map $v$ are often assumed to satisfy the following conditions: 
\begin{gather}
\tag{C}\label{C} 
c \in \C{2}\left([0,R_{\max}];[1, \infty) \right),\ c(0) = 1
\hbox{ and }
\forall \rho \in [0,R_{\max}]\ c'( \rho ),c''( \rho ) \geq 0
\\[5pt]
\tag{V}\label{V1}
\begin{aligned}
&v \in \C{2}([0,\rho_{\max}];[0,v_{\max}]) \hbox{ with } v_{\max} := v(0) > 0,\ v( \rho_{\max})=0,
\\
& \text{$v$ strictly decreasing on $[0,\rho_{\max}]$ and~}
\forall \rho \in [0, \rho_{\max}]\ 2v'( \rho ) + \rho \, v''( \rho ) < 0.
\end{aligned}
\end{gather}
Note that for  $c$ given by \eqref{e:crazy}, properties \eqref{C} require the restriction $R_{\max}<\rho_{\max}$.

The last inequality in \eqref{V1} means that $f (\rho):= \rho \, v( \rho )$ is strictly concave; thus, since $f$ vanishes at $0$ and $\rho_{\max}$, there exists a unique $\rho_c := \mathrm{argmax}_{[0, \rho_{\max}]} \{f\}$. 
In Section~\ref{s:existBVloc}, we will strengthen the last inequality of \eqref{V1} by assuming that the map $\rho \mapsto \rho \, v'( \rho )$ is non-increasing, i.e.,
\begin{gather}
\tag{$\mathrm{V}'$}\label{Vbis}
v'( \rho ) + \rho \, v''( \rho ) \leq 0 \hbox{ for any } \rho \in [0, \rho_{\max}].
\end{gather}

Following \cite{DiFrancescoFagioliRosiniRussoKRM,AmadoriDiFrancesco,AmadoriGoatinRosini,ElKhatibGoatinRosini}, due to the explicit resolution of the eikonal equation in \eqref{e:modela} (with the boundary conditions on $\phi$ in \eqref{e:modelb}), the one-dimensional Hughes model \eqref{e:model}  is equivalently formulated as follows (see Definition~\ref{d:entro} on page~\pageref{d:entro}): given the so called \emph{turning curve}
$\xi \colon t \in [0,\infty)\to \mathfrak{C}$  implicitly defined by
\begin{equation}
\label{e:cost0}
\int_{-1}^{\xi(t)}c\bigl( \rho(t,y)\bigr) \, {\rm{d}} y = \int_{\xi(t)}^{1}c\bigl( \rho(t,y)\bigr) \, {\rm{d}} y,
\end{equation}
under the initial condition \eqref{e:modelc} and the homogeneous Dirichlet boundary condition
\begin{equation}\label{eq:Dirichlet-BLN}
\rho(t,\pm 1)=0 
\end{equation}
(the latter being understood in the sense of Bardos-LeRoux-N\'ed\'elec \cite{MR542510}, ``BLN'' for short), $\rho$ solves in the appropriate entropy sense
\begin{equation}\label{eq:reformul}
\rho_{t}+\bigl(\sign\left(x-\xi(t)\right) \, \rho \, v( \rho )\bigr)_{x} = 0,
\end{equation}
for $t > 0$ and $x \in\mathfrak{C}$.
Further, in the context of the Hughes model, the BLN interpretation of the boundary condition \eqref{eq:Dirichlet-BLN} can be seen as the ``open-end'' condition. 
More precisely, thanks to Proposition~\ref{p:refo} in Section~\ref{s:noboundary}, we will further reformulate the Cauchy-Dirichlet problem \eqref{eq:Dirichlet-BLN}, \eqref{eq:reformul} with initial data $\overline{\rho} \colon \mathfrak C \to [0, \rho_{\max}]$ as the Cauchy problem \eqref{eq:reformul} written for $t>0$, $x\in \mathbb{R}$, with initial data $\overline{\rho}$ extended by zero to $\mathbb{R}\setminus \mathfrak{C}$.

Let us now focus on the choice of the cost function. One frequently used cost function is
\begin{equation}
\label{e:crazy}
c( \rho ) := 1/v( \rho ),
\end{equation}
(under the assumption that $R_{\max}< \rho_{\max}$, ensuring that $v( \rho )$ is separated from $0$), see for instance \cite{AmadoriDiFrancesco,AmadoriGoatinRosini,ElKhatibGoatinRosini,DiFrancescoFagioliRosiniRusso,MR3698447,MR3644595,MR3619091,DiFrancescoFagioliRosiniRussoKRM,MR3460619,MR3451862,MR3277564,MR3199781,MR3055243,MR2737207,Hughes02}.
Existence results on the resulting model \eqref{e:model}, \eqref{e:crazy} are available under the assumption that the initial datum $\overline{\rho}$ has small total variation, see \cite{AmadoriGoatinRosini,DiFrancescoFagioliRosiniRussoKRM,DiFrancescoFagioliRosiniRusso,MR3644595}.
In these references, the existence proofs rely on the property that non-classical shocks will not appear if the initial datum has sufficiently small total variation,
thus ruling out the most interesting specificity of the Hughes model which corresponds, at the microscopic level, to pedestrians switching direction of their motion.

In \cite{ElKhatibGoatinRosini} the authors introduce the piecewise linear cost function
\[c( \rho ):=\begin{cases}
1&\hbox{if } \rho < \rho_{\rm c},
\\
\rho / \rho_{\rm c}&\hbox{if } \rho \geq \rho_{\rm c},
\end{cases}\]
motivated by the fact that this choice minimizes the evacuation time
in some basic situations.

In this paper we concentrate to the case of a linear cost 
\begin{equation}\label{e:cost}
c( \rho ) := 1+\alpha \, \rho .
\end{equation}	
The motivation stems from the physical meaning of its slope $\alpha \geq0$.
Indeed, $\alpha$ measures the importance given to avoid regions of high number of pedestrians.
This allows us to reproduce different behaviours with the same model, by just letting vary the parameter $\alpha$.
In fact, taking $\alpha=0$ corresponds to a panic behaviour, when people 
simply move towards the closest exit.
On the other hand, as $\alpha>0$ grows, so does the importance
of avoiding exits attracting high number of pedestrians; note that formally, taking $\alpha=\infty$ implies that we have the same number of pedestrians in the two groups corresponding to the two exits. As a timely example, social distancing rules enforced during pandemic situations correspond to higher values of $\alpha$ compared to ordinary times.

\subsection{Summary of the results and outline of the paper}

One of the main analytical features of the Hughes  model is the possible development for the solution of non-classical shocks \cite{MR1927887}.
These have a physical counterpart: pedestrians may switch direction during the evacuation of a bounded corridor $\mathfrak{C}$ through its two exits placed at $\partial\mathfrak{C}$.
In fact, pedestrians choose their direction of motion according to a weighted distance encoding the overall distribution of the crowd in $\mathfrak{C}$.
So pedestrians choose their path towards the fastest exit, taking into account the distance from the two exits as well as avoiding crowded regions.

Both in \cite{AmadoriGoatinRosini} and \cite{DiFrancescoFagioliRosiniRussoKRM} the presence of non-classical shocks is prevented by requiring some sufficient conditions, which in turn result in considering initial data with sufficiently small total variation.
On the contrary, in the present paper we assume that the initial datum has arbitrary (possibly even infinite) total variation and we consider the possible arise of non-classical shocks.
Furthermore, differently from \cite{AmadoriGoatinRosini,DiFrancescoFagioliRosiniRussoKRM}, here we allow the density to attain the value $\rho_{\max}$.

We borrow from the previous literature the adequate notion of entropy solution to the one-dimensional Hughes model, which includes entropy conditions along the turning curve.
We 
find it important to distinguish between general entropy solutions and $\mathbf{BV}$-regular solutions. Indeed, we illustrate the need of $\mathbf{BV}$-regularity for the sake of stability and uniqueness analysis, proving a special instance of stability result for solutions without non-classical shocks. It contains many of the difficulties of the general case. We prove in this special case the locally Lipschitz continuous dependence, in the $\L1$-topology, of the solution $\rho$ on the initial datum $\overline{\rho}$ and on the cost parameter $\alpha$.  
This implies uniqueness of solutions, within this restricted class, and a kind of continuous dependence of the solution on parameter $\alpha$.
It is however not clear that, in general, the evacuation time depends continuously on $\alpha$; we will come back to this aspect at the end of the paper.

Our strategy of existence analysis relies on a many-particle approximation; it continues the line initiated in  \cite{DiFrancescoFagioliRosiniRussoKRM,DiFrancescoFagioliRosiniRusso,MR3644595}, with important differences.
First, comparatively to \cite{DiFrancescoFagioliRosiniRussoKRM}, we propose a sharper many-particle Hughes model based upon an original definition of the approximate turning curve (namely, we rely upon \eqref{e:turning} rather than  \eqref{e:xiturning}): this allows us to link direction switches of particles (i.e., crossings of particles' paths with the turning curve) to the instants when one of the particles leaves the domain  $\mathfrak{C}$.
This also allows us to prove rigorously the global in time existence of a discrete solution, whereas in \cite{DiFrancescoFagioliRosiniRussoKRM} non-accumulation of switching times in the construction of the discrete solution was implicitly assumed; another consequence is that in the discrete model the evacuation time is bounded.
Second, in the present case with a linear cost, the functional defined in \cite[(9)]{DiFrancescoFagioliRosiniRussoKRM} becomes trivial and hence, it becomes useless.
Further, in contrast to the standard approaches found in the literature on the Hughes model (for both the wave-front tracking and the many-particle approximations), we propose an existence result which does not rely upon a global control of the space variation of the approximate solutions, see Theorem~\ref{t:existBVloc}. To do so, we develop original arguments based upon local variation control provided by the local regularization effect of the LWR model.
Most importantly, differently from \cite{DiFrancescoFagioliRosiniRussoKRM}, we allow the solutions to have non-classical shocks, and prove that these shocks obey the adapted entropy conditions of \cite{ElKhatibGoatinRosini,AmadoriGoatinRosini} at the turning curve. 

Our existence results are the consequence of the global construction of approximate solutions and of the convergence analysis for the many-particle approximation scheme. The convergence analysis is presented in two steps.

At a first step, in Theorem~\ref{t:exist} we propose a conditional convergence result under the assumption of the global variation control. It highlights the key arguments of passage to the limit. It takes into account the possible arise of non-classical shocks and leads to
existence of a 	$\mathbf{BV}$-regular solution, whenever 
a uniform $\mathbf{BV}$ bound for the sequence of approximate solutions can be obtained.
Let us underline that in practice, this delicate assumption seems to hold true for \lq\lq typical\rq\rq\ choices of initial data, see the numerical tests provided at the end of the paper, as well as those presented in \cite{HUANG2009127,MR3055243,MR3451862,MR3698447,MR3277564,MR3619091,MR3149318,MR3177723}.
Two unconditional applications, both excluding non-classical shocks, concern the case of two non-interacting crowds evacuating by the two exits. 
The first application result exploits a uniform bound for the turning curve, so that if the support of the initial datum is well separated from the origin, then no interaction occurs between the solution and the turning curve. This result has a limited outreach but it appears to be new; it also leads to well-posedness for specific data since our stability result is applicable in the setting of non-interacting crowds.
The second application deals with symmetric initial data. Note that existence (and uniqueness, which is immediate) for the symmetric case have been already stated both in \cite{AmadoriGoatinRosini} and \cite{DiFrancescoFagioliRosiniRussoKRM}; here we give an alternative argument.

\delaynewpage{1}
At a second step, we ``upgrade'' our convergence result. Based on the same construction we develop a less restrictive $\mathbf{BV}_{\text{loc}}$ compactness argument via local reduction to microscopic approximations of the LWR model. 
The resulting local $\mathbf{BV}$ control also requires a more delicate treatment of the passage to the limit; this is achieved via a reformulation relying on the idea that the test functions with the property of being constant in space in a vicinity of the turning curve form a dense subset of $\Cc\infty$.
With these additional tools, we assess the convergence of our many-particle scheme and achieve an unconditional existence result for general $\L\infty$ data, but without guarantee of $\mathbf{BV}$ regularity.
The numerical approximation scheme thus being rigorously assessed, we present some numerical simulations with the goal to numerically study the evacuation time as a function of the parameter $\alpha\geq0$.
We observe, at least in the case under consideration, a unique minimum and a discontinuous graph.
These aspects may deserve further investigations, starting from the development of a higher order numerical scheme.

The paper is organized as follows. 
In the next section we introduce the Hughes model.
In Section~\ref{s:main} we collect the main results of the paper. In Section~\ref{s:noboundary} we prove that the Hughes model is equivalent to the problem without boundary, that is in the whole one-dimensional space.
In Section~\ref{s:ContDep} we state and prove the stability result of Theorem~\ref{t:WS-stability} for $\textbf{BV}$-regular solutions without non-classical shocks.
In Section~\ref{s:mpa} we define a sharp many-particle analogue of the Hughes model, carefully describe its dynamics and rigorously assess its well-posedness.
In Section~\ref{s:prma} we give the proof of our first convergence result stated in Theorem~\ref{t:exist}.
We also provide applications of Theorem~\ref{t:exist} to the case of two non-interacting (separated) crowds. 
In Section~\ref{s:existBVloc}, we replace the compactness result used in the proof of Theorem~\ref{t:exist} by a local compactness argument, thus circumventing the issue of global variation control; then we improve the convergence analysis from Theorem~\ref{t:exist} and justify the general existence claim of Theorem~\ref{t:existBVloc}. 
In Section~\ref{s:numsche} we present and briefly discuss the fully discrete version of the many-particle approximation scheme, provide some numerical simulations  and make observations concerning the evacuation time.

\section{Main results}
\label{s:main}

As explained in the Introduction, the reformulation \eqref{e:cost0} of the eikonal equation contained in \eqref{e:modela} leads to the following definition of entropy solution to \eqref{e:model} based on entropy conditions for conservation laws with discontinuous flux functions. 
To make notations more concise, recall that  $f( \rho ) = \rho \, v( \rho )$ and introduce
\begin{align}
\label{e:Phi}
\Phi(t,x, \rho ,\kappa,\xi) &:= \sign\bigl(x-\xi(t)\bigr) \, \sign( \rho -\kappa) \, \bigl( f( \rho ) - f(\kappa) \bigr).
\end{align}

\begin{definition}[Entropy solution]\label{d:entro}	
A couple $(\rho,\xi) \in \Lloc1\left([0, \infty) \times \mathfrak{C};[0, \rho_{\max}]\right) \times \mathbf{Lip}([0, \infty);\mathfrak{C})$ is an entropy solution of the initial-boundary value problem \eqref{e:model} if it satisfies \eqref{e:cost0} for a.e.\ $t>0$, satisfies the entropy inequality
\begin{subequations}\label{e:entro}
\begin{align}\label{e:entro1}
0 \leq {}& \int_0^{ \infty} \int_{\mathfrak{C}} \bigl( | \rho -\kappa| \, \varphi_t + \Phi(t,x, \rho ,\kappa,\xi) \varphi_x \bigr) \, {\rm{d}} x \, {\rm{d}} t
\\&\label{e:entro3}
+2 \int_0^{ \infty} f(\kappa) \varphi\bigl(t,\xi(t)\bigr) \, {\rm{d}} t
\end{align}
\end{subequations}
for all $\kappa \in [0, \rho_{\max}]$ and all test function $\varphi \in \Cc \infty\left((0, \infty) \times \mathfrak{C};[0, \infty)\right)$;
moreover, upon choosing a suitable representative of $\rho$, there holds $\rho\in \C0([0,\infty);\L1(\mathfrak{C}))$ with the initial condition taken in the sense $\rho(0,\cdot)=\overline{\rho}$; and finally, the strong traces $\rho (\,\cdot,\pm 1^\mp)$ understood in the sense of \cite{Panov_traces2,MR1869441} verify the condition
\begin{equation}\label{e:entro2}
\hbox{for a.e.\ $t>0$,\;\; $\rho (t,\pm 1^\mp)\in [0, \rho_c]$.}
\end{equation}	
If, moreover, for all $T>0$ there holds $\rho \in \L \infty(0,T;\mathbf{BV}(\mathfrak{C}))$, then we say that $(\rho,\xi)$ is a $\mathbf{BV}$-regular solution.
\end{definition}

\delaynewpage{2}
In the above definition, the line \eqref{e:entro1} originates from the classical Kruzhkov definition \cite{Kruzhkov} of entropy solution of a Cauchy problem. The line \eqref{e:entro3} accounts for entropy admissibility condition at the discontinuity of the flux along the turning curve, see \cite{MR2086124,MR2195983,MR1770068,MR1961002,MR2024741,MR2209759,MR2300671,MR3425264} for analogous definitions. As a matter of fact, including \eqref{e:entro3} implicitly  prescribes the coupling, across the ``turning curve'' $\xi$, of the two crowds heading to the two exits, each of the two crowds being governed by the standard LWR equation  with unknown data at the interface $x=\xi(t)$.
The requirement \eqref{e:entro2} represents the classical BLN (Bardos et al.\ \cite{MR542510}) interpretation of the homogeneous Dirichlet  boundary condition \eqref{e:modelb}, see also \cite{AndrSbihi,MR1980978,MR2168427,MR1387428}.
We stress that the strict concavity of the flux $f \colon \rho \mapsto \rho \, v( \rho )$ (see \eqref{V1}) guarantees the existence at the boundary points $x=\pm 1$ of the strong one-sided traces of any $\L1\left([0, \infty) \times \mathfrak{C};[0, \rho_{\max}]\right)$ function $\rho$ verifying \eqref{e:entro}, see \cite{Panov_traces2,MR1869441}, thus giving sense to  \eqref{e:entro2}.
The initial condition could as well be formulated, analogously to the boundary ones, in the sense of initial trace \cite{Panov_traces1,MR1869441}; however, the stronger time-continuity property is automatic if $\rho$ verifies \eqref{e:entro1}, according to the following remark.
\begin{remark}\label{rem:L1-continuity}
Note that away from the turning curve $x=\xi(t)$, \eqref{e:entro} prescribes that $\rho$ is a local entropy solution of a standard homogeneous conservation law of LWR kind.
Therefore a measurable function $\rho$ taking values in $[0, \rho_{\max}]$ and fulfilling \eqref{e:entro} for all $\kappa \in [0, \rho_{\max}]$ and all test function $\varphi \in \Cc \infty\left((0, \infty) \times \mathfrak{C};[0, \infty)\right)$ can be normalized to fulfill $\rho\in \C0([0,\infty);\Lloc1(\mathfrak{C}))$. This follows by a straightforward calculation from the uniform boundedness of solutions and from the local time continuity result of \cite{CancesGallouet} applied on rectangles $(\tau_0,\tau_0+\delta)\times (-1,\xi(\tau_0)-\sigma)$ and $(\tau_0,\tau_0+\delta)\times (\xi(\tau_0)+\sigma,1)$, where $\tau_0>0$ is arbitrary, $\delta>0$ and $\sigma>0$ are small enough so that these rectangles do not cross the turning curve $\{(t,x)\,:\, x=\xi(t)\}$. We refer to \cite{Sylla} for details of the calculation. Further, because the interval $\mathfrak{C}$ is finite and $\rho$ is bounded, $\Lloc1$ can be replaced by $\L1$ in the resulting continuity property. 
\end{remark}

In Section~\ref{s:noboundary}, we will identify the problem addressed in Definition~\ref{d:entro} with the problem \eqref{e:cost0}, \eqref{eq:reformul}, \eqref{e:modelc} considered for $x\in \mathbb{R}$, meaning that the initial datum $\overline{\rho}(x)$ is extended by zero for $x \in \mathbb{R}\setminus\mathfrak{C}$ and the boundaries are suppressed, see Definition~\ref{d:entro-bis}.

We are now in the position to state our results.
First, we illustrate the importance of the $\mathbf{BV}$-regularity by carrying out a stability analysis for a restricted class of solutions; we show, in particular, the $\L1$-continuous dependence of $\rho$ on the initial datum $\overline{\rho}$ and on the cost parameter $\alpha$.
To this end, we introduce the following notion of well-separated solution (particular instances of such solutions are considered in Corollaries~\ref{t:exist1}, \ref{t:exist2}).
\begin{definition}[Well-separated solution]
\label{d:well-separated}
An entropy solution $(\rho,\xi)$ of the initial-boundary value problem \eqref{e:model} is called well-separated if the limits $\rho(t,\xi(t)^\pm)$ (well defined for a.e.\ $t>0$) are equal to zero.
\end{definition}
We recall that existence of strong one-sided traces of $\rho$ on the Lipschitz curve $x=\xi(t)$ is automatic: it follows from the local entropy inequalities and the strict non-linearity of the flux \cite{MR1869441}.
We then assess the following stability claim.
\begin{theorem}[Stability of $\mathbf{BV}$-regular well-separated solutions]
\label{t:WS-stability} 
For $i\in\{1,2\}$, let $( \rho_{i},\xi_{i})$ well-separated entropy solution of the initial-boundary value problem \eqref{e:model} corresponding to the initial datum $\overline{\rho}_{i}$ and the cost parameter $\alpha_{i}$. Assume moreover that $\rho_1$ is  $\mathbf{BV}$-regular.
For a.e.\ $t>0$ there exists a constant $C$ that depends, in a non-decreasing way, only on $t$ and $\max\{\alpha_{1},\alpha_{2}\}$, such that
\begin{equation}\label{eq:stability}
\left\| \rho_{1}(t,\cdot\,)- \rho_{2}(t,\cdot\,)\right\|_{\L1(\mathfrak{C})} \leq C\bigl(|\alpha_{1}-\alpha_{2}|+\left\|\overline{\rho}_{1}-\overline{\rho}_{2}\right\|_{\L1(\mathfrak{C})}\bigr).
\end{equation}
In particular, for any fixed cost parameter $\alpha$ and initial datum $\overline{\rho}$ there exists at most one $\mathbf{BV}$-regular well-separated solution.
\end{theorem}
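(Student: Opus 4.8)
\section*{Proof proposal}

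\textbf{Strategy.} The plan is to combine three ingredients: a quantitative control of the displacement $|\xi_1(t)-\xi_2(t)|$ of the two turning curves, a doubling-of-variables argument for the pair $(\rho_1,\rho_2)$ in the whole-line formulation, and Grönwall's lemma. First I would reduce, via Proposition~\ref{p:refo}, to the equivalent problem posed on $\mathbb{R}$ with the data extended by zero; finite speed of propagation keeps both $\rho_i(t,\cdot)$ compactly supported, and away from its own turning curve each $\rho_i$ is a Kruzhkov entropy solution of $\rho_t+(\sign(x-\xi_i(t))\,f(\rho))_x=0$. Recall also that, since the solutions are well-separated, $\rho_i(t,\xi_i(t)^\pm)=0$ for a.e.\ $t$, and since $f(0)=0$ the flux is in fact continuous across the turning curve (the non-classical shock degenerates to zero strength). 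To control the curves, I would set $F_i(t,x):=\int_{-1}^{x}c_i(\rho_i(t,y))\,\mathrm{d}y-\int_{x}^{1}c_i(\rho_i(t,y))\,\mathrm{d}y$ with $c_i(\rho)=1+\alpha_i\rho$, so that $F_i(t,\xi_i(t))=0$ by \eqref{e:cost0} and $\partial_xF_i=2c_i(\rho_i)\geq2$. Comparing $F_1$ and $F_2$ at the larger of the two abscissae $\xi_1(t),\xi_2(t)$ and using $|c_1(\rho_1)-c_2(\rho_2)|\leq\max\{\alpha_1,\alpha_2\}\,|\rho_1-\rho_2|+R_{\max}|\alpha_1-\alpha_2|$ yields
\begin{equation*}
|\xi_1(t)-\xi_2(t)|\leq\tfrac12\max\{\alpha_1,\alpha_2\}\,\|\rho_1(t,\cdot)-\rho_2(t,\cdot)\|_{\L1(\mathfrak{C})}+R_{\max}|\alpha_1-\alpha_2|.
\end{equation*}

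\textbf{Core step.} Next I would prove the differential inequality $\frac{\mathrm{d}}{\mathrm{d}t}\|\rho_1(t,\cdot)-\rho_2(t,\cdot)\|_{\L1}\leq K|\xi_1(t)-\xi_2(t)|$ for a.e.\ $t$, with $K=K(t,\max\{\alpha_1,\alpha_2\})$ non-decreasing. For this I would run Kruzhkov's doubling of variables on $(\rho_1,\rho_2)$, using the entropy inequality \eqref{e:entro} for each solution with the other one as constant $\kappa$, and test functions constant in space (cut off at $|x|$ large, where both solutions vanish). With $\varphi_x\equiv0$ the flux terms cancel wherever $\sign(x-\xi_1(t))=\sign(x-\xi_2(t))$; on the remaining strip, of width $|\xi_1(t)-\xi_2(t)|$, they are bounded by $C|\xi_1(t)-\xi_2(t)|$ since $|f(\rho_1)-f(\rho_2)|\leq 2\|f\|_\infty$. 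The adapted-interface terms \eqref{e:entro3} produce, in the limit, the contributions $2f(\rho_2(t,\xi_1(t)))+2f(\rho_1(t,\xi_2(t)))$, which I would also bound by $C|\xi_1(t)-\xi_2(t)|$: by well-separatedness together with the genuine nonlinearity of $f$, the one-sided (Oleinik-type) regularizing estimates for the LWR equation, and the uniform Lipschitz bound on the turning curves, each $\rho_i$ vanishes on a space-time neighbourhood of its own turning curve of width bounded below by some $\delta_0=\delta_0(\max\{\alpha_1,\alpha_2\})>0$; hence these terms vanish when $|\xi_1(t)-\xi_2(t)|<\delta_0$ and are trivially $\leq(2\|f\|_\infty/\delta_0)|\xi_1(t)-\xi_2(t)|$ otherwise. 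It is precisely in making this comparison rigorous in the presence of the two distinct Lipschitz flux-discontinuity curves — existence and manipulation of the strong traces on them and validity of the $\L1$-comparison across the discontinuity — that the $\mathbf{BV}$-regularity of $\rho_1$ enters. This doubling argument with two moving interfaces and adapted-entropy terms, and keeping the constant dependent only on $t$ and $\max\{\alpha_1,\alpha_2\}$ (and not on $\TV(\rho_1)$), is the step I expect to be the main obstacle.

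\textbf{Conclusion.} Substituting the displacement bound into the differential inequality gives
\begin{equation*}
\frac{\mathrm{d}}{\mathrm{d}t}\|\rho_1(t,\cdot)-\rho_2(t,\cdot)\|_{\L1(\mathfrak{C})}\leq\tfrac12 K\max\{\alpha_1,\alpha_2\}\,\|\rho_1(t,\cdot)-\rho_2(t,\cdot)\|_{\L1(\mathfrak{C})}+K R_{\max}|\alpha_1-\alpha_2|,
\end{equation*}
with $K=K(t,\max\{\alpha_1,\alpha_2\})$ non-decreasing. Since $\rho_i\in\C0([0,\infty);\L1(\mathfrak{C}))$ with $\rho_i(0,\cdot)=\overline{\rho}_i$ (Remark~\ref{rem:L1-continuity}), Grönwall's lemma then yields \eqref{eq:stability} with a constant $C$ depending, in a non-decreasing way, only on $t$ and $\max\{\alpha_1,\alpha_2\}$. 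Finally, taking $\alpha_1=\alpha_2$ and $\overline{\rho}_1=\overline{\rho}_2$ forces $\rho_1(t,\cdot)=\rho_2(t,\cdot)$ for a.e.\ $t>0$, which gives uniqueness of the $\mathbf{BV}$-regular well-separated solution.
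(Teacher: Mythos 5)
Your overall architecture (control $|\xi_{1}-\xi_{2}|$ by $\|\rho_{1}-\rho_{2}\|_{\L1}$, prove a differential inequality for $\|\rho_{1}-\rho_{2}\|_{\L1}$ in terms of $|\xi_{1}-\xi_{2}|$, close by Gr\"onwall) is genuinely different from the paper's, and the first ingredient --- the static bound $2|\xi_{1}(t)-\xi_{2}(t)|\leq\max\{\alpha_{1},\alpha_{2}\}\|\rho_{1}(t,\cdot)-\rho_{2}(t,\cdot)\|_{\L1}+2\rho_{\max}|\alpha_{1}-\alpha_{2}|$ --- is correct (it is the argument used in the paper for \eqref{eq:xi(0)-estim}, applied at every $t$). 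The gap is in the core step. Doubling of variables for two solutions whose flux-discontinuity curves are \emph{distinct} produces, as you correctly identify, the interface terms $2f\bigl(\rho_{2}(t,\xi_{1}(t)^{\pm})\bigr)$ and $2f\bigl(\rho_{1}(t,\xi_{2}(t)^{\pm})\bigr)$ (the terms at each solution's \emph{own} curve vanish by well-separation). To get $\frac{{\rm d}}{{\rm d}t}\|\rho_{1}-\rho_{2}\|_{\L1}\leq K|\xi_{1}-\xi_{2}|$ you must bound these by $C|\xi_{1}(t)-\xi_{2}(t)|$, and for that you invoke the claim that each $\rho_{i}$ vanishes on a space-time neighbourhood of its own turning curve of uniform width $\delta_{0}>0$. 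This does not follow from Definition~\ref{d:well-separated}, which only prescribes zero one-sided traces for a.e.\ $t$, and it is false in general: to the right of $\xi_{i}$ the solution obeys $\rho_{t}+f(\rho)_{x}=0$, whose characteristic speed $f'(\rho)$ is \emph{negative} for $\rho>\rho_{c}$, so a congested region initially separated from the turning curve back-propagates toward it and the gap between $\supp\rho_{i}(t,\cdot)$ and $\xi_{i}(t)$ can shrink to zero while the trace stays null for a.e.\ $t$. Without a uniform $\delta_{0}$, the interface terms are only $O(\|f\|_{\infty})$, not $O(|\xi_{1}-\xi_{2}|)$, and Gr\"onwall does not close. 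Note also that nothing in your scheme actually uses the $\mathbf{BV}$-regularity of $\rho_{1}$ quantitatively (strong traces exist without it, by genuine nonlinearity), which is a warning sign given the authors' remark that all available flux-perturbation stability results require a $\mathbf{BV}$ bound on one solution.

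The paper circumvents exactly this obstruction by never comparing solutions across two distinct interfaces: it translates $\rho_{1}$ into $\rho_{3}(t,x):=\rho_{1}(t,x-\xi_{2}(t)+\xi_{1}(t))$, so that $\rho_{3}$ and $\rho_{2}$ share the single interface $x=\xi_{2}(t)$, on which \emph{both} have zero traces by well-separation; the interface terms then genuinely vanish. The price is that $\rho_{3}$ solves a conservation law with the drift-perturbed flux $\pm f(\rho_{3})+(\dot\xi_{2}-\dot\xi_{1})\rho_{3}$, and the continuous-dependence-on-the-flux estimate (Bouchut--Perthame, Karlsen--Risebro) --- this is where $\mathbf{BV}$-regularity of $\rho_{1}$ enters --- yields the error $C_{1}(t)\int_{0}^{t}|\dot\xi_{1}-\dot\xi_{2}|$ (Proposition~\ref{prop:rho-of-xi}). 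The quantity $\int_{0}^{\tau}|\dot\xi_{1}-\dot\xi_{2}|$ is then controlled, on a short time interval $\tau=\tau(\max\{\alpha_{1},\alpha_{2}\})$, through the explicit formula $\dot\xi_{i}=\frac{\alpha_{i}}{2}\bigl(f(\rho_{i}(t,-1^{+}))-f(\rho_{i}(t,1^{-}))\bigr)$ and an $\L1$-contraction argument in triangles adjacent to the exits (Proposition~\ref{prop:xidot}); the global-in-time claim follows by a stop-and-restart bootstrap. If you want to salvage your route, you would need either a proof of the uniform $\delta_{0}$-separation (which would be an additional structural hypothesis, not a consequence of the definitions) or the paper's translation trick.
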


\delaynewpage{2}
We emphasize that the $\mathbf{BV}$-regularity is a crucial ingredient of the stability argument; indeed, being understood that $\xi_{1}\not\equiv \xi_{2}$, we have to call upon quantitative stability of solutions of conservation laws $\rho_t + f(t,x, \rho )_x=0$ with respect to perturbations of the flux $f$. All such results (see \cite{BouchutPerthame,KarlsenRisebro,Mercier} for the best known ones) rely upon the $\mathbf{BV}$ in space estimate on at least one of the two solutions.
Moreover, the open-end reformulation of Definiton~\ref{d:entro-bis} is instrumental in our stability analysis. Finally, we stress that the well-separation assumption is not merely technical; dropping it would require new ideas for the sake of uniqueness analysis.

Turning to the issue of existence, we are able to bypass the well-separation assumption.
The starting point for existence is provided by the accurate construction of discrete solutions based upon the modification, introduced in Section~\ref{s:mpa}, of the many-particle scheme for Hughes' model considered in \cite{DiFrancescoFagioliRosiniRusso,MR3644595,DiFrancescoFagioliRosiniRussoKRM}. This discrete Hughes model is studied in detail for its own sake (see, in particular, Lemmas~\ref{l:bounxi} - \ref{l:maxpri}, Proposition~\ref{p:Opeth} and Theorem~\ref{th:discrete-WP}), and for the sake of preparing grounds for its asymptotic analysis as the number of particles goes to infinity.
Having in mind the importance of the $\mathbf{BV}$-regularity highlighted by the result and the method of proof of Theorem~\ref{t:WS-stability}, first we 
assess consistency and compactness of the many-particle approximation of the Hughes model provided uniform variation bounds are available through the construction. We arrive at the following conditional convergence and existence result.

\begin{theorem}[Conditional convergence for the ${\mathbf{BV}}$-case]\label{t:exist}
Consider the cost function \eqref{e:cost} and assume \eqref{V1}.
For any initial datum $\overline{\rho}$ in ${\mathbf{BV}}(\mathfrak{C};[0, \rho_{\max}])$ satisfying \eqref{I}, let
$\{( \rho^{n},\zeta^{n})\}_n$ be the approximate solutions constructed in Section~\ref{s:mpa}. 
Assume that for all $T>0$  there exists a constant $\hbox{\bf TV}=\hbox{\bf TV}(T)>0$ such that for any $t\in [0,T]$ and $n\in{\mathbb{N}}$ we have
\begin{equation}
\label{e:DavidMaximMicic}
\TV\left( \rho^{n}(t,\cdot\,)\right) \leq \hbox{\bf TV}.
\end{equation} 
Then, $\{( \rho^{n},\zeta^{n})\}_{n}$ converges, up to a subsequence,  in $\L1((0,T)\times\mathfrak{C})\times \C0([0,T])$ for all $T>0$, to a $\mathbf{BV}$-regular entropy solution $(\rho,\xi)$ of the initial-boundary value problem \eqref{e:model} in the sense of Definition~\ref{d:entro}. 
\end{theorem}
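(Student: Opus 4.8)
The plan is to pass to the limit in the many-particle scheme of Section~\ref{s:mpa}, exploiting the uniform bound \eqref{e:DavidMaximMicic} to extract a convergent subsequence, and then to check that the limit satisfies every clause of Definition~\ref{d:entro}. First I would record the compactness ingredients: the bound \eqref{I} on the initial data together with the maximum principle (Lemma~\ref{l:maxpri}) gives uniform $\L\infty$ bounds on $\rho^n$; the assumed total-variation bound \eqref{e:DavidMaximMicic} plus the standard $\L1$-in-time equicontinuity for scalar conservation laws (which here must be obtained locally away from the approximate turning curve $\zeta^n$, since the flux switches sign there) yields, via Helly's theorem, a subsequence with $\rho^n \to \rho$ in $\L1_{\mathrm{loc}}((0,\infty)\times\mathfrak{C})$ and a.e., and in fact in $\L1((0,T)\times\mathfrak{C})$ since $\mathfrak C$ is bounded. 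For the turning curves, Lemmas~\ref{l:bounxi}--\ref{l:maxpri} should provide a uniform Lipschitz bound on $\zeta^n$ (this is where the sharp definition \eqref{e:turning} of the approximate turning curve matters), so by Ascoli--Arzelà, along a further subsequence $\zeta^n \to \xi$ in $\C0([0,T])$ with $\xi$ Lipschitz with the same constant. Lower semicontinuity of total variation then gives $\rho\in\L\infty(0,T;\mathbf{BV}(\mathfrak{C}))$, so the limit will be $\mathbf{BV}$-regular.

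Next I would verify the turning-curve relation \eqref{e:cost0}. Since $c(\rho)=1+\alpha\rho$ is affine and $\rho^n\to\rho$ in $\L1$, the integrals $\int_{-1}^{\zeta^n(t)} c(\rho^n(t,y))\,\mathrm dy$ converge to $\int_{-1}^{\xi(t)} c(\rho(t,y))\,\mathrm dy$ for a.e.\ $t$ (combining the $\L1$ convergence of $\rho^n$ with the uniform convergence $\zeta^n\to\xi$ and the $\L\infty$ bound), and similarly for the right-hand integral; passing to the limit in the discrete analogue of \eqref{e:cost0} gives \eqref{e:cost0} for the limit. The entropy inequality \eqref{e:entro} is the heart of the matter: away from $\xi$ the scheme is a follow-the-leader approximation of the homogeneous LWR equation $\rho_t + (\sign(x-\xi)f(\rho))_x=0$, and the results of \cite{DiFrancescoRosini,DiFrancescoFagioliRosini-BUMI} give the Kruzhkov inequalities for the limit there; the delicate point is the term \eqref{e:entro3} on $\xi$. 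I would handle it by testing with $\varphi$ supported near the turning curve and tracking, in the discrete dynamics, the flux balance across $\zeta^n$ — precisely the particle-switching mechanism — showing that the contribution of particles crossing $\zeta^n$ produces, in the limit, the $2\int f(\kappa)\varphi(t,\xi(t))\,\mathrm dt$ term. The $\C0([0,\infty);\L1(\mathfrak C))$ regularity with $\rho(0,\cdot)=\overline{\rho}$ follows from Remark~\ref{rem:L1-continuity} together with consistency of the discrete initial data, and the boundary condition \eqref{e:entro2} at $x=\pm1$ comes from the open-end character of the scheme (no particles re-enter $\mathfrak C$), using the strong-trace theory of \cite{Panov_traces2,MR1869441}.

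The main obstacle I expect is the passage to the limit in the interface term \eqref{e:entro3}, i.e.\ identifying the discrete flux defect across $\zeta^n$ with $2f(\kappa)$ tested against $\varphi(t,\xi(t))$ in the limit. The difficulty is twofold: the approximate turning curve $\zeta^n$ only converges uniformly (not in $\mathbf{BV}$ in time, a priori), so one cannot naively localize; and the set of crossing times of particles with $\zeta^n$ must be controlled so that the associated flux contributions assemble correctly. The remedy, foreshadowed in the paper's outline and to be developed fully in Section~\ref{s:prma}, is to exploit the sharp link (built into definition \eqref{e:turning}) between particle-path crossings of $\zeta^n$ and exit times of particles from $\mathfrak C$, together with the uniform Lipschitz bound on $\zeta^n$, to show that the discrete entropy production concentrated near the turning curve passes to the limit to exactly \eqref{e:entro3}. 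A secondary technical point is the local (rather than global) $\L1$-in-time equicontinuity needed for Helly's theorem, because the sign change of the flux at $\zeta^n$ obstructs a direct global estimate; restricting to rectangles not meeting the curve, as in Remark~\ref{rem:L1-continuity}, circumvents this.
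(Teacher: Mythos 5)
You assert that Lemmas~\ref{l:bounxi}--\ref{l:maxpri} yield a uniform Lipschitz bound on $\zeta^{n}$ and then apply Arzel\`a--Ascoli directly to $\{\zeta^{n}\}_n$. This step fails: $\zeta^{n}$ is not even continuous in time, since by Proposition~\ref{p:Opeth} it has a jump discontinuity at every time a single particle exits $\mathfrak{C}$, and Lemmas~\ref{l:bounxi}--\ref{l:maxpri} only provide an $\L\infty$ bound on $\zeta^{n}$ and a lower bound on the particle spacing, no time regularity. The paper circumvents this by introducing the auxiliary curves $\xi^{n}$ defined by \eqref{e:xiturning} (i.e.\ \eqref{e:cost0} evaluated on $\rho^{n}$ over all of $[-1,1]$): these are uniformly Lipschitz (Lemma~\ref{l:2}), which in turn rests on the uniform weak time-continuity estimate of Lemma~\ref{l:dept}; Ascoli is applied to $\{\xi^{n}\}_n$, and the convergence is transferred to $\zeta^{n}$ via $\sup_t|\xi^{n}(t)-\zeta^{n}(t)|\leq \alpha\,\ell/2\to 0$ (Lemma~\ref{l:zeta-xi}). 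A related problem affects your space--time compactness for $\rho^{n}$: you invoke ``$\L1$-in-time equicontinuity for scalar conservation laws'', but $\rho^{n}$ is a piecewise-constant particle density, not an (approximate) entropy solution to which such estimates apply. The paper instead derives a Lipschitz-in-time estimate in the $1$-Wasserstein distance directly from $|\dot x_i|\leq v_{\max}$ (Proposition~\ref{p:JakubZytecki}) and combines it with \eqref{e:DavidMaximMicic} through the generalized Aubin--Lions lemma (Theorem~\ref{t:AL}); no localization away from the turning curve is needed for compactness.

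The second, and central, gap is that your treatment of the entropy inequality --- in particular the interface term \eqref{e:entro3} --- remains a plan (``track the flux balance across $\zeta^{n}$'') rather than an argument, and it misses where the hypothesis \eqref{e:DavidMaximMicic} enters beyond compactness. In the paper's proof (Proposition~\ref{p:entropy-local}) the full inequality, interface term included, is obtained by one summation-by-parts computation on the discrete entropy functional $(\spadesuit^n)$: using the ODEs \eqref{e:FTL} one writes $(\spadesuit^n)=\kappa(\spadesuit^n_1)+(\spadesuit^n_2)$, the remainder $(\spadesuit^n_2)$ is estimated by a constant times $\ell\int_0^{T}\TV(\rho^{n}(t,\cdot))\,{\rm d}t$ --- this is exactly where the global variation bound is indispensable --- while $(\spadesuit^n_1)\geq 0$ termwise, the contribution at the turning curve appearing with the favourable sign $2\bigl(1+\sign(R_{I_0^h+\frac12}-\kappa)\bigr)v(\kappa)\,\varphi(t,\zeta^{n})\geq 0$. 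Nothing in your proposal substitutes for this bookkeeping; appealing to \cite{DiFrancescoRosini,DiFrancescoFagioliRosini-BUMI} away from the curve and ``assembling'' crossing contributions near it would still require it, and note that by Lemma~\ref{lem:switching} particles interact with $\zeta^{n}$ only at exit times, so there is no continuous flux of switching particles to track. The remaining items of your outline (passage to the limit in \eqref{e:cost0}, initial datum via weak convergence plus time continuity, boundary condition through the open-end reformulation of Proposition~\ref{p:refo}, and $\mathbf{BV}$-regularity of the limit by lower semicontinuity) do match the paper's proof.
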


In Section~\ref{s:applications} we provide two applications; one of them corresponds to an already known existence result, but the convergence of the many-particle approximation is assessed for the first time.  
In these results, the control on the total variation of the approximate solutions is due to the absence of interaction of particles with the turning curve.
Note that the settings of the existence results of Corollaries~\ref{t:exist1}, \ref{t:exist2} fit the stability framework of Theorem~\ref{t:WS-stability}. 
In presence of particles' interactions with the turning curve (which corresponds to the presence of non-classical shocks in the limit solution), detailed insight into the analysis of variation bounds  can be found in \cite{MaxGra-preprint}; unfortunately it is not enough to assess the global space variation bound \eqref{e:DavidMaximMicic}. 

Then, we circumvent the use of  \eqref{e:DavidMaximMicic} in order to address solutions with non-classical shocks. We pursue an original line of investigation which relies upon a reduction to one-sided Lipschitz regularization properties of the standard LWR model throughly studied in \cite{DiFrancescoRosini,DiFrancescoFagioliRosini-BUMI}, see also \cite{HoldenRisebro1,HoldenRisebro2}. This approach
does not ensure the $\mathbf{BV}$-regularity of the constructed solutions, but guarantees existence for the one-dimensional Hughes' model \eqref{e:model} for general $\L\infty$ data via a less stringent $\mathbf{BV}_{\text{loc}}$ compactness argument and a refined convergence analysis in a vicinity of the turning curve. 
\begin{theorem}[Existence beyond the $\mathbf{BV}$ control]
Consider the cost function \eqref{e:cost} and assume \eqref{V1}, \eqref{Vbis}. 
For any initial datum $\overline{\rho}$ satisfying \eqref{I}, there exists an entropy solution to problem \eqref{e:model}. More precisely, the sequence of approximate solutions $\{( \rho^{n},\zeta^{n})\}_{n}$
constructed in Section~\ref{s:mpa} converges, up to a subsequence, in $\L1((0,T)\times \mathfrak{C})\times \C0([0,T])$ for all $T>0$, to an entropy solution $(\rho,\xi)$ of the initial-boundary value problem \eqref{e:model} in the sense of Definition~\ref{d:entro}.
\label{t:existBVloc}
\end{theorem}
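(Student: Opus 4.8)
The plan is to upgrade the conditional convergence argument of Theorem~\ref{t:exist} by replacing the global assumption \eqref{e:DavidMaximMicic} with a purely local variation bound away from the turning curve, obtained by reduction to the one-sided Lipschitz (Oleinik-type) regularization intrinsic to the LWR model. First I would recall that the construction of Section~\ref{s:mpa} provides, for every $n$, a pair $(\rho^n,\zeta^n)$ solving a discrete Hughes model, together with the basic a~priori bounds that do \emph{not} require global $\mathbf{BV}$ control: uniform $\L\infty$ bound $0\le\rho^n\le R_{\max}$ (discrete maximum principle, Lemma~\ref{l:maxpri}), mass conservation and the consequent $\L1$ bound, uniform Lipschitz estimate on the approximate turning curves $\zeta^n$ with a common modulus (Lemma~\ref{l:bounxi} and the bound on the discrete evacuation time), hence Arzel\`a--Ascoli compactness of $\{\zeta^n\}$ in $\C0([0,T])$. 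Passing to a subsequence, $\zeta^n\to\xi$ uniformly, with $\xi\in\mathbf{Lip}([0,\infty);\mathfrak C)$.

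Next I would localize. Away from the turning curve each exemplar of the discrete scheme is precisely the follow-the-leader approximation of a homogeneous LWR equation with concave flux $\pm f$, for which \cite{DiFrancescoRosini,DiFrancescoFagioliRosini-BUMI} provide a one-sided Lipschitz bound $\partial_x\rho^n(t,\cdot)\le C/t$ on the appropriate side of $\xi$; assumption \eqref{Vbis} is exactly what makes the relevant monotonicity of $\rho\mapsto\rho v'(\rho)$ available so that this estimate survives under the discrete dynamics and the reflection of particles at the moving interface. From such one-sided bounds one gets, on any rectangle $Q=(\tau,T)\times I$ with $\bar I$ disjoint from the graph of $\xi$ (uniformly in $n$, for $n$ large, by uniform convergence $\zeta^n\to\xi$), a uniform bound $\TV(\rho^n(t,\cdot);I)\le C(\tau,I)$ for $t\in(\tau,T)$. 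Combined with the standard time-continuity/compactness machinery for entropy solutions of scalar conservation laws (e.g. the arguments behind Remark~\ref{rem:L1-continuity}, or a Kruzhkov-type interpolation), this yields $\mathbf{BV}_{\mathrm{loc}}$ compactness in the complement of any neighbourhood of the turning curve, hence $\rho^n\to\rho$ in $\L1_{\mathrm{loc}}$ off the turning curve; a diagonal/exhaustion argument upgrades this to $\rho^n\to\rho$ in $\L1((0,T)\times\mathfrak C)$ for all $T>0$, using the uniform $\L\infty$ bound to control the thin strip around $\xi$.

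Then I would identify the limit as an entropy solution. Off the turning curve, passing to the limit in the discrete entropy inequalities is routine and gives that $\rho$ is a Kruzhkov entropy solution of \eqref{eq:reformul} on $\{x\ne\xi(t)\}$, together with the BLN traces \eqref{e:entro2} at $x=\pm1$ and the $\C0([0,\infty);\L1)$ initial condition. The cost relation \eqref{e:cost0} passes to the limit from \eqref{e:turning} using $\L1$ convergence of $\rho^n$, continuity of $c$, and uniform convergence of $\zeta^n$. The remaining point --- the interface entropy term \eqref{e:entro3} --- is where the refined convergence analysis near $\xi$ is needed: following the idea flagged in the introduction, I would test the discrete inequalities against functions $\varphi\in\Cc\infty$ that are \emph{constant in $x$ in a neighbourhood of the turning curve}, a class dense in $\Cc\infty$, so that the flux-discontinuity contribution collapses to the single-point term $2\int f(\kappa)\varphi(t,\xi(t))\,dt$ and no uncontrolled boundary flux at $x=\xi^\pm$ appears; the discrete analogue of this term is produced exactly by the particle-switching rule of Section~\ref{s:mpa}, whose sharp formulation ties switches to particles leaving $\mathfrak C$ and thus makes the limit transition transparent. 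I expect the main obstacle to be precisely this last step: controlling the behaviour of $\rho^n$ and of its discrete fluxes in the shrinking strip around $\zeta^n$ --- reconciling the ``microscopic'' reflection of particles at the moving interface with the ``macroscopic'' non-classical shock and its adapted entropy condition --- since here no $\mathbf{BV}$ bound is available and one must rely on the weak-$\star$ convergence of fluxes, the constancy of test functions near $\xi$, and the fine structure of the discrete switching dynamics to close the argument.
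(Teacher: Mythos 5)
Your proposal follows essentially the same route as the paper's proof: localized $\mathbf{BV}$ bounds and compactness away from the turning curve via the Oleinik one-sided Lipschitz estimate for the follow-the-leader LWR scheme, a covering/diagonal argument to get a.e.\ convergence on the whole domain, and a density argument using test functions that are constant in $x$ in a neighbourhood of the turning curve to recover the interface entropy term. The one step you leave implicit is how the LWR regularization estimate is transferred to the Hughes particles despite the switching dynamics near $\zeta^n$; the paper handles this in Proposition~\ref{prop:rectangle_} by constructing an auxiliary pure follow-the-leader system that coincides with the Hughes particles on the relevant side of the turning curve and showing that the two approximate densities differ negligibly in $\L1$ and by at most $\rho_{\max}$ in total variation.
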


\delaynewpage{2}
\section{Reduction to a problem without boundary}\label{s:noboundary}

This section is devoted to the justification of the fact that, in the setting of the one-dimensional Hughes model, the Dirichlet boundary conditions at exits \eqref{e:modelb} are mere open-end conditions corresponding to the problem without boundary in the whole one-dimensional space. The solutions to the latter are defined as follows, having in mind initial data supported in $\mathfrak{C}$.

\begin{definition}[Open-end formulation]
\label{d:entro-bis}
Consider a measurable initial datum $\overline{\rho} \colon \mathbb{R} \to [0, \rho_{\max}]$.
A couple $(\rho,\xi) \in \Lloc1\left([0, \infty) \times \mathbb{R};[0, \rho_{\max}]\right) \times \mathbf{Lip}([0, \infty);\mathfrak{C})$ is an entropy solution of the initial-value problem \eqref{e:cost0}, \eqref{eq:reformul}, \eqref{e:modelc} if it satisfies \eqref{e:cost0} for a.e.\ $t>0$, and satisfies the entropy inequality
\begin{align}
\label{e:entro-bis}
0 \leq {}& \int_0^{ \infty} \int_\mathbb{R} \bigl( | \rho -\kappa| \, \varphi_t + \Phi(t,x, \rho ,\kappa,\xi) \, \varphi_x \bigr) \, {\rm{d}} x \, {\rm{d}} t
+2 \int_0^{ \infty} f(\kappa) \, \varphi\bigl(t,\xi(t)\bigr) \, {\rm{d}} t
\end{align}
for all $\kappa \in [0, \rho_{\max}]$  and all test function $\varphi \in \Cc \infty\left((0, \infty) \times \mathbb{R};[0, \infty)\right)$, with $\Phi$ defined as in \eqref{e:Phi};
moreover, upon choosing a suitable representative of $\rho$, there holds $\rho\in \C0([0,\infty);\L1(\mathbb{R}))$ with the initial condition taken in the sense $\rho(0,\cdot)=\overline{\rho}$. 
\end{definition}
\begin{proposition}\label{p:refo}
Given $\overline{\rho} \colon \mathfrak{C} \to [0, \rho_{\max}]$, let us identify $\overline{\rho}$ with its extension to the whole of $\mathbb{R}$ by the value zero on $\mathbb{R}\setminus \mathfrak{C}$. 
Then the following problems are equivalent:
\begin{equation}
\label{e:IBVP}
\left\{\begin{array}{@{}l@{\qquad}l@{\ }l@{}}
\int_{-1}^{\xi(t)}c\bigl( \rho(t,y)\bigr) \, {\rm{d}} y = \int_{\xi(t)}^{1}c\bigl( \rho(t,y)\bigr) \, {\rm{d}} y, && t\geq 0,\\
\rho_{t}+\bigl(\sign\left(x-\xi(t)\right) \, \rho \, v( \rho )\bigr)_{x} = 0, &x\in \mathfrak{C},&t>0,\\
\rho (0,x) = \overline{\rho}(x), &x\in \mathfrak{C},\\
\rho(t,\pm 1)=0, &&t>0,
\end{array}\right.
\end{equation}
where the solution $(\rho,\xi)$ is understood in the sense of Definition~\ref{d:entro}, and
\begin{equation}
\label{e:IVP}
\left\{\begin{array}{@{}l@{\qquad}l@{\ }l@{}}
\int_{-1}^{\xi(t)}c\bigl( \rho(t,y)\bigr) \, {\rm{d}} y = \int_{\xi(t)}^{1}c\bigl( \rho(t,y)\bigr) \, {\rm{d}} y, && t\geq 0,\\
\rho_{t}+\bigl(\sign\left(x-\xi(t)\right) \, \rho \, v( \rho )\bigr)_{x} = 0, &x\in \mathbb{R},&t>0,\\
\rho (0,x) = \overline{\rho}(x), &x\in \mathbb{R},
\end{array}\right.
\end{equation}
where the solution $(\rho,\xi)$ is understood in the sense of Definition~\ref{d:entro-bis}.

More precisely, given $\overline{\rho}$, any solution to  Problem~\eqref{e:IBVP} can be seen as the restriction to $x\in \mathfrak{C}$ of a solution to Problem~\eqref{e:IVP}; reciprocally, 
any solution to Problem~\eqref{e:IVP} can be seen as an extension to $x\in \mathbb{R}$ of a solution to Problem~\eqref{e:IBVP}.
\end{proposition}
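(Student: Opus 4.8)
The plan is to establish the equivalence of Problems~\eqref{e:IBVP} and \eqref{e:IVP} by proving the two implications separately, exploiting the finite speed of propagation inherent to the LWR-type equation away from the turning curve and the fact that the turning curve is a priori confined to $\mathfrak{C}$.

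First I would handle the implication from \eqref{e:IVP} to \eqref{e:IBVP}. Let $(\rho,\xi)$ be an entropy solution in the sense of Definition~\ref{d:entro-bis} with initial datum $\overline{\rho}$ supported in $\mathfrak{C}$. Since $\xi(t)\in \mathfrak{C}=(-1,1)$ for all $t$ by the regularity class $\mathbf{Lip}([0,\infty);\mathfrak{C})$, on a neighbourhood of $x=\pm 1$ the equation \eqref{eq:reformul} is locally a standard homogeneous LWR conservation law $\rho_t\pm f(\rho)_x=0$. The key point is a finite-speed-of-propagation / barrier argument: because $\overline{\rho}=0$ outside $\mathfrak{C}$ and the characteristic speeds $|f'(\rho)|$ are bounded by some $\lambda_{\max}$, the solution remains zero outside the expanding region, hence in particular $\rho(t,x)=0$ for $x$ outside a slightly larger interval for small times — and more to the point, since the exits are "absorbing" (the flux near $x=-1$ is $-f(\rho)$, i.e. directed outward to the left, and near $x=1$ it is $+f(\rho)$, directed outward to the right), no mass re-enters $\mathfrak{C}$ from outside. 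Thus the restriction of $\rho$ to $\mathfrak{C}$ satisfies the local entropy inequalities inside $\mathfrak{C}$, the coupling term along $\xi$ carries over verbatim, and it remains to check that the boundary condition \eqref{e:entro2} holds, i.e. the strong traces $\rho(t,\pm1^\mp)$ lie in $[0,\rho_c]$. This follows from the outgoing direction of the flux combined with the strict concavity of $f$: an entropy solution of $\rho_t+f(\rho)_x=0$ that vanishes immediately to the right of $x=1$ must have its left trace in $[0,\rho_c]$ (a classical Rankine–Hugoniot plus Oleinik/Kruzhkov boundary-trace computation, cf.\ the BLN condition \cite{MR542510}), and symmetrically at $x=-1$.

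Next I would treat the reverse implication, from \eqref{e:IBVP} to \eqref{e:IVP}. Given $(\rho,\xi)$ solving \eqref{e:IBVP} in the sense of Definition~\ref{d:entro}, extend $\rho$ by zero to $\mathbb{R}\setminus\mathfrak{C}$ and call the extension $\tilde\rho$. I must verify the global entropy inequality \eqref{e:entro-bis} for test functions $\varphi\in\Cc\infty((0,\infty)\times\mathbb{R};[0,\infty))$ that need not vanish near $x=\pm1$. Splitting the spatial integral over $(-\infty,-1)$, $\mathfrak{C}$, and $(1,\infty)$: on the outer two intervals $\tilde\rho\equiv 0$ and, since $f(0)=0$ and $\Phi(t,x,0,\kappa,\xi)=-\sign(x-\xi(t))\sign(\kappa)f(\kappa)=-\sign(\kappa)f(\kappa)$ there (as $x>\xi$ on the right, $x<\xi$ on the left), the contributions reduce to boundary terms on $\{x=\pm1\}$ involving the traces of $\tilde\rho$ from outside (which are $0$) against $|0-\kappa|=\kappa$. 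On $\mathfrak{C}$, using \eqref{e:entro1}–\eqref{e:entro3} with the same $\varphi$ restricted to $\mathfrak{C}$ produces the interior contribution plus the $\xi$-coupling term plus boundary terms on $\{x=\pm1\}$ involving the interior traces $\rho(t,\pm1^\mp)$. The boundary terms from the two sides must combine to a nonnegative quantity: this is precisely where \eqref{e:entro2}, $\rho(t,\pm1^\mp)\in[0,\rho_c]$, is used, via the standard fact (Otto's boundary-entropy / BLN reformulation, see \cite{MR542510,MR1980978,MR2168427}) that for a strictly concave flux the boundary entropy defect $\mathrm{sign}(\rho(t,1^-)-\kappa)(f(\rho(t,1^-))-f(\kappa)) - \mathrm{sign}(0-\kappa)(f(0)-f(\kappa)) \le 0$ against an outgoing flux, and symmetrically at $-1$; the global Kruzhkov inequality on $\mathbb{R}$ then follows. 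Also, the $\C0([0,\infty);\L1)$-continuity and the constraint $\xi(t)\in\mathfrak{C}$ transfer directly.

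The main obstacle I expect is the careful bookkeeping of the boundary trace terms at $x=\pm 1$ and verifying that the BLN condition \eqref{e:entro2} is exactly equivalent to the nonnegativity of the corresponding boundary defect against the zero exterior state — in other words, matching the interior formulation (Definition~\ref{d:entro}) with the whole-line Kruzhkov formulation (Definition~\ref{d:entro-bis}) at the level of traces. This requires invoking the existence of strong one-sided traces (guaranteed by strict concavity of $f$ via \cite{Panov_traces2,MR1869441}) and then a short but delicate case analysis on the sign of $\kappa$ relative to the traces; the interior part of the argument and the passage through the turning curve are comparatively routine since the coupling term $2\int f(\kappa)\varphi(t,\xi(t))\,{\rm d}t$ is identical in both definitions and $\xi$ stays away from $\partial\mathfrak{C}$. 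I would also note, as a preliminary, that one must check the turning curve determined by \eqref{e:cost0} is unaffected by the extension, which is immediate since the integrals in \eqref{e:cost0} are over $(-1,1)$ only and $c(\rho)$ there is unchanged.
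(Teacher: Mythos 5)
Your proof of the extension direction (\eqref{e:IBVP} $\Rightarrow$ \eqref{e:IVP}) does not work: you extend $\rho$ by \emph{zero} on $\mathbb{R}\setminus\mathfrak{C}$, but the zero extension is not even a weak solution of \eqref{eq:reformul} on $\mathbb{R}$. Indeed, taking $\kappa=0$ and $\kappa=\rho_{\max}$ in \eqref{e:entro-bis} yields the conservative weak formulation on the whole line, hence conservation of $\int_{\mathbb{R}}\rho(t,x)\,{\rm{d}} x$; the zero extension has decreasing total mass as soon as evacuation starts. Equivalently, at the stationary line $x=1$ the Kruzhkov interface inequality against the exterior state $0$ requires $\sign(0-\kappa)\,(f(0)-f(\kappa))-\sign(\rho(t,1^-)-\kappa)\,(f(\rho(t,1^-))-f(\kappa))\leq 0$ for all $\kappa$, and letting $\kappa\uparrow\rho(t,1^-)$ this forces $f(\rho(t,1^-))\leq 0$, i.e.\ zero outgoing flux. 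The BLN condition $\rho(t,1^-)\in[0,\rho_c]$ does \emph{not} give this, so your claimed "nonnegative boundary defect" fails whenever mass is actually exiting. The statement only asserts that \emph{some} extension solves \eqref{e:IVP}: the paper constructs it by solving auxiliary Cauchy--Dirichlet problems in the exterior quarter-planes $(0,\infty)\times(\pm1,\pm\infty)$ with boundary data given by the interior traces $\rho(t,\pm1^\mp)$ and zero initial data; the BLN condition and the maximum principle then force the exterior traces to coincide with the interior ones, so the pieces glue into a global entropy solution with no interface defect at $x=\pm1$.

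In the restriction direction your verification of \eqref{e:entro2} also rests on a false premise: an entropy solution of \eqref{e:IVP} does \emph{not} "vanish immediately to the right of $x=1$" for $t>0$ --- the evacuated mass occupies $(1,\infty)$, and the trace $\rho(t,1^+)$ is generically nonzero (in fact equal to $\rho(t,1^-)$). What is true is that the \emph{initial datum} vanishes on $[1,\infty)$; the correct argument (used in the paper) follows the minimal backward generalized characteristic from $(t,1)$: if $f'(\rho(t,1^-))\leq 0$ it would terminate on the initial line in $\{x\geq 1\}$ where $\overline{\rho}=0$, forcing $\rho(t,1^-)=0$ and hence $f'(\rho(t,1^-))=v_{\max}>0$, a contradiction; so $\rho(t,1^-)\in[0,\rho_c]$ in all cases. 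Your preliminary observations (the turning-curve relation is insensitive to the extension, interior entropy inequalities restrict by localizing test functions) are correct, but both trace arguments need to be replaced as above.
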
	
\begin{proof}
Let us first prove that the component $\rho$ of the solution to Problem~\eqref{e:IBVP}  (defined for $x \in \mathfrak{C}:=(-1,1)$, verifying the entropy inequality \eqref{e:entro}) can be extended to $x \in \mathbb{R}$ in a way compatible with the entropy inequality \eqref{e:entro-bis}. 
Observe that, due to the shape assumption on $f:\rho\mapsto \rho \, v(\rho)$ contained in \eqref{V1}, the BLN interpretation \cite{MR542510} of the boundary condition $\rho(t,1^-)=0$ reads: 
\begin{equation*}
\text{for a.e.\ $t>0$,\;\; $\rho(t,1^-) \in [0, \rho_{\rm c}]$}.
\end{equation*}
We recall that the trace $\rho(t,1^-)$ exists in the strong $\Lloc1$ sense, see \cite{Panov_traces2}.	
Denote $\Omega_0:=(0,\infty)\times\mathfrak{C}$ the domain of definition of $\rho$ and consider, in the quarter-plane $\Omega_+:=(0,\infty)\times(1,\infty)$, the Cauchy-Dirichlet problem with the boundary datum at $x=1^+$ given by the trace $\rho(t,1^-)$ and with zero initial condition. 
The existence of a unique entropy solution  to this problem is standard (see, e.g., \cite{AndrSbihi}); let us denote it by $\rho_+$. Moreover, the maximum principle implies that, for a.e. $t>0$, the trace $\rho_+(t,1^+)$ belongs to $[0, \rho_{\rm c}]$ because both the initial and the boundary data take their values in $[0, \rho_{\rm c}]$. 
Using again the BLN interpretation of the boundary condition (or using the machinery of \cite{AndrSbihi}), we find that one must have $\rho_+(t,1^+)= \rho(t,1^-)$ for a.e.\ $t>0$. 
Further, observe that the two entropy solutions $\rho$ and $\rho_+$ defined in the adjacent subdomains $\Omega_0$ and $\Omega_+$ can be pieced together continuously across the boundary $\Gamma_+ :=(0,\infty)\times\{1\}$ shared by the two subdomains: the resulting function is an entropy solution in $\Omega_0 \cup \Gamma_+ \cup \Omega_+ = (0,\infty)\times(-1,\infty)$. 
In the same way, we extend $\rho$ to $x<-1$ by setting it equal to the entropy solution $\rho_-$ of the analogous Cauchy-Dirichlet problem set in the quarter-plane $\Omega_-:=(0,\infty)\times(-\infty,-1)$, with the boundary datum given by the trace $\rho(t,-1^+)$ and zero initial condition. The resulting extension of $\rho$ fulfills \eqref{e:entro-bis} and thus it solves Problem~\eqref{e:IVP}, because \eqref{e:cost0} is in common in Problems~\eqref{e:IBVP} and~\eqref{e:IVP}.

Reciprocally, consider a solution to Problem~\eqref{e:IVP} corresponding to data which are zero on $\mathbb{R}\setminus \mathfrak{C}$. Note that \eqref{e:entro} is immediate by restricting the support of the test function. It remains to justify \eqref{e:entro2}.
Note that $\xi$ takes values in $\mathfrak{C}$ and that, on each side from $\xi$, $\rho$ solves a scalar conservation law with strictly convex or strictly concave flux, so that the theory of generalized characteristics \cite{Dafermos} applies. 
Considering minimal backward characteristics issued from a point $(t,1)$, $t>0$, we see that either $\rho(t,1^-)=0$ or $f'( \rho(t,1^-))>0$, so that  $\rho(t,1^-)\in [0, \rho_c]$ in all cases. 
Similarly, $\rho(t,-1^+)\in [0, \rho_c]$. 
Consequently, $\rho$ restricted to $x\in\mathfrak{C}$ fulfills \eqref{e:entro2}.
\qed
\end{proof}

\section{Stability for well-separated $\mathbf{BV}$-regular solutions}\label{s:ContDep}

In this section, we prove Theorem~\ref{t:WS-stability}.
The claim of the theorem follows by a straightforward combination of two distinct ingredients provided in Propositions~\ref{prop:rho-of-xi} and~\ref{prop:xidot}; we state both, before turning to their proofs.

\begin{proposition}\label{prop:rho-of-xi}
For $i\in\{1,2\}$, let $( \rho_{i},\xi_{i})$ be a well-separated solution of the initial-boundary value problem \eqref{e:model} corresponding to the initial datum $\overline{\rho}_{i}$ and the cost parameter $\alpha_{i}$. 
Assume, moreover, that $\rho_1$ is $\mathbf{BV}$-regular.
Then, for a.e.\ $t>0$ there holds
\begin{equation}
\begin{aligned}
&\left\| \rho_{1}(t,\cdot\,)- \rho_{2}(t,\cdot\,)\right\|_{\L1(\mathfrak{C})} 
\\\leq\ &\left\|\overline{\rho}_{1}-\overline{\rho}_{2}\right\|_{\L1(\mathfrak{C})}
+2 C_1(t) \left(|\xi_{1}(0)-\xi_{2}(0)|+ \int_0^t |\dot{\xi}_{1}(s)-\dot{\xi}_{2}(s)| \, {\rm{d}} s \right),
\end{aligned}
\label{eq:rho-estim}
\end{equation}
where $C_1(t) := \sup_{s \in[0,t]} \left\{\TV\left( \rho_{1}(s,\cdot\,)\right)\right\}$.
\end{proposition}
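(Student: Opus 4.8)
The plan is to compare $\rho_1$ and $\rho_2$ after \emph{synchronising} their turning curves by a time-dependent spatial translation: this reduces \eqref{eq:rho-estim} to a quantitative $\L1$-stability estimate between two entropy solutions of conservation laws that share the \emph{same} discontinuous flux and differ only by a spatially homogeneous transport term, and it makes transparent why the $\mathbf{BV}$-regularity of one of the two solutions is the decisive ingredient. As a preliminary step I reduce to the whole line: by Proposition~\ref{p:refo} I regard each $\rho_i$ as an entropy solution on $(0,\infty)\times\mathbb{R}$, in the sense of Definition~\ref{d:entro-bis}, of $\partial_t\rho_i+\partial_x(\sign(x-\xi_i(t))\,f(\rho_i))=0$ with initial datum $\overline{\rho}_i$ extended by zero off $\mathfrak{C}$; by finite speed of propagation $\rho_i(t,\cdot)$ has bounded support for every $t$, hence belongs to $\L1(\mathbb{R})$, and since $\|\cdot\|_{\L1(\mathfrak{C})}\le\|\cdot\|_{\L1(\mathbb{R})}$ it suffices to prove the estimate on $\mathbb{R}$.

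Next comes the synchronisation. Set $\eta(t):=\xi_2(t)-\xi_1(t)$, which is Lipschitz with $\dot\eta=\dot{\xi}_2-\dot{\xi}_1$ a.e., and define $\widetilde{\rho}_1(t,x):=\rho_1(t,x-\eta(t))$. Since $x\mapsto x-\eta(t)$ is a bi-Lipschitz change of variables leaving the Kruzhkov entropies $|{\cdot}-\kappa|$ unchanged, a direct computation shows that $\widetilde{\rho}_1$ is an entropy solution, again in the sense of Definition~\ref{d:entro-bis}, of $\partial_t\widetilde{\rho}_1+\partial_x(\sign(x-\xi_2(t))\,f(\widetilde{\rho}_1)+\dot\eta(t)\,\widetilde{\rho}_1)=0$, now with turning curve $\xi_2$; it is still $\mathbf{BV}$-regular, with $\TV(\widetilde{\rho}_1(s,\cdot))=\TV(\rho_1(s,\cdot))$ for every $s$, and it is well-separated with respect to $\xi_2$, because its one-sided traces satisfy $\widetilde{\rho}_1(t,\xi_2(t)^\pm)=\rho_1(t,\xi_1(t)^\pm)=0$.

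Now $\widetilde{\rho}_1$ and $\rho_2$ are entropy solutions with the \emph{same} turning curve $\xi_2$, and their fluxes, $\sign(x-\xi_2(t))f(w)+\dot\eta(t)\,w$ and $\sign(x-\xi_2(t))f(w)$, have identical discontinuous part and differ only by the spatially homogeneous perturbation $\dot\eta(t)\,w$, of $w$-Lipschitz constant $|\dot\eta(t)|$. I run the Kruzhkov doubling of variables, taking in each of the two entropy inequalities the constant $\kappa$ equal to the (a.e.\ defined) value of the other solution, and passing to the limit with test functions that localise the doubling variables on the diagonal while approximating the indicator of the time strip $(0,t)$. Three points make this work: the space-derivative terms coming from the common discontinuous part of the flux cancel in the usual way; the two interface contributions $2\int f(\kappa)\,\varphi(\cdot,\xi_2(\cdot))$ produced at $x=\xi_2(t)$ involve $f$ evaluated at the traces of $\widetilde{\rho}_1$ and of $\rho_2$ along $\xi_2$, which vanish by well-separation, so that — using $f(0)=0$ and the exact compensation of these terms by the jump of the entropy flux $\Phi$ across $\xi_2$ — the situation near the turning curve reduces to the standard Kruzhkov one (this is precisely where the well-separation of \emph{both} solutions and the shape of the entropy condition of Definition~\ref{d:entro-bis} are used); and the only leftover, the contribution of the perturbation $\dot\eta(t)\,\widetilde{\rho}_1$, is controlled after an integration by parts by $\int_0^t|\dot\eta(s)|\,\TV(\widetilde{\rho}_1(s,\cdot))\, {\rm{d}} s$, using the $\mathbf{BV}$ bound on $\widetilde{\rho}_1$. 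This is the classical flux-perturbation estimate, in the spirit of \cite{BouchutPerthame,KarlsenRisebro,Mercier} and adapted here to a common discontinuous part and a merely $\L{\infty}$-in-time perturbation; it yields, for a.e.\ $t>0$, the inequality $\|\widetilde{\rho}_1(t,\cdot)-\rho_2(t,\cdot)\|_{\L1(\mathbb{R})}\le\|\widetilde{\rho}_1(0,\cdot)-\rho_2(0,\cdot)\|_{\L1(\mathbb{R})}+\int_0^t|\dot{\xi}_1(s)-\dot{\xi}_2(s)|\,\TV(\rho_1(s,\cdot))\, {\rm{d}} s$.

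It remains to undo the shift. Translation invariance of $\L1(\mathbb{R})$ and the elementary bound $\|w(\cdot-a)-w\|_{\L1(\mathbb{R})}\le|a|\,\TV(w)$ give $\|\widetilde{\rho}_1(0,\cdot)-\rho_2(0,\cdot)\|_{\L1(\mathbb{R})}\le\|\overline{\rho}_1-\overline{\rho}_2\|_{\L1(\mathfrak{C})}+\TV(\overline{\rho}_1)\,|\xi_1(0)-\xi_2(0)|$ and $\|\rho_1(t,\cdot)-\widetilde{\rho}_1(t,\cdot)\|_{\L1(\mathbb{R})}\le\TV(\rho_1(t,\cdot))\,|\eta(t)|$; combining these with the triangle inequality, with $|\eta(t)|\le|\xi_1(0)-\xi_2(0)|+\int_0^t|\dot{\xi}_1-\dot{\xi}_2|\, {\rm{d}} s$, with the uniform bound $\TV(\rho_1(s,\cdot))\le C_1(t)$ for $s\in[0,t]$, and finally restricting to $\mathfrak{C}$, yields exactly \eqref{eq:rho-estim}. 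I expect the main obstacle to be the rigorous execution of the middle step: an $\L1$-comparison with explicit error for two entropy solutions of a conservation law whose flux is discontinuous in space and only Lipschitz in time. The delicate points there are to verify that the interface terms at $\xi_2$ genuinely disappear — which is exactly where the well-separation of both solutions enters — and to carry out the doubling argument when the perturbation speed $\dot\eta$ and the coefficient $\dot{\xi}_2$ are only $\L{\infty}$ in time. Throughout, the $\mathbf{BV}$-regularity of $\rho_1$ is indispensable: it is what allows the perturbation to be integrated against $\TV(\widetilde{\rho}_1(s,\cdot))$ in the middle step, and what controls the two translation errors; without it, no quantitative comparison of solutions with $\xi_1\not\equiv\xi_2$ appears to be available.
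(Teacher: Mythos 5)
Your proposal follows essentially the same route as the paper's proof: translate $\rho_1$ by $\xi_2-\xi_1$ to synchronise the turning curves, apply the quantitative continuous-dependence-on-the-flux estimate (doubling of variables, requiring the $\mathbf{BV}$ bound on the translated solution), use the well-separation (zero traces on $x=\xi_2(t)$) to neutralise the interface contributions, and then undo the shift via $\|w(\cdot-a)-w\|_{\L1}\leq |a|\,\TV(w)$ and the triangle inequality. The only difference is presentational — the paper first proves the estimate with test functions supported away from the turning curve and then removes this restriction by a truncation/trace argument, whereas you propose absorbing the interface terms directly in the doubling — and this does not change the substance of the argument.
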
	

\begin{proposition}\label{prop:xidot}
For $i\in\{1,2\}$, let $( \rho_{i},\xi_{i})$ be a well-separated solution of the initial-boundary value problem \eqref{e:model} corresponding to initial datum $\overline{\rho}_{i}$ and the cost parameter $\alpha_{i}$. 
Then there exist $\tau>0$, $K>0$ that depend only on $\max\{\alpha_{1},\alpha_{2}\}$, such that
\begin{equation}\label{eq:integral-dotxi-estim}
\int_0^\tau|\dot{\xi}_{1}(s)-\dot{\xi}_{2}(s)|\, {\rm{d}} s \leq K\Bigl(|\alpha_{1}-\alpha_{2}|+\left\|\overline{\rho}_{1}-\overline{\rho}_{2}\right\|_{\L1(\mathfrak{C})}\Bigr);
\end{equation}
moreover, we have
\begin{equation}\label{eq:xi(0)-estim}
|\xi_{1}(0)-\xi_{2}(0)| \leq K\Bigl(|\alpha_{1}-\alpha_{2}|
+\left\|\overline{\rho}_{1}-\overline{\rho}_{2}\right\|_{\L1(\mathfrak{C})}\Bigr).
\end{equation}
\end{proposition}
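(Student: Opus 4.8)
\textbf{Proof proposal for Proposition~\ref{prop:xidot}.}

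The plan is to work directly with the implicit relation \eqref{e:cost0} defining the turning curve, specialized to the linear cost \eqref{e:cost}. Writing $c_i(\rho) = 1 + \alpha_i \rho$ and using that the total mass is conserved away from the exits (or at least controlled on a short time interval), the defining identity
$\int_{-1}^{\xi_i(t)} c_i(\rho_i(t,y))\,{\rm d}y = \int_{\xi_i(t)}^{1} c_i(\rho_i(t,y))\,{\rm d}y$
can be rewritten, after moving everything to one side, as
\[
G_i(t,\xi_i(t)) := 2\xi_i(t) + \alpha_i\!\left(\int_{-1}^{\xi_i(t)}\rho_i(t,y)\,{\rm d}y - \int_{\xi_i(t)}^{1}\rho_i(t,y)\,{\rm d}y\right) = 0.
\]
First I would establish \eqref{eq:xi(0)-estim} at $t=0$: here $\rho_i(0,\cdot) = \overline\rho_i$, and subtracting the two identities $G_1(0,\xi_1(0)) = 0 = G_2(0,\xi_2(0))$ and using that $x \mapsto G_1(0,x)$ is strictly increasing with derivative bounded below by $2$ (since $\partial_x G_1(0,x) = 2 + 2\alpha_1\overline\rho_1(x) \geq 2$), I get $|\xi_1(0)-\xi_2(0)| \leq \tfrac12|G_1(0,\xi_2(0)) - G_2(0,\xi_2(0))|$, and the right-hand side is bounded by a constant times $|\alpha_1-\alpha_2| + \|\overline\rho_1-\overline\rho_2\|_{\L1(\mathfrak C)}$ by adding and subtracting $\alpha_1$ versus $\alpha_2$ and using $\|\overline\rho_i\|_{\L\infty}\leq\rho_{\max}$, $\|\overline\rho_i\|_{\L1}\leq 2\rho_{\max}$. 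This part is routine.

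For \eqref{eq:integral-dotxi-estim} the idea is to differentiate the relation $G_i(t,\xi_i(t)) = 0$ in time and obtain an explicit expression for $\dot\xi_i$. Differentiating, and using that $\rho_i$ satisfies the conservation law \eqref{eq:reformul} on each side of $\xi_i$ (so $\partial_t \int_{-1}^{\xi_i(t)}\rho_i = \dot\xi_i\,\rho_i(t,\xi_i(t)^-) - f(\rho_i(t,\xi_i(t)^-)) + f(\rho_i(t,-1^+))$, and symmetrically on the right, with the boundary flux terms vanishing because of the open-end/BLN structure combined with the zero initial data on a short interval — this is where I would invoke Proposition~\ref{p:refo} and the finite speed of propagation to get zero boundary traces for $t \leq \tau$ with $\tau$ depending only on $\max\{\alpha_i\}$ through the bound on $v$), I arrive at
\[
\dot\xi_i(t)\,\bigl(2 + \alpha_i\rho_i(t,\xi_i(t)^-) + \alpha_i\rho_i(t,\xi_i(t)^+)\bigr) = \alpha_i\bigl(f(\rho_i(t,\xi_i(t)^-)) + f(\rho_i(t,\xi_i(t)^+))\bigr) = 0,
\]
where the last equality uses the \emph{well-separation} hypothesis $\rho_i(t,\xi_i(t)^\pm) = 0$, which forces $f(\rho_i(t,\xi_i(t)^\pm)) = f(0) = 0$. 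Hence $\dot\xi_i \equiv 0$ on $[0,\tau]$ for both $i$, the left-hand side of \eqref{eq:integral-dotxi-estim} is zero, and the inequality holds trivially for any $K$. In fact, on $[0,\tau]$ well-separation makes the turning curve stationary, $\xi_i(t) = \xi_i(0)$, and \eqref{eq:integral-dotxi-estim} reduces to \eqref{eq:xi(0)-estim} multiplied by zero, so the whole proposition collapses to the $t=0$ estimate already established.

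The main obstacle, and the step requiring genuine care, is the rigorous justification of the time-differentiation of $\int_{-1}^{\xi_i(t)}\rho_i(t,y)\,{\rm d}y$ and the identification of the boundary-flux contributions: one must know that for $t \in [0,\tau]$ the traces $\rho_i(t,\pm1^\mp)$ vanish (so no mass enters or leaves through the exits yet), which follows from finite speed of propagation since $\overline\rho_i$ is supported in $\mathfrak C$ and $\tau$ can be chosen as, say, half the distance the fastest wave travels, a quantity controlled by $\|f'\|_{\L\infty} = \sup|v + \rho v'|$, hence depending only on $v$ (and trivially on $\max\{\alpha_i\}$). One must also ensure the map $t \mapsto \int_{-1}^{\xi_i(t)}\rho_i(t,y)\,{\rm d}y$ is absolutely continuous, which follows from $\rho_i \in \C0([0,\infty);\L1)$ together with $\xi_i$ Lipschitz; and the algebraic manipulation leading to $\dot\xi_i = 0$ must be carried out at a.e.\ $t$ where the strong traces exist and where $\dot\xi_i(t)$ exists, both being full-measure conditions. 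Once these measure-theoretic points are settled, the conclusion is immediate from well-separation.
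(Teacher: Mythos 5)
Your treatment of \eqref{eq:xi(0)-estim} is essentially the paper's: subtract the two defining identities at $t=0$, use monotonicity in $x$ of the cost integral (slope at least $2$), and absorb the $\alpha$-difference using $\left\|\overline{\rho}_2\right\|_{\L1}\leq 2\rho_{\max}$. That part is fine. Your derivation of the formula for $\dot{\xi}_i$ by differentiating the relation $G_i(t,\xi_i(t))=0$ and using the conservation law on each side of the turning curve, with the turning-curve trace terms killed by well-separation, is also exactly the right structure (it is the paper's identity \eqref{eq:dotxi}), modulo the rigorous justification you defer.

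The genuine gap is the claim that the boundary flux terms $f(\rho_i(t,\pm1^\mp))$ vanish for $t\leq\tau$ by ``finite speed of propagation from zero data outside $\mathfrak{C}$''. Nothing in \eqref{I} keeps $\supp\overline{\rho}_i$ away from the exits: the datum may be positive up to $x=\pm1$ (e.g.\ $\overline{\rho}\equiv0.9$ on $\mathfrak{C}$), and then mass exits immediately, the BLN traces at $x=\pm1$ are nonzero from $t=0^+$, and the correct formula is $\dot{\xi}_i(t)=\frac{\alpha_i}{2}\bigl(f(\rho_i(t,-1^+))-f(\rho_i(t,1^-))\bigr)$, which is generically nonzero. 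Well-separation only says the traces \emph{at the turning curve} vanish, not at the exits, so your conclusion $\dot{\xi}_i\equiv0$ on $[0,\tau]$ is false and the proposition does not ``collapse to the $t=0$ estimate''. (Even in the special case where the supports are separated from $\pm1$, your $\tau$ would depend on that separation, i.e.\ on the data, not only on $\max\{\alpha_1,\alpha_2\}$ as required.) The actual content of \eqref{eq:integral-dotxi-estim} is to bound $\int_0^\tau\bigl|f(\rho_1(t,\pm1^\mp))-f(\rho_2(t,\pm1^\mp))\bigr|\,{\rm d}t$ by $\left\|\overline{\rho}_1-\overline{\rho}_2\right\|_{\L1(\mathfrak{C})}$; this is done by a Kato/Kruzhkov triangle argument in the corner regions $\mathcal{T}^*$ adjacent to the exits (where both solutions solve the same homogeneous LWR equation, with $\tau$ determined by $\Lambda\geq\|f'\|_\infty$ and by the uniform interior bound on $\xi_i(0)$ coming from \eqref{e:cost0}), followed by the observation that the BLN boundary condition turns $\sign(\rho_1-\rho_2)\bigl(f(\rho_1)-f(\rho_2)\bigr)$ into $\bigl|f(\rho_1)-f(\rho_2)\bigr|$ at $x=\pm1$. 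This boundary-contraction step is entirely missing from your proposal and cannot be bypassed.
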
	

\delaynewpage{3}
\begin{proof}[of Theorem~\ref{t:WS-stability}]
The result for $t \in [0,\tau]$ follows by simply plugging the bounds \eqref{eq:integral-dotxi-estim}, \eqref{eq:xi(0)-estim} of Proposition~\ref{prop:xidot} into estimate \eqref{eq:rho-estim} of Proposition~\ref{prop:rho-of-xi}. 
Recall that $\tau$ depends only on $\max\{\alpha_{1},\alpha_{2}\}$. 
Then we bootstrap the argument. 
Indeed, from the time continuity result of \cite{CancesGallouet} for local entropy solutions of scalar conservation laws, it is easily derived that $\rho_{1}$, $\rho_{2}$ are continuous in time with values in $\L1(\mathfrak{C})$ (cf.\ \cite{Towers-BV,Sylla} for details, in a similar situation). 
This permits to use the stop-and-restart procedure, taking $t=k\,\tau$ for initial time, $k \in\mathbb{N}$. 
Observe that due to the BLN interpretation of the boundary condition, for $t>k \, \tau$ we have that $( \rho_{1}|_{\mathfrak{C}}(t,\cdot),\xi_{1})$ and $( \rho_{2}|_{\mathfrak{C}}(t,\cdot),\xi_{2})$ depend solely on $\rho_{1}|_{\mathfrak{C}}(k \, \tau,\cdot)$ and $\rho_{2}|_{\mathfrak{C}}(k \, \tau,\cdot)$. Hence we can recursively plug the estimate of $\left\| \rho_{1}(k \, \tau,\cdot\,) - \rho_{2}(k \, \tau,\cdot\,) \right\|_{\L1(\mathfrak{C})}$ into the estimate 
\begin{align*}
\left\| \rho_{1}(t,\cdot\,)- \rho_{2}(t,\cdot\,)\right\|_{\L1(\mathfrak{C})} \leq K\left(|\alpha_{1}-\alpha_{2}| +\left\| \rho_{1}(k \, \tau,\cdot\,)- \rho_{2}(k \, \tau,\cdot\,)\right\|_{\L1(\mathfrak{C})}\right)
\\
\forall t \in[k \, \tau,(k+1) \, \tau]&.
\end{align*}
Thus we extend the control of $\left\| \rho_{1}(t,\cdot\,)- \rho_{2}(t,\cdot\,)\right\|_{\L1(\mathfrak{C})}$ of the form \eqref{eq:stability} to any time $t>0$, with the constant $C$ that grows with $t$ like $K^{m+1}$, being $m:=\hbox{Ent}(t/\tau)$.\qed
\end{proof}	 

\begin{proof}[of Proposition~\ref{prop:rho-of-xi}]
The proof is analogous to the arguments that can be found in \cite{AndrLagoTakaSeguin14,DelleMonacheGoatin17,Sylla}, regarding continuous dependence of solutions of discontinuous-flux conservation laws with respect to a moving interface. 
For application of the argument to the Hughes model at hand, we rely upon the reformulation ``without boundary'': by Proposition~\ref{p:refo}, we can consider that $\rho_{1}$, $\rho_{2}$ are defined for all $x \in\mathbb{R}$.

In the subdomains $\{(t,x) : \pm (x-\xi_{1}(t))>0\}$ the function $\rho_{1}$ solves the conservation laws $\partial_t \rho_{1}+\partial_x(\pm f( \rho_{1}))=0$ in the standard Kruzhkov entropy sense. 
It is readily checked, by a change of variable in the entropy formulation (written with Lipschitz continuous test functions), that in the subdomains $\Theta_\pm:=\{(t,x) : \pm(x-\xi_{2}(t))>0\}$ the translated function $\rho_{3}(t,x):= \rho_{1}(t,x-\xi_{2}(t)+\xi_{1}(t))$ solves the scalar conservation law with translated flux
\begin{equation}\label{eq:auxil-equ}
\partial_t \rho_{3} + \partial_x\left( \pm f( \rho_{3}) + (\dot{\xi}_{2}(t) - \dot{\xi}_{1}(t)) \rho_{3} \right) = 0
\end{equation}
in the standard Kruzhkov entropy sense. 
Recall that $\rho_{2}$ solves the analogous problem with the fluxes $\pm f( \rho_{2})$ in the same subdomains $\Theta_\pm$. 
We then use the standard estimate of continuous dependence on the flux within the doubling of variables argument \cite{BouchutPerthame,KarlsenRisebro,Mercier}: for all smooth non-negative $\varphi$ compactly supported in $\Theta_- \cup \Theta_+$, we have
\begin{align}\nonumber
&- \int_0^{\infty} \int_\mathbb{R} \Bigl(
\begin{aligned}[t]
\varphi_t \left| \rho_{3}(t,x)- \rho_{2} (t,x)\right|
+ \varphi_x \, \sign\bigl(x-\xi_{2}(t)\bigr) \, \sign\bigl( \rho_{3}(t,x)- \rho_{2}(t,x)\bigr)\\
\times\bigl(f\bigl( \rho_{3}(t,x)\bigr)- f\bigl( \rho_{2}(t,x)\bigr)\bigr)\Bigr) {\rm{d}} x\, {\rm{d}} t
\end{aligned}\\
\leq\ & \int_\mathbb{R} \varphi(0,x) \left|\overline{\rho}_{3}(x)-\overline{\rho}_{2}(x)\right|\, {\rm{d}}x+ \int_0^{\infty} \left\|\varphi(t,\cdot\,)\right\|_\infty \, |\dot{\xi}_{2}(t)-\dot{\xi}_{1}(t)| \, \TV\bigl(\rho_{3}(t,\cdot\,)\bigr)\, {\rm{d}} t,
\label{eq:ContDepLocal}
\end{align} 	
where $\overline{\rho}_{3}(x):=\overline{\rho}_{1}(x-\xi_{2}(0)+\xi_{1}(0))$ is the initial condition for the solution $\rho_{3}$ of \eqref{eq:auxil-equ}.

Now, note that for well-separated solutions, it is straightforward to drop the assumption that $\varphi$ is zero in a neighborhood of the curve $\{(t,x) : x=\xi_{2}(t)\}$. 
Indeed, take general $\varphi \in \Cc{ \infty}([0, \infty) \times \mathbb{R})$ and consider truncated test functions $\varphi_m(t,x) := \varphi(t,x) \, \eta(m \, (x-\xi_{2}^{m}(t))) \in \Cc{ \infty}(\Theta_- \cup \Theta_+) $. 
Here $\xi_{2}^{m}$ is a smooth approximation of $\xi_{2}$ such that $\dot{\xi}_{2}$ is uniformly bounded and $\left\|\xi_{2}-\xi_{2}^{m}\right\|_\infty \leq 1/m$ (such approximation is constructed by convolution); $\eta \in\mathbf{C}^ \infty(\mathbb{R})$ is even, $\eta'(z) \geq 0$ for $z>0$, $\eta(z)$ equals $1$ for $z\geq 1$, and $\eta\equiv 0$ in a neighborhood of $z=0$. 
Upon substituting $\varphi_m$ into \eqref{eq:ContDepLocal}, the integrals of the terms
\begin{gather*}
m \, \eta'\bigl(m (x-\xi_{2}^{m}(t))\bigr) \, \dot{\xi}_{2}^{m}(t) \left| \rho_{3}- \rho_{2}\right| \varphi,\\
m \, \eta'\bigl(m (x-\xi_{2}^{m}(t))\bigr) \sign\bigl( \rho_{3}(t,x)- \rho_{2}(t,x)\bigr) \, \bigl(f\bigl( \rho_{3}(t,x)\bigr) - f\bigl( \rho_{2}(t,x)\bigr)\bigr) \, \varphi
\end{gather*}
vanish as $m\to \infty$ due to the assumption that $\rho_{1}$, $\rho_{2}$ are well separated, since it means that the traces of $\rho_{3}$, $\rho_{2}$ on the curve $x=\xi_{2}(t)$ are zero. Indeed, one can perform the change of variables $y := m\,(x-\xi_{2}(t))$ in the integrals of these terms. 
The dominated convergence can be applied since $\rho_{2,3}(t,\xi_{2}(t)+\frac{y}{m})\to 0$ pointwise, as $m\to \infty$, while the support of $y\mapsto \eta'\bigl(y+m\,(\xi_{2}(t)-\xi_{2}^{m}(t))\bigr)$ lies within the fixed interval $[-2,2]$, by the choice of $\eta$ and of $\xi_{2}^{m}$.

\delaynewpage{2}
Then, as in the standard Kruzhkov $\L1$-contraction argument, we can let $\varphi$ converge to the indicator function of $[0,t) \times \mathbb{R}$ and infer
\begin{equation}\label{eq:ContDepGlobal}
\int_\mathbb{R} | \rho_{3}(t,x)- \rho_{2} (t,x)|\, {\rm{d}} x 
\leq \int_\mathbb{R} |\overline{\rho}_{3}(x)-\overline{\rho}_{2}(x)|\, {\rm{d}} x+ C_{1}(t) \int_0^{t} |\dot{\xi}_{1}(s)-\dot{\xi}_{2}(s)|\, {\rm{d}} s,
\end{equation} 
where $C_1(t) := \sup_{s \in[0,t]} \left\{\TV\left( \rho_{1}(s,\cdot\,)\right)\right\}$ and
we also used the fact that, by construction, $\TV( \rho_{3}(s,\cdot\,))=\TV( \rho_{1}(s,\cdot\,))$.
From the definition of $\rho_{3}$, we also infer
\begin{align*}
\int_\mathbb{R} |\overline{\rho}_{1}(x)-\overline{\rho}_{3}(x)|\, {\rm{d}} x 
& \leq C_{1}(0) \, |\xi_{1}(0)-\xi_{2}(0)|
\leq C_{1}(t) \, |\xi_{1}(0)-\xi_{2}(0)| ,\\
\int_\mathbb{R} | \rho_{1}(t,x)- \rho_{3} (t,x)|\, {\rm{d}} x 
& \leq C_{1}(t) \, |\xi_{1}(t)-\xi_{2}(t)| \\
& \leq C_{1}(t) \left(|\xi_{1}(0)-\xi_{2}(0)|+ \int_0^t |\dot{\xi}_{1}(s)-\dot{\xi}_{2}(s)|\, {\rm{d}} s\right).
\end{align*}
Assembling these bounds with \eqref{eq:ContDepGlobal} via the triangle inequality, we infer the claim of the proposition.\qed
\end{proof}

\begin{proof}[of Proposition~\ref{prop:xidot}]
We start by assessing \eqref{eq:xi(0)-estim}. 
It follows from \eqref{e:cost0} that 
\[\int_{-1}^{\xi_{2}(0)} c_2(\overline{\rho}_{2}(x))\, {\rm{d}} x - \int_{-1}^{\xi_{1}(0)} c_{1}(\overline{\rho}_{1}(x))\, {\rm{d}} x 
= \int^1_{\xi_{2}(0)} c_2(\overline{\rho}_{2}(x))\, {\rm{d}} x - \int^1_{\xi_{1}(0)} c_{1}(\overline{\rho}_{1}(x))\, {\rm{d}} x,\]
which can be rewritten as	
\[2 \int_{\xi_{1}(0)}^{\xi_{2}(0)} c_2(\overline{\rho}_{2}(x))\, {\rm{d}} x 
= \int^1_{-1} \sign(x-\xi_{1}(0)) \bigl(c_2\left(\overline{\rho}_{2}(x)\right)-c_{1}\left(\overline{\rho}_{1}(x)\right) \bigr)\, {\rm{d}} x.\]
The choice of $c_i=1+\alpha_i \, \rho$, $i\in\{1,2\}$, now yields
\[2 \, |\xi_{2}(0)-\xi_{1}(0)| \leq \int_{-1}^1 |\alpha_{2} \, \overline{\rho}_{2}(x) - \alpha_{1} \, \overline{\rho}_{1}(x)|\, {\rm{d}} x,\]
which readily leads to \eqref{eq:xi(0)-estim} having in mind that $\left\|\overline{\rho}_{2}\right\|_{\L1(\mathfrak C)} \leq 2 \rho_{\max}$.

\smallskip
Now, let us admit for a while the explicit expression for $\dot{\xi}_{1}$, which can be obtained by a formal calculation. 
Keeping in mind that $c_{1}( \rho_{1}(t,\xi_{1}(t)^\pm))=c_{1}(0)=1$ due to the assumption that $\rho_{1}$ is a well-separated solution, substituting $\partial_x(\pm f( \rho_{1}))$ in the place of $\partial_t \rho_{1}$ for $\pm (x-\xi_{1}(t))>0$, we exhibit the formula
\begin{align}\label{eq:dotxi}
&\dot{\xi}_{1}(t)
= -\frac{\alpha_1}{2} \int_{-1}^1 f( \rho_{1}(t,x))_x \, {\rm{d}} x
= \frac{\alpha_1}{2} \Bigl(f\bigl( \rho_{1}(t,-1^+)\bigr) - f\bigl( \rho_{1}(t,1^-)\bigr)\Bigr)&
&
\end{align}
for a.e.\ $t>0$. The rigorous assessment of \eqref{eq:dotxi} is postponed to the end of the proof.

We now exploit the expression \eqref{eq:dotxi} for $\dot{\xi}_{1}$ (and the analogous expression for $\dot{\xi}_{2}$) in order to reach to \eqref{eq:integral-dotxi-estim}, for appropriately defined $\tau>0$ and $K>0$. Set
\begin{align*}
&\Lambda:=\max\{ \|f'\|_\infty, \|\dot{\xi}_{1}\|_\infty, \|\dot{\xi}_{2}\|_\infty\},\\
&\begin{aligned}
&x_*:=\min\{\xi_{1}(0),\xi_{2}(0)\},&
&x^*:=\max\{\xi_{1}(0),\xi_{2}(0)\},\\
&\tau^*:=\frac{1-x^*}{\Lambda},&
&\tau_*:=\frac{x_*+1}{\Lambda},&
&\tau:=\min\left\{\tau^*,\tau_*\right\}.
\end{aligned}
\end{align*}
Because the cost $c_{1}$ takes values in $[1,\max\{\alpha_{1},\alpha_{2}\} \rho_{\max}]$, it is easily seen from \eqref{e:cost0} that $\xi_{1}(0)$, $\xi_{2}(0)$ belong to $[-1+\delta,1-\delta]$ for some $\delta>0$ that only depends on $\max\{\alpha_{1},\alpha_{2}\}$; moreover, in view of \eqref{eq:dotxi}, $\|\dot{\xi}_{1}\|_\infty$, $\|\dot{\xi}_{2}\|_\infty$ are bounded by a constant times $\max\{\alpha_{1},\alpha_{2}\}$. Therefore $\tau>0$ depends only on $\max\{\alpha_{1},\alpha_{2}\}$.

Now, let $\mathcal{T}^*$ be the interior of the triangle with vertices $(x^*,0)$, $(1,0)$ and $(1,\tau^*)$.
By the definition of $x^*$ and of $\Lambda$, both $\rho_{1}$ and $\rho_{2}$ verify in $\mathcal{T}^*$ the same homogeneous scalar conservation law with flux $f$ that is non-affine on any interval.
In this situation, strong traces of $\rho_{1}$, $\rho_{2}$ as $t\to 0^+$ (the initial trace) and as $x\to 1^-$ (the boundary trace) exist, see \cite{Panov_traces1,Panov_traces2}. 
It follows that, first, the so-called Kato inequality in $\mathcal{T}^*$ is fulfilled:
\begin{align*}
- \iint_{\mathcal{T}^*} \Bigl(| \rho_{1}- \rho_{2}|\varphi_t+ \sign( \rho_{1}- \rho_{2})\bigl(f( \rho_{1})-f( \rho_{2})\bigr)\varphi_x \Bigr)\, {\rm{d}} x \, {\rm{d}} t \leq 0&&\forall \varphi \in \Cc\infty(\mathcal{T}^*).
\end{align*}
Second, it follows that one can proceed as in the classical setting of Kruzhkov \cite{Kruzhkov}, approximating the characteristic function of $\mathcal{T}^*$ by a sequence of $\varphi \in \mathbf{C}^ \infty_0(\mathcal{T}^*)$; note that, like in \cite{Kruzhkov}, we have chosen the slope $\Lambda$ of the oblique part of the boundary of $\mathcal{T}^*$ larger than $\|f'\|_\infty$. We find 
\[\int_{0}^{\tau^*} \sign\bigl( \rho_{1}(t,1^-)- \rho_{2}(t,1^-)\bigr) \left( f( \rho_{1}(t,1^-))-f( \rho_{2}(t,1^-))\right)\, {\rm{d}} t
\leq
\int_{x^*}^1 \left|\overline{\rho}_{1}(x)-\overline{\rho}_{2}(x)\right|\, {\rm{d}} x.\]
Finally, recalling the BLN interpretation \cite{MR542510} of the Dirichlet boundary condition (see also \cite{AndrSbihi}, where the boundary condition is interpreted in terms of monotone subgraphs of the graph of $f$), we point out that
\[\sign\bigl( \rho_{1}(t,1^-)- \rho_{2}(t,1^-)\bigr) \left(f( \rho_{1}(t,1^-))-f( \rho_{2}(t,1^-))\right)
= \left|f( \rho_{1}(t,1^-))-f( \rho_{2}(t,1^-))\right|.\]
To sum up, we find
\[\int_{0}^{\tau^*} \left|f( \rho_{1}(t,1^-))-f( \rho_{2}(t,1^-))\right|\, {\rm{d}} t \leq \left\|\overline{\rho}_{1}-\overline{\rho}_{2}\right\|_{\L1([x^*,1])}.\]
Further, the same inequality holds with $1^-$, $\tau^*$ and $[x^*,1]$ replaced by $-1^+$, $\tau_*$ and $[-1,x_*]$, respectively.

Recalling \eqref{eq:dotxi}, which we also write for $\dot{\xi}_{2}$ with $\alpha_{2}=(\alpha_{2}-\alpha_{1})+\alpha_{1}$, we finally deduce \eqref{eq:integral-dotxi-estim} under the precise form
\[\int_0^\tau|\dot{\xi}_{1}(s)-\dot{\xi}_{2}(s)|\, {\rm{d}} s \leq \tau\left\|f\right\|_\infty|\alpha_{1}-\alpha_{2}|+
\frac{\alpha_{1}}{2} \left\|\overline{\rho}_{1}-\overline{\rho}_{2}\right\|_{\L1(\mathfrak{C})}.\]

To conclude the proof, we now turn to the justification of \eqref{eq:dotxi}. Recall that by definition, $\xi_{1}$ is Lipschitz continuous, therefore its derivative $\dot{\xi}_{1}$ is defined a.e., and it is enough to establish
\begin{align}\label{eq:xidot-weak}
&2 \int_0^{\infty} \theta(t) \, \dot{\xi}_{1}(t)\, {\rm{d}} t 
= \alpha_1 \int_0^{\infty} \theta(t) \bigl(f\bigl( \rho_{1}(t,-1^+)\bigr) - f\bigl( \rho_{1}(t,1^-)\bigr)\bigr) {\rm{d}} t
&\forall \theta \in \Cc{ \infty}((0,\infty)).
\end{align}
As a starting point, let us multiply \eqref{e:cost0} by $\dot\theta$ and integrate in time. This leads to
\begin{align}\nonumber
0&= \int_0^{\infty} \dot\theta(t) \int_{\mathfrak{C}} \sign\left(x-\xi_{1}(t)\right) \, c_{1}\left( \rho_{1}(t,x)\right)\, {\rm{d}} x\, {\rm{d}} t
\\\nonumber
&= \int_0^{\infty} \int_{\mathfrak{C}} \dot\theta(t) \, \sign\left(x-\xi_{1}(t)\right)\, {\rm{d}} x \, {\rm{d}} t
+ \alpha_{1} \int_0^{\infty} \int_{\mathfrak{C}} \dot\theta(t) \, \sign\left(x-\xi_{1}(t)\right) \, \rho_{1}(t,x)\, {\rm{d}} x \, {\rm{d}} t\\\label{eq:I1+Irho}
&=: I_1+\alpha_{1} I_ \rho . 
\end{align}
We then consider
a sequence $\{\xi_{1}^m\}_m$ of $\mathbf{C}^ \infty$ approximations of $\xi_{1}$, which converge uniformly on $[0,\infty)$ while keeping uniformly bounded derivatives $\dot{\xi}_{1}^m$ converging pointwise to $\dot{\xi}_{1}$ (such approximations can be obtained by regularizing $\dot{\xi}_{1}$ by convolution). 
Without loss of generality, we can assume $\left\|\xi_{1}^m-\xi_{1}\right\|_\infty \leq 1/m$. 
Further, we approximate $\sign(\cdot)$ by functions $\eta_m(z):=\eta(mz)$, where $\eta \in\C\infty(\mathbb{R})$ is non-decreasing and $\eta(z)=\sign(z)$ for $|z|\geq 1$.
Note that $\eta_m(x-\xi_{1}^m(t))\to \sign(x-\xi_{1}(t))$ for all $(t,x)$ such that $x\neq \xi_{1}(t)$.

The dominated convergence readily yields $I_1=\lim_{m\to\infty} I_1^m$, where $I_1^m$ is defined by replacing $\sign(x-\xi_{1}(t))$ by $\eta_m(x-\xi_{1}^m(t))$ in the definition of $I_1$. Now we integrate by parts in $t$ in the expression of $I_1^m$. For sufficiently large $m$, using the fact that $\xi_{1}$ takes values in $[-1+\delta,1-\delta]$ with some $\delta>0$, as stated here above, we find
\begin{equation}\label{eq:I1}
I_1=\lim_{m\to \infty} I_1^m
=\lim_{m\to \infty} \int_0^{\infty} \theta(t) \, \dot{\xi}_{1}^m(t) \int_{\mathfrak{C}} \eta_m'(x-\xi_{1}^m(t))\, {\rm{d}} x\, {\rm{d}} t
=2 \int_0^{\infty} \theta(t) \, \dot{\xi}_{1}(t)\, {\rm{d}} t.
\end{equation}
In order to calculate $I_ \rho $, let us point out that the weak formulation contained in \eqref{e:entro} implies
\begin{equation}\label{eq:weak-formul}
\int_0^{ \infty} \int_{\mathfrak{C}} \bigl( \rho_{1}\, \varphi_t + \sign\left(x-\xi_{1}(t)\right) f( \rho_{1})\, \varphi_x \bigr) \, {\rm{d}} x \, {\rm{d}} t=0
\end{equation}
for all $\varphi \in \Cc \infty\left((0,\infty) \times \mathfrak{C}\right)$.
We then consider in \eqref{eq:weak-formul} 
test functions $\varphi_m(t,x)=\theta(t) \, \psi(x) \, \eta_m(x-\xi_{1}^m(t))$ with $\psi \in \Cc{\infty}(\mathfrak{C})$ and $\theta$, $\eta_m$, $\xi_{1}^m$ defined above. 
The choices we made for $\eta_m$ and $\xi_{1}^m$ ensure that the factor $|\eta'(m(x-\xi_{1}^m(t)))|$  has its support included in the set $\{(t,x) : |x-\xi_{1}(t)| \leq \frac 2m\}$; note that this factor is $\L \infty$ bounded uniformly in $m$.
It follows that, as $m\to \infty$, the integrals of
\[\theta(t) \, \psi(x) \, m \, \eta'\bigl(m\left(x-\xi_{1}^m(t)\right)\bigr) \, \dot{\xi}_{1}^m(t)\, \rho_{1}, \quad
\theta(t)\,\psi(x)\,m\,\eta'\bigl(m\left(x-\xi_{1}^m(t)\right)\bigr)\,f( \rho_{1})\]
vanish due to the assumption of zero traces of $\rho_{1}$ on the curve $x=\xi_{1}(t)$.
This is assessed via dominated convergence argument in transformed variables $(y,t)$, $y:=m(x-\xi_{1}(t))$, having in mind the above remark on the support of the $\eta'$ factor and the zero trace assumption meaning that $\rho_{1}(t,\xi_{1}(t)+\frac ym)\to 0$ pointwise, as $m\to \infty$.

As $\sign(x-\xi_{1}(t)) \, \eta\bigl(m(x-\xi_{1}(t))\bigr)\to 1$ a.e., with another application of the dominated convergence theorem we infer 
\begin{equation*}
\int_0^{\infty} \int_{\mathfrak{C}} \left( \dot\theta(t) \, \psi(x) \, \sign\left(x-\xi_{1}(t)\right) \, \rho_{1}\, + \theta(t)\psi'(x) f( \rho_{1})\, \right) \, {\rm{d}} x \, {\rm{d}} t=0.
\end{equation*}
Finally, to reach to $I_ \rho $ we let $\psi$ converge to the characteristic function of $\mathfrak{C}$; since strong boundary traces $\rho_{1}(\cdot,\pm1^\mp)$ exist (see \cite{Panov_traces2}), we get
\begin{equation}\label{eq:Irho}
I_ \rho = \int_0^{\infty} \theta(t) \left(f( \rho_{1}(t,1^-)) - f( \rho_{1}(t,-1^+))\right) \, {\rm{d}} t.
\end{equation}
Assembling \eqref{eq:I1} and \eqref{eq:Irho} within \eqref{eq:I1+Irho}, we reach to \eqref{eq:xidot-weak} and conclude the proof.\qed
\end{proof}

\section{A sharp model for many-particle Hughes dynamics}
\label{s:mpa}

In previous works \cite{DiFrancescoFagioliRosiniRusso,MR3644595,DiFrancescoFagioliRosiniRussoKRM} on the subject, many-particle approximations of Hughes' model were introduced and simulated. Here we propose  an improved many-particle Hughes model based upon a new definition of the approximate turning curve, that we denote by $\zeta^n$. This definition leads to a many-particle dynamics where the instants of particles' interactions with the approximate turning curve are sharply captured. This leads, in turn, to a rigorous construction of the unique global in time solution to the many-particle system.

Assume we are given $L>0$, $n\in \mathbb{N}$ and  $-1\leq \overline{x}_0<\overline{x}_1<\dots<\overline{x}_n\leq 1$ satisfying 
\[
\forall i \in \llbracket 0 , n-1 \rrbracket, \  \overline{x}_{i+1}-\overline{x}_{i} \geq \frac{\ell}{\rho_{\max}},
\]	
where we set $\ell:=L/n$.
We also set $R_{\max}:= \max_{i \in \llbracket 0 , n-1 \rrbracket} \frac{\ell}{\overline{x}_{i+1}-\overline{x}_{i}}\in (0,\rho_{\max}]$.

The time evolution in the whole of $\mathbb{R}$ of the particle system $x_{0}(t),\ldots,x_{n}(t)$ is described by the follow-the-leader system
\begin{equation}
\label{e:FTL}
\left\{\begin{array}{@{}l@{\qquad}l@{\quad}l@{}}
\dot{x}_{i}(t) = -v\left(R_{i-\frac{1}{2}}(t)\right) &\hbox{ if } x_{i}(t) < \zeta^{n}(t),& i \in \llbracket 0 , n \rrbracket,
\\
\dot{x}_{i}(t) = v\left(R_{i+\frac{1}{2}}(t)\right) &\hbox{ if } x_{i}(t) \geq \zeta^{n}(t),& i \in \llbracket 0 , n \rrbracket,
\\
x_{i}(0) = \overline{x}_{i}, && i \in \llbracket 0 , n \rrbracket.
\end{array}\right.
\end{equation}
Here and after
\begin{equation}
\label{e:Plini}
R_{i+\frac{1}{2}}(t) := 
\frac{\ell}{x_{i+1}(t)-x_{i}(t)},
\qquad i \in \llbracket -1 , n \rrbracket,
\end{equation}
where
\begin{align}
\label{e:IBuiltTheSky}
&x_{-1}(t):=- \infty,&
&x_{n+1}(t):= \infty.
\end{align}
Notice that by \eqref{e:Plini} and \eqref{e:IBuiltTheSky} we have $R_{-\frac{1}{2}} = 0$ and $R_{n+\frac{1}{2}} = 0$, therefore $v(R_{-\frac{1}{2}}) = v_{\max}$ and $v(R_{n+\frac{1}{2}}) = v_{\max}$.
The ODE system \eqref{e:FTL} with notations \eqref{e:Plini}, \eqref{e:IBuiltTheSky} needs to be closed by providing the dynamics of the approximate turning point $\zeta^{n}(t) \in \mathbb{R}$. The latter is implicitly uniquely determined by 
\begin{equation}
\label{e:turning}
Z_-\left(t,\zeta^{n}(t)\right) = Z_+\left(t,\zeta^{n}(t)\right),
\end{equation}
where $Z_\pm \colon [0,\infty) \times \mathbb{R} \to \mathbb{R}$ are defined by
\begin{align}
Z_-(t,x) &:= 
\begin{cases}\displaystyle x +1 + \alpha \int_{x_{I_-}(t)}^{x} \rho^{n}(t,y) \, {\rm{d}} y
&\begin{minipage}[t]{.4\linewidth}
if $\exists\, I_- \in \llbracket 0 , n \rrbracket$ such that\\$x_{I_--1}(t) \leq -1 < x_{I_-}(t) < x$,
\end{minipage}
\\[7pt] \displaystyle
x + 1 &\hbox{otherwise},
\end{cases}
\label{e:Xim}
\\
Z_+(t,x) &:= 
\begin{cases}\displaystyle 1 - x + \alpha \int_{x}^{x_{I_+}(t)} \rho^{n}(t,y) \, {\rm{d}} y
&\begin{minipage}[t]{.4\linewidth}
if $\exists\, I_+ \in \llbracket 0 , n \rrbracket$ such that\\$x < x_{I_+}(t) < 1 \leq x_{I_++1}(t)$,
\end{minipage}
\\[7pt] \displaystyle
1 - x &\hbox{otherwise},
\end{cases}
\label{e:Xip}
\end{align}
with $\rho^{n} \colon (0, \infty) \times \mathbb{R} \to [0, \rho_{\max}]$ being the approximate density
\begin{equation}
\label{e:disdens}
\rho^{n}(t,x) := \sum_{i = 0}^{n-1}R_{i+\frac{1}{2}}(t) \, \mathbbm{1}_{\left[x_{i}(t),x_{i+1}(t)\right)}(x).
\end{equation} 

Notice that by \eqref{e:Plini} and \eqref{e:disdens} we have
$ \int_{\mathbb{R}} \rho^{n}(t,y) \, {\rm{d}} y = L$, $t\geq0$.

\smallskip
We underline that $\zeta^{n}(t)$ is well defined by the monotonicity of $Z_\pm(t,\cdot\,)$.
We stress that in general it may happen that condition $x_{0}(t) < \zeta^{n}(t) \leq x_{n}(t)$ is not satisfied, as the following example proves.
\begin{example}
\label{ex:simple}
In the case $\alpha=0$, we have that $\zeta^{n}\equiv0$.
If $[\overline{x}_{\min} , \overline{x}_{\max}] \subset [-1,0) \cup (0,1]$, then condition $x_{0}(t) < \zeta^{n}(t) \leq x_{n}(t)$ is not satisfied for any time $t\geq0$.
\end{example}

\begin{remark}\label{rem:choice-zeta}
One can see that the definitions \eqref{e:Xim}, \eqref{e:Xip} take into account only those particles that are situated inside $\mathfrak{C}$ (those ranging from $x_{I_-}(t)$ to $x_{I_+}(t)$). Bearing in mind the idea of ``thinking particles'' behind the continuum Hughes' model, we find a natural interpretation of the above definition of the turning curve in the many-particle Hughes' model we propose here. Namely, the pedestrians that have reached the doors do not represent ``obstacles to evacuation'' any more, as if there were no particles at all in front of $x_{I_\pm}(t)$. Therefore the particles falling out of $\mathfrak{C}$ are disregarded in the evaluation, done by the remaining pedestrians, of the evacuation costs $Z_{\pm}$ by the respective exits located at $x=\pm 1$.
Beside this clear modeling assumption, the above definition of the turning curve $\zeta^n$ has numerous analytical consequences that we uncover in this section, making it advantageous in comparison to the straightforward alternative  definition $\xi^n$ of the turning curve, see \eqref{e:xiturning}, \eqref{e:Zmp} in Section~\ref{s:turn}.  	
\end{remark}

To sum up, the many-particle approximation consists in the ODE system \eqref{e:FTL}--\eqref{e:IBuiltTheSky} which features discontinuities in the state variable $(x_0,\dots,x_n)$ driven by the variable $\zeta^n$ implicitly determined by relations \eqref{e:turning}--\eqref{e:disdens}. 
By a solution to \eqref{e:FTL}--\eqref{e:disdens} we  mean an $(n+2)$-tuple $\bigl((x_0,\dots,x_n),\zeta^n\bigr)$ of functions
defined on $[0,\tau)$ (for some $\tau\in (0,\infty]$) and having the following regularity:
\begin{itemize}
\item [(i)] $x_i$, $i\in \llbracket 0 , n \rrbracket$, and $\zeta^n$ are piecewise $\C1$ on $[0,\tau)$. More precisely, there exists $H_{\rm sw}\in \mathbb{N}$ and times $\{t_h\}_{h\in \llbracket 1 , H_{\rm sw} \rrbracket}$, $t_1<t_2<\dots<t_{H_{\rm sw}}<\tau$, such that, upon setting $t_0:=0$ and $t_{H_{\rm sw}+1}:=\tau$, the restriction of each of these functions to the time intervals $(t_h,t_{h+1})$ can be extended to a  $\C1([t_h,t_{h+1}))$ function.
\label{pageC1}
\item[(ii)]  $x_i$, $i\in \llbracket 0 , n \rrbracket$, are continuous on $[0,\tau)$, while their derivatives $\dot x_i$ and the function $\zeta^n$ are normalized by the left-continuity at the times $t_h$, $i\in \llbracket 0 , H_{\rm sw} \rrbracket$.
\end{itemize} 
Note that, because of the piecewise regularity and of the discontinuity of the right-hand side of the system \eqref{e:FTL}, standard ODE tools do not readily yield existence of a solution, not even locally in time ($\tau<\infty$). In what follows, we will conduct an \emph{a priori} analysis of solutions of \eqref{e:FTL}--\eqref{e:disdens}. This analysis will culminate in an effective construction of a
unique global in time ($\tau=\infty$) solution to \eqref{e:FTL}--\eqref{e:disdens} in the above indicated sense, see Theorem~\ref{th:discrete-WP}. It will also uncover several properties of the many-particle dynamics. Some of them will be instrumental for the analysis of stability, consistency and convergence, as $n\to\infty$, of the many-particle scheme for problem \eqref{e:model}; moreover, they shed light on the agents' behavior within the many-particle variant of the Hughes' model.

\smallskip
Example~\ref{ex:simple} shows that the case $\alpha=0$ is trivial.
For this reason, below we assume that
\[\alpha>0.\]
In this case the functions $Z_\pm$ can be represented as in \figurename~\ref{f:xi}.
Notice that $Z_{-}$ is a piecewise linear strictly increasing map, and $Z_{+}$ is a piecewise linear strictly decreasing map.
Moreover, it results
\begin{align*}
&Z_-(t,-1) = 0,&
&Z_-(t,1) \in [2, 2+\alpha \, M(t)],&
&\partial_x Z_-(t,x) \geq 1,
\\
&Z_+(t,-1) \in [2, 2+\alpha \, M(t)],&
&Z_+(t,1) = 0,&
&\partial_x Z_+(t,x) \leq -1,
\end{align*}
where
\[M(t) := \int_{-1}^{1} \rho^{n}(t,y) \, {\rm{d}} y\]
is the total mass in $\mathfrak{C}$ at time $t\geq0$.
The above considerations imply that
\begin{align*}
&x+1 \leq Z_-(t,x) \leq x+1+\alpha\,M(t),&
&-x+1 \leq Z_+(t,x) \leq -x+1+\alpha\,M(t).
\end{align*}
Observe that by definition $M(0)=L$.

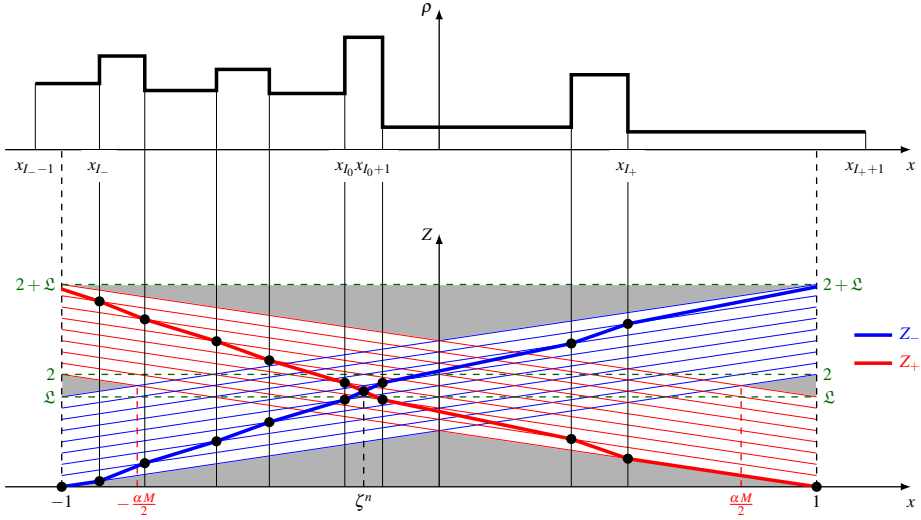
\begin{figure}[!htbp]
\resizebox{\linewidth}{!}{
\begin{tikzpicture}[x=60mm, y=9mm, semithick]

\def\lll{.2}
\def\a{-1.07}
\def\b{-.9}
\def\c{-.78}
\def\d{-.59}
\def\e{-.45}
\def\f{-.25}
\def\g{-.15}
\def\h{.35}
\def\i{.5}
\def\l{1.13}

\draw[white,fill=black!30!white] (-1,0) rectangle (1,3.6);
\draw[white,fill=white] (-1,1.6) -- (-1,0) -- (0,1) -- (1,0) -- (1,1.6) -- (.8,1.8) -- (1,2) -- (1,3.6) -- (0,2.6) -- (-1,3.6) -- (-1,2) -- (-.8,1.8) -- cycle;

\foreach \z in {0,1,...,8}{
\draw[blue,thin] (-1,{\z*\lll}) -- (1,{2+\z*\lll});
\draw[red,thin] (-1,{2+\z*\lll}) -- (1,{\z*\lll});
}

\draw[red,dashed] (-.8,0) node[below] {\strut$-\frac{\alpha \, M}{2}$} -- (-.8,1.8);
\draw[red,dashed] (.8,0) node[below] {\strut$\frac{\alpha \, M}{2}$} -- (.8,1.8);

\draw[blue,ultra thick] (1.1,2.7) -- ++(.1,0) node[right] {$Z_-$};
\draw[red,ultra thick] (1.1,2.2) -- ++(.1,0) node[right] {$Z_+$};

\draw[-latex] (0,0) -- (0,4.5)node[left] {$Z$};
\draw[-latex] (-1.15,0) -- (1.25,0) node[below] {\strut$x$};

\node[below] at (-1,0) {\strut $-1$};
\node[below] at (1,0) {\strut $1$};
\draw[dashed,green!40!black] (-1,3.6) node[left] {$2+\mathfrak{L}$} -- (1,3.6) node[right] {$2+\mathfrak{L}$};
\draw[dashed,green!40!black] (-1,2) node[left] {$2$} -- (1,2) node[right] {$2$};
\draw[dashed,green!40!black] (-1,1.6) node[left] {$\mathfrak{L}$} -- (1,1.6) node[right] {$\mathfrak{L}$};

\draw[dashed] (-1,0) -- (-1,6);
\draw[dashed] (1,0) -- (1,6);

\def\rab{(\lll/(\b-\a))}
\def\rbc{(\lll/(\c-\b))}
\def\rcd{(\lll/(\d-\c))}
\def\rde{(\lll/(\e-\d))}
\def\ref{(\lll/(\f-\e))}
\def\rfg{(\lll/(\g-\f))}
\def\rgh{(\lll/(\h-\g))}
\def\rhi{(\lll/(\i-\h))}
\def\ril{(\lll/(\l-\i))}

\draw[thin] (\b,0) -- (\b,{6+\rab});
\draw[thin] (\c,0) -- (\c,{6+\rbc});
\draw[thin] (\d,0) -- (\d,{6+\rcd});
\draw[thin] (\e,0) -- (\e,{6+\rde});
\draw[thin] (\f,0) -- (\f,{6+\ref});
\draw[thin] (\g,0) -- (\g,{6+\rfg});
\draw[thin] (\h,0) -- (\h,{6+\rgh});
\draw[thin] (\i,0) -- (\i,{6+\ril});

\begin{scope}[shift={(0,6)}]

\draw[thin] (\a,0) -- (\a,{\rab});
\draw[thin] (\l,0) -- (\l,{\ril});

\draw[-latex] (0,0) -- (0,2.5)node[left] {$\rho$};
\draw[-latex] (-1.15,0) -- (1.25,0) node[below] {\strut$x$};

\draw[ultra thick] (\a,{\rab}) -- (\b,{\rab}) -- (\b,{\rbc}) -- (\c,{\rbc}) -- (\c,{\rcd}) -- (\d,{\rcd}) -- (\d,{\rde}) -- (\e,{\rde}) -- (\e,{\ref}) -- (\f,{\ref}) -- (\f,{\rfg}) -- (\g,{\rfg}) -- (\g,{\rgh}) -- (\h,{\rgh}) -- (\h,{\rhi}) -- (\i,{\rhi}) -- (\i,{\ril}) -- (\l,{\ril});

\node[below] at (\a,0) {\strut $x_{I_--1}$};
\node[fill=white,below] at (\b,0) {\strut $x_{I_-}$};
\node[fill=white,below] at (\i,0) {\strut $x_{I_+}$};
\node[below] at (\l,0) {\strut $x_{I_++1}$};
\node[fill=white,below] at ({(\f+\g)/2},0) {\strut $x_{I_0}x_{I_0+1}$};

\end{scope}

\coordinate (BM) at (\b,{\b+1});
\coordinate (CM) at (\c,{\c+1+\lll});
\coordinate (DM) at (\d,{\d+1+2*\lll});
\coordinate (EM) at (\e,{\e+1+3*\lll});
\coordinate (FM) at (\f,{\f+1+4*\lll});
\coordinate (GM) at (\g,{\g+1+5*\lll});
\coordinate (HM) at (\h,{\h+1+6*\lll});
\coordinate (IM) at (\i,{\i+1+7*\lll});
\coordinate (LM) at (\l,{\l+1+8*\lll});
\coordinate (AP) at (\a,{-\a+1+8*\lll});
\coordinate (BP) at (\b,{-\b+1+7*\lll});
\coordinate (CP) at (\c,{-\c+1+6*\lll});
\coordinate (DP) at (\d,{-\d+1+5*\lll});
\coordinate (EP) at (\e,{-\e+1+4*\lll});
\coordinate (FP) at (\f,{-\f+1+3*\lll});
\coordinate (GP) at (\g,{-\g+1+2*\lll});
\coordinate (HP) at (\h,{-\h+1+\lll});
\coordinate (IP) at (\i,{-\i+1});

\begin{scope}
\clip(-1,0) rectangle (1,3.6);
\draw[blue,ultra thick] (-1,0) -- (BM) -- (CM) -- (DM) -- (EM) -- (FM) -- (GM) -- (HM) -- (IM) -- (LM);
\draw[red,ultra thick] (AP) -- (BP) -- (CP) -- (DP) -- (EP) -- (FP) -- (GP) -- (HP) -- (IP) -- (1,0);
\end{scope}

\draw[fill=black] (-1,0) circle (2pt);
\draw[fill=black] (BM) circle (2pt);
\draw[fill=black] (CM) circle (2pt);
\draw[fill=black] (DM) circle (2pt);
\draw[fill=black] (EM) circle (2pt);
\draw[fill=black] (FM) circle (2pt);
\draw[fill=black] (GM) circle (2pt);
\draw[fill=black] (HM) circle (2pt);
\draw[fill=black] (IM) circle (2pt);
\draw[fill=black] (BP) circle (2pt);
\draw[fill=black] (CP) circle (2pt);
\draw[fill=black] (DP) circle (2pt);
\draw[fill=black] (EP) circle (2pt);
\draw[fill=black] (FP) circle (2pt);
\draw[fill=black] (GP) circle (2pt);
\draw[fill=black] (HP) circle (2pt);
\draw[fill=black] (IP) circle (2pt);
\draw[fill=black] (1,0) circle (2pt);

\draw[dashed] (-.2,0) node[below] {\strut$\zeta^{n}$} -- (-.2,1.7);
\draw[fill=black] (-.2,1.7) circle (2pt);
\end{tikzpicture}}

\caption{Representations of $Z_\pm$ and $\zeta^{n}$ in the case $\mathfrak{L}:=\alpha\,(I_+-I_-+1) \, \ell < 2$ and $x_{I_--1} < -1 < x_{I_-} < \ldots < x_{I_+} < 1 < x_{I_++1}$. 
For convenience we omit the dependence on $t$.
Notice that, at least in the case under consideration, the value at $x_{i}$ of $Z_-$ (respectively, $Z_+$) corresponds to the intersection of the vertical line $x=x_{i}$ and the line $Z=x+1+\alpha(i-I_-) \, \ell$ (respectively, $Z=-x+1+\alpha(I_+-i) \, \ell$).
At last, the graphs of $Z_{\pm}$ on the whole of $\mathfrak{C}$ are then obtained by interpolating such points.}
\label{f:xi}
\end{figure}

For ease of notation, in the following we will drop the time and the $n$ dependencies whenever it is clear from the context.

\delaynewpage{2}
As a consequence of the next lemma we have that, once a particle leaves $\mathfrak{C}$, it cannot re-enter (it remains outside $\mathfrak{C}$). We will show that particle interactions with the turning curve (and the corresponding singularities of the piecewise $\C1$ solution to the system  \eqref{e:FTL}--\eqref{e:disdens}) occur only if exactly one particle leaves $\mathfrak{C}$; thus it becomes easy to count the interaction times.

\begin{lemma}
For any $t\in [0,\tau)$ we have that
\[\zeta^{n}(t) \in \mathfrak{C} \cap \left[-\frac{\alpha}{2}\,M(t),\frac{\alpha}{2}\,M(t)\right] \subseteq \mathfrak{C} \cap \left[-\frac{\alpha}{2}\,L,\frac{\alpha}{2}\,L\right].\]
\label{l:bounxi}
\end{lemma}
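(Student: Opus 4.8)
The plan is to exploit the explicit structure of the auxiliary functions $Z_\pm$ recorded just before the statement of the lemma, together with the monotonicity that makes $\zeta^n(t)$ well defined. First I would observe that the second inclusion $\mathfrak{C}\cap[-\tfrac\alpha2 M(t),\tfrac\alpha2 M(t)]\subseteq \mathfrak{C}\cap[-\tfrac\alpha2 L,\tfrac\alpha2 L]$ is immediate once we know $M(t)\leq L$ for all $t\in[0,\tau)$; this mass bound follows from the maximum principle / monotonicity properties of the follow-the-leader system together with $M(0)=L$ (particles can only leave $\mathfrak{C}$, never enter, as announced right after the lemma and as will be made precise in the subsequent results — but here it suffices to note $\int_{\mathbb R}\rho^n(t,y)\,{\rm d}y=L$ and $\rho^n\geq0$, so $M(t)=\int_{-1}^1\rho^n(t,y)\,{\rm d}y\leq L$ trivially). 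So the crux is the first inclusion.

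For the inclusion $\zeta^n(t)\in\mathfrak{C}$, I would argue by the sign of the defining equation \eqref{e:turning} at the endpoints. Using the boundary values $Z_-(t,-1)=0$ and $Z_+(t,-1)\geq 2>0$ recorded above, we get $Z_-(t,-1)-Z_+(t,-1)<0$; symmetrically $Z_-(t,1)-Z_+(t,1)\geq 2>0$. Since $x\mapsto Z_-(t,x)-Z_+(t,x)$ is continuous and strictly increasing (being the difference of a strictly increasing and a strictly decreasing piecewise linear function, with $\partial_x Z_-\geq 1$ and $\partial_x Z_+\leq -1$, so the difference has slope $\geq 2$), it has a unique zero, and that zero lies strictly inside $(-1,1)=\mathfrak{C}$. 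This both reproves well-definedness of $\zeta^n(t)$ and gives $\zeta^n(t)\in\mathfrak{C}$.

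For the two-sided bound $|\zeta^n(t)|\leq\tfrac\alpha2 M(t)$, I would plug the elementary envelopes $x+1\leq Z_-(t,x)\leq x+1+\alpha M(t)$ and $-x+1\leq Z_+(t,x)\leq -x+1+\alpha M(t)$ (also recorded above) into the identity $Z_-(t,\zeta^n(t))=Z_+(t,\zeta^n(t))$. From $Z_-(t,\zeta^n)\geq \zeta^n+1$ and $Z_+(t,\zeta^n)\leq -\zeta^n+1+\alpha M(t)$ we obtain $\zeta^n+1\leq -\zeta^n+1+\alpha M(t)$, i.e. $\zeta^n(t)\leq \tfrac\alpha2 M(t)$; the reversed inequalities $Z_-(t,\zeta^n)\leq \zeta^n+1+\alpha M(t)$ and $Z_+(t,\zeta^n)\geq -\zeta^n+1$ give $-\zeta^n+1\leq \zeta^n+1+\alpha M(t)$, i.e. $\zeta^n(t)\geq -\tfrac\alpha2 M(t)$. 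Combining yields $\zeta^n(t)\in[-\tfrac\alpha2 M(t),\tfrac\alpha2 M(t)]$, and intersecting with the previously established $\zeta^n(t)\in\mathfrak{C}$ completes the proof.

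I do not expect a genuine obstacle here: the lemma is essentially a bookkeeping consequence of the monotonicity and of the linear envelopes for $Z_\pm$ that were established in the paragraphs preceding the statement. The only point requiring a word of care is the mass monotonicity $M(t)\leq L$; if at this stage of the paper the "no re-entry" property has not yet been proved, one should phrase the argument so that it relies only on $\rho^n\geq 0$ and the conservation $\int_{\mathbb R}\rho^n=L$, which already forces $M(t)\leq L$ without any dynamical input.
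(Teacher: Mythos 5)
Your proof is correct, and its overall skeleton matches the paper's, but the middle step is handled differently. For $\zeta^n(t)\in\mathfrak{C}$ you evaluate the sign of $Z_-(t,\cdot)-Z_+(t,\cdot)$ at $x=\pm1$ and invoke its strict monotonicity; this is the same mechanism as the paper's contradiction argument (its Step~I, where $\zeta^n(t)\leq-1$ would force $0\geq Z_-=Z_+\geq 2$). The genuine difference is in the bound $|\zeta^n(t)|\leq\frac{\alpha}{2}M(t)$: the paper first proves an auxiliary claim (its Step~II) that whenever $\zeta^n(t)\neq0$ there is at least one particle strictly between $\zeta^n(t)$ and the nearer exit, so that the ``integral'' branch of the relevant $Z_\pm$ is active at $x=\zeta^n(t)$, and then deduces $\zeta^n(t)\leq\frac{\alpha}{2}\int_{\zeta^n(t)}^{x_{I_+}(t)}\rho^n(t,y)\,{\rm d}y\leq\frac{\alpha}{2}M(t)$ from the exact expressions; you instead plug the two-sided envelopes $x+1\leq Z_-(t,x)\leq x+1+\alpha M(t)$ and $1-x\leq Z_+(t,x)\leq 1-x+\alpha M(t)$ (recorded in the paper just before the lemma, and valid on $[-1,1]$ in both branches of the definitions, since the added integral is between $0$ and $M(t)$ there) directly into $Z_-(t,\zeta^n(t))=Z_+(t,\zeta^n(t))$. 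Your route is slightly shorter because it bypasses the particle-existence claim, which is only needed if one insists on the sharper identity; the paper's route yields in passing the structural observation that a nonzero turning point always has a particle between itself and the corresponding boundary, information not required for the statement itself. Finally, your treatment of $M(t)\leq L$ via $\rho^n\geq0$ and $\int_{\mathbb{R}}\rho^n(t,y)\,{\rm d}y=L$ is exactly the paper's concluding observation.
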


\begin{proof}
\begin{enumerate}[label={\bf Step~\Roman*},wide=0pt]

\item 
Assume by contradiction that $\zeta^n(t) \leq -1$.
Then by condition \eqref{e:turning} and definitions \eqref{e:Xim}, \eqref{e:Xip} we have
$0 \geq Z_-(t,\zeta^n(t)) = Z_+(t,\zeta^n(t)) \geq 2,$
but this gives a contradiction.
The case $\zeta^n(t)\geq1$ is analogous and is therefore omitted.

\item
We claim that:

\begin{itemize}

\item 
If $\zeta^n(t)<0$, then in the interval $\left(-1,\zeta^n(t)\right)$ there is at least one particle.

\item\label{Step2}
If $\zeta^n(t)>0$, then in the interval $(\zeta^n(t),1)$ there is at least one particle.
\end{itemize}
We prove the first claim; the second follows then from the symmetry of the model.
If by contradiction no particle is in $\mathfrak{C}$ at time $t$, then by \eqref{e:turning} we have $\zeta^n(t)=0$ and this contradicts the hypothesis $\zeta^n(t)<0$.
Furthermore, if by contradiction there exists $I_- \in \llbracket0,n\rrbracket$ such that
\[x_{I_--1}(t) \leq -1 < \zeta^n(t) \leq x_{I_-}(t) \leq x_{I_+}(t) < 1 \leq x_{I_++1}(t),\]
then by \eqref{e:turning} we have
\[
0 > 2\zeta^n(t) = \alpha \int_{\zeta^n(t)}^{x_{I_+}(t)} \rho(t,y) \, {\rm{d}} y \geq 0
\]
and this gives a contradiction.

\item
We prove now that
\[\zeta^n(t) \in \left[-\frac{\alpha}{2}\,M(t),\frac{\alpha}{2}\,M(t)\right].\]
If $\zeta^n(t)>0$, then by \ref{Step2}, \eqref{e:Xim}, \eqref{e:Xip} and \eqref{e:turning} we have
\[
\zeta^n(t) \leq \frac{\alpha}{2} \int_{\zeta^n(t)}^{x_{I_+}(t)} \rho(t,y) \, {\rm{d}} y \leq \frac{\alpha}{2}\,M(t).
\]
The case $\zeta^n(t)<0$ is analogous and the case $\zeta^n(t)=0$ is trivial.

\item
At last, we conclude the proof by observing that $M(t) \leq L$.\qed
\end{enumerate}
\end{proof} 

Next lemma highlights how the parameter $\alpha>0$ impacts on the approximate turning point $\zeta^{n}$.
\begin{lemma}
Fix $t\in [0,\tau)$.
Assume that in each of the intervals $(-1,\zeta^{n}(t)]$ and $[\zeta^{n}(t),1)$ there is at least one particle, namely, that there exist $I_-,I_+ \in \llbracket 0 , n \rrbracket$ such that
\[x_{I_--1}(t) \leq -1 <x_{I_-}(t) \leq \zeta^{n}(t) \leq x_{I_+}(t) < 1 \leq x_{I_++1}(t).\]
Then $\zeta^{n}$ belongs to the closed interval between $0$ and $\frac{1}{2}(x_{I_{\rm m}^-}+x_{I_{\rm m}^+})$, where
\[I_{\rm m}^-:=\left\lfloor\frac{I_++I_-}{2}\right\rfloor,
\qquad
I_{\rm m}^+:=\left\lceil\frac{I_++I_-}{2}\right\rceil.\]
More precisely, $\zeta^{n}$ is closer to $0$ for lower values of $\alpha$, whereas it is closer to $\frac{1}{2}(x_{I_{\rm m}^-}+x_{I_{\rm m}^+})$ for higher values of $\alpha$.
\end{lemma}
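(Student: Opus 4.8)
The plan is to turn the implicit relation \eqref{e:turning} into an explicit scalar fixed-point equation for $\zeta^{n}(t)$ and then read off the localization from elementary monotonicity. Since by hypothesis the indices $I_-$ (leftmost particle inside $\mathfrak{C}$) and $I_+$ (rightmost one) exist with $x_{I_-}(t)\le\zeta^{n}(t)\le x_{I_+}(t)$, evaluating $Z_\pm$ at $x=\zeta^{n}(t)$ via \eqref{e:Xim}, \eqref{e:Xip} yields
\[
\zeta^{n}(t)+1+\alpha\,\mu_-(t)\;=\;Z_-\bigl(t,\zeta^{n}(t)\bigr)\;=\;Z_+\bigl(t,\zeta^{n}(t)\bigr)\;=\;1-\zeta^{n}(t)+\alpha\,\mu_+(t),
\]
where $\mu_-(t):=\int_{x_{I_-}(t)}^{\zeta^{n}(t)}\rho^{n}(t,y)\,{\rm{d}}y$ and $\mu_+(t):=\int_{\zeta^{n}(t)}^{x_{I_+}(t)}\rho^{n}(t,y)\,{\rm{d}}y$ (this stays correct in the borderline cases $x_{I_-}(t)=\zeta^{n}(t)$ or $\zeta^{n}(t)=x_{I_+}(t)$, where the corresponding integral vanishes and the ``otherwise'' branch applies). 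Hence $\zeta^{n}(t)=\frac{\alpha}{2}\bigl(\mu_+(t)-\mu_-(t)\bigr)$. Two elementary facts will be used throughout: by \eqref{e:Plini}--\eqref{e:disdens} every inter-particle gap $[x_i(t),x_{i+1}(t))$ carries mass exactly $\ell$, so $\mu_-(t)+\mu_+(t)=(I_+-I_-)\,\ell$; and $\rho^{n}(t,\cdot)$ is strictly positive on $(x_{I_-}(t),x_{I_+}(t))$ (the particles in $\mathfrak{C}$ being strictly ordered with finite gaps).

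Next I would introduce, for $x\in[x_{I_-}(t),x_{I_+}(t)]$, the map
\[
g(x):=\frac{\alpha}{2}\Bigl(\int_{x}^{x_{I_+}(t)}\rho^{n}(t,y)\,{\rm{d}}y-\int_{x_{I_-}(t)}^{x}\rho^{n}(t,y)\,{\rm{d}}y\Bigr),
\]
which is continuous and, by the positivity of $\rho^{n}(t,\cdot)$, strictly decreasing; the identity above says exactly that $\zeta^{n}(t)$ is the unique solution of $x=g(x)$ on that interval. The key bookkeeping step is to locate the zero of $g$. Writing $m:=\frac{1}{2}\bigl(x_{I_{\rm m}^-}(t)+x_{I_{\rm m}^+}(t)\bigr)$ and distinguishing the parity of $I_-+I_+$ — if $I_-+I_+$ is even then $m=x_{(I_-+I_+)/2}(t)$, while if $I_-+I_+=2k+1$ then $m$ is the midpoint of the gap $[x_k(t),x_{k+1}(t)]$, on which $\rho^{n}(t,\cdot)$ is constant so that each half of the gap carries mass $\ell/2$ — one checks in both cases that $\int_{x_{I_-}(t)}^{m}\rho^{n}(t,\cdot)=\int_{m}^{x_{I_+}(t)}\rho^{n}(t,\cdot)=\frac{1}{2}(I_+-I_-)\ell$, that is, $g(m)=0$; by strict monotonicity, $m$ is the unique zero of $g$. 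Consequently $h(x):=x-g(x)$ is continuous, strictly increasing, with $h(\zeta^{n}(t))=0$ and $h(m)=m$.

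The localization of $\zeta^{n}(t)$ then follows from monotonicity alone. As $h$ is increasing and $h(\zeta^{n}(t))=0$, the sign of $h(m)=m$ forces $\zeta^{n}(t)\le m$ when $m\ge0$ and $\zeta^{n}(t)\ge m$ when $m\le0$; hence $\zeta^{n}(t)$ lies on the same side of $m$ as $0$. For the comparison with $0$: if $0\notin[x_{I_-}(t),x_{I_+}(t)]$, then the whole interval — and so both $\zeta^{n}(t)$ and $m$ — lies on one side of $0$ and nothing more is needed; otherwise $g(0)$ is defined, and (say, when $m\ge0$) monotonicity of $g$ gives $g(0)\ge g(m)=0$, i.e.\ $h(0)\le0=h(\zeta^{n}(t))$, so $\zeta^{n}(t)\ge0$, and symmetrically $\zeta^{n}(t)\le0$ when $m\le0$. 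Thus $\zeta^{n}(t)$ belongs to the closed interval with endpoints $0$ and $m$ (with $\zeta^{n}(t)=0=m$ in the degenerate case $m=0$).

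Finally, for the refinement on $\alpha$, I would keep the configuration $\{x_i(t)\}$ frozen and write $g=\alpha\,\tilde g$, where $\tilde g$ is independent of $\alpha$, continuous, strictly decreasing, and vanishes exactly at $m$; the relation becomes $\zeta^{n}(t)=\alpha\,\tilde g(\zeta^{n}(t))$. Since $\zeta^{n}(t)$ stays in the bounded interval $(-1,1)$ while $\tilde g(\zeta^{n}(t))=\zeta^{n}(t)/\alpha$, letting $\alpha\to0^+$ forces $\zeta^{n}(t)\to0$, and letting $\alpha\to\infty$ forces $\tilde g(\zeta^{n}(t))\to0$, hence $\zeta^{n}(t)\to m$. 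Monotonicity in $\alpha$ is a short contradiction: for $0<\alpha_1<\alpha_2$ with $m>0$ (the case $m<0$ being symmetric, and $m=0$ giving $\zeta^{n}(t)\equiv0$), were the value of $\zeta^{n}(t)$ at $\alpha_2$ not larger than at $\alpha_1$, then — $\tilde g$ being decreasing and positive on $[x_{I_-}(t),m)$ — its value at the $\alpha_2$-point would be at least its value at the $\alpha_1$-point, and multiplying by the strictly larger factor $\alpha_2$ would contradict the assumed ordering. So $\alpha\mapsto\zeta^{n}(t)$ moves monotonically from $0$ towards $m$ as $\alpha$ grows, as claimed. The only genuinely delicate points I anticipate are the parity case-split in the identification $g(m)=0$ — together with the degenerate configurations (e.g.\ $I_-=I_+$, or $x_{I_-}(t)=\zeta^{n}(t)$), in which the statement collapses consistently to $\zeta^{n}(t)=m$ — and pinning down the precise meaning of the $\alpha$-dependence clause, which concerns the implicit relation \eqref{e:turning} with the particle positions held fixed rather than the full $n$-particle dynamics.
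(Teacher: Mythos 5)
Your proof is correct and starts from the same identity as the paper's proof (the paper's equation \eqref{e:Korn} is exactly your relation $\zeta^{n}(t)=\frac{\alpha}{2}\bigl(\mu_+(t)-\mu_-(t)\bigr)$), and it singles out the same two limiting configurations $\alpha=0$ and $\alpha\to\infty$. The only difference is that the paper dispatches the case of intermediate $\alpha$ with ``it is now clear'', whereas you supply the monotonicity argument that actually justifies it --- the strictly decreasing map $g$, the parity check that $g$ vanishes at $\frac{1}{2}(x_{I_{\rm m}^-}+x_{I_{\rm m}^+})$, and the comparison via $h(x)=x-g(x)$ --- so your write-up is a rigorous completion of the paper's sketch rather than a different route.
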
 

\begin{proof} 
By \eqref{e:turning} we have
\begin{equation}
\label{e:Korn}
\zeta^n(t)+\alpha \int_{x_{I_-}(t)}^{\zeta^n(t)} \rho^{n}(t,y) \, {\rm{d}} y = \frac{\alpha}{2} \int_{x_{I_-}(t)}^{x_{I_+}(t)} \rho^{n}(t,y) \, {\rm{d}} y = \frac{\alpha\ell}{2}(I_+-I_-).
\end{equation}
By taking $\alpha=0$, from \eqref{e:Korn} we deduce that $\zeta^n\equiv0$.
By letting $\alpha$ go to infinity in \eqref{e:Korn}, we obtain
\[ \int_{x_{I_-}(t)}^{\zeta^n(t)} \rho^{n}(t,y) \, {\rm{d}} y = \int_{\zeta^n(t)}^{x_{I_+}(t)} \rho^{n}(t,y) \, {\rm{d}} y = \frac{1}{2} \int_{x_{I_-}(t)}^{x_{I_+}(t)} \rho^{n}(t,y) \, {\rm{d}} y,\]
or equivalently $\zeta^n\equiv\frac{1}{2}(x_{I_{\rm m}^-}+x_{I_{\rm m}^+})$.
It is now clear that if $\alpha \in(0, \infty)$, then $\zeta^n$ belongs to the interval between $0$ and $\frac{1}{2}(x_{I_{\rm m}^-}+x_{I_{\rm m}^+})$.\qed
\end{proof} 

Let $s_0:=0$ and denote by $s_{h}\in (0,\tau)$ the $h$-th positive time of exit of at least one particle from $\mathfrak{C}$.
Notice that at time $s_0=0$ at least one particle leaves $\mathfrak{C}$ if and only if $\overline{x}_{\min}=-1$ or $\overline{x}_{\max}=1$. 
By Lemma~\ref{l:bounxi} there are at most $n+2$ of such times $s_{h}$.
Denote by $s_{H_{\rm ex}}$ the last exit time and let $s_{H_{\rm ex}+1}:=\tau$.

\delaynewpage{2}
In the sequel, we say that the $i$-th particle changes direction at time $t>0$ if  $\dot x_i(t^-)$ and $\dot x_i(t^+)$ are of opposite sign.
\begin{lemma}\label{lem:switching}
No particle changes direction during each time interval $(s_{h},s_{h+1})$.
\end{lemma}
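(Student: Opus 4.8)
The statement says that between two consecutive particle-exit times $s_h$ and $s_{h+1}$, no particle changes direction. Since a direction change of the $i$-th particle means that $\dot x_i(t^-)$ and $\dot x_i(t^+)$ have opposite sign, and by the definition \eqref{e:FTL} a particle's velocity is $-v(R_{i-1/2})$ when $x_i(t)<\zeta^n(t)$ and $+v(R_{i+1/2})$ when $x_i(t)\geq\zeta^n(t)$, with $v>0$ on $[0,\rho_{\max})$, a direction change can occur only when a particle crosses the turning curve $\zeta^n$, i.e.\ when $x_i(t)=\zeta^n(t)$ and the sign of $x_i(\cdot)-\zeta^n(\cdot)$ switches at $t$. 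So the plan is to show that no such crossing occurs on $(s_h,s_{h+1})$.

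\textbf{Step 1: Reduce to a crossing event.} First I would argue that on an open interval free of exit times, the set of particles inside $\mathfrak{C}$ is fixed; say $x_{I_--1}\leq -1<x_{I_-}<\dots<x_{I_+}<1\leq x_{I_++1}$ throughout $(s_h,s_{h+1})$, with $I_\pm$ constant. (Here one uses that particles never re-enter $\mathfrak{C}$ once they have left, as noted after Lemma~\ref{l:bounxi}, and that the $x_i$ are continuous.) Then by Lemma~\ref{l:bounxi}, $\zeta^n(t)\in\mathfrak{C}$, so $\zeta^n$ lies strictly between $x_{I_--1}$ and $x_{I_++1}$; hence only the "active" particles $x_{I_-},\dots,x_{I_+}$ can possibly meet $\zeta^n$. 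A direction change of such a particle at time $t_*\in(s_h,s_{h+1})$ forces $x_{I}(t_*)=\zeta^n(t_*)$ for some $I\in\llbracket I_-,I_+\rrbracket$, with $x_I-\zeta^n$ changing sign at $t_*$.

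\textbf{Step 2: Monotonicity of the relative position.} The key is to show $x_I-\zeta^n$ does not change sign. I expect this to be the main obstacle, and the idea is to exploit the explicit relation \eqref{e:Korn}: on $(s_h,s_{h+1})$, with $I_\pm$ fixed, $\zeta^n(t)+\alpha\int_{x_{I_-}(t)}^{\zeta^n(t)}\rho^n(t,y)\,\mathrm{d}y=\frac{\alpha\ell}{2}(I_+-I_-)$ is \emph{constant in $t$}. Writing $\int_{x_{I_-}}^{\zeta^n}\rho^n\,\mathrm{d}y=\ell\,(\text{number of full gaps to the left of }\zeta^n)+\text{partial-gap term}$, one sees that $\zeta^n$ increases exactly when a particle crosses it from right to left, and decreases when one crosses from left to right — i.e., when $x_I-\zeta^n$ would pass from $+$ to $-$, the constraint pushes $\zeta^n$ \emph{up}, and symmetrically down in the other case. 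Making this precise: suppose $x_I(t_*)=\zeta^n(t_*)$; consider $t$ slightly less than $t_*$. If $x_I(t)>\zeta^n(t)$ for $t<t_*$ (candidate crossing $+\to-$), then $\dot x_I(t^-)=v(R_{I+1/2})>0$; but one computes, from \eqref{e:Korn} and the fact that near $t_*$ the particle $I$ is the one adjacent to $\zeta^n$, that $\dot\zeta^n(t_*^-)\leq \dot x_I(t_*^-)$ would have to hold for the crossing, whereas the constraint forces the opposite inequality at the instant the partial-gap contribution collapses — a contradiction. The same argument rules out $-\to+$ crossings by symmetry. (Alternatively, one may invoke Lemma~\ref{lem:switching}'s companion results on $\zeta^n$ and show directly that between exit times $\zeta^n$ is monotone and each active particle stays on one side of it.)

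\textbf{Step 3: Conclude.} Since no active particle crosses $\zeta^n$ on $(s_h,s_{h+1})$, the sign of $x_i-\zeta^n$ is constant there for every $i$, hence each $\dot x_i$ is given by a single branch of \eqref{e:FTL} of fixed sign (or is a continuous function of the $C^1$ gap quantities with no sign change, using $v>0$), so no particle changes direction. The heart of the matter — and where care is needed — is Step 2, namely ruling out the crossing; the subtlety is that $\zeta^n$ itself moves, driven implicitly by the particle configuration through \eqref{e:turning}, so one cannot treat it as a fixed barrier and must instead extract the monotone coupling between $\dot\zeta^n$ and the particle adjacent to it from the conserved quantity in \eqref{e:Korn}.
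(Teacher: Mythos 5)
Your overall strategy is the same as the paper's: reduce the claim to showing that, between two exit times, the turning curve cannot overtake either of the two particles flanking it, and extract this from the time-differentiated form of the defining relation \eqref{e:turning} (equivalently \eqref{e:Korn}). However, the step that carries the whole proof --- your Step~2 --- is asserted rather than carried out, and as written it is stated with the inequalities reversed: for $\zeta^n$ to catch the particle $x_I$ lying immediately to its right one needs $\dot\zeta^n\geq\dot x_I$ near the contact time, while what the constraint actually yields is $\dot\zeta^n<\dot x_I$; you claim the crossing requires $\dot\zeta^n\leq\dot x_I$ and that the constraint forces the opposite. The missing content is precisely the computation the paper performs (formula \eqref{eq:dotzetan}, borrowed from the earlier many-particle literature): when $x_{I_0}<\zeta^n\leq x_{I_0+1}$,
\[
\dot{\zeta}^{n}=\frac{\alpha R_{I_0+\frac{1}{2}}}{c\bigl(R_{I_0+\frac{1}{2}}\bigr)}\cdot\frac{\dot{x}_{I_0}\bigl(x_{I_0+1}-\zeta^{n}\bigr)+\dot{x}_{I_0+1}\bigl(\zeta^{n}-x_{I_0}\bigr)}{x_{I_0+1}-x_{I_0}},
\]
a convex combination of the two adjacent velocities damped by the factor $\alpha R/(1+\alpha R)\in(0,1)$. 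Combined with the fact that these two particles move \emph{away} from the turning curve with nonzero speed, $\dot x_{I_0}<0<\dot x_{I_0+1}$ --- which itself requires the infinite-speed-of-propagation property, i.e.\ that no inter-particle density equals $\rho_{\max}$ for $t>0$ --- this gives the strict inclusion $\dot\zeta^n\in(\dot x_{I_0},\dot x_{I_0+1})$, hence both gaps strictly increase and the ordering \eqref{eq:ordering} propagates up to $s_{h+1}$ with the same index $I_0^h$. Without this strictness your argument cannot even exclude that $\zeta^n$ merely grazes a particle.

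Two further ingredients that the paper needs and your sketch omits: the continuity of $\zeta^n$ inside $(s_h,s_{h+1})$, obtained from an estimate of the type $2\,|\zeta^n(t)-\zeta^n(s)|\leq\alpha\left\|\rho^n(t,\cdot)-\rho^n(s,\cdot)\right\|_{\L1(\mathbb{R})}$, without which the ``crossing'' picture is not justified; and the degenerate configuration in which $\zeta^n$ coincides with a particle on a whole right-neighbourhood of $s_h$, which is not ruled out by forbidding sign changes of $x_I-\zeta^n$ and which the paper treats separately (again via \eqref{eq:dotzetan}, deriving a contradiction from the strict growth of the gap $x_{I_0+1}-\zeta^n$). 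Finally, your parenthetical fallback --- that $\zeta^n$ is monotone between exit times --- is not true in general and should not be relied upon. In short, the plan is the right one, but the decisive estimate on $\dot\zeta^n$ is missing and the inequality you do state points the wrong way, so the proposal as it stands has a genuine gap.
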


\begin{proof}
Let $S_{h} := (s_{h},s_{h+1})$. Let $I_-^h, I_0^h, I_+^h : S_{h}\to \llbracket 0 , n \rrbracket$ be defined by the property 
\begin{equation*}\label{eq:ordering-general}
x_{I_-^h-1} \leq -1 < x_{I_-^h} \leq  x_{I_0^h} < \zeta^{n} \leq x_{I_0^h+1} \!\leq x_{I_+^h} < 1 \leq x_{I_+^h+1} \; \text{in $S_{h}$},
\end{equation*}
(this is the typical case, we omit the simpler cases where there is no particle on one side from the turning curve, in order to not overload the proof). By definition no particle leaves $\mathfrak{C}$ during the time interval $S_{h}$, therefore each of the functions $I_\pm^h$ keeps a constant value in $S_{h}$. Our goal is to prove that no particle changes direction during the time inteval $S_{h}$, which means according to \eqref{e:FTL} that also $I_0^h$ keeps a constant value in $S_{h}$.

First, we prove that $\zeta^n$ is continuous on $S_{h}$.
From the definition \eqref{e:turning}--\eqref{e:disdens} of $\zeta^n$ and the invariance of $I_\pm^h$,  with calculations analogous to those of the proof of \eqref{eq:xi(0)-estim} in Proposition~\ref{prop:xidot}, we find that for $s,t\in S_{h}$ there holds
\begin{equation}\label{eq:zetan-cont}
2 \, |\zeta^n(t)-\zeta^n(s)| \leq \alpha \int_{I_-^h}^{I_+^h} |\rho^n(t,x) - \rho^n(s,x)|\, {\rm{d}} x.
\end{equation}
Recall that $x_i$,  $i\in \llbracket 0 , n \rrbracket$, are continuous on $(0,\tau)$ and $x_{i+1}-x_i\neq 0$ according to the notion of solution we have fixed for \eqref{e:FTL}--\eqref{e:IBuiltTheSky}. Therefore $R_{i+\frac 12}$, $i\in \llbracket -1 , n \rrbracket$, are continuous as well, hence $\rho^n$ in \eqref{e:disdens} belongs to $\C0([0,\tau);\L1(\mathbb{R}))$. Now \eqref{eq:zetan-cont} implies the continuity of $\zeta^n$.

Now, consider any point $t$ in $S_{h}$ which is a point of differentiability of $x_0,\dots,x_n$ and of $\zeta^n$; moreover, assume that at time $t$ the turning curve lies strictly between two particles, which reads 
\begin{equation}\label{eq:ordering}
x_{I}(t) \,<\, \zeta^{n}(t) < x_{I+1}(t)
\end{equation}	 
where we denoted $I:=I_0^h(t)$; the value $I$ is fixed until the end of the paragraph. 
By the continuity of $x_{I}$, $\zeta^{n}$ and $x_{I+1}$, the ordering \eqref{eq:ordering} of their values is preserved in some interval $(t-\delta,t+\delta)\subset S_{h}$. In this situation, arguing as in \cite[Lemma~2.1]{DiFrancescoFagioliRosiniRussoKRM} we compute
\begin{align}
\dot{\zeta}^{n}(t) &=
\frac{\alpha R_{I+\frac{1}{2}}(t)}{c\bigl(R_{I+\frac{1}{2}}(t)\bigr)} \cdot \frac{\dot{x}_{I}(t)\bigl(x_{I+1}(t)-\zeta^{n}(t)\bigr)+\dot{x}_{I+1}(t)\bigl(\zeta^{n}(t)-x_{I}(t)\bigr)}{x_{I+1}(t)-x_{I}(t)}.\label{eq:dotzetan}
\end{align}
By \eqref{e:cost}, the first factor on the right-hand side of the above equation belongs to the interval $(0,1)$; by \eqref{eq:ordering}, the second one belongs to the interval $[\dot{x}_{I}(t),\dot{x}_{I+1}(t)]$. 
We note in passing that the dynamics \eqref{e:FTL} implies, by a straightforward induction argument, the infinite speed of propagation, that is, for all $i\in \llbracket 0 , n \rrbracket$, $\dot x_i\neq 0$ on $(0,\tau)$ (see \cite{AndrRosini-SOTA} and Remark~\ref{rem:finite-time-evac} below).
Therefore we actually have $\dot{\zeta}^{n}(t) \in (\dot{x}_{I}(t),\dot{x}_{I+1}(t))$.
This implies that the ordering \eqref{eq:ordering} extends to all times in some right neighborhood $[t,t+\delta)$ of $t$, moreover, in this neighbourhood the gaps $\zeta^n-x_{I}$, $x_{I+1}-\zeta^n$ strictly increase with time.
Whence it is straightforward to deduce that the ordering \eqref{eq:ordering} extends to all times in $[t,s_{h+1})$ and moreover, the value $I=I_0^h(t)$ fits the definition of $I_0^h(s)$ for all $s\in [t,s_{h+1})$.

Thus, we have achieved the property that $I_0^h$ keeps a constant value on every interval of the form $[t,s_{h+1})$ with the above choice of $t\in S_{h}$.
To conclude the proof, it remains to observe that $t$ can be taken arbitrarily close to $s_h$. Indeed, otherwise we have for some $I$ that $\zeta^{n} \equiv x_{I+1}$ in some right vicinity $(s_h,s_h+\delta)$ of $s_h$. Instead of the strict ordering \eqref{eq:ordering}, we have  $x_{I}(t) \,<\, \zeta^{n}(t) = x_{I+1}(t)$ for $t\in (s_h,s_h+\delta)$. Keeping in mind the definition of the scheme, we see that also in this case the expression \eqref{eq:dotzetan} holds at any time $t\in (s_h,s_h+\delta)$ of differentiability of the functions involved into the calculation. This leads to a contradiction because the gap $ x_{I+1}-\zeta^n$ is found to be strictly increasing. This ends the proof.
\qed
\end{proof}

\begin{remark}
Note that the above proof ensures in passing the following properties of solutions of \eqref{e:FTL}--\eqref{e:disdens} in the sense indicated on page~\pageref{pageC1}:
\begin{itemize}
\item the equality $\zeta^{n}(t) = x_{I_0^h+1}(t)$ can hold only at isolated points $t=s_h$, elsewhere the strict ordering \eqref{eq:ordering} holds;
\item \eqref{e:FTL} holds at such points due to the normalization of $\zeta^n$ by the right continuity and by the piecewise $\C1$ regularity of $x_i$, $i\in \llbracket 0 , n \rrbracket$.
\end{itemize}
\end{remark}

As a consequence of Lemma~\ref{lem:switching} and the above remark, there exists a finite number of times where a particle switches its direction; only at these times, the solution of \eqref{e:FTL}--\eqref{e:disdens} can feature discontinuities of $\zeta^n$ and non-differentiability of some of the particle trajectories $x_i$, $i \in\llbracket 0 , n \rrbracket$. Let $t_0:=0$ and denote by $t_{h}>0$ the $h$-th positive time of switching direction of at least one particle.
Denote by $t_{H_{\rm sw}}$ the last time a particle changes direction and let $t_{H_{\rm sw}+1}:=\infty$.
Lemma~\ref{lem:switching} tells us that $\{t_0,t_1,\dots,t_{H_{\rm sw}}\}\subseteq \{s_0,s_1,\dots,s_{H_{\rm ex}}\}$.

\smallskip
Our next goal is to demonstrate that a locally defined solution would not cease to exist because the values $R_{i+\frac{1}{2}}$ in \eqref{e:Plini} fall out of the interval $[0,\rho_{\max}]$ on which $v$ is defined.

\begin{lemma}[Discrete maximum principle]
\label{l:maxpri}
Assume \eqref{V1} and \eqref{C}. 
For any $i \in\llbracket 0 , n-1 \rrbracket$, it holds that
\begin{equation}
\label{19}
\frac{\ell}{R_{\max}} \leq x_{i+1}(t)-x_{i}(t) \leq 2(v_{\max} \, t+1),
\qquad t\in [0,\tau).
\end{equation}
\end{lemma}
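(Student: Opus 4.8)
The plan is to establish the two inequalities in \eqref{19} separately, treating the lower bound (which encodes the discrete maximum principle $R_{i+\frac12}\leq R_{\max}$) and the upper bound (a lower bound on the local density, i.e.\ $R_{i+\frac12}\geq \ell/(2(v_{\max}t+1))$) by monotonicity arguments along the particle trajectories. Throughout, recall from Lemma~\ref{lem:switching} and the subsequent remark that on each open interval $(t_h,t_{h+1})$ between consecutive switching times, every particle $x_i$ is $\C1$, each $\dot x_i$ keeps a constant sign, and the indices $I_-^h,I_0^h,I_+^h$ are constant; moreover the gaps $x_{i+1}-x_i$ are continuous across the switching times $t_h$. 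Hence it suffices to prove that the quantity $g_i(t):=x_{i+1}(t)-x_i(t)$ does not exit the interval $[\ell/R_{\max},2(v_{\max}t+1)]$ on each such subinterval, with continuity at the $t_h$ gluing the pieces together; the initial condition $g_i(0)=\overline x_{i+1}-\overline x_i\in[\ell/R_{\max},2]$ holds by the standing hypothesis on the $\overline x_i$ and the definition of $R_{\max}$.

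For the \emph{lower bound}, fix a subinterval $(t_h,t_{h+1})$ and a consecutive pair $(x_i,x_{i+1})$. Differentiating, $\dot g_i=\dot x_{i+1}-\dot x_i$, where each $\dot x_j$ equals $\pm v(R_{j\mp\frac12})$ according to the side of $\zeta^n$ on which $x_j$ lies. The key case is when both particles move in the same direction (say rightward, $\dot x_j=v(R_{j+\frac12})$): then $\dot g_i=v(R_{i+\frac32})-v(R_{i+\frac12})$, and at a time where $g_i$ attains the value $\ell/R_{\max}$, i.e.\ $R_{i+\frac12}=R_{\max}\geq R_{i+\frac32}$ (since $R_{\max}$ is by construction an upper bound valid initially and we argue by a first-exit-time contradiction), monotonicity of $v$ gives $v(R_{i+\frac32})\geq v(R_{i+\frac12})$, so $\dot g_i\geq 0$; this prevents $g_i$ from dropping below $\ell/R_{\max}$. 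The cases where the two particles straddle $\zeta^n$ (one moving left, one moving right — they separate, so $\dot g_i>0$ trivially) or both move leftward are analogous or easier. One runs the standard argument: if $g_i$ first reached a value below $\ell/R_{\max}$ at some time $t^\ast$, then just before $t^\ast$ we have $R_{i+\frac12}>R_{\max}\geq R_{i\pm\frac32}$ and the sign of $\dot g_i$ forces $g_i$ increasing there, a contradiction; a symmetric first-exit argument handles all particles simultaneously since the minimal gap is the relevant quantity.

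For the \emph{upper bound}, the crude estimate suffices: for every $j$, $|\dot x_j|=v(R_{j\pm\frac12})\leq v_{\max}$ on each $\C1$ piece, hence by continuity $|x_j(t)-\overline x_j|\leq v_{\max}t$ for all $t\in[0,\tau)$. Therefore $g_i(t)=x_{i+1}(t)-x_i(t)\leq |x_{i+1}(t)-\overline x_{i+1}|+(\overline x_{i+1}-\overline x_i)+|x_i(t)-\overline x_i|\leq v_{\max}t+2+v_{\max}t\leq 2(v_{\max}t+1)$, using $\overline x_{i+1}-\overline x_i\leq \overline x_n-\overline x_0\leq 2$. This already gives the stated bound; no sharper argument is needed here.

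The main obstacle is the bookkeeping of cases in the lower-bound argument, specifically handling the instants when a particle's trajectory meets the turning curve $\zeta^n$ (the switching times $t_h$) and when $\zeta^n$ coincides with a particle position $x_{I+1}$: one must use the continuity of the gaps $g_i$ across these isolated times (established in Lemma~\ref{lem:switching}) and the right-continuity normalization of $\dot x_i$ to pass the monotonicity argument through them. A secondary subtlety is justifying that $R_{\max}$, defined purely from the initial data, remains a valid upper bound for all the $R_{i+\frac12}(t)$ — this is precisely the content of the lower bound on $g_i$, so the two halves of the proof are intertwined and are best run as a single first-exit-time argument for the function $t\mapsto \min_i g_i(t)$.
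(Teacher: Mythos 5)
Your proof is correct and follows essentially the same route as the paper's: split time at the switching instants, use continuity of the gaps $x_{i+1}-x_i$ across those instants, observe that the pair straddling $\zeta^n$ is repulsive so its gap cannot decrease, and control the two one-directional groups by the follow-the-leader maximum principle — which the paper simply invokes as \cite[Lemma~1]{DiFrancescoRosini} while you reprove it via the minimal-gap/first-exit argument; the upper bound via $|\dot x_i|\leq v_{\max}$ is identical. One phrasing slip: you write that ``just before $t^*$'' one has $R_{i+\frac12}>R_{\max}$, whereas before the first exit time all gaps are still $\geq \ell/R_{\max}$ (the strict inequality can only hold after $t^*$); the intended and correct statement is that whenever the minimal gap sits at the threshold, monotonicity of $v$ forces its derivative to be non-negative, so the threshold cannot be crossed.
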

\begin{proof} 
The upper bound in \eqref{19} follows from the estimates $-1\leq x_{\min} < x_{\max} \leq 1$ and $|\dot{x}_{i}(t)| \leq v_{\max}$.
We prove now the lower bound in \eqref{19} that we denote by $\eqref{19}_1$.
Estimate $\eqref{19}_1$ holds at time $t=0$ because by \eqref{I} and \eqref{eq:Periphery} we have
\begin{equation}
\ell = \int_{\overline{x}_{i}}^{\overline{x}_{i+1}}\overline{\rho}(x) \, {\rm{d}} x \leq (\overline{x}_{i+1}-\overline{x}_{i}) \, R_{\max},
\qquad i \in\llbracket 0 , n-1 \rrbracket.
\label{e:Marillion}
\end{equation}
Now we prove that if $\eqref{19}_1$ holds at time $t=t_h$ then it also holds for all $t\in (t_h,t_{h+1}]$ (on $(t_h,t_{h+1})$ if $t_{h+1}=\tau$); clearly, the claim of the lemma follows by induction. By definition, no particle changes direction in $(t_h,t_{h+1})$, so that $x_{I_0^h}< \zeta^n\leq x_{I_0^h+1}$ on $[t_h,t_{h+1})$ for some fixed value $I^h_0$. 
We can  apply \cite[Lemma~1]{DiFrancescoRosini} separately to the particles in $x_i$, $i<I_0^h$, and to those with $i\geq I_0^h+1$. Observe in addition that for $t\in [t_h,t_{h+1})$ the distance $x_{I_0^h+1}-x_{I_0^h}$ doesn't decrease because according to \eqref{e:FTL} their movement is repulsive with respect to the turning curve. By the inductive assumption,  it follows that $\eqref{19}_1$ holds for  $t\in (t_h,t_{h+1})$. Then by the continuity of $x_i$ it also holds at $t=t_{h+1}$. This concludes the proof.
\qed
\end{proof} 

Notice that at time $s_{h}$ at most one particle crosses $x=-1$ or $x=1$, moreover,  the above lemma ensures that for all $h\in \llbracket 0 , H_{\rm ex} \rrbracket$ one has $s_{h+1}-s_h\geq \frac{\ell}{R_{\max}v_{\max}}$.

In the following proposition we collect some basic properties of the many-particle Hughes model, they describe more precisely particles exiting or switching direction and the singularities of $\zeta^n$. Their proof comes directly from some geometrical considerations on the graphs of $Z_{-}$ and $Z_{+}$.

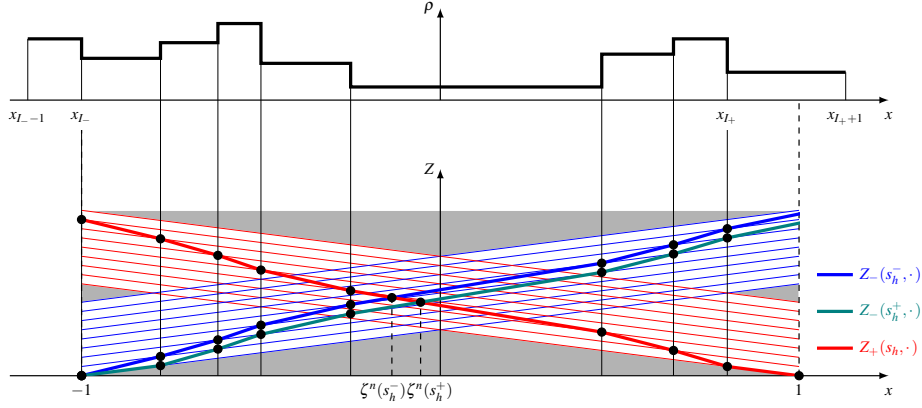
\begin{figure}[!htbp]
\resizebox{\linewidth}{!}{
\begin{tikzpicture}[x=62mm, y=8mm, semithick]

\def\lll{.2}
\def\a{-1.15}
\def\b{-1}
\def\c{-.78}
\def\d{-.62}
\def\e{-.5}
\def\f{-.25}
\def\g{.45}
\def\h{.65}
\def\i{.8}
\def\l{1.13}

\draw[white,fill=black!30!white] (-1,0) rectangle (1,3.6);
\draw[white,fill=white] (-1,1.6) -- (-1,0) -- (0,1) -- (1,0) -- (1,1.6) -- (.8,1.8) -- (1,2) -- (1,3.6) -- (0,2.6) -- (-1,3.6) -- (-1,2) -- (-.8,1.8) -- cycle;

\foreach \z in {0,1,...,8}{
\draw[blue,thin] (-1,{\z*\lll}) -- (1,{2+\z*\lll});
\draw[red,thin] (-1,{2+\z*\lll}) -- (1,{\z*\lll});
}

\draw[blue,ultra thick] (1.05,2.2) -- ++(.1,0) node[right] {$Z_-(s_{h}^-,\cdot\,)$};
\draw[teal,ultra thick] (1.05,1.4) -- ++(.1,0) node[right] {$Z_-(s_{h}^+,\cdot\,)$};
\draw[red,ultra thick] (1.05,.6) -- ++(.1,0) node[right] {$Z_+(s_{h},\cdot\,)$};

\draw[-latex] (0,0) -- (0,4.5)node[left] {$Z$};
\draw[-latex] (-1.2,0) -- (1.25,0) node[below] {\strut$x$};

\node[below] at (-1,0) {\strut $-1$};
\node[below] at (1,0) {\strut $1$};

\draw[dashed] (-1,0) -- (-1,6);
\draw[dashed] (1,0) -- (1,6);

\def\rab{(\lll/(\b-\a))}
\def\rbc{(\lll/(\c-\b))}
\def\rcd{(\lll/(\d-\c))}
\def\rde{(\lll/(\e-\d))}
\def\ref{(\lll/(\f-\e))}
\def\rfg{(\lll/(\g-\f))}
\def\rgh{(\lll/(\h-\g))}
\def\rhi{(\lll/(\i-\h))}
\def\ril{(\lll/(\l-\i))}

\draw[thin] (\b,0) -- (\b,{6+\rab});
\draw[thin] (\c,0) -- (\c,{6+\rbc});
\draw[thin] (\d,0) -- (\d,{6+\rcd});
\draw[thin] (\e,0) -- (\e,{6+\rde});
\draw[thin] (\f,0) -- (\f,{6+\ref});
\draw[thin] (\g,0) -- (\g,{6+\rfg});
\draw[thin] (\h,0) -- (\h,{6+\rgh});
\draw[thin] (\i,0) -- (\i,{6+\ril});

\begin{scope}[shift={(0,6)}]

\draw[thin] (\a,0) -- (\a,{\rab});
\draw[thin] (\l,0) -- (\l,{\ril});

\draw[ultra thick] (\a,{\rab}) -- (\b,{\rab}) -- (\b,{\rbc}) -- (\c,{\rbc}) -- (\c,{\rcd}) -- (\d,{\rcd}) -- (\d,{\rde}) -- (\e,{\rde}) -- (\e,{\ref}) -- (\f,{\ref}) -- (\f,{\rfg}) -- (\g,{\rfg}) -- (\g,{\rgh}) -- (\h,{\rgh}) -- (\h,{\rhi}) -- (\i,{\rhi}) -- (\i,{\ril}) -- (\l,{\ril});

\node[below] at (\a,0) {\strut $x_{I_--1}$};
\node[fill=white,below] at (\b,0) {\strut $x_{I_-}$};
\node[fill=white,below] at (\i,0) {\strut $x_{I_+}$};
\node[below] at (\l,0) {\strut $x_{I_++1}$};

\draw[-latex] (0,0) -- (0,2)node[left] {$\rho$};
\draw[-latex] (-1.2,0) -- (1.25,0) node[below] {\strut$x$};

\end{scope}

\coordinate (BM) at (\b,{\b+1});
\coordinate (CM) at (\c,{\c+1+\lll});
\coordinate (DM) at (\d,{\d+1+2*\lll});
\coordinate (EM) at (\e,{\e+1+3*\lll});
\coordinate (FM) at (\f,{\f+1+4*\lll});
\coordinate (GM) at (\g,{\g+1+5*\lll});
\coordinate (HM) at (\h,{\h+1+6*\lll});
\coordinate (IM) at (\i,{\i+1+7*\lll});
\coordinate (LM) at (\l,{\l+1+8*\lll});
\coordinate (BMp) at (\b,{\b+1});
\coordinate (CMp) at (\c,{\c+1});
\coordinate (DMp) at (\d,{\d+1+\lll});
\coordinate (EMp) at (\e,{\e+1+2*\lll});
\coordinate (FMp) at (\f,{\f+1+3*\lll});
\coordinate (GMp) at (\g,{\g+1+4*\lll});
\coordinate (HMp) at (\h,{\h+1+5*\lll});
\coordinate (IMp) at (\i,{\i+1+6*\lll});
\coordinate (LMp) at (\l,{\l+1+7*\lll});
\coordinate (AP) at (\a,{-\a+1+8*\lll});
\coordinate (BP) at (\b,{-\b+1+7*\lll});
\coordinate (CP) at (\c,{-\c+1+6*\lll});
\coordinate (DP) at (\d,{-\d+1+5*\lll});
\coordinate (EP) at (\e,{-\e+1+4*\lll});
\coordinate (FP) at (\f,{-\f+1+3*\lll});
\coordinate (GP) at (\g,{-\g+1+2*\lll});
\coordinate (HP) at (\h,{-\h+1+\lll});
\coordinate (IP) at (\i,{-\i+1});

\begin{scope}
\clip(-1,0) rectangle (1,3.6);
\draw[blue,ultra thick] (-1,0) -- (BM) -- (CM) -- (DM) -- (EM) -- (FM) -- (GM) -- (HM) -- (IM) -- (LM);
\draw[red,ultra thick] (AP) -- (BP) -- (CP) -- (DP) -- (EP) -- (FP) -- (GP) -- (HP) -- (IP) -- (1,0);
\draw[teal,ultra thick] (-1,0) -- (BMp) -- (CMp) -- (DMp) -- (EMp) -- (FMp) -- (GMp) -- (HMp) -- (IMp) -- (LMp);
\end{scope}

\draw[fill=black] (-1,0) circle (2pt);
\draw[fill=black] (BM) circle (2pt);
\draw[fill=black] (CM) circle (2pt);
\draw[fill=black] (DM) circle (2pt);
\draw[fill=black] (EM) circle (2pt);
\draw[fill=black] (FM) circle (2pt);
\draw[fill=black] (GM) circle (2pt);
\draw[fill=black] (HM) circle (2pt);
\draw[fill=black] (IM) circle (2pt);
\draw[fill=black] (BMp) circle (2pt);
\draw[fill=black] (CMp) circle (2pt);
\draw[fill=black] (DMp) circle (2pt);
\draw[fill=black] (EMp) circle (2pt);
\draw[fill=black] (FMp) circle (2pt);
\draw[fill=black] (GMp) circle (2pt);
\draw[fill=black] (HMp) circle (2pt);
\draw[fill=black] (IMp) circle (2pt);
\draw[fill=black] (BP) circle (2pt);
\draw[fill=black] (CP) circle (2pt);
\draw[fill=black] (DP) circle (2pt);
\draw[fill=black] (EP) circle (2pt);
\draw[fill=black] (FP) circle (2pt);
\draw[fill=black] (GP) circle (2pt);
\draw[fill=black] (HP) circle (2pt);
\draw[fill=black] (IP) circle (2pt);
\draw[fill=black] (1,0) circle (2pt);

\draw[fill=black] (-.135,1.7) circle (2pt);
\draw[fill=black] (-.055,1.6) circle (2pt);
\draw[dashed] (-.135,0) -- (-.135,1.7);
\draw[dashed] (-.055,0) -- (-.055,1.6);
\node[below] at (-0.095,0) {\strut$\zeta^{n}(s_{h}^-)\zeta^{n}(s_{h}^+)$};
\end{tikzpicture}}

\caption{Representations of $Z_\pm(s_{h}^\pm,\cdot\,)$ and $\zeta^{n}(s_{h}^\pm)$ in the case the particle $x_{I_-}$ crosses $x=-1$ at time $s_{h}$ and no particle changes direction.}
\label{f:xi2}
\end{figure}

\begin{proposition}
\label{p:Opeth}
We have the following:
\begin{enumerate}[label= {\bf(\arabic*)},leftmargin=*]
\item 
The approximate turning curve has a discontinuity jump if and only if a single particle leaves $\mathfrak{C}$.
More precisely, if a single particle leaves $\mathfrak{C}$ crossing $x=-1$ (respectively, $x=1$), then the approximate turning curve has a positive (respectively, negative) discontinuity jump.
\item 
At any time a single particle leaves $\mathfrak{C}$, then at most one particle changes
direction.
More precisely, if a single particle leaves $\mathfrak{C}$ crossing $x=-1$ (respectively, $x=1$) and a particle changes direction, then it changes from positive to negative (respectively, from negative to positive). Moreover, no
particle changes direction whenever two particles leave $\mathfrak{C}$ at the same time.
\end{enumerate}
\end{proposition}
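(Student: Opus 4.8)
The plan is to read off both claims from the geometry of the graphs $Z_-(t,\cdot\,)$, $Z_+(t,\cdot\,)$ and, above all, from the behaviour of the reference indices $I_\pm$ at an exit time $s_h$. First I would record the basic structure. Away from the times $s_h$ the particle gaps stay bounded below (Lemma~\ref{l:maxpri}), hence $\rho^{n}\in\C0$ with values in $\L1$, the indices $I_\pm$ are locally constant, and both $t\mapsto Z_-(t,\cdot\,)$ and $t\mapsto Z_+(t,\cdot\,)$ are continuous off the $s_h$'s; in particular the one-sided limits $Z_\pm(s_h^\pm,\cdot\,)$ exist. On each cell $(x_i,x_{i+1})$ one has $\partial_x Z_- = 1+\alpha\,R_{i+\frac{1}{2}}$ and $\partial_x Z_+ = -1-\alpha\,R_{i+\frac{1}{2}}$, so $Z_--Z_+$ is continuous, strictly increasing, with slope $2+2\alpha\,R_{i+\frac{1}{2}}\geq 2$ on every cell; by \eqref{e:turning}, $\zeta^{n}(t)$ is the unique zero of $Z_-(t,\cdot\,)-Z_+(t,\cdot\,)$. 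The key observation is that when a single particle $x_p$ crosses $x=-1$ at $t=s_h$ (necessarily moving left, since $\zeta^{n}>-1$ by Lemma~\ref{l:bounxi}), the leading interior index jumps from $p$ to $p+1$; combining the definition \eqref{e:Xim} with the Lagrangian normalisation \eqref{e:Plini} (the cell between $x_p$ and $x_{p+1}$ carries mass $\ell$) one gets that $Z_-(s_h^+,\cdot\,)$ is obtained from $Z_-(s_h^-,\cdot\,)$ by subtracting a pointwise non-negative function that equals $\alpha\,\ell$ for $x\geq x_{p+1}(s_h)$ and is strictly positive at every $x\in(-1,1)$, while $Z_+$, involving no index that changes, satisfies $Z_+(s_h^+,\cdot\,)=Z_+(s_h^-,\cdot\,)$. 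The symmetric statements hold when a particle crosses $x=1$.

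With these preliminaries part (1) is almost immediate. If no particle leaves on $(s_h,s_{h+1})$, then $\zeta^{n}$ is continuous there, which is already contained in the proof of Lemma~\ref{lem:switching}. If a single particle crosses $x=-1$, then $(Z_--Z_+)(s_h^+,\zeta^{n}(s_h^-))<(Z_--Z_+)(s_h^-,\zeta^{n}(s_h^-))=0$ by the strict pointwise drop, so the zero of the strictly increasing map $(Z_--Z_+)(s_h^+,\cdot\,)$ lies strictly to the right, i.e. $\zeta^{n}(s_h^+)>\zeta^{n}(s_h^-)$: a positive jump; the case $x=1$ is symmetric and yields a negative jump. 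If two particles leave simultaneously, the strict ordering of the particles forces one through each exit; then the downward jumps of $Z_-$ and of $Z_+$, each of size $\alpha\,\ell$ away from the respective exiting particle, compensate in a neighbourhood of $\zeta^{n}$, so $Z_--Z_+$ is locally unchanged and $\zeta^{n}$ does not jump (the residual degenerate sub-configuration, where $\zeta^{n}$ sits next to one of the exiting particles, is checked directly and only moves $\zeta^{n}$ within the particle-free interval delimited by two consecutive interior particles). This gives the equivalence together with the announced signs.

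For part (2), I would use that by \eqref{e:FTL} a particle $x_i$ changes direction at $s_h$ exactly when its (continuous) trajectory value $x_i(s_h)$ lies between $\zeta^{n}(s_h^-)$ and $\zeta^{n}(s_h^+)$. Consider a single particle crossing $x=-1$, so $\zeta^{n}(s_h^-)<\zeta^{n}(s_h^+)$, and let $x_{I_0}(s_h)<\zeta^{n}(s_h^-)\leq x_{I_0+1}(s_h)$ be the straddling particles. The exiting particle moves left throughout and does not switch; particles of index $\leq I_0$ stay to the left of $\zeta^{n}(s_h^-)<\zeta^{n}(s_h^+)$ and do not switch; hence the only candidate is $x_{I_0+1}$, and if it switches it goes from $\dot x_{I_0+1}>0$ to $\dot x_{I_0+1}<0$, i.e. from positive to negative. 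To rule out a switch of $x_{I_0+2}$ I would evaluate $Z_--Z_+$ after the jump at $x_{I_0+2}(s_h)$: the growth of $Z_--Z_+$ across the single cell $(x_{I_0+1},x_{I_0+2})$ equals $2(x_{I_0+2}-x_{I_0+1})+2\alpha\,\ell\geq 2\alpha\,\ell$ by the cell normalisation, and since $(Z_--Z_+)(s_h^-,x_{I_0+1})\geq(Z_--Z_+)(s_h^-,\zeta^{n}(s_h^-))=0$ we get $(Z_--Z_+)(s_h^-,x_{I_0+2})\geq 2\alpha\,\ell$, hence $(Z_--Z_+)(s_h^+,x_{I_0+2})\geq 2\alpha\,\ell-\alpha\,\ell=\alpha\,\ell>0$ and therefore $\zeta^{n}(s_h^+)<x_{I_0+2}(s_h)$. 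The case of a crossing at $x=1$ is symmetric and gives a switch from negative to positive. Finally, when two particles leave simultaneously, $\zeta^{n}$ stays within a particle-free interval by part (1), so no trajectory is crossed and no particle changes direction.

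The routine part is the piecewise-linear monotone-function bookkeeping; the genuinely delicate points are handling the one-sided-in-time limits $Z_\pm(s_h^\pm,\cdot\,)$ and the index shifts $I_\pm\mapsto I_\pm\pm 1$ correctly, including the boundary configurations where the exiting particle sits exactly at $\pm1$ at $t=s_h$, the degenerate straddling cases $\zeta^{n}(s_h^-)=x_{I_0+1}(s_h)$, the case where $\zeta^{n}$ jumps towards the last interior particle (then trivially at most one particle can switch), and the absence of a particle on one side of the turning curve (the latter treated exactly as in the proof of Lemma~\ref{lem:switching}). The quantitative heart of part (2) is the strict inequality $2\alpha\,\ell>\alpha\,\ell$ — that a single cell makes $Z_--Z_+$ grow by strictly more than the amplitude of the jump of $Z_\pm$ caused by one particle leaving; this is precisely where the Lagrangian normalisation $R_{i+\frac{1}{2}}(x_{i+1}-x_i)=\ell$ enters, and it is what prevents the turning curve from jumping past two particles at once.
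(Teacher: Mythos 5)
Your argument is correct and follows essentially the same route as the paper: both rest on the explicit drop formula $Z_\mp(s_h^+,\cdot\,)=\max\{Z_\mp(s_h^-,\cdot\,)-\alpha\,\ell,\ \cdot\ \}$ at an exit time, the strict monotonicity of $Z_--Z_+$ to read off the sign and strictness of the jump, and a quantitative bound showing that $[\zeta^{n}(s_h^-),\zeta^{n}(s_h^+))$ contains at most one particle. The only cosmetic difference is in that last step: you evaluate $Z_--Z_+$ at the second particle to the right of the turning curve, using that one full cell contributes $2\alpha\,\ell$ to its growth against a drop of only $\alpha\,\ell$, whereas the paper bounds the mass of the jump interval by $\ell/2$ and notes that two particles would force mass at least $\ell$ --- the same estimate in different clothing (and both treatments relegate the degenerate configurations, e.g.\ $\zeta^{n}$ adjacent to an exiting particle, to a case check).
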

\begin{proof} 
Consider an exit time $s_{h}$.
Assume that a single particle leaves $\mathfrak{C}$ crossing $x=-1$ at time $s_{h}$.
By \eqref{e:Xim} and \eqref{e:Xip}, see \figurename~\ref{f:xi2}, we have
\begin{equation}
\label{e:jump}
\begin{aligned}
Z_-(s_{h}^+,x) &= \max\{Z_-(s_{h}^-,x)-\alpha \, \ell,x+1\} < Z_-(s_{h}^-,x),
\\
Z_+(s_{h}^+,x) &= Z_+(s_{h}^-,x).
\end{aligned}
\end{equation}
As a consequence, the approximate turning curve has an increasing discontinuity.
In general, this discontinuity of $\zeta^n$ does not imply that a particle changes direction, see again \figurename~\ref{f:xi2}.
Indeed, this occurs if and only if a particle belongs to the interval $\left[\zeta^n(s_{h}^-),\zeta^n(s_{h}^+)\right)$, see \figurename~\ref{f:xi3}.
We prove that at time $s_{h}$ at most one particle changes direction by showing that the interval $\left[\zeta^n(s_{h}^-),\zeta^n(s_{h}^+)\right)$ contains at most one particle.
Indeed, in the case there is at least one particle in both $(-1,\zeta^n(s_{h}^\pm))$ and $[\zeta^n(s_{h}^\pm),1)$, by \eqref{e:turning} and \eqref{e:jump} we have
\begin{align*}
&Z_-(s_{h}^-,\zeta^n(s_{h}^-)) - Z_-(s_{h}^+,\zeta^n(s_{h}^+)) = Z_+(s_{h}^-,\zeta^n(s_{h}^-)) - Z_+(s_{h}^+,\zeta^n(s_{h}^+))
\\\Longleftrightarrow\ &
0 < \zeta^n(s_{h}^+) - \zeta^n(s_{h}^-) = \frac{\alpha}{2} \left(\ell - 2 \int_{\zeta^n(s_{h}^-)}^{\zeta^n(s_{h}^+)} \rho^{n}(t,y) \, {\rm{d}} y\right),
\end{align*}
and therefore
\[\int_{\zeta^n(s_{h}^-)}^{\zeta^n(s_{h}^+)} \rho^{n}(t,y) \, {\rm{d}} y < \frac{\ell}{2}.\]
The remaining cases can be treated analogously and are therefore omitted.

The case of a single particle that leaves $\mathfrak{C}$ crossing $x=1$ at time $s_{h}$ is analogous and is therefore omitted.
On the other hand, if two particles leave $\mathfrak{C}$ at the same time $s_{h}$, then by Lemma~\ref{l:maxpri} they don't cross the same exit, and therefore
\begin{align*}
Z_-(s_{h}^+,x) &= \max\{Z_-(s_{h}^-,x)-\alpha \, \ell,x+1\} < Z_-(s_{h}^-,x),
\\
Z_+(s_{h}^+,x) &= \max\{Z_+(s_{h}^-,x)-\alpha \, \ell,1-x\} < Z_+(s_{h}^-,x).
\end{align*}
As a consequence the approximate turning curve is continuous across time $s_{h}$.
\qed
\end{proof}

\begin{figure}[!htbp]
\resizebox{!}{!}{
\begin{tikzpicture}[x=65mm, y=5mm, semithick]

\def\lll{1}
\def\a{-.6}
\def\b{-.2}
\def\c{.4}

\begin{scope}
\clip(\a,0) rectangle (\c,4.9);
\foreach \z in {1,2,3,4}{
\draw[black,thin] (-1,{\z*\lll}) -- (1,{2+\z*\lll});
\draw[red,thin] (-1,{2+\z*\lll}) -- (1,{\z*\lll});
}
\draw[blue,ultra thick] (\a,{\a+1+2*\lll}) -- (\b,{\b+1+3*\lll}) -- (\c,{\c+1+4*\lll});
\draw[teal,ultra thick] (\a,{\a+1+\lll}) -- (\b,{\b+1+2*\lll}) -- (\c,{\c+1+3*\lll});
\draw[red,ultra thick] (\a,{-\a+1+3*\lll}) -- (\b,{-\b+1+2*\lll}) -- (\c,{-\c+1+1*\lll});
\end{scope}

\draw[blue,ultra thick] (.5,5) -- ++(.1,0) node[right] {$Z_-(s_{h}^-,\cdot\,)$};
\draw[teal,ultra thick] (.5,4.2) -- ++(.1,0) node[right] {$Z_-(s_{h}^+,\cdot\,)$};
\draw[red,ultra thick] (.5,3.4) -- ++(.1,0) node[right] {$Z_+(s_{h},\cdot\,)$};

\draw[thin] (\a,0) node[below] {\strut$x_{I_0-1}$} -- (\a,5) node[above] {\strut$x_{I_0-1}$};
\draw[thin] (\b,0) -- (\b,5) node[above] {\strut$x_{I_0}$};
\draw[thin] (\c,0) node[below] {\strut$x_{I_0+1}$} -- (\c,5) node[above] {\strut$x_{I_0+1}$};

\draw[fill=black] (-.285,3.5) circle (2pt);
\draw[dashed] (-.285,0) node[below] {\strut$\zeta^{n}(s_{h}^-)$} -- (-.285,3.5);
\draw[fill=black] (-.125,3) circle (2pt);
\draw[dashed] (-.125,0) node[below] {\strut$\zeta^{n}(s_{h}^+)$} -- (-.125,3);

\draw[fill=black] (-.5,3.5) circle (2pt);
\draw[dashed] (-.5,0) node[below] {\strut$\mathsf{L}$} -- (-.5,3.5);
\draw[fill=black] (0,3) circle (2pt);
\draw[dashed] (0,0) node[below] {\strut$\mathsf{U}$} -- (0,3);

\node[black,left] at (\a,{\a+1+3*\lll}) {$x+1+\alpha(I_0-I_-) \, \ell$};
\node[black,left] at (\a,{\a+1+2*\lll}) {$x+1+\alpha(I_0-1-I_-) \, \ell$};
\node[red,right] at (\c,{-\c+1+2*\lll}) {$-x+1+\alpha(I_+-I_0) \, \ell$};

\draw[-latex] (-1,0) -- (.8,0) node[below] {\strut$x$};

\draw[fill=black] (\a,{\a+1+2*\lll}) circle (2pt);
\draw[fill=black] (\a,{\a+1+\lll}) circle (2pt);
\draw[fill=black] (\b,{\b+1+3*\lll}) circle (2pt);
\draw[fill=black] (\b,{\b+1+2*\lll}) circle (2pt);
\draw[fill=black] (\c,{\c+1+3*\lll}) circle (2pt);
\draw[fill=black] (\a,{-\a+1+3*\lll}) circle (2pt);
\draw[fill=black] (\b,{-\b+1+2*\lll}) circle (2pt);
\draw[fill=black] (\c,{-\c+1+1*\lll}) circle (2pt);
\end{tikzpicture}}

\caption{Representations of $Z_\pm(s_{h}^\pm,\cdot\,)$ and $\zeta^{n}(s_{h}^\pm)$ in the case at time $s_{h}$ the particle $x_{I_-}$ crosses $x=-1$, the particle $x_{I_+}$ does not cross $x=1$ and the particle $x_{I_0}$ changes direction from positive to negative.
Above we denoted $\mathsf{L} :=\alpha \, \ell \, \left(\frac{I_++I_-}{2}-I_0\right)$ and $\mathsf{U}:=\alpha \, \ell \, \left(\frac{I_++I_-}{2}-I_0+\frac{1}{2}\right)$.}
\label{f:xi3}
\end{figure}
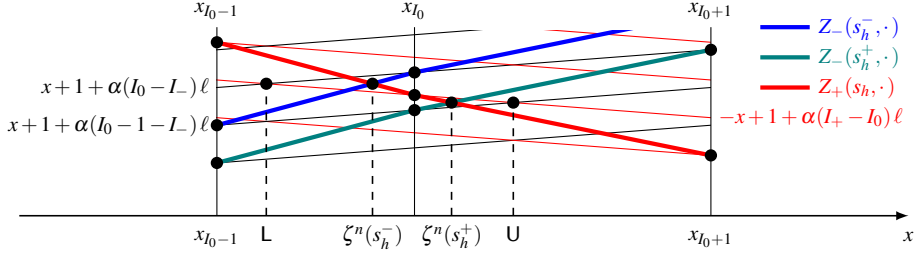

\begin{remark}\label{rem:finite-time-evac}
Let us point out that in the many-particle Hughes dynamics, whatever be the initial particle distribution, all the particles leave the corridor in a finite time.
To assess this claim, first observe that
the many-particle approximation has the ``infinite speed of propagation'' property \cite{AndrRosini-SOTA} meaning that even if the initial density can attain the saturation value $\rho_{\max}$ corresponding to the null velocity, all particles start to move instantaneously. Indeed, the leftmost and the rightmost particles move with velocities $\mp v_{\max}\neq 0$; then, by induction, it follows that all the following particles cannot stay at rest. For all $\tau>0$, the strict monotonicity of $v$ and the fact that $\rho^n(\tau,\cdot)$ takes only a finite number of values imply that  $\|\rho^n(\tau,\cdot)\|_\infty<\rho_{\max}$. Then the maximum principle of Lemma~\ref{l:maxpri} applied starting from the particle distribution at the initial time $\tau$ ensures that for all $t\geq \tau$, $|\dot x_i(t)|\geq \delta$ for some $\delta=\delta(\tau)>0$. Consequently,
the only possibility to have an infinite evacuation time is that at least one particle changes direction infinitely many times. However, we already know from Lemma~\ref{lem:switching} that
switching happens at most $n$ times.
\end{remark}	

Now we are ready to prove the main result of this section, which states the global in time solvability of the many-particle Hughes model.

\begin{theorem}\label{th:discrete-WP}
System \eqref{e:FTL}-\eqref{e:IBuiltTheSky} coupled to \eqref{e:turning}-\eqref{e:disdens} admits a unique global piecewise $\C1$ solution in the sense detailed on page~\pageref{pageC1}.
\end{theorem}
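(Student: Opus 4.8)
The plan is to build the solution step by step on the finitely many time intervals delimited by the instants at which a particle leaves $\mathfrak{C}$, on each of which the velocity-sign pattern in \eqref{e:FTL} is constant, and then to concatenate the pieces across those instants with the help of the explicit description supplied by Proposition~\ref{p:Opeth}.

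First I would solve the ``frozen sign'' subproblem. If the indices are split into left-movers $i\le I_0$ and right-movers $i\ge I_0+1$, then $\zeta^n$ disappears from the right-hand side of \eqref{e:FTL}, which becomes the autonomous system $\dot x_i=-v\bigl(R_{i-1/2}\bigr)$ for $i\le I_0$ and $\dot x_i=v\bigl(R_{i+1/2}\bigr)$ for $i\ge I_0+1$, with the conventions $x_{-1}\equiv-\infty$, $x_{n+1}\equiv+\infty$ (so $R_{-1/2}=R_{n+1/2}=0$). On the open set $\{x_0<x_1<\dots<x_n\}$ this right-hand side is smooth, and, arguing as in the proof of Lemma~\ref{l:maxpri} (applied separately to the two groups, using that the gap across the turning curve cannot decrease), any solution stays in that set and keeps every $R_{i+1/2}$ in $[0,\rho_{\max}]$, the domain of $v$; hence Cauchy--Lipschitz gives a unique maximal $\C1$ solution, which extends up to the first time $s_1$ a particle reaches $x=\pm1$. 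That time is finite: since $x_{-1}$ and $x_{n+1}$ are infinite, the extreme mover (the leftmost particle if it moves left, otherwise the rightmost particle) travels toward its exit at the maximal speed $v_{\max}$. I would then check \emph{a posteriori} that the frozen partition was the correct one: evaluating $\zeta^n$ along this solution via \eqref{e:turning}--\eqref{e:disdens}, the computation carried out in the proof of Lemma~\ref{lem:switching} shows that the gaps $\zeta^n-x_{I_0}$ and $x_{I_0+1}-\zeta^n$ are nondecreasing, and strictly increasing once positive, so the ordering $x_{I_0}<\zeta^n\le x_{I_0+1}$ persists on all of $[0,s_1)$ and the signs built into \eqref{e:FTL} are indeed respected; the degenerate case $\zeta^n\equiv x_{I_0+1}$ on a right neighbourhood of the starting time is ruled out exactly as at the end of that proof. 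At $t=0$ the initial partition is the one dictated by $\overline x_0<\dots<\overline x_n$ together with $\zeta^n(0)$, well defined by the strict monotonicity of $Z_\pm(0,\cdot)$, with the one-sided normalizations of $\zeta^n$ fixed so that \eqref{e:FTL} and the notion of solution on page~\pageref{pageC1} hold.

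Next I would cross the exit time $s_1$ by invoking Proposition~\ref{p:Opeth}: one or two particles leave $\mathfrak{C}$, $\zeta^n$ undergoes the jump described there, and at most one particle changes direction in the way characterised there; this yields the new partition and the value $\zeta^n(s_1^+)$, from which I restart the frozen-sign construction at the configuration $\bigl(x_0(s_1),\dots,x_n(s_1)\bigr)$. Iterating produces $0=s_0<s_1<s_2<\dots$ and a candidate whose $x_i$ are continuous, which is piecewise $\C1$, and whose only singularities (kinks of the $x_i$, jumps of $\zeta^n$) occur at the $s_h$. By the remark following Lemma~\ref{l:maxpri} one has $s_{h+1}-s_h\ge \ell/(R_{\max}v_{\max})$, so the $s_h$ do not accumulate; by Lemma~\ref{l:bounxi} there are at most $n+2$ of them; and by Remark~\ref{rem:finite-time-evac} (infinite speed of propagation together with the at-most-$n$ direction switches granted by Lemma~\ref{lem:switching}) every particle has left $\mathfrak{C}$ after some finite $s_{H_{\rm ex}}$. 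Beyond $s_{H_{\rm ex}}$ one has $Z_-(t,x)=x+1$ and $Z_+(t,x)=1-x$, hence $\zeta^n\equiv0$; a particle once outside $\mathfrak{C}$ cannot re-enter (a consequence of Lemma~\ref{l:bounxi}), so no further switch occurs and the system decouples into two monotone non-colliding families solving a smooth autonomous ODE, globally defined on $[s_{H_{\rm ex}},\infty)$. Gluing all the pieces yields a global ($\tau=\infty$) solution in the sense of page~\pageref{pageC1}.

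For uniqueness I would run the same induction backwards: any solution in the prescribed sense has, by Lemma~\ref{lem:switching}, a constant velocity-sign pattern between consecutive switching instants, so on each such interval it solves one of the frozen-sign ODEs, whose solution issued from a given configuration is unique; at each exit time Proposition~\ref{p:Opeth} forces the reconfiguration; and the initial pattern is forced by $\overline\rho$ and the normalization conventions. Hence two solutions with the same data agree on $[0,s_1]$, then on $[0,s_2]$, and so on, up to $\tau=\infty$. The step I expect to be the most delicate is precisely the \emph{a posteriori} self-consistency of the frozen sign pattern, together with the careful bookkeeping of the borderline configurations (a particle sitting on the turning curve, two particles exiting at once) and of the one-sided normalizations at the $s_h$; everything else is an assembly of the a priori facts already established in Lemmas~\ref{l:bounxi}--\ref{l:maxpri} and Proposition~\ref{p:Opeth}.
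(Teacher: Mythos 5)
Your proposal is correct and follows essentially the same route as the paper's proof: freeze the sign pattern on each inter-exit interval, solve the resulting locally Lipschitz ODE via Cauchy--Lipschitz with Lemma~\ref{l:maxpri} excluding blow-up, verify a posteriori via the computation \eqref{eq:dotzetan} of Lemma~\ref{lem:switching} that the ordering $x_{I_0}<\zeta^n\le x_{I_0+1}$ persists, restart across exit times using Proposition~\ref{p:Opeth}, conclude global existence from the finiteness of the exit times, and obtain uniqueness by identifying any piecewise $\C1$ solution with the output of this iterative scheme. The extra details you supply (finiteness of the first exit time via the extreme particle moving at speed $v_{\max}$, the decoupled dynamics after the last exit) are consistent with and implicit in the paper's argument.
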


\begin{proof}
We construct a solution for \eqref{e:FTL} using a recursive procedure which ends after a finite number of steps. 
We first compute the initial position of the approximate turning curve $\zeta^n(0)$ by applying \eqref{e:turning}-\eqref{e:Xip} to the initial particle distribution.
Denote by $I_0^0$ the index such that $\overline{x}_{I_0^0}<\zeta^n(0)\leq \overline{x}_{I_0^0+1}$. Then, we let the particles $x_i$, $i\in \llbracket 0, n\rrbracket$, evolve according to
\begin{equation}\label{e:evolution}
\left\{\begin{array}{@{}l@{\qquad}l@{\quad}l@{}}
\dot{x}_{i}(t) = -v\left(R_{i-\frac{1}{2}}(t)\right) &\hbox{ if }  i \in \llbracket 0 , I_0^0 \rrbracket,
\\
\dot{x}_{i}(t) = v\left(R_{i+\frac{1}{2}}(t)\right) &\hbox{ if } i \in \llbracket I_0^0+1 , n \rrbracket,
\end{array}\right.
\end{equation}
until the maximal time $\tau_1$ which is either the time of explosion of the solution or the first time $s_1$ where a particle (or a couple of particles) exits $\mathfrak{C}$. Since \eqref{e:evolution} is a standard ODE system with locally Lipschitz right-hand side, the local solution exists, moreover, it is unique. Starting from $(x_0,\dots,x_n)$ on $[0,\tau_1)$ we can  define $\zeta^n$ on the same interval using the implicit relations \eqref{e:turning}-\eqref{e:disdens}, whose solvability has been checked.
Now, let us notice that on $[0,\tau_1)$ the solution $(x_0,\dots,x_n)$ of \eqref{e:evolution} is also a solution to the original problem \eqref{e:FTL}--\eqref{e:disdens}. This is due	
to the calculation \eqref{eq:dotzetan} of the proof of Lemma~\ref{lem:switching} which ensures that \eqref{eq:ordering} holds for $t$ between zero and the first exit time $s_1$  (it also ensures that $\zeta^n\in \C1([0,\tau_1)$). Then by Lemma~\ref{l:maxpri} we also know that the solution cannot cease to exist in finite time, therefore $\tau_1=s_1$. To sum up, we have constructed a solution to the scheme \eqref{e:FTL}--\eqref{e:disdens} on the interval $[0,s_1)$.
We can extend $x_0,\dots,x_n$ to $t=s_1$ because these functions are Lipschitz continuous with Lipschitz constant $v_{\max}$. Notice that the resulting particle paths are $\C1$ on $[0,s_1]$ and also $\zeta^n$ is a $\C1$ function on $[0,s_1)$.

For a generic time interval $(s_{h},s_{h+1}]$ we can proceed iteratively.
We compute the position $\zeta^n(s_{h})$ again by \eqref{e:turning}-\eqref{e:Xip}, we denote by $I_0^h$ the index such that $x_{I_0^h}(s_{h})<\zeta^n(s_{h})\leq x_{I_0^h+1}(s_{h})$.
Observe that the definition of $\zeta^n$ ensures its left-continuity at $t=s_h$, indeed, we have $I_\pm(s_h)=I_\pm(s_h^+)$ in \eqref{e:Xim}, \eqref{e:Xip} because  particles exiting $\mathfrak{C}$ at time $s_h$ do not re-enter $\mathfrak{C}$ at later times.
We let the particles evolve according to \eqref{e:evolution} with $I_0^h$ replacing $I_0^0$. The outcome is the definition of solution to the scheme \eqref{e:FTL}--\eqref{e:disdens} on $[s_h,s_{h+1})$; notice that it has the regularity required on page~\ref{pageC1}. 

To complete the existence proof it remains to show that the above procedure leads to a global solution for \eqref{e:FTL}.
This comes from the fact that we have a finite number of exit times $s_{h}$.
Hence, the above construction leads to a solution of \eqref{e:FTL}--\eqref{e:disdens} with $\tau=\infty$. Moreover, according to the analysis of Lemma~\ref{lem:switching}, any piecewise $\C1$ solution of \eqref{e:FTL}--\eqref{e:disdens} is actually a solution of the iterative scheme we used for the existence proof; because the latter is unique by construction, uniqueness for the scheme \eqref{e:FTL}--\eqref{e:disdens} follows. This ends the proof.\qed
\end{proof}

\delaynewpage{3}
Let us now recall that the algorithm produces the approximate density $\rho^n$ in \eqref{e:disdens}. We now provide a uniform in $n$ time continuity property of solutions $t\mapsto \rho^n(t,\cdot)\in \L1(\mathbb{R})$ constructed in Theorem~\ref{th:discrete-WP}, which we'll interpret later on as a weak $\L1$ continuity. An application of the discrete maximum principle stated in Lemma~\ref{l:maxpri} is the following 
\begin{lemma}
\label{l:dept}
For any $a<b$ and $0\leq s<t$, the approximate density $\{\rho^n\}_{n}$  satisfies
\begin{equation}
\label{e:ab}
\biggl|\int_{a}^{b}\bigl(\rho^n(t,x)-\rho^n(s,x)\bigr) \,  {\rm{d}} x\biggr|\leq C \, (t-s), 
\end{equation}
with $C:=6v_{\max}R_{\max}>0$.
\end{lemma}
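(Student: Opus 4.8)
The plan is to show that the function $A\colon t\mapsto\int_a^b\rho^n(t,x)\,{\rm{d}}x$ is Lipschitz continuous on every interval $[0,T]$ with Lipschitz constant $6v_{\max}R_{\max}$; then \eqref{e:ab} follows immediately. First I would record the regularity of the ingredients. By \eqref{e:FTL} and the piecewise $\C1$ regularity established in Theorem~\ref{th:discrete-WP}, each particle path $x_i$, $i\in\llbracket 0,n\rrbracket$, is Lipschitz on $[0,T]$ with constant $v_{\max}$, since its (one-sided) derivatives are of the form $\pm v(R_{i\mp\frac12})\in[-v_{\max},v_{\max}]$. By the discrete maximum principle of Lemma~\ref{l:maxpri} we have $x_{i+1}(t)-x_i(t)\geq\ell/R_{\max}>0$ for all $t$, hence $R_{i+\frac12}=\ell/(x_{i+1}-x_i)$ is Lipschitz on $[0,T]$ and takes values in $(0,R_{\max}]$.

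Next, using \eqref{e:Plini}, \eqref{e:disdens} and the identity $R_{i+\frac12}(x_{i+1}-x_i)=\ell$, I would rewrite $A$ in the form
\[
A(t)=\sum_{i=0}^{n-1} R_{i+\frac12}(t)\Bigl(\min\{x_{i+1}(t),b\}-\max\{x_i(t),a\}\Bigr)^{+}.
\]
Since $\min$, $\max$ and $(\cdot)^{+}$ are $1$-Lipschitz and the factors involved are bounded on $[0,T]$, each summand is a product of bounded Lipschitz functions of $t$, so $A$ is Lipschitz on $[0,T]$; in particular $A$ is absolutely continuous, and it suffices to bound $|A'(t)|$ for a.e.\ $t$. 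At a point $t$ of differentiability one classifies the summands: if the cell $[x_i(t),x_{i+1}(t))$ is contained in $(a,b)$ the summand equals $R_{i+\frac12}(x_{i+1}-x_i)=\ell$ (constant, zero contribution to $A'$), and if the cell does not meet $(a,b)$ the summand vanishes. The only summands with nonconstant contribution come from cells straddling $a$ or $b$, of which there are at most two; moreover if a single cell straddles both $a$ and $b$ it is the only nonconstant summand. For a cell with $x_i(t)\leq a<x_{i+1}(t)\leq b$ the summand is $\ell(x_{i+1}-a)/(x_{i+1}-x_i)$, and differentiating and using $|\dot x_i|,|\dot x_{i+1}|\leq v_{\max}$ together with $0\leq x_{i+1}-a\leq x_{i+1}-x_i$, its derivative is bounded in absolute value by $3\ell v_{\max}/(x_{i+1}-x_i)=3v_{\max}R_{i+\frac12}\leq 3v_{\max}R_{\max}$; the case $a\leq x_i(t)<b<x_{i+1}(t)$ is symmetric; and for a cell with $x_i(t)<a<b<x_{i+1}(t)$ the overlap length $b-a$ is constant, the summand is $\ell(b-a)/(x_{i+1}-x_i)$, and its derivative is bounded by $2\ell v_{\max}/(x_{i+1}-x_i)\leq 2v_{\max}R_{\max}$ (using $b-a\leq x_{i+1}-x_i$). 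Summing the (at most two) nonzero contributions gives $|A'(t)|\leq 6v_{\max}R_{\max}$ for a.e.\ $t\in[0,T]$; integrating over $[s,t]$ yields \eqref{e:ab}, and since $T$ is arbitrary the proof is complete.

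The step I expect to be the most delicate is the reduction to the a.e.-differentiable setting, i.e.\ making sure that the instants where $a$ or $b$ coincides with a particle position and the switching times $t_h$ produce no jump of $A$; this is precisely what the $\min/\max/(\cdot)^{+}$ representation above handles, since it exhibits $A$ as a finite combination of Lipschitz functions, and the ``partial cell'' contributions glue continuously exactly because $R_{i+\frac12}(x_{i+1}-x_i)=\ell$, so a partially covered cell carries mass proportional to the overlap length. (Finiteness of the exceptional set on $[0,T]$, if desired, follows from Lemma~\ref{lem:switching} and Remark~\ref{rem:finite-time-evac}: between switching times each $x_i$ is strictly monotone for $t>0$, hence meets $a$ or $b$ at most once.) I also note that the constant $6v_{\max}R_{\max}$ is not optimal — a slightly sharper bookkeeping of the straddling cells gives $2v_{\max}R_{\max}$ — but the stated value is all that is needed.
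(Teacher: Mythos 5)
Your proof is correct and rests on the same ingredients as the paper's: only the (at most two) cells straddling $a$ or $b$ contribute, and their contribution is controlled by the Lipschitz bound $|\dot x_i|\leq v_{\max}$ together with the gap lower bound $x_{i+1}-x_i\geq \ell/R_{\max}$ from Lemma~\ref{l:maxpri}, giving the same constant $3v_{\max}R_{\max}$ per endpoint. The only difference is packaging: the paper bounds the finite differences of the boundary-cell fractions $\gamma_a,\gamma_b$ on time intervals free of crossings of $a$ or $b$ and then telescopes over the finitely many crossing times, whereas you bound the a.e.\ derivative of the globally Lipschitz (hence absolutely continuous) overlap representation of $t\mapsto\int_a^b\rho^n(t,x)\,{\rm{d}}x$ and integrate — an equivalent, equally valid bookkeeping.
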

\begin{proof}
We first restrict our computation to the case no particle leaves the interval $[a,b]$ for $\tau\in [s,t]$. Let us introduce the indices $0\leq I_a \leq I_b \leq n$ such that
\[x_{I_a-1}(\tau) \leq a < x_{I_a}(\tau)  \leq x_{I_b}(\tau) < b \leq x_{I_b+1}(\tau) \quad \text{for } \tau\in[s,t].\]
We distinguish two subcases:
\begin{itemize}
\item If $I_a=0$ and $I_b=n$, then \eqref{e:ab} is trivial because $\|\rho^n(\tau,\cdot\,)\|_{\L1(a,b)}=L$ for $\tau\in\{s,t\}$.
\item If $I_a\geq 1$ or $I_b\leq n-1$, then we have
\[\int_{a}^{b}\rho^n(\tau,x)\, {\rm{d}} x = \ell \, \bigl(\gamma_a(\tau)+ I_b-I_a+\gamma_b(\tau)\bigr)
\qquad\hbox{for } \tau\in\{s,t\},\]
with
\begin{align*}
\gamma_a(\tau)&:=\begin{cases} \frac{x_{I_a}(\tau)-a}{x_{I_a}(\tau)-x_{I_a-1}(\tau)} &\text{if } I_a\geq 1,
\\
0 &\text{if } I_a=0,
\end{cases}
&
\gamma_b(\tau)&:=\begin{cases} \frac{b-x_{I_b}(\tau)}{x_{I_b+1}(\tau)-x_{I_b}(\tau)} &\text{if } I_b\leq n-1,
\\
0 &\text{if } I_b=n,
\end{cases}
\end{align*}
and therefore
\[\biggl|\int_{a}^{b}(\rho^n(t,x)-\rho^n(s,x)) \,{\rm{d}} x\biggr|\leq \ell \, \bigl(\bigl|\gamma_a(t)-\gamma_a(s)\bigr|+\bigl|\gamma_b(t)-\gamma_b(s)\bigr|\bigr).\]

Let us estimate only $|\gamma_a(t)-\gamma_a(s)|$. 
Denoting $\eta(\tau):= x_{I_a}(\tau)-x_{I_a-1}(\tau)$ and $\delta(\tau):= x_{I_a}(\tau)-a$, we have
\begin{align*}
|\gamma_a(t)-\gamma_a(s)|=\frac{|\delta(t)\eta(s)-\delta(s)\eta(t)|}{\eta(t)\eta(s)}\leq{}& \frac{\eta(s)|\delta(t)-\delta(s)|+\delta(s)|\eta(t)-\eta(s)|}{\eta(t)\eta(s)}
\\
\leq{}&  \frac{3 v_{\max}R_{\max}(t-s)}{\ell},
\end{align*}
since
\begin{align*}
0\leq \delta(s)&\leq \eta(s), \qquad \eta(t) \geq \frac{\ell}{R_{\max}},
\\
|\delta(t)-\delta(s)|&=|x_{I_a}(t)-x_{I_a}(s)|\leq v_{\max} \, (t-s),
\\
|\eta(t)-\eta(s)|&\leq|x_{I_a}(t)-x_{I_a}(s)|+|x_{I_a-1}(t)-x_{I_a-1}(s)|\leq 2\, v_{\max} \, (t-s).
\end{align*}
An analogous estimate holds also for $|\gamma_b(t)-\gamma_b(s)|$.
Therefore we get \eqref{e:ab} with $C:=6v_{\max}R_{\max}>0$.
\end{itemize}
To conclude the proof it is sufficient to notice that for a fixed $n\in\mathbb{N}$, each particle can change direction only a finite number of times, thus there exists a finite number of times $\tau_i$, $i\in \llbracket 1, k \rrbracket$, such that a particle crosses $a$ or $b$. 
Denote $\tau_0:=s$ and $\tau_{k+1}:=t$.
The above argument ensures that \eqref{e:ab} holds true on each $(\tau_i,\tau_{i+1})$, $i\in \llbracket 0,k \rrbracket$. 
As a consequence
\begin{align*}
\biggl|\int_{a}^{b}\bigl(\rho^n(t,x)-\rho^n(s,x)\bigr)  \,{\rm{d}} x\biggr| 
&\leq
\sum_{i=0}^{k}\biggl|\int_{a}^{b}\bigl(\rho^n(\tau_{i+1},x)-\rho^n(\tau_i,x)\bigr) \, {\rm{d}} x\biggr|
\\&\leq
C \sum_{i=0}^{k}(\tau_{i+1}-\tau_i) = C \, (t-s),
\end{align*}
where $C:=6v_{\max}R_{\max}>0$, hence the proof is completed.\qed
\end{proof}

\section{Conditional convergence of the many-particle approximation (proof of Theorem~\ref{t:exist})}
\label{s:prma}

In order to define the many-particle approximation of model \eqref{e:model} we only need to prescribe initial data for \eqref{e:FTL} starting from a given initial density $\overline{\rho}$  satisfying \eqref{I}.
Consider the convex hull of the support of $\overline{\rho}$
\[[\overline{x}_{\min} , \overline{x}_{\max}] = \mathrm{Conv}\left( \mathrm{supp}(\overline{\rho}) \right) \subseteq [-1,1].\]
We extend $\overline{\rho}$ by zero outside $\mathfrak{C}$ and, with a slight abuse of notation, we denote it by $\overline{\rho}$.
Fix $n \in{\mathbb{N}}$ sufficiently large.
We split the interval $[\overline{x}_{\min},\overline{x}_{\max}]$ into $n$ sub-intervals having equal mass $\ell := L/n$.
To perform this task, we set $\overline{x}_{0} := \overline{x}_{\min}$ and define recursively
\begin{equation}
\overline{x}_{i} := \sup \left\{ x > \overline{x}_{i-1} : \int_{\overline{x}_{i-1}}^{x}\overline{\rho}(x) \, {\rm{d}} x < \ell\right\}, \qquad i \in \llbracket 1 , n \rrbracket.
\label{e:Slipknot}
\end{equation}
Notice that by definition
\begin{align}
\label{eq:Periphery}
&\overline{x}_{n} = \overline{x}_{\max},&
&\overline{x}_{i-1} < \overline{x}_{i},&
& \int_{\overline{x}_{i-1}}^{\overline{x}_{i}}\overline{\rho}(x) \, {\rm{d}} x = \ell,&
&i \in \llbracket 1 , n \rrbracket.
\end{align}

Theorem~\ref{th:discrete-WP} provides us with a sequence $\{(\rho^n,\zeta^n)\}_{n\in\mathbb{N}}$ of solutions to the many-particle Hughes model \eqref{e:FTL}--\eqref{e:disdens} for the discretized initial data \eqref{e:Slipknot}. In this section we prove the conditional convergence statement of Theorem~\ref{t:exist}.

We establish the compactness, in an appropriate sense, of the sequence of approximate solutions $\{(\rho^n,\zeta^n)\}_n$; then we make explicit the consistency of the scheme and perform the passage to the limit, as $n\to\infty$.

\subsection{Compactness of the sequence of approximate solutions}

As a first step, we obtain a uniform time continuity estimate for $\rho^{n}$ with respect to the 1-Wasserstein distance, see Proposition~\ref{p:JakubZytecki} in Section~\ref{s:cont}.
This estimate guarantees, together with \eqref{e:DavidMaximMicic}, the strong $\L1$-compactness with respect to both space and time by exploiting a generalized version of Aubin-Lions lemma, see Theorem~\ref{t:AL} in Section~\ref{s:comp}.
The compactness of the approximate turning curve $\{\zeta^{n}\}_n$ is proved in Section~\ref{s:turn} by applying Arzel{\`a}-Ascoli Theorem.

\subsubsection{Time continuity}
\label{s:cont}

We prove now a uniform Lipschitz estimate with respect to time in the $1$-Wasserstein distance. 
We set
\[\mathcal{M}_L := \left\{ \mu \hbox{ Radon measure on $\mathbb{R}$ with compact support} : \mu \geq0,\ \mu(\mathbb{R}) = L\right\}.
\]
The pseudo-inverse distribution function associated to $\mu \in \mathcal{M}_L$ is
\[X_{\mu}(z):= \inf\left\{x \in \mathbb{R} : \mu((- \infty,x])>z\right\}, \qquad z \in [0,L].\]
The (rescaled) $1$-Wasserstein distance between $\mu,\nu \in \mathcal{M}_L$ is defined as the $\L1$-distance of $X_{\mu}$ and $X_{\nu}$, that is
\begin{equation}
\label{17}
W_{1}(\mu,\nu):=\left\|X_{\mu}-X_{\nu}\right\|_{\L1([0,L];\mathbb{R})}.
\end{equation}
It is easily seen from \eqref{e:disdens} that $\rho^{n}(t,\cdot\,) \in \mathcal{M}_L$ for all $t\geq0$ and
\begin{align}
\label{18}
&X_{ \rho^{n}(t,\cdot\,)}(z) = x_{i}(t)+(z-i \, \ell)R_{i+\frac{1}{2}}(t)^{-1},&
&z \in (i \, \ell, (i+1) \, \ell),&
&i \in\llbracket 0 , n-1 \rrbracket.
\end{align}

\begin{proposition}
\label{p:JakubZytecki}
For any $n \in {\mathbb{N}}$ we have that
\[W_{1}\left( \rho^{n}(t,\cdot\,), \rho^{n}(s,\cdot\,)\right) \leq 2L \, v_{\max} \, |t-s|, \qquad t,s> 0.\]
\end{proposition}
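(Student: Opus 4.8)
The plan is to work in the Lagrangian description provided by the pseudo-inverse distribution function, which turns the rescaled $1$-Wasserstein distance into a plain $\L1$-distance of explicit piecewise affine functions. By \eqref{17} the quantity to bound is $\int_0^L |X_{\rho^{n}(t,\cdot\,)}(z)-X_{\rho^{n}(s,\cdot\,)}(z)|\,{\rm{d}}z$, so it suffices to control, pointwise in the mass variable $z$, the time increment of $z\mapsto X_{\rho^{n}(t,\cdot\,)}(z)$. First I would rewrite \eqref{18}, using the identity $R_{i+\frac12}(t)^{-1}=(x_{i+1}(t)-x_i(t))/\ell$ coming from \eqref{e:Plini}, as the convex combination
\begin{equation*}
X_{\rho^{n}(t,\cdot\,)}(z)=\Bigl(1-\tfrac{z-i\ell}{\ell}\Bigr)x_i(t)+\tfrac{z-i\ell}{\ell}\,x_{i+1}(t),\qquad z\in(i\ell,(i+1)\ell),
\end{equation*}
and I would stress the crucial point that the index $i$ is determined by $z$ alone and does not depend on $t$: passing to the mass variable freezes the particle label attached to a given $z$.

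Next I would record that each trajectory $x_j$ is $v_{\max}$-Lipschitz in time. Indeed, \eqref{e:FTL} gives $\dot x_j(t)=-v(R_{j-\frac12}(t))$ or $\dot x_j(t)=v(R_{j+\frac12}(t))$ wherever the derivative is defined, and the discrete maximum principle of Lemma~\ref{l:maxpri} (together with the conventions $R_{-\frac12}=R_{n+\frac12}=0$) ensures $R_{j\pm\frac12}(t)\in[0,R_{\max}]\subseteq[0,\rho_{\max}]$, so that $v$ is evaluated inside its domain and $|\dot x_j(t)|\leq v_{\max}$; since the $x_j$ are continuous and piecewise $\C1$ with finitely many breakpoints, this upgrades to $|x_j(t)-x_j(s)|\leq v_{\max}|t-s|$ for all $t,s$. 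Plugging this into the convex combination above gives, for a.e.\ $z\in(0,L)$,
\begin{equation*}
\bigl|X_{\rho^{n}(t,\cdot\,)}(z)-X_{\rho^{n}(s,\cdot\,)}(z)\bigr|\leq\max\bigl\{|x_i(t)-x_i(s)|,\,|x_{i+1}(t)-x_{i+1}(s)|\bigr\}\leq v_{\max}|t-s|,
\end{equation*}
and integrating over $z\in(0,L)$ yields $W_1(\rho^{n}(t,\cdot\,),\rho^{n}(s,\cdot\,))\leq L\,v_{\max}|t-s|$, which is in fact slightly stronger than the asserted bound $2L\,v_{\max}|t-s|$.

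I do not expect a genuine obstacle here; the only point deserving care is the legitimacy of the representation \eqref{18} on all of $(0,L)$, i.e.\ that consecutive particles never collide, so that the ratios $R_{i+\frac12}(t)$ stay finite and the affine interpolation is well defined --- this is again supplied by Lemma~\ref{l:maxpri}, which gives $x_{i+1}(t)-x_i(t)\geq\ell/R_{\max}>0$. I would underline that the simplicity of the argument comes entirely from the change to the mass variable: in Eulerian coordinates the norm $\|\rho^{n}(t,\cdot\,)-\rho^{n}(s,\cdot\,)\|_{\L1}$ is sensitive to particles crossing a fixed spatial section and must be handled by a separate, more delicate argument, as in Lemma~\ref{l:dept}.
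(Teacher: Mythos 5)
Your proof is correct, and it starts from the same Lagrangian reduction as the paper: both arguments use \eqref{17} and the explicit pseudo-inverse formula \eqref{18}, together with the $v_{\max}$-Lipschitz bound on each particle path. The only real difference is in how the integrand is handled. The paper splits $x_{i}(t)-x_{i}(s)+(z-i\ell)\bigl(R_{i+\frac12}(t)^{-1}-R_{i+\frac12}(s)^{-1}\bigr)$ by the triangle inequality into a ``translation'' term $A(s,t)$ and a ``dilation'' term $B(s,t)$, each bounded by $L\,v_{\max}\,|t-s|$, which is where the factor $2$ comes from. You instead observe that $X_{\rho^{n}(t,\cdot\,)}(z)$ is the convex combination $(1-\theta)\,x_i(t)+\theta\,x_{i+1}(t)$ with $\theta=(z-i\ell)/\ell$ fixed by $z$ alone, so the time increment is itself a convex combination of the increments of $x_i$ and $x_{i+1}$ and is bounded by $v_{\max}|t-s|$ pointwise in $z$; this avoids the double counting and yields the sharper constant $L\,v_{\max}$ in place of $2L\,v_{\max}$ (which of course still implies the stated inequality). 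Your side remarks are also accurate: the representation \eqref{18} is valid on all of $(0,L)$ because Lemma~\ref{l:maxpri} keeps $x_{i+1}-x_i\geq \ell/R_{\max}>0$, and the index $i$ attached to a given $z$ never changes in time, which is precisely what makes the mass variable the right one here.
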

\begin{proof} 
Fix $s,t>0$ with $s<t$.
From \eqref{17} and \eqref{18} it follows that
\begin{align*}
W_{1}\left( \rho^{n}(t,\cdot\,), \rho^{n}(s,\cdot\,)\right)
=\ &
\sum_{i=0}^{n-1} \int_{i \, \ell}^{(i+1) \, \ell}\left|x_{i}(t)-x_{i}(s)+(z-i \, \ell)\left(R_{i+\frac{1}{2}}(t)^{-1} - R_{i+\frac{1}{2}}(s)^{-1}\right)\right|{\rm{d}} z 
\\\leq\ & A(s,t)+B(s,t),
\end{align*}
with
\begin{align*}
A(s,t)&:=\sum_{i=0}^{n-1} \int_{i \, \ell}^{(i+1) \, \ell}|x_{i}(t)-x_{i}(s)| \, {\rm{d}} z,
\\
B(s,t)&:=\sum_{i=0}^{n-1} \int_{i \, \ell}^{(i+1) \, \ell} (z-i \, \ell) \, \left|R_{i+\frac{1}{2}}(t)^{-1}-R_{i+\frac{1}{2}}(s)^{-1}\right| \, {\rm{d}} z.
\end{align*}
We estimate now $A(s,t)$ and $B(s,t)$. Starting from $A(s,t)$, we have
\[|x_{i}(t)-x_{i}(s)| \leq v_{\max} \, (t-s)\]
and therefore
\[A(s,t) \leq \sum_{i=0}^{n-1}\ell \, v_{\max} \, (t-s) \leq L \, v_{\max} \, (t-s).\]
Turning to $B(s,t)$, we have
\[\left| R_{i+\frac{1}{2}}(t)^{-1}-R_{i+\frac{1}{2}}(s)^{-1} \right| =
\left| \frac{x_{i+1}(t)-x_{i+1}(s)}{\ell}
-
\frac{x_{i}(t)-x_{i}(s)}{\ell} \right|
\leq \frac{2v_{\max}}{\ell} \, (t-s)
\]
and therefore
\[B(s,t) \leq 
\frac{2v_{\max}}{\ell} \, (t-s) \sum_{i=0}^{n-1} \int_{i \, \ell}^{(i+1) \, \ell} (z-i \, \ell) \, {\rm{d}} z =
v_{\max} \, (t-s) \sum_{i=0}^{n-1} \ell
=L \, v_{\max} \, (t-s).\]
Adding gives $W_{1}( \rho^{n}(t,\cdot\,), \rho^{n}\left(s,\cdot\,)\right) \leq 2 L\, v_{\max} \, (t-s)$ and this concludes the proof.\qed
\end{proof} 

\subsubsection{Compactness for the approximate density}
\label{s:comp}

We deduce from the following generalized version of Aubin-Lions Lemma, see \cite{MR2005609,DiFrancescoRosini,DiFrancescoFagioliRosini-BUMI}, the $\Lloc1$-compactness of the approximations $\{ \rho^{n}\}_{n}$.
\begin{theorem}
\label{t:AL}
Take $T,L>0$ and a bounded open interval $I\subset \mathbb{R}$, possibly depending on $T$.
Let $\{\rho^{n}\}_{n}$ be a sequence in $\L{ \infty}((0,T);\L{1}(\mathbb{R}))$ such that $\left\|\rho^{n}(t,\cdot\,)\right\|_{\L1(\mathbb{R})} = L$ for all $n \in{\mathbb{N}}$ and $t \in[0,T]$.
Assume that:
\begin{enumerate}[label={\bf (\Alph*)}, leftmargin=*]
\item\label{i:A}
$\supp \{\rho^{n}(t,\cdot\,)\} \subseteq I$ for all $n \in \mathbb{N}$ and $t \in [0,T]$.
\item\label{i:B}
$\displaystyle \sup_{n} \int_{0}^{T} \TV\left(\rho^{n}(t,\cdot\,);I\right) \, {\rm{d}} t < \infty$.
\item\label{i:C}
There exists a constant $c$ independent on $n$ such that $W_{1}\left(\rho^{n}(t,\cdot\,),\rho^{n}(s,\cdot\,)\right) \leq c \, |t-s|$ for a.e.  $s,t \in (0,T)$.
\end{enumerate}
Then $\{\rho^{n}\}_{n}$ is strongly relatively compact in $\L{1}( (0,T)  \times \mathbb{R} )$.
\end{theorem}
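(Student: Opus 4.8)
The plan is to apply the Fr\'echet--Kolmogorov(--Riesz) compactness criterion to the sequence $\{\rho^n\}_n$, viewed as a subset of $\L1((0,T)\times\mathbb{R})$ after extending each $\rho^n$ by zero outside $(0,T)\times I$. Boundedness in $\L1$ is immediate from the mass normalisation, $\|\rho^n\|_{\L1((0,T)\times\mathbb{R})}=LT$; the tightness (no escape of mass to infinity) is exactly hypothesis~\ref{i:A}. What remains is the equicontinuity of translations, which I would establish separately in the space variable $x$ and in the time variable $t$, and then combine; essentially all the work sits in the time direction.

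For spatial translations I would invoke the classical $BV$ estimate $\|\rho^n(t,\cdot+\sigma)-\rho^n(t,\cdot)\|_{\L1(\mathbb{R})}\leq|\sigma|\,\TV(\rho^n(t,\cdot);\mathbb{R})$. Since by~\ref{i:A} the support of $\rho^n(t,\cdot)$ is a compact subset of the open interval $I$, the function vanishes near $\partial I$, so extending by zero adds no variation and $\TV(\rho^n(t,\cdot);\mathbb{R})=\TV(\rho^n(t,\cdot);I)$. Integrating in time and using~\ref{i:B} gives, uniformly in $n$,
\[
\int_0^T\|\rho^n(t,\cdot+\sigma)-\rho^n(t,\cdot)\|_{\L1(\mathbb{R})}\,{\rm{d}} t \leq |\sigma|\,C_B,\qquad C_B:=\sup_n\int_0^T\TV(\rho^n(t,\cdot);I)\,{\rm{d}} t<\infty,
\]
which vanishes as $\sigma\to0$.

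The heart of the matter is the time-translation estimate, where the merely $W_1$ regularity in time of~\ref{i:C} must be upgraded to an $\L1$-in-space estimate by interpolation against the spatial $BV$ bound. I would fix a spatial mollifier $\omega_\varepsilon(z)=\varepsilon^{-1}\omega(z/\varepsilon)$ with $\omega\in\Cc\infty((-1,1))$, $\omega\geq0$, $\int\omega=1$, put $C_0:=\int_{\mathbb{R}}|w|\,\omega(w)\,{\rm{d}} w$, and split, for $0<h<T$ and a.e.\ $t\in(0,T-h)$,
\begin{align*}
\|\rho^n(t+h,\cdot)-\rho^n(t,\cdot)\|_{\L1(\mathbb{R})} \leq{}& \|\rho^n(t+h,\cdot)-\rho^n(t+h,\cdot)*\omega_\varepsilon\|_{\L1(\mathbb{R})} \\
{}&+ \|\bigl(\rho^n(t+h,\cdot)-\rho^n(t,\cdot)\bigr)*\omega_\varepsilon\|_{\L1(\mathbb{R})} \\
{}&+ \|\rho^n(t,\cdot)*\omega_\varepsilon-\rho^n(t,\cdot)\|_{\L1(\mathbb{R})}.
\end{align*}
The first and third terms are controlled by $\|\mu*\omega_\varepsilon-\mu\|_{\L1(\mathbb{R})}\leq C_0\,\varepsilon\,\TV(\mu;\mathbb{R})$, a direct consequence of the $BV$ translation estimate. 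For the middle term, the Kantorovich--Rubinstein dual form of $W_1$ (equivalently $W_1(\mu,\nu)=\int_\mathbb{R}|F_\mu-F_\nu|\,{\rm{d}} x$ in terms of the distribution functions) shows that $x\mapsto\bigl((\mu-\nu)*\omega_\varepsilon\bigr)(x)=\int\omega_\varepsilon(x-y)\,{\rm{d}}(\mu-\nu)(y)$ is pointwise bounded by $\|\omega_\varepsilon'\|_\infty\,W_1(\mu,\nu)=\varepsilon^{-2}\|\omega'\|_\infty\,W_1(\mu,\nu)$ and is supported in the $\varepsilon$-neighbourhood of $I$, whence $\|(\mu-\nu)*\omega_\varepsilon\|_{\L1(\mathbb{R})}\leq(|I|+2\varepsilon)\,\varepsilon^{-2}\,\|\omega'\|_\infty\,W_1(\mu,\nu)$. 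Substituting, integrating over $t\in(0,T-h)$ and using~\ref{i:B} and~\ref{i:C} yields, uniformly in $n$,
\[
\int_0^{T-h}\|\rho^n(t+h,\cdot)-\rho^n(t,\cdot)\|_{\L1(\mathbb{R})}\,{\rm{d}} t \leq 2\,C_0\,C_B\,\varepsilon + (|I|+2\varepsilon)\,\varepsilon^{-2}\,\|\omega'\|_\infty\,c\,T\,h.
\]
Choosing $\varepsilon=h^{1/3}$ makes the right-hand side $O(h^{1/3})$, hence it tends to $0$ as $h\to0$ uniformly in $n$. Combined with the spatial estimate above (and with the harmless boundary contribution of size $\leq 2Lh$ produced when extending $\rho^n$ by zero in time), this gives full translation-equicontinuity, and the Fr\'echet--Kolmogorov criterion yields the asserted relative compactness of $\{\rho^n\}_n$ in $\L1((0,T)\times\mathbb{R})$.

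I expect the only genuinely delicate point to be this interpolation step: producing a sharp enough quantitative comparison between $W_1$ and the $\L1$ norm of a mollification, and balancing $\varepsilon$ against $h$ so that the bound is uniform in $n$; boundedness, tightness and the spatial equicontinuity are routine once the three hypotheses are used as above. As an alternative, one may deduce the statement from an abstract generalised Aubin--Lions--Simon theorem applied with the chain ``$BV(I)$ compactly embedded in $\L1(I)$, which is continuously embedded in $(\mathcal{M}_L,W_1)$'', as in \cite{MR2005609,DiFrancescoRosini,DiFrancescoFagioliRosini-BUMI}; the self-contained argument above has the advantage of exhibiting explicitly the role of each of \ref{i:A}--\ref{i:C}.
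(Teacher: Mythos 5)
Your proof is correct, but it is worth pointing out that the paper itself does not prove Theorem~\ref{t:AL} at all: it imports it as a known generalized Aubin--Lions lemma from \cite{MR2005609,DiFrancescoRosini,DiFrancescoFagioliRosini-BUMI}, where the compactness is obtained from the abstract Rossi--Savar\'e framework (a normal coercive integrand built from the compact embedding $\mathbf{BV}(I)\hookrightarrow\L1(I)$, combined with the $W_1$-equicontinuity in time). Your self-contained Fr\'echet--Kolmogorov argument is a genuinely different and, in my view, more transparent route: the decomposition into spatial and temporal translation estimates isolates exactly where each of \ref{i:A}--\ref{i:C} enters, and the key interpolation step --- mollifying in space, bounding $\|(\mu-\nu)*\omega_\varepsilon\|_{\L1}$ by $(|I|+2\varepsilon)\varepsilon^{-2}\|\omega'\|_\infty W_1(\mu,\nu)$ via the Kantorovich--Rubinstein dual form of $W_1$ (which coincides with the paper's pseudo-inverse definition, since $\|X_\mu-X_\nu\|_{\L1([0,L])}=\int_{\mathbb{R}}|F_\mu-F_\nu|\,{\rm{d}}x$ for equal-mass measures), and balancing $\varepsilon=h^{1/3}$ --- is carried out correctly and uniformly in $n$. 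Two minor remarks: the hypothesis that $\rho^n\geq 0$ is implicit in the statement (it is needed for $\rho^n(t,\cdot)$ to belong to $\mathcal{M}_L$ so that \ref{i:C} makes sense), and your observation that $\TV(\rho^n(t,\cdot);\mathbb{R})=\TV(\rho^n(t,\cdot);I)$ after zero-extension relies on the support being a closed, hence compact, subset of the bounded open interval $I$, which is exactly what \ref{i:A} provides. What the citation-based route buys is brevity and reusability of an abstract theorem; what your route buys is an explicit, quantitative modulus of $\L1$ time-equicontinuity of order $h^{1/3}$ and a proof that does not require the reader to unpack the Rossi--Savar\'e machinery.
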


\noindent
Indeed, for any fixed $T>0$, condition~\ref{i:A} is satisfied because
\[\supp \{ \rho^{n}(t,\cdot\,)\} \subseteq I:=[-1-v_{\max}T,1+v_{\max}T]\]
for all $n \in \mathbb{N}$ and $t \in [0,T]$.
Condition~\ref{i:B} holds true by the assumption \eqref{e:DavidMaximMicic} and because by construction $\left\| \rho^{n}(t,\cdot\,)\right\|_{\L1(\mathbb{R})} = L$.
At last, condition~\ref{i:C} holds true by Proposition~\ref{p:JakubZytecki}. 
As a result, from Theorem~\ref{t:AL} follows that $\{ \rho^{n}\}_{n}$ converges (up to a subsequence) almost everywhere and in $\L1$ on $(0,T) \times \mathbb{R}$ to a certain function we denote by $\rho$. Using the diagonal procedure, we define $\rho \in \Lloc1\left([0, \infty);\L1(\mathbb{R};[0, \rho_{\max}])\right)$ as the a.e. limit of (a subsequence of) $\{\rho^n\}_n$.

\subsubsection{Compactness for the turning curve}
\label{s:turn}

Let $x=\xi(t)$ be the turning curve implicitly defined by \eqref{e:cost0} and corresponding to the function $\rho$ obtained in Section~\ref{s:comp}.
We want to prove that $\{\zeta^{n}\}_n$ implicitly defined by \eqref{e:turning} converges to $\xi$.
In poor words, to do so we first introduce the turning curve $x=\xi^{n}(t)$ corresponding to $\rho^{n}$ and implicitly defined by \eqref{e:cost0} with $\rho$ replaced by $\rho^{n}$.
Then we prove that $\{\xi^{n}\}_n$ converges (up to a subsequence) to $\xi$ and that $\{\xi^{n}-\zeta^{n}\}_n$ converges to zero. In passing, we establish the uniform Lipschitz bound on $\{\xi^n\}_n$ and therefore, we prove that $\xi\in \mathbf{Lip}([0,\infty);\mathfrak{C})$.

By \eqref{e:cost0} we have that $\xi^{n}(t)$ is implicitly defined by
\begin{equation}
\label{e:xiturning}
\Xi_-\left(t,\xi^{n}(t)\right) = \Xi_+\left(t,\xi^{n}(t)\right),
\end{equation}
where $\Xi_\pm \colon [0,\infty) \times \mathbb{R} \to \mathbb{R}$ are defined by
\begin{equation}
\label{e:Zmp}
\begin{aligned}
\Xi_-(t,x) &:= 
\begin{cases}\displaystyle
\int_{-1}^{x} c\left( \rho^{n}(t,y)\right) \, {\rm{d}} y = x + 1 + \alpha \int_{-1}^{x} \rho^{n}(t,y) \, {\rm{d}} y
&\hbox{if }x\geq-1,
\\
x+1&\hbox{otherwise},
\end{cases}
\\
\Xi_+(t,x) &:=
\begin{cases}\displaystyle
\int_{x}^{1} c\left( \rho^{n}(t,y)\right) \, {\rm{d}} y= 1 - x + \alpha \int_{x}^{1} \rho^{n}(t,y) \, {\rm{d}} y
&\hbox{if }x \leq 1,
\\
1-x&\hbox{otherwise}.
\end{cases}
\end{aligned}
\end{equation}
We underline
that $\xi^{n}(t)$ is well defined by the strict monotonicity and continuity of $\Xi_-(t,\cdot\,)$ and $\Xi_+(t,\cdot\,)$.
In the next two lemmas, we prove further properties of $\xi^{n}$ that will be exploited in the following Proposition~\ref{p:DanWieten} to get the compactness for $\{\xi^{n}\}_n$.

\begin{lemma}
\label{l:1}
$\xi^{n}(t) \in \mathfrak{C}$ for all $t\geq0$.
\end{lemma}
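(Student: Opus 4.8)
The plan is to mimic Step~I of the proof of Lemma~\ref{l:bounxi}: exploit the strict monotonicity of the two profiles $\Xi_\pm(t,\cdot\,)$ together with the fact that the cost $c$ is bounded below by $1$, so that the total cost of traversing the whole corridor is at least $2$. Fix $t\geq 0$. From \eqref{e:Zmp}, the map $x\mapsto \Xi_-(t,x)$ is Lipschitz continuous with a.e.\ derivative $1+\alpha\,\rho^{n}(t,x)\geq 1$ on $(-1,\infty)$ and derivative $1$ on $(-\infty,-1)$; hence it is continuous and strictly increasing on all of $\mathbb{R}$. Symmetrically, $x\mapsto \Xi_+(t,x)$ is continuous and strictly decreasing on $\mathbb{R}$. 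Therefore $g(t,\cdot\,):=\Xi_-(t,\cdot\,)-\Xi_+(t,\cdot\,)$ is continuous and strictly increasing on $\mathbb{R}$, so it has at most one zero; by \eqref{e:xiturning} this zero is precisely $\xi^{n}(t)$, which also re-confirms that $\xi^{n}(t)$ is well defined.

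It remains to locate this zero inside $\mathfrak{C}=(-1,1)$. A direct evaluation of \eqref{e:Zmp} at the endpoints gives, for every $t\geq 0$, $\Xi_-(t,-1)=0$, $\Xi_+(t,-1)=2+\alpha\,M(t)$, $\Xi_-(t,1)=2+\alpha\,M(t)$ and $\Xi_+(t,1)=0$, where $M(t):=\int_{-1}^{1}\rho^{n}(t,y)\,{\rm{d}} y\geq 0$ and we used $c(0)=1$. Hence $g(t,-1)=-\bigl(2+\alpha\,M(t)\bigr)<0$ while $g(t,1)=2+\alpha\,M(t)>0$. By the intermediate value theorem $g(t,\cdot\,)$ vanishes somewhere in the open interval $(-1,1)$, and by the uniqueness established above this vanishing point is exactly $\xi^{n}(t)$; thus $\xi^{n}(t)\in\mathfrak{C}$ for every $t\geq 0$, which is the claim. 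If desired, one may additionally remark that $g(t,x)\leq-2<0$ for $x<-1$ and $g(t,x)\geq 2>0$ for $x>1$, which independently rules out any zero outside $[-1,1]$.

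There is no genuine obstacle here: the argument is a one-line monotonicity-plus-intermediate-value computation, entirely parallel to Step~I in the proof of Lemma~\ref{l:bounxi}. The only point requiring a little care is that the inequalities at the endpoints $x=\pm 1$ are \emph{strict}; this is forced by the normalization $c(0)=1$ (more generally $c\geq 1$), which makes $\Xi_\pm(t,\mp 1)\geq 2$ independently of the particle configuration, and it is precisely this strictness that prevents $\xi^{n}(t)$ from touching the boundary $\partial\mathfrak{C}$.
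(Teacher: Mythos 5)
Your proof is correct and rests on exactly the same ingredients as the paper's argument (mimicking Step~I of Lemma~\ref{l:bounxi}): the monotonicity of $\Xi_\pm(t,\cdot\,)$ and the endpoint values $\Xi_-(t,-1)=0=\Xi_+(t,1)$, $\Xi_\mp(t,\pm1)\geq 2$. The only difference is presentational: the paper argues by contradiction ($0\geq \Xi_-(t,\xi^n(t))=\Xi_+(t,\xi^n(t))\geq 2$ if $\xi^n(t)\leq -1$), while you run the equivalent intermediate-value/uniqueness argument for $\Xi_--\Xi_+$.
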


\noindent
\begin{proof} 
As in \textbf{Step~I} of the proof of Lemma~\ref{l:bounxi}, assume by contradiction that $\xi^{n}(t) \leq -1$. 
Then by condition \eqref{e:xiturning} and definitions in \eqref{e:Zmp} we have
\[0\geq \Xi_-\left(t,\xi^{n}(t)\right)=\Xi_+\left(t,\xi^{n}(t)\right) \geq 2\]
and this gives a contradiction. 
Analogously, we can show that $\xi^{n}(t)< 1$; hence $\xi^{n}(t) \in \mathfrak{C}$.\qed
\end{proof} 

\begin{lemma}
The sequence $\{\xi^{n}\}_{n}$ is uniformly Lipschitz continuous in $[0,\infty)$.
\label{l:2}
\end{lemma}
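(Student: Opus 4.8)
The plan is to estimate $|\xi^n(t)-\xi^n(s)|$ by differentiating (in the weak/Lipschitz sense) the implicit relation \eqref{e:xiturning} and controlling the time variation of $\rho^n$ through the local mass-transport estimate of Lemma~\ref{l:dept}. First I would observe that, by Lemma~\ref{l:1}, $\xi^n(t)\in\mathfrak{C}$, and that by the explicit formulas in \eqref{e:Zmp} one has $\partial_x\Xi_-(t,x)=c(\rho^n(t,x))\geq 1$ and $\partial_x\Xi_+(t,x)=-c(\rho^n(t,x))\leq -1$ for $x\in\mathfrak{C}$; hence the map $x\mapsto \Xi_-(t,x)-\Xi_+(t,x)$ has derivative bounded below by $2$ uniformly in $t$ and $n$. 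Therefore, writing $G^n(t,x):=\Xi_-(t,x)-\Xi_+(t,x)$, the implicit relation $G^n(t,\xi^n(t))=0$ together with this uniform lower bound yields
\begin{equation}
\label{e:xin-lip-est}
2\,|\xi^n(t)-\xi^n(s)| \leq \bigl|G^n(t,\xi^n(s))-G^n(s,\xi^n(s))\bigr|,
\end{equation}
because $G^n(s,\xi^n(s))=0=G^n(t,\xi^n(t))$ and $G^n(t,\cdot)$ is increasing with slope $\geq 2$.

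Next I would bound the right-hand side of \eqref{e:xin-lip-est}. Fix $x=\xi^n(s)\in\mathfrak{C}$. From \eqref{e:Zmp}, for $x\in\mathfrak{C}$,
\[
G^n(t,x)=2x+\alpha\!\left(\int_{-1}^{x}\rho^n(t,y)\,{\rm{d}}y-\int_{x}^{1}\rho^n(t,y)\,{\rm{d}}y\right),
\]
so that
\[
G^n(t,x)-G^n(s,x)=\alpha\!\left(\int_{-1}^{x}\bigl(\rho^n(t,y)-\rho^n(s,y)\bigr)\,{\rm{d}}y-\int_{x}^{1}\bigl(\rho^n(t,y)-\rho^n(s,y)\bigr)\,{\rm{d}}y\right).
\]
Applying Lemma~\ref{l:dept} to each of the two intervals $[-1,x]$ and $[x,1]$ gives $|G^n(t,x)-G^n(s,x)|\leq 2\alpha C\,|t-s|$ with $C=6v_{\max}R_{\max}$, independently of $n$ and of $x\in\mathfrak{C}$. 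Plugging this into \eqref{e:xin-lip-est} yields
\[
|\xi^n(t)-\xi^n(s)|\leq \alpha C\,|t-s|=6\alpha v_{\max}R_{\max}\,|t-s|,
\]
which is the desired uniform Lipschitz bound.

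The only delicate point is the passage from the implicit relation to the quantitative estimate \eqref{e:xin-lip-est}: one must make sure the argument does not require differentiability of $\xi^n$ (which we do not a priori know, as $\rho^n$ has moving jumps). This is handled purely by the monotonicity/slope argument above, which is elementary and avoids any differentiation of $\xi^n$ — essentially the same device already used in the proof of \eqref{eq:xi(0)-estim} and of \eqref{eq:zetan-cont}. A minor care is that Lemma~\ref{l:dept} is stated for fixed endpoints $a,b$ while here the relevant interval endpoint $x=\xi^n(s)$ depends on $s$; but since the bound in Lemma~\ref{l:dept} is uniform over all choices of $a<b$, this causes no difficulty. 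Thus the main (and essentially the only) obstacle is recognizing that the uniform slope bound $\partial_x\Xi_-\geq 1$, $\partial_x\Xi_+\leq -1$ — which comes directly from the linear-cost structure \eqref{e:cost} and $c\geq 1$ — is exactly what converts the time-continuity of masses from Lemma~\ref{l:dept} into the time-continuity of $\xi^n$, with a Lipschitz constant proportional to $\alpha$.
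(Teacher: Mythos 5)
Your proof is correct and follows essentially the same route as the paper's: both subtract the defining identity \eqref{e:xiturning} at the two times, use $c\geq 1$ (equivalently your slope bound $\partial_x(\Xi_--\Xi_+)\geq 2$) to extract $2\,|\xi^n(t)-\xi^n(s)|$, and control the remaining time-increment of the partial masses via Lemma~\ref{l:dept}. The resulting Lipschitz constant (a multiple of $\alpha\,v_{\max}R_{\max}$) matches the paper's, and your observation that no differentiability of $\xi^n$ is needed is exactly the point of the paper's purely algebraic rearrangement.
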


\noindent
\begin{proof}
Consider $t\geq 0$ and $h>0$; we are intended to prove that
\begin{equation}\label{e:PaulMcCartney}
|\xi^n(t+h)-\xi^n(t)|\leq C\,h,
\end{equation}
with $C$ that depends only on $\rho_{\max}$, $v_{\max}$ and $\alpha$.
Subtracting identities \eqref{e:xiturning} written for times $t+h$ and $t$, rearranging the terms by separating $[-1,1]$ into intervals with endpoints $-1$, $\xi^n(t)$, $\xi^n(t+h)$ and $1$, and using the definition \eqref{e:cost} of $c$, we infer
\begin{align*}
&\int_{\xi^n(t)}^{\xi^n(t+h)} \Bigl(c(\rho^n(t,x))+c(\rho^n(t+h,x)) \Bigr) {\rm{d}} x  \\
=\ &
-\alpha \int_{-1}^{\xi^n(t)}  \Bigl(\rho^n(t+h,x)-\rho^n(t,x) \Bigr) {\rm{d}} x
+ \alpha \int_{\xi^n(t+h)}^1  \Bigl(\rho^n(t+h,x)-\rho^n(t,x) \Bigr) {\rm{d}} x.
\end{align*} 
We then take the absolute values: using the fact that $c\geq 1$, we find
\begin{align*}
2\,|\xi^n(t+h)-\xi^n(t)| & \;\leq\;  \biggl| \int_{\xi^n(t)}^{\xi^n(t+h)} \Bigl(c(\rho^n(t,x))+c(\rho^n(t+h,x)) \Bigr) {\rm{d}} x \biggr|\\
& \leq \;2\alpha\, \sup_{-1\leq a<b\leq 1}\, \biggl| \int_a^b \Bigl(\rho^n(t+h,x)-\rho^n(t,x) \Bigr) {\rm{d}} x \biggr|.
\end{align*}
Whence we derive \eqref{e:PaulMcCartney} by applying Lemma~\ref{l:dept}.\qed
\end{proof}

\begin{proposition}
\label{p:DanWieten}
$\{\xi^{n}\}_{n}$ admits a subsequence which converges in $\Lloc1([0, \infty);[-1,1])$ and locally uniformly on $[0,\infty)$ to a function $\xi \in \mathbf{Lip}([0, \infty);[-1,1])$.
\end{proposition}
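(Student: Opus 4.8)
The plan is to produce a convergent subsequence by a soft compactness argument and then to identify its limit with the turning curve attached to the density $\rho$ constructed in Section~\ref{s:comp}. First I would invoke Arzel\`a--Ascoli: by Lemma~\ref{l:1} the functions $\xi^n$ take their values in the fixed interval $[-1,1]$, and by Lemma~\ref{l:2} they share a Lipschitz constant $C=C(\rho_{\max},v_{\max},\alpha)$ that is independent of $n$; hence $\{\xi^n\}_n$ is equibounded and equicontinuous on each $[0,T]$. A diagonal extraction over $T\in\mathbb{N}$ then yields a subsequence, not relabeled, converging locally uniformly on $[0,\infty)$ to some $\xi\in\mathbf{Lip}([0,\infty);[-1,1])$ with the same Lipschitz constant $C$. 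Because the $\xi^n$ are uniformly bounded, locally uniform convergence upgrades to convergence in $\Lloc1([0,\infty);[-1,1])$, which is the regularity asserted in the statement.

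The substance of the argument is then to check that this limit $\xi$ solves \eqref{e:cost0} for the density $\rho$ of Section~\ref{s:comp}, so that it is indeed \emph{the} turning curve associated with $\rho$. Here I would use the \emph{strong} compactness from Section~\ref{s:comp}: passing to a further subsequence, $\rho^n\to\rho$ in $\L1((0,T)\times\mathbb{R})$ for all $T>0$, and by one more diagonal extraction one may assume $\|\rho^n(t,\cdot)-\rho(t,\cdot)\|_{\L1(\mathbb{R})}\to 0$ for a.e.\ $t>0$. Fixing such a $t$ and writing \eqref{e:xiturning}--\eqref{e:Zmp} in the equivalent form $\int_{-1}^{\xi^n(t)}c(\rho^n(t,y))\,{\rm{d}} y=\int_{\xi^n(t)}^{1}c(\rho^n(t,y))\,{\rm{d}} y$, I would pass to the limit in each side: since $c$ is $\alpha$-Lipschitz and bounded by $1+\alpha\rho_{\max}$ on $[0,\rho_{\max}]$ and since $\xi^n(t)\to\xi(t)$, splitting the integration domain at $\xi^n(t)$ and $\xi(t)$ gives, on each side, convergence to the corresponding integral built from $\rho(t,\cdot)$ and $\xi(t)$. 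The limit identity is exactly \eqref{e:cost0}, and its solution is unique because $x\mapsto\int_{-1}^{x}c(\rho(t,y))\,{\rm{d}} y-\int_{x}^{1}c(\rho(t,y))\,{\rm{d}} y$ is absolutely continuous with a.e.\ derivative $2\,c(\rho(t,\cdot))\ge 2$, hence strictly increasing. Therefore $\xi$ coincides a.e.\ with the turning curve of $\rho$, which consequently admits the Lipschitz representative just constructed.

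Finally I would note that combining \eqref{e:xiturning} with the elementary bounds $1\le c\le 1+\alpha\rho_{\max}$ forces $\xi^n(t)\in[-1+\delta,1-\delta]$ with $\delta:=2/(2+\alpha\rho_{\max})$, uniformly in $n$ and $t$; hence in the limit $\xi$ takes its values in a compact subinterval of $\mathfrak{C}$, so that in fact $\xi\in\mathbf{Lip}([0,\infty);\mathfrak{C})$ as announced at the start of this subsection. The only genuinely non-routine point is the passage to the limit inside the nonlocal relation \eqref{e:cost0}: because $c$ is composed with $\rho^n$, mere weak $\L1$ convergence of the densities would not be enough, and one needs the a.e.-in-time strong $\L1$ convergence delivered by the Aubin--Lions-type Theorem~\ref{t:AL}. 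The Arzel\`a--Ascoli extraction and the continuity estimates themselves are standard; it is the implication \lq\lq strong compactness of $\rho^n$ $\Rightarrow$ convergence of the implicitly defined curve $\xi^n$\rq\rq\ that is the crux.
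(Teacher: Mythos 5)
Your first paragraph is exactly the paper's proof: Arzel\`a--Ascoli via Lemmas~\ref{l:1} and~\ref{l:2}, a diagonal extraction over $T$, and the immediate upgrade of locally uniform convergence of uniformly bounded functions to convergence in $\Lloc1([0,\infty);[-1,1])$. Your remaining two paragraphs (identifying the limit with the turning curve of $\rho$ through \eqref{e:cost0} via a.e.-in-time strong $\L1$ convergence, and confining $\xi$ to a compact subinterval of $\mathfrak{C}$) are correct but go beyond what this statement asks; the paper establishes precisely these facts separately in Proposition~\ref{p:inir} and the remark following it, by essentially the same argument you outline.
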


\noindent
\begin{proof}
For any fixed $T>0$, by Lemmas~\ref{l:1} and~\ref{l:2} we can apply Arzel{\`a}-Ascoli Theorem and get that $\{\xi^{n}\}_{n}$ admits a subsequence which converges uniformly in $[0,T]$ to a function $\xi \in \mathbf{Lip}([0,T];[-1,1])$ with Lipschitz constant
that does not depend on $T$.
Hence, by a diagonal procedure, we obtain $\xi \in \mathbf{Lip}([0, \infty);[-1,1])$, which results to be the limit in $\Lloc1([0, \infty);[-1,1])$ of a subsequence of $\{\xi^{n}\}_{n}$.
At last, we have convergence in $\L1$ on $[0, \infty)$ because after the evacuation time we have $\xi\equiv0$.\qed
\end{proof} 

With a slight abuse of notation, we denote the subsequence of $\{\xi^{n}\}_{n}$ converging to $\xi$ again by $\{\xi^{n}\}_{n}$.
\begin{lemma}\label{l:zeta-xi}
The sequence $\{\xi^{n}-\zeta^n\}_{n}$ tends to zero uniformly on $[0,\infty)$ as $n\to\infty$.
\end{lemma}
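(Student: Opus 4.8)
The plan is to read off both turning curves from monotone profiles and to quantify directly how far apart these profiles can be. Introduce $G^n(t,x):=\Xi_-(t,x)-\Xi_+(t,x)$. From \eqref{e:Zmp} one has $\partial_x\Xi_-(t,\cdot)\geq 1$ and $\partial_x\Xi_+(t,\cdot)\leq-1$ wherever defined, so $G^n(t,\cdot)$ is continuous, piecewise linear and strictly increasing, with
\[
G^n(t,b)-G^n(t,a)\ \geq\ 2\,(b-a)\qquad\text{for }a<b,
\]
while $G^n(t,\xi^n(t))=0$ by \eqref{e:xiturning}. Consequently, for every $t\geq0$,
\[
2\,|\xi^n(t)-\zeta^n(t)|\ \leq\ \bigl|G^n(t,\xi^n(t))-G^n(t,\zeta^n(t))\bigr|\ =\ \bigl|G^n(t,\zeta^n(t))\bigr|.
\]
Since $Z_-(t,\zeta^n(t))=Z_+(t,\zeta^n(t))$ by \eqref{e:turning}, we may subtract this vanishing quantity and write $G^n(t,\zeta^n(t))=\bigl(\Xi_-(t,\zeta^n(t))-Z_-(t,\zeta^n(t))\bigr)-\bigl(\Xi_+(t,\zeta^n(t))-Z_+(t,\zeta^n(t))\bigr)$, so that everything reduces to an estimate of $|\Xi_\pm-Z_\pm|$ on $\mathfrak{C}$.

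The second step is precisely this estimate. Comparing the first line of \eqref{e:Xim} with the first line of \eqref{e:Zmp}, for $x\in\mathfrak{C}$ the only discrepancy between $Z_-(t,x)$ and $\Xi_-(t,x)$ is that $Z_-$ integrates $\rho^n(t,\cdot)$ starting from the first in-corridor particle $x_{I_-}(t)$, whereas $\Xi_-$ integrates from $-1$; hence $|\Xi_-(t,x)-Z_-(t,x)|=\alpha\int_{-1}^{\min\{x,x_{I_-}(t)\}}\rho^n(t,y)\, {\rm{d}} y$ when an in-corridor particle exists on the left, and the left-hand side vanishes otherwise. Because $x_{I_--1}(t)\leq-1<x_{I_-}(t)$, the interval $[-1,x_{I_-}(t))$ lies within the single discretization cell $[x_{I_--1}(t),x_{I_-}(t))$, which carries mass $\ell=L/n$; therefore $|\Xi_-(t,x)-Z_-(t,x)|\leq\alpha\,\ell$. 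The symmetric comparison of \eqref{e:Xip} with the second line of \eqref{e:Zmp} gives $|\Xi_+(t,x)-Z_+(t,x)|\leq\alpha\,\ell$. Plugging $x=\zeta^n(t)$ into the inequality of the first step, we obtain
\[
|\xi^n(t)-\zeta^n(t)|\ \leq\ \alpha\,\ell\ =\ \frac{\alpha L}{n}\qquad\text{for all }t\geq0,
\]
which is the claimed uniform convergence to zero.

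The main (and essentially only) obstacle is the bookkeeping of the case distinctions in \eqref{e:Xim}, \eqref{e:Xip}, \eqref{e:Zmp}: one must separately handle the configurations in which no particle currently lies inside $\mathfrak{C}$ on one or both sides, and in which $\zeta^n(t)$ happens to lie to the left of $x_{I_-}(t)$ (resp.\ to the right of $x_{I_+}(t)$) rather than strictly between two in-corridor particles. In each case one checks directly that the relevant $Z_\pm$ and $\Xi_\pm$ either coincide or differ by at most $\alpha$ times the mass contained in a single cell of the discretization, so the bound $\alpha\,\ell$ is unaffected; recall also that both $\xi^n(t)$ and $\zeta^n(t)$ do lie in $\mathfrak{C}$ by Lemma~\ref{l:1} and Lemma~\ref{l:bounxi}, which guarantees $\min\{x,x_{I_-}(t)\}\geq-1$ so that all integrals above make sense. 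With this settled, the monotonicity estimate of the first step closes the argument.
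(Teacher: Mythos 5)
Your argument is correct and rests on the same mechanism as the paper's one-line proof: both turning points are roots of strictly monotone cost profiles with slope at least $1$ in absolute value (hence combined slope at least $2$), and $\Xi_\pm$ differs from $Z_\pm$ only by $\alpha$ times the mass of the single boundary cell straddling $x=\pm 1$, which is at most $\ell=L/n$. Your bookkeeping via $G^n$ and the triangle inequality yields the constant $\alpha\,\ell$ instead of the paper's $\tfrac{\alpha}{2}\,\ell$, which is immaterial for the uniform convergence claimed.
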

\begin{proof}
It is sufficient to observe that, by \eqref{e:turning} and \eqref{e:xiturning}, we have for all $t>0$
\[
|\xi^{n}(t)-\zeta^{n}(t)|
=
\frac{\alpha}{2} \, \left| \int_{x_{I_+}(t)}^{1} \rho^{n}(t,y) \, {\rm{d}} y - \int_{-1}^{x_{I_-}(t)} \rho^{n}(t,y) \, {\rm{d}} y \right| 
\leq \frac{\alpha}{2} \, \ell,
\]
which tends to zero as $n\to\infty$.\qed
\end{proof}

From Proposition~\ref{p:DanWieten} and Lemma~\ref{l:zeta-xi}, we readily deduce the final claim of this paragraph.
\begin{corollary}\label{p:Korn}
Both $\{\zeta^{n}\}_{n}$ and $\{\xi^{n}\}_{n}$ converge (up to a subsequence) to some limit $\xi$ in $\Lloc1([0, \infty);[-1,1])$ and locally uniformly on $[0,\infty)$.
\end{corollary}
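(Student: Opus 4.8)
The plan is to obtain the corollary as an immediate consequence of Proposition~\ref{p:DanWieten} and Lemma~\ref{l:zeta-xi}, combined through a single triangle-inequality argument; there is no genuine obstacle here, the only point requiring attention being to fix the subsequence \emph{once} and then carry out every subsequent convergence along that same subsequence, so that $\zeta^n$ and $\xi^n$ are guaranteed to share the same limit.

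Concretely, I would first invoke Proposition~\ref{p:DanWieten} to extract a subsequence — which, following the abuse of notation announced right after that proposition, we keep denoting $\{\xi^n\}_n$ — converging to $\xi\in\mathbf{Lip}([0,\infty);[-1,1])$ simultaneously locally uniformly on $[0,\infty)$ and in $\Lloc1([0,\infty);[-1,1])$. Then, working along this fixed subsequence, I would use the decomposition
\[
\zeta^n = \xi^n - (\xi^n-\zeta^n).
\]
By Lemma~\ref{l:zeta-xi} the correction term $\xi^n-\zeta^n$ tends to zero uniformly on the whole half-line $[0,\infty)$, hence a fortiori locally uniformly and, on every compact time interval $[0,T]$ (which has finite Lebesgue measure), in $\L1$. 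Since both relevant modes of convergence — locally uniform convergence on $[0,\infty)$ and convergence in $\Lloc1$ — are preserved under sums of convergent sequences, this yields that $\zeta^n\to\xi$ locally uniformly on $[0,\infty)$ and in $\Lloc1([0,\infty);[-1,1])$, with the \emph{same} limit $\xi$ as for $\{\xi^n\}_n$.

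To close, I would record for consistency that $\zeta^n(t)\in\mathfrak C\subseteq[-1,1]$ by Lemma~\ref{l:bounxi} (and $\xi^n(t)\in\mathfrak C$ by Lemma~\ref{l:1}), so no values escape the admissible range in the passage to the limit and the stated target space $[-1,1]$ is appropriate. As anticipated, the one step deserving care is purely bookkeeping: performing the extraction of Proposition~\ref{p:DanWieten} before, and not after, applying Lemma~\ref{l:zeta-xi}, so that the two sequences are compared along a common index set; everything else is a routine stability-under-sums observation.
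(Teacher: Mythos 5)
Your proposal is correct and follows exactly the route the paper takes: the corollary is deduced from Proposition~\ref{p:DanWieten} (subsequential convergence of $\{\xi^n\}_n$) combined with Lemma~\ref{l:zeta-xi} ($\xi^n-\zeta^n\to0$ uniformly) via the obvious triangle-inequality/decomposition argument. Your additional bookkeeping remarks about fixing the subsequence first are sensible but routine; nothing is missing.
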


\subsection{Consistency of the approximation}
\label{s:cons}

Towards proving Theorem~\ref{t:exist}, our next goal is to show that the limit $(\rho,\xi)$ is indeed an entropy solution for the initial-boundary value problem \eqref{e:model} in the sense of Definition~\ref{d:entro}.
We stress that by construction $\rho$ is defined in the whole of $[0, \infty) \times \mathbb{R}$ and not only on $[0, \infty) \times \mathfrak{C}$.
Moreover, according to Proposition~\ref{p:refo}, it is enough for us to prove that $(\rho,\xi)$ is an entropy solution for the initial value problem \eqref{e:IVP} in the sense of Definition~\ref{d:entro-bis}, where, with a slight abuse of notation, we denote by $\overline{\rho}$ the extension of $\overline{\rho}$ to $\mathbb{R}$ by zero outside $\mathfrak{C}$.

\subsubsection{The relation defining the turning curve}

First, we show that the relation (the analogue of \eqref{e:cost0}) prescribing $\xi^n$ as a function of $\rho^n$ is inherited at the limit $n\to\infty$. 
\begin{proposition}
\label{p:inir}
The limit $(\rho,\xi)$ of the subsequence $\{( \rho^{n},\xi^{n})\}_{n}$ satisfies \eqref{e:cost0} almost everywhere.
\end{proposition}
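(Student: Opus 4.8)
The plan is to pass to the limit in the implicit relation \eqref{e:xiturning}--\eqref{e:Zmp} defining $\xi^n$ in terms of $\rho^n$ and to identify the limit with \eqref{e:cost0}. Recall \eqref{e:cost0} reads $\int_{-1}^{\xi(t)} c(\rho(t,y))\,{\rm{d}} y = \int_{\xi(t)}^{1} c(\rho(t,y))\,{\rm{d}} y$; since $c(\rho)=1+\alpha\rho$, this is equivalent to $\Xi_-(t,\xi(t))=\Xi_+(t,\xi(t))$ with $\Xi_\pm$ as in \eqref{e:Zmp} but with $\rho$ in place of $\rho^n$, i.e.\ to
\begin{equation*}
2\xi(t) = \alpha\int_{\xi(t)}^{1}\rho(t,y)\,{\rm{d}} y - \alpha\int_{-1}^{\xi(t)}\rho(t,y)\,{\rm{d}} y
= \alpha\int_{-1}^{1}\sign(y-\xi(t))\,\rho(t,y)\,{\rm{d}} y.
\end{equation*}
Accordingly, I would introduce the functional $G^n(t):=\Xi_-(t,\xi^n(t))-\Xi_+(t,\xi^n(t))$, which vanishes identically by \eqref{e:xiturning}, and the analogous $G(t):=2\xi(t)-\alpha\int_{-1}^1\sign(y-\xi(t))\rho(t,y)\,{\rm{d}} y$; the goal is $G(t)=0$ for a.e.\ $t>0$.

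First I would fix $T>0$ and work on $[0,T]$. By Corollary~\ref{p:Korn}, $\xi^n\to\xi$ locally uniformly (hence in $\L\infty(0,T)$), and by Section~\ref{s:comp}, $\rho^n\to\rho$ in $\L1((0,T)\times\mathbb{R})$ and a.e.\ (after passing to the further subsequence already fixed). The term $2\xi^n(t)\to 2\xi(t)$ pointwise is immediate. For the integral term, write
\begin{align*}
&\left|\int_{-1}^{1}\sign(y-\xi^n(t))\,\rho^n(t,y)\,{\rm{d}} y - \int_{-1}^{1}\sign(y-\xi(t))\,\rho(t,y)\,{\rm{d}} y\right|\\
\leq{}& \int_{-1}^{1}\left|\rho^n(t,y)-\rho(t,y)\right|{\rm{d}} y + \int_{-1}^{1}\left|\sign(y-\xi^n(t))-\sign(y-\xi(t))\right|\rho(t,y)\,{\rm{d}} y.
\end{align*}
The first term on the right tends to zero in $\L1(0,T)$, hence along a subsequence for a.e.\ $t$. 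For the second term I would use $|\sign(y-\xi^n(t))-\sign(y-\xi(t))|\leq 2\,\mathbbm{1}_{[\xi^n(t)\wedge\xi(t),\,\xi^n(t)\vee\xi(t))}(y)$ together with $\rho\le\rho_{\max}$, so that the second term is bounded by $2\rho_{\max}|\xi^n(t)-\xi(t)|$, which vanishes uniformly in $t$. Therefore $\alpha\int_{-1}^1\sign(y-\xi^n(t))\rho^n(t,y)\,{\rm{d}} y$ converges, for a.e.\ $t\in(0,T)$ along the subsequence, to $\alpha\int_{-1}^1\sign(y-\xi(t))\rho(t,y)\,{\rm{d}} y$. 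Since $2\xi^n(t)-\alpha\int_{-1}^1\sign(y-\xi^n(t))\rho^n(t,y)\,{\rm{d}} y = G^n(t)\equiv 0$, passing to the limit gives $G(t)=0$ for a.e.\ $t\in(0,T)$; letting $T\to\infty$ along a sequence finishes the proof.

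The only delicate point is that one must be careful about which subsequence is used: the $\L1$ convergence of $\rho^n$ only gives a.e.\ $t$-convergence of $\int|\rho^n(t,\cdot)-\rho(t,\cdot)|\,{\rm{d}} y$ along a subsequence, but this is harmless because we already work along the fixed subsequence of Section~\ref{s:comp} and are free to extract once more. Everything else is a routine dominated-convergence/uniform-bound argument; the uniform bound $\rho^n,\rho\le\rho_{\max}$ and the uniform convergence $\xi^n\to\xi$ from Corollary~\ref{p:Korn} are exactly what make the $\sign$-discontinuity term negligible. I do not expect any genuine obstacle here; the main thing to state cleanly is the reduction of \eqref{e:cost0} to the relation $\Xi_-(t,\xi(t))=\Xi_+(t,\xi(t))$ for the limiting density, so that the passage to the limit is literally a passage to the limit in an equality between continuous functionals of $(\rho^n,\xi^n)$.
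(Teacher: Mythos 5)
Your proposal is correct and follows essentially the same route as the paper: both arguments pass to the limit in the identity \eqref{e:xiturning} defining $\xi^n$, using the locally uniform convergence $\xi^n\to\xi$ from Corollary~\ref{p:Korn} to control the $\sign$-discontinuity (equivalently, the integral over the interval between $\xi^n(t)$ and $\xi(t)$) and the $\L1$ convergence $\rho^n\to\rho$ to control the density term, for a.e.\ $t$. Your explicit rewriting of \eqref{e:cost0} as $2\xi(t)=\alpha\int_{-1}^{1}\sign(y-\xi(t))\,\rho(t,y)\,{\rm{d}} y$ and your remark on the a.e.-in-$t$ subsequence extraction are just cleaner presentations of the same triangle-inequality estimate the paper carries out.
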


\begin{proof}
By \eqref{e:xiturning} and the triangular inequality we have that
\begin{align*}
\Delta(t):=\ &\biggl| \int_{-1}^{\xi(t)} c\bigl( \rho(t,y)\bigr) \, {\rm{d}} y - \int_{\xi(t)}^{1} c\bigl( \rho(t,y)\bigr) \, {\rm{d}} y \biggr|
\\
=\ & \biggl| 
\int_{-1}^{\xi(t)} c\bigl( \rho(t,y)\bigr) \, {\rm{d}} y 
- \int_{-1}^{\xi^{n}(t)} c\bigl( \rho(t,y)\bigr) \, {\rm{d}} y 
+ \int_{-1}^{\xi^{n}(t)} c\bigl( \rho(t,y)\bigr) \, {\rm{d}} y 
\\&
- \int_{-1}^{\xi^{n}(t)} c\left( \rho^{n}(t,y)\right) \, {\rm{d}} y 
+ \int_{\xi^{n}(t)}^{1} c\left( \rho^{n}(t,y)\right) \, {\rm{d}} y 
- \int_{\xi^{n}(t)}^{1} c\bigl( \rho(t,y)\bigr) \, {\rm{d}} y 
\\&
+ \int_{\xi^{n}(t)}^{1} c\bigl( \rho(t,y)\bigr) \, {\rm{d}} y 
- \int_{\xi(t)}^{1} c\bigl( \rho(t,y)\bigr) \, {\rm{d}} y \biggr|
=\biggl| 
2 \int_{\xi^{n}(t)}^{\xi(t)} c\bigl( \rho(t,y)\bigr) \, {\rm{d}} y 
\\&
+\alpha \int_{-1}^{\xi^{n}(t)} \bigl( \rho(t,y) - \rho^{n}(t,y) \bigr) \, {\rm{d}} y 
+\alpha \int_{\xi^{n}(t)}^{1} \bigl( \rho^{n}(t,y) - \rho(t,y) \bigr) \, {\rm{d}} y 
\biggr|
\\
\leq\ &
2|\xi^{n}(t)-\xi(t)| \, (1+\alpha\,R_{\max})
+\alpha \int_{\mathfrak{C}} | \rho^{n}(t,y) - \rho(t,y) | \, {\rm{d}} y .
\end{align*}
To conclude that $\Delta(t)=0$ for a.e. $t\geq 0$, it is then sufficient to recall that $\xi^{n}\to\xi$ uniformly in $[0,T]$ and, moreover, that the fact that $\rho^{n}\to \rho $ in $\L1((0,T) \times \mathfrak{C})$ implies $\rho^n(t,\cdot)\to \rho(t,\cdot)$ for a.e. $t$.\qed
\end{proof} 

We underline that satisfying \eqref{e:cost0} implies taking values in $\mathfrak{C}$, thus Propositions~\ref{p:DanWieten} and~\ref{p:inir} imply that actually $\xi$ belongs to $\mathbf{Lip}([0, \infty);\mathfrak{C})$.

\subsubsection{The entropy condition for the density}

It remains to prove that the limit $(\rho,\xi)$ is an entropy solution to the initial value problem \eqref{e:IVP} in the sense of Definition~\ref{d:entro-bis}. Because the existence of a strong initial trace of $\rho$ relies upon local entropy inequalities, we start by establishing the latter.

\begin{proposition}\label{p:entropy-local}
The limit $(\rho,\xi)$ of the subsequence $\{( \rho^{n},\xi^{n})\}_{n}$ verifies the entropy condition \eqref{e:entro-bis} of Definition~\ref{d:entro-bis}.
\end{proposition}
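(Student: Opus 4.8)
The plan is to fix $\kappa\in[0,\rho_{\max}]$ and $\varphi\in\Cc\infty((0,\infty)\times\mathbb{R};[0,\infty))$ and to establish the single inequality $\liminf_{n\to\infty}\mathcal E^{n}(\kappa,\varphi)\geq0$, where $\mathcal E^{n}(\kappa,\varphi)$ denotes the quantity obtained from the right-hand side of \eqref{e:entro-bis} upon replacing $(\rho,\xi)$ by $(\rho^{n},\zeta^{n})$. Since $\rho^{n}\to\rho$ in $\Lloc1$ (Section~\ref{s:comp}) and $\zeta^{n}\to\xi$ uniformly on compact time intervals (Corollary~\ref{p:Korn}), while $f$ and $\Phi$ are continuous and bounded on the relevant compact ranges, one has $\mathcal E^{n}(\kappa,\varphi)\to$ (the right-hand side of \eqref{e:entro-bis}); hence this convergence, combined with the announced lower bound, yields precisely the desired entropy inequality.

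The first, rather standard, step is to handle the region away from the turning curve. Decomposing $[0,\infty)$ into the finitely many time intervals delimited by the switching instants, on each of them the left-moving particles (resp.\ the right-moving particles) form a stand-alone follow-the-leader system for the homogeneous law $\rho_{t}-f(\rho)_{x}=0$ (resp.\ $\rho_{t}+f(\rho)_{x}=0$); the finitely many injections of a particle across $\zeta^{n}$ occur at isolated times and, by the uniform-in-$n$ time continuity of $t\mapsto\rho^{n}(t,\cdot)$ of Lemma~\ref{l:dept}, do not affect the $\Lloc1$-limit. Thus the discrete entropy (one-sided Lipschitz) estimates for the follow-the-leader scheme of \cite{DiFrancescoRosini,DiFrancescoFagioliRosini-BUMI}, together with the infinite-speed-of-propagation property recalled in Remark~\ref{rem:finite-time-evac}, pass to the limit and give that $\rho$ is a Kruzhkov entropy solution of the corresponding LWR-type law separately on $\{(t,x):x<\xi(t)\}$ and on $\{(t,x):x>\xi(t)\}$; in particular $\mathcal E^{n}(\kappa,\varphi)\geq-o(1)$ for every $\varphi$ whose support is disjoint from the turning curve $\{x=\xi(t)\}$.

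The core of the argument is the contribution of the turning curve itself. For a general $\varphi$ I would write $\varphi=\varphi\,\chi_{\sigma}+\varphi\,(1-\chi_{\sigma})$, where $\chi_{\sigma}$ is a smooth cut-off vanishing on a $\sigma$-neighbourhood of $\{x=\xi(t)\}$ and equal to $1$ off its $2\sigma$-neighbourhood; the contribution of $\varphi\,\chi_{\sigma}$ is controlled by the previous step. For the tube term $\varphi\,(1-\chi_{\sigma})$, a direct computation of $\mathcal E^{n}$ from the ODE system \eqref{e:FTL} and the definition \eqref{e:disdens} leaves, besides the usual follow-the-leader consistency errors of the bulk, only the discrete entropy contributions carried by the at most two particles that straddle $\zeta^{n}$ together with the jump of $\Phi(t,\cdot,\rho^{n},\kappa,\zeta^{n})$ across $\zeta^{n}$, of size $2\,\sign\big(r^{n}(t)-\kappa\big)\,\big(f(r^{n}(t))-f(\kappa)\big)$, where $r^{n}(t):=\rho^{n}(t,\zeta^{n}(t))$ (the density $\rho^{n}$ being continuous across $\zeta^{n}$ off isolated times). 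Using $f\geq0$ and the elementary bound $\sign(a-\kappa)\big(f(a)-f(\kappa)\big)\leq f(a)+f(\kappa)$, this jump term is at worst $2\int\big(f(r^{n}(t))+f(\kappa)\big)\varphi(t,\zeta^{n}(t))\,{\rm d}t$, so that, after reinstating the gift term $2\int f(\kappa)\varphi(t,\zeta^{n}(t))\,{\rm d}t$ built into $\mathcal E^{n}$, the whole tube contribution is bounded below by $-2\int f(r^{n}(t))\varphi(t,\zeta^{n}(t))\,{\rm d}t$ up to vanishing errors. The decisive point — and this is where the sharp definition \eqref{e:turning} of the approximate turning curve is used — is that this residual \emph{spurious flux defect} disappears in the limit: relying on the explicit formula \eqref{eq:dotzetan} for $\dot\zeta^{n}$, on the repulsive behaviour of the straddling particles established in Lemma~\ref{lem:switching}, on the discrete maximum principle of Lemma~\ref{l:maxpri}, and on the uniform bound \eqref{e:DavidMaximMicic}, one shows that the straddling gap $x_{J+1}^{n}-x_{J}^{n}$ opens fast enough that $\int_{0}^{T}f(r^{n}(t))\,{\rm d}t\to0$ for every $T>0$. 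This yields $\mathcal E^{n}(\kappa,\varphi)\geq-\omega(\sigma)-o_{n}(1)$ with $\omega(\sigma)\to0$.

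Combining the two steps, letting $n\to\infty$ and then $\sigma\to0$, gives $\liminf_{n}\mathcal E^{n}(\kappa,\varphi)\geq0$, hence \eqref{e:entro-bis}. I expect the \textbf{main obstacle} to be exactly the last point of the third paragraph: the rigorous proof that the flux defect carried by the approximate turning curve is asymptotically negligible (equivalently, that $\rho$ inherits the weak formulation across $\xi$, so that the Rankine--Hugoniot relation holds there). This is the step that genuinely exploits the new construction \eqref{e:turning}, in contrast with the straightforward turning curve $\xi^{n}$ of \eqref{e:xiturning}; everything else is either the classical follow-the-leader/LWR machinery or elementary bookkeeping.
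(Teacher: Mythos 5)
Your overall plan (show that $\mathcal E^{n}\to$ RHS of \eqref{e:entro-bis} and that $\liminf_{n}\mathcal E^{n}\geq 0$, with the only delicate contribution coming from the cell straddling $\zeta^{n}$) matches the paper's strategy, and your identification of the residual term $-2\int f(r^{n}(t))\,\varphi(t,\zeta^{n}(t))\,{\rm d}t$ after the crude bound $\sign(a-\kappa)(f(a)-f(\kappa))\leq f(a)+f(\kappa)$ is correct bookkeeping. But the step you yourself flag as the main obstacle is a genuine gap, and the mechanism you propose to close it does not work. The claim that the straddling gap $x_{I_{0}+1}-x_{I_{0}}$ opens fast enough that $\int_{0}^{T}f(r^{n}(t))\,{\rm d}t\to 0$ is unsubstantiated and false in general: the gap opens at rate $v(R_{I_{0}+\frac32})+v(R_{I_{0}-\frac12})$, which can be arbitrarily small (even zero when the neighbouring cells are saturated), and even when this rate is bounded below by $c>0$ the integral of $f(r^{n})$ over one inter-switching interval is only $O\bigl(\frac{\ell}{c}\ln(1+\frac{c\,\Delta t\,\rho_{\max}}{\ell})\bigr)$; summed over the $O(n)$ inter-switching intervals that can occur in $[0,T]$ (consecutive exit times are only separated by $\ell/(R_{\max}v_{\max})$), this gives $O(1)$, not $o(1)$. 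Nothing in Lemmas~\ref{lem:switching}, \ref{l:maxpri} or in \eqref{e:DavidMaximMicic} forces the straddling-cell density to vanish in time-average; indeed the limit solution may carry a non-classical shock with nonzero traces at $\xi$, and the discrete density near $\zeta^{n}$ stays of that order.

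The reason the defect nevertheless disappears — and this is what your inequality $\sign(a-\kappa)(f(a)-f(\kappa))\leq f(a)+f(\kappa)$ throws away — is an exact algebraic cancellation with the \emph{dilution} contribution of the straddling cell coming from the $|\rho^{n}-\kappa|\varphi_{t}$ term. Since the straddling cell conserves its mass $\ell$ while its endpoints recede with velocities $-v(R_{I_{0}-\frac12})$ and $+v(R_{I_{0}+\frac32})$, relation \eqref{e:Rdot} gives $-\dot R_{I_{0}+\frac12}\int_{x_{I_{0}}}^{x_{I_{0}+1}}\varphi\,{\rm d}x=R_{I_{0}+\frac12}\bigl(v(R_{I_{0}+\frac32})+v(R_{I_{0}-\frac12})\bigr)\fint\varphi$, and the identity $R\bigl(v(R_{+})+v(R_{-})\bigr)-R\,\Delta_v(R_{+},R)-R\,\Delta_v(R_{-},R)=2f(R)$ matches the flux defect $2f(R_{I_{0}+\frac12})\varphi(t,\zeta^{n})$ exactly, up to errors of size $\ell\bigl(|R_{I_{0}+\frac32}-R_{I_{0}+\frac12}|+|R_{I_{0}-\frac12}-R_{I_{0}+\frac12}|+v_{\max}\bigr)$ (this is estimate \eqref{e:MilesDavis}, using $R_{I_{0}+\frac12}\fint|x-\zeta^{n}|\,{\rm d}x\leq\ell$); these errors are then summed and controlled by the assumed global variation bound \eqref{e:DavidMaximMicic}, which is where the hypothesis of Theorem~\ref{t:exist} enters. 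The signed terms that survive are organized into the telescoping sums \eqref{e:milan}--\eqref{e:lazio}, which are nonnegative without any smallness. So the correct proof does not show the turning-curve flux defect is small; it shows it is cancelled. As written, your argument leaves a non-vanishing negative term and therefore does not establish \eqref{e:entro-bis}.
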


\begin{proof}
By the $\L1$-convergence of $\{ \rho^{n}\}_{n}$ to $\rho$, by Corollary~\ref{p:Korn} together with the equality 
\[\int_{\mathbb{R}} \left|\sign\left(x-\xi(t)\right)-\sign\left(x-\zeta^{n}(t)\right)\right| \, {\rm{d}} x = 2 |\xi(t)-\zeta^{n}(t)|,
\] 
and by the Lipschitzianity of $\varphi$ and $f$,
the right-hand side of \eqref{e:entro-bis} coincides with the limit, as $n\to\infty$, of 
\[(\spadesuit^n) := 
\int_0^{ \infty} \int_{\mathbb{R}} \left( | \rho^{n}-\kappa| \, \varphi_t + \Phi(t,x, \rho^{n},\kappa,\zeta^n) \varphi_x \right) \, {\rm{d}} x \, {\rm{d}} t + 2 \int_0^{ \infty} f(\kappa) \varphi(t,\zeta^{n}) \, {\rm{d}} t.\]	
Therefore	
it is sufficient to prove that, for every $\kappa \in [0,\rho_{\max}]$ and every non-negative test function $\varphi \in \Cc \infty((0,\infty)\times \mathbb{R})$, the limit as $n$ goes to infinity of $(\spadesuit^n)$ is non-negative.

Recall that $t_1,\ldots,t_{H_{\rm sw}}$ are the strictly positive times at which a particle changes direction with $0<t_{h}<t_{h+1}$, $t_0:=0$. With a slight abuse of notation, instead of taking $t_{H_{\rm sw}+1}=\infty$, let $t_{H_{\rm sw}+1}$ be such that $\varphi(t,\cdot\,)\equiv0$ for any $t\geq t_{H_{\rm sw}+1}$.
It is not restrictive to take $t_{H_{\rm sw}+1}>t_{H_{\rm sw}}$. 

By \eqref{e:Plini}, see \figurename~\ref{f:Step2a}, we have
\begin{equation}
\begin{aligned}
\dot{R}_{i+\frac{1}{2}}(t) = 
-R_{i+\frac{1}{2}}(t) \, \frac{\dot{x}_{i+1}(t)-\dot{x}_{i}(t)}{x_{i+1}(t)-x_{i}(t)} = 
-R_{i+\frac{1}{2}}(t)^2 \, \frac{\dot{x}_{i+1}(t)-\dot{x}_{i}(t)}{\ell}, 
\\
t \in (t_{h},t_{h+1}), \ i \in\llbracket 0 , n-1 \rrbracket.
\end{aligned}
\label{e:Rdot}
\end{equation}
Moreover, there exists an index $I_0=I_0^h$ such that
\[x_{I_0}(t) < \zeta^{n}(t) \leq x_{I_0+1}(t),
\qquad t \in (t_{h},t_{h+1}).
\]
Notice that for any $t \in (t_{h},t_{h+1})$ we have
\begin{align*}
&\dot{R}_{i+\frac{1}{2}}(t) = 
\frac{\frac{\rm{d}}{{\rm{d}} t}v\left(R_{i+\frac{1}{2}}(t)\right)}{v'\left(R_{i+\frac{1}{2}}(t)\right)} =
\left\{\begin{array}{@{}r@{\qquad\hbox{if }}l@{}}
-\ddot{x}_{i+1}(t)\big/v'\left(R_{i+\frac{1}{2}}(t)\right)&
i \in\llbracket 0 , I_0-1 \rrbracket,
\\[10pt]
\ddot{x}_{i}(t)\big/v'\left(R_{i+\frac{1}{2}}(t)\right)&
i \in\llbracket I_0+1,n-1 \rrbracket.
\end{array}\right.
\end{align*}

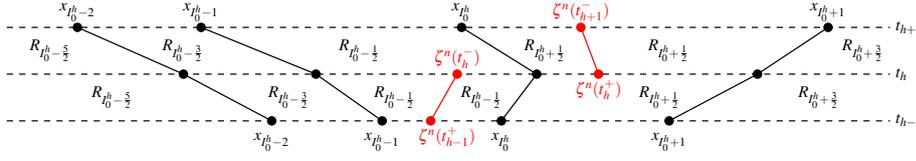
\begin{figure}[!htbp]
\resizebox{\linewidth}{!}{
\begin{tikzpicture}[x=15mm, y=8mm, semithick]

\coordinate (aa) at (-3,0);
\coordinate (ab) at (-1.5,0);
\coordinate (ac) at (.1,0);
\coordinate (acc) at (1.7,0);
\coordinate (ad) at (1,0);
\coordinate (ae) at (3.5,0);
\coordinate (ba) at (-4.2,1);
\coordinate (bb) at (-2.8,1);
\coordinate (bc) at (1.5,1);
\coordinate (bd) at (.15,1);
\coordinate (be) at (4.3,1);
\coordinate (ca) at (-2,-1);
\coordinate (cb) at (-.75,-1);
\coordinate (cc) at (-.2,-1);
\coordinate (cd) at (.6,-1);
\coordinate (ce) at (2.5,-1);

\draw[dashed] (-5,0) -- (5,0) node[right] {$t_{h}$};
\draw[dashed] (-5,1) -- (5,1) node[right] {$t_{h+1}$};
\draw[dashed] (-5,-1) -- (5,-1) node[right] {$t_{h-1}$};

\draw[fill=black] (aa) circle (2pt);
\draw[fill=black] (ab) circle (2pt);
\draw[red,fill=red] (ac) circle (2pt);
\draw[red,fill=red] (acc) circle (2pt);
\draw[fill=black] (ad) circle (2pt);
\draw[fill=black] (ae) circle (2pt);
\draw[fill=black] (ba) circle (2pt);
\draw[fill=black] (bb) circle (2pt);
\draw[red,fill=red] (bc) circle (2pt);
\draw[fill=black] (bd) circle (2pt);
\draw[fill=black] (be) circle (2pt);
\draw[fill=black] (ca) circle (2pt);
\draw[fill=black] (cb) circle (2pt);
\draw[red,fill=red] (cc) circle (2pt);
\draw[fill=black] (cd) circle (2pt);
\draw[fill=black] (ce) circle (2pt);
\draw (ca) node[below] {\strut$x_{I_0^h-2}$} -- (aa) -- (ba) node[above] {\strut$x_{I_0^h-2}$};
\draw (cb) node[below] {\strut$x_{I_0^h-1}$} -- (ab) -- (bb) node[above] {\strut$x_{I_0^h-1}$};
\draw[red] (cc) node[below] {\hspace{5.5mm}\strut$\zeta^{n}(t_{h-1}^+)$} -- (ac) node[above] {\strut$\zeta^{n}(t_{h}^-)$};
\draw[red] (acc) node[below] {\strut$\zeta^{n}(t_{h}^+)$} --(bc) node[above] {\strut$\zeta^{n}(t_{h+1}^-)$};
\draw (cd) node[below] {\strut$x_{I_0^h}$} -- (ad) -- (bd) node[above] {\strut$x_{I_0^h}$};
\draw (ce) node[below] {\strut$x_{I_0^h+1}$} -- (ae) -- (be) node[above] {\strut$x_{I_0^h+1}$};

\node at (-4.5,.5) {$R_{I_0^h-\frac{5}{2}}$};
\node at (-3,.5) {$R_{I_0^h-\frac{3}{2}}$};
\node at (-1,.5) {$R_{I_0^h-\frac{1}{2}}$};
\node at (1.1,.5) {$R_{I_0^h+\frac{1}{2}}$};
\node at (2.5,.5) {$R_{I_0^h+\frac{1}{2}}$};
\node at (4.7,.5) {$R_{I_0^h+\frac{3}{2}}$};

\node at (-3.8,-.5) {$R_{I_0^h-\frac{5}{2}}$};
\node at (-1.8,-.5) {$R_{I_0^h-\frac{3}{2}}$};
\node at (-.6,-.5) {$R_{I_0^h-\frac{1}{2}}$};
\node at (.38,-.5) {$R_{I_0^h-\frac{1}{2}}$};
\node at (2.4,-.5) {$R_{I_0^h+\frac{1}{2}}$};
\node at (4.2,-.5) {$R_{I_0^h+\frac{3}{2}}$};
\end{tikzpicture}}
\caption{Above, for convenience, the paths of both the particles and the turning curve are represented by just straight lines.
Notice that, at least in the case under consideration, $I_0^{h-1} = I_0^{h}-1$.}
\label{f:Step2a}
\end{figure}

With this notation, we have
\[(\spadesuit^n) = 
\sum_{h=0}^{s} \int_{t_{h}}^{t_{h+1}} \Biggl(
\begin{aligned}[t]
& \int_{\mathbb{R}} | \rho^{n}-\kappa| \, \varphi_t \, {\rm{d}} x 
- \int_{- \infty}^{\zeta^{n}} \!\sign( \rho^{n}-\kappa) \bigl(f( \rho^{n})-f(\kappa)\bigr) \varphi_x \, {\rm{d}} x 
\\&
+ \int_{\zeta^{n}}^{ \infty} \sign( \rho^{n}-\kappa) \bigl(f( \rho^{n})-f(\kappa)\bigr) \varphi_x \, {\rm{d}} x
+ 2f(\kappa) \varphi(t,\zeta^{n})
\Biggr) {\rm{d}} t.
\end{aligned}\]
For simplicity in the exposition, we consider a time interval $(t_{h},t_{h+1})$ for which there exists $I_0^h \in \llbracket 0 , n-1 \rrbracket$ such that
\[x_{I_0^h}(t) < \zeta^{n}(t) < x_{I_0^h+1}(t) \qquad \forall t \in (t_{h},t_{h+1}),\]
the remaining cases are analogous and are therefore omitted.
For any $t \in (t_{h},t_{h+1})$ we have by \eqref{e:disdens} that
\begin{align*}
&\int_{\mathbb{R}} | \rho^{n}-\kappa| \, \varphi_t \, {\rm{d}} x 
= 
\sum_{i=-1}^{n} \left( |R_{i+\frac{1}{2}}-\kappa| \int_{x_{i}}^{x_{i+1}} \varphi_t \, {\rm{d}} x \right)
\\
=\ &
\sum_{i=-1}^{n} \frac{{\rm{d}}}{{\rm{d}} t} \left(|R_{i+\frac{1}{2}}-\kappa| \int_{x_{i}}^{x_{i+1}} \varphi \, {\rm{d}} x\right)
- \sum_{i=0}^{n-1} \sign(R_{i+\frac{1}{2}}-\kappa) \, \dot{R}_{i+\frac{1}{2}} \, \left( \int_{x_{i}}^{x_{i+1}} \varphi \, {\rm{d}} x \right)
\\&
- \sum_{i=-1}^{n-1} |R_{i+\frac{1}{2}}-\kappa| \, \dot{x}_{i+1} \, \varphi(t,x_{i+1})
+ \sum_{i=0}^{n} |R_{i+\frac{1}{2}}-\kappa| \, \dot{x}_{i} \, \varphi(t,x_{i}).
\end{align*}
Analogously we have
\begin{align*}
&- \int_{- \infty}^{\zeta^{n}} \sign( \rho^{n}-\kappa) \bigl(f( \rho^{n})-f(\kappa)\bigr) \varphi_x \, {\rm{d}} x
\\={}&
- \sum_{i=-1}^{I_0^h-1} \sign(R_{i+\frac{1}{2}}-\kappa) \bigl(f(R_{i+\frac{1}{2}})-f(\kappa)\bigr) \bigl( \varphi(t,x_{i+1})-\varphi(t,x_{i})\bigr)
\\&
- \sign(R_{I_0^h+\frac{1}{2}}-\kappa) \bigl(f(R_{I_0^h+\frac{1}{2}})-f(\kappa)\bigr) \bigl( \varphi(t,\zeta^{n})-\varphi(t,x_{I_0^h})\bigr)
\end{align*}
and
\begin{align*}
& \int_{\zeta^{n}}^{ \infty} \sign(\rho^{n}-\kappa) \bigl(f( \rho^{n})-f(\kappa)\bigr) \varphi_x \, {\rm{d}} x
\\={}&
\sign(R_{I_0^h+\frac{1}{2}}-\kappa) \bigl(f(R_{I_0^h+\frac{1}{2}})-f(\kappa)\bigr) \bigl( \varphi(t,x_{I_0^h+1})-\varphi(t,\zeta^{n})\bigr) 
\\& 
+ \sum_{i=I_0^h+1}^{n} \sign(R_{i+\frac{1}{2}}-\kappa) \bigl(f(R_{i+\frac{1}{2}})-f(\kappa)\bigr) \bigl(\varphi(t,x_{i+1})-\varphi(t,x_{i})\bigr).
\end{align*}
Observe furthermore that, since $\varphi(0,\cdot\,)\equiv0$ and $\varphi(t,\cdot\,)\equiv0$ for any $t \geq t_{H_{\rm sw}+1}$, and moreover $R_{\frac{1}{2}},\ldots,R_{n-\frac{1}{2}}$ are $\C0$ in $(0, \infty)$ and $\C1$ in each $(t_{h},t_{h+1})$, we have
\[\sum_{h=0}^{s} \int_{t_{h}}^{t_{h+1}} \left(
\sum_{i=-1}^{n} \frac{{\rm{d}}}{{\rm{d}} t} \left(|R_{i+\frac{1}{2}}-\kappa| \int_{x_{i}}^{x_{i+1}} \varphi \, {\rm{d}} x\right)
\right) \, {\rm{d}} t = 0.\]
\delaynewpage{2}
Therefore, since \eqref{e:FTL} implies the following identities
\begin{align*}
R_{i+\frac{1}{2}} \, \dot{x}_{i+1} + f(R_{i+\frac{1}{2}}) &=
-R_{i+\frac{1}{2}} \, v(R_{i+\frac{1}{2}}) + f(R_{i+\frac{1}{2}}) = 0,&& i \leq I_0^h-1,
\\
R_{i+\frac{1}{2}} \, \dot{x}_{i} + f(R_{i+\frac{1}{2}}) &=
R_{i+\frac{1}{2}} \bigl( v(R_{i+\frac{1}{2}}) - v(R_{i-\frac{1}{2}}) \bigr),&& i \leq I_0^h-1,
\\
R_{I_0^h+\frac{1}{2}} \, \dot{x}_{I_0^h+1} - f(R_{I_0^h+\frac{1}{2}}) &=
R_{I_0^h+\frac{1}{2}} \bigl(v(R_{I_0^h+\frac{3}{2}}) - v(R_{I_0^h+\frac{1}{2}})\bigr),
\\
R_{I_0^h+\frac{1}{2}} \, \dot{x}_{I_0^h} + f(R_{I_0^h+\frac{1}{2}}) &=
R_{I_0^h+\frac{1}{2}} \bigl(-v(R_{I_0^h-\frac{1}{2}}) + v(R_{I_0^h+\frac{1}{2}})\bigr),
\\
R_{i+\frac{1}{2}} \, \dot{x}_{i+1} - f(R_{i+\frac{1}{2}}) &=
R_{i+\frac{1}{2}} \bigl( v(R_{i+\frac{3}{2}}) - v(R_{i+\frac{1}{2}}) \bigr),&& i \geq I_0^h+1,
\\
R_{i+\frac{1}{2}} \, \dot{x}_{i} - f(R_{i+\frac{1}{2}}) &=
R_{i+\frac{1}{2}} \, v(R_{i+\frac{1}{2}}) - f(R_{i+\frac{1}{2}}) = 0,&& i \geq I_0^h+1,
\end{align*}
we get $(\spadesuit^n)=\kappa(\spadesuit^n_1)+(\spadesuit^n_2)$, where
\begin{align}\nonumber
&( \spadesuit^n_1):= \sum_{h=0}^{s} \int_{t_{h}}^{t_{h+1}} \biggl\{
- \sum_{i=0}^{I_0^h-1}  \sign(R_{i+\frac{1}{2}}-\kappa) \, \Delta_v(\kappa,R_{i-\frac{1}{2}}) \, \varphi(t,x_{i})
\\&\nonumber
+\sum_{i=-1}^{I_0^h-1}    \sign(R_{i+\frac{1}{2}}-\kappa)\, \Delta_v(\kappa,R_{i+\frac{1}{2}}) \, \varphi(t,x_{i+1})
-  \sign(R_{I_0^h+\frac{1}{2}}-\kappa) \, \Delta_v(\kappa,R_{I_0^h-\frac{1}{2}}) \, \varphi(t,x_{I_0^h})
\\
&\nonumber + \sum_{i=I_0^h+1}^{n}  \sign(R_{i+\frac{1}{2}}-\kappa) \, \Delta_v(\kappa,R_{i+\frac{1}{2}}) \,  \varphi(t,x_{i})
- \sum_{i=I_0^h+1}^{n-1}  \sign(R_{i+\frac{1}{2}}-\kappa) \, \Delta_v(\kappa,R_{i+\frac{3}{2}}) \,  \varphi(t,x_{i+1}) 
\\
&-  \sign(R_{I_0^h+\frac{1}{2}}-\kappa) \, \Delta_v(\kappa,R_{I_0^h+\frac{3}{2}}) \, \varphi(t,x_{I_0^h+1})
+ 2 \bigl(1 + \sign(R_{I_0^h+\frac{1}{2}}-\kappa)\bigr)\, v(\kappa) \,  \varphi(t,\zeta^{n})\biggr\} \ {\rm d} t,
\label{e:spade_1}
\\\nonumber
&(\spadesuit^n_2) :=
\sum_{h=0}^{s} \int_{t_{h}}^{t_{h+1}} \biggl\{
- \sum_{i=0}^{I_0^h-1} \sign(R_{i+\frac{1}{2}}-\kappa) 
\biggl[
\dot{R}_{i+\frac{1}{2}} \int_{x_{i}}^{x_{i+1}} \varphi \,  {\rm d} x 
- R_{i+\frac{1}{2}} \, \Delta_v(R_{i+\frac{1}{2}},R_{i-\frac{1}{2}}) \, \varphi(t,x_{i}) 
\biggr]
\\&\nonumber
- \sum_{i=I_0^h+1}^{n-1}\!\! \sign(R_{i+\frac{1}{2}}\!-\!\kappa) 
\biggl[
\dot{R}_{i+\frac{1}{2}}   \int_{x_{i}}^{x_{i+1}} \varphi  \, {\rm d}x 
+ R_{i+\frac{1}{2}} \, \Delta_v(R_{i+\frac{3}{2}},R_{i+\frac{1}{2}}) \, \varphi(t,x_{i+1}) \biggr]
\\&\nonumber
-\sign(R_{I_0^h+\frac{1}{2}}-\kappa) \biggl[
\dot{R}_{I_0^h+\frac{1}{2}} \int_{x_{I_0^h}}^{x_{I_0^h+1}} \varphi \,  {\rm d}x
+ 2\, f(R_{I_0^h+\frac{1}{2}})  \, \varphi(t,\zeta^{n}) 
\\&
+ R_{I_0^h+\frac{1}{2}} \, \Delta_v(R_{I_0^h+\frac{3}{2}},R_{I_0^h+\frac{1}{2}}) \, \varphi(t,x_{I_0^h+1})
- R_{I_0^h+\frac{1}{2}} \, \Delta_v(R_{I_0^h+\frac{1}{2}},R_{I_0^h-\frac{1}{2}}) \, \varphi(t,x_{I_0^h})
\biggr]
\biggr\} \, {\rm d} t,
\label{e:spade_2}
\end{align}
with
\begin{equation}
\label{e:notations}
\Delta_v(\alpha,\beta) := v(\alpha) - v(\beta).
\end{equation}
We first prove that $(\spadesuit^n_2)\to 0$ as $n\to \infty$. Denote by $\mathcal{L}_v$ and $\mathcal{L}_\varphi$ the Lipschitz constants for $v$ and $\varphi$, respectively. By applying \eqref{e:Rdot} we get 
\begin{align}\nonumber
&\left| - \sum_{i=0}^{I_0^h-1} \dot{R}_{i+\frac{1}{2}} \int_{x_{i}}^{x_{i+1}} \varphi \, {\rm{d}} x
+ \sum_{i=0}^{I_0^h-1} R_{i+\frac{1}{2}}  \, \Delta_v(R_{i+\frac{1}{2}},R_{i-\frac{1}{2}}) \varphi(t,x_{i}) \right|
\\={}&\nonumber
\left| \sum_{i=0}^{I_0^h-1} R_{i+\frac{1}{2}}  \, \Delta_v(R_{i-\frac{1}{2}},R_{i+\frac{1}{2}}) \fint_{x_{i}}^{x_{i+1}} \bigl( \varphi(t,x) - \varphi(t,x_{i}) \bigr) {\rm{d}} x \right|
\\ \leq {}&
\mathcal{L}_v \, \mathcal{L}_\varphi  \sum_{i=0}^{I_0^h-1} |R_{i-\frac{1}{2}}-R_{i+\frac{1}{2}}| \, R_{i+\frac{1}{2}} \, \frac{x_{i+1}-x_{i}}{2}
= \frac{\mathcal{L}_v \, \mathcal{L}_\varphi}{2} \, \ell \sum_{i=0}^{I_0^h-1} |R_{i-\frac{1}{2}}-R_{i+\frac{1}{2}}| ,
\label{e:ChickCorea}
\\\nonumber
&\left| -\sum_{i=I_0^h+1}^{n-1}\dot{R}_{i+\frac{1}{2}} \int_{x_{i}}^{x_{i+1}} \varphi \, {\rm{d}} x 
- \sum_{i=I_0^h+1}^{n-1}  R_{i+\frac{1}{2}}  \, \Delta_v(R_{i+\frac{3}{2}},R_{i+\frac{1}{2}}) \varphi(t,x_{i+1}) \right|
\\={}&\nonumber
\left|- \sum_{i=I_0^h+1}^{n-1} R_{i+\frac{1}{2}}  \, \Delta_v(R_{i+\frac{3}{2}},R_{i+\frac{1}{2}}) \fint_{x_{i}}^{x_{i+1}} \bigl( \varphi(t,x_{i+1}) - \varphi(t,x) \bigr) {\rm{d}} x \right|
\\ \leq {}&
\mathcal{L}_v \, \mathcal{L}_\varphi  \sum_{i=I_0^h+1}^{n-1} |R_{i+\frac{3}{2}}-R_{i+\frac{1}{2}}| \, R_{i+\frac{1}{2}} \, \frac{x_{i+1}-x_{i}}{2} 
= \frac{\mathcal{L}_v \, \mathcal{L}_\varphi}{2} \,  \ell \sum_{i=I_0^h+1}^{n-1} |R_{i+\frac{3}{2}}-R_{i+\frac{1}{2}}| ,
\label{e:JohnColtrane}
\\\nonumber
&\Biggl|- \dot{R}_{I_0^h+\frac{1}{2}} \int_{x_{I_0^h}}^{x_{I_0^h+1}} \varphi \, {\rm{d}} x 
- R_{I_0^h+\frac{1}{2}}  \, \Delta_v(R_{I_0^h+\frac{3}{2}},R_{I_0^h+\frac{1}{2}}) \, \varphi(t,x_{I_0^h+1}) 
\\&\nonumber
- R_{I_0^h+\frac{1}{2}}  \, \Delta_v(R_{I_0^h-\frac{1}{2}},R_{I_0^h+\frac{1}{2}}) \, \varphi(t,x_{I_0^h})
-2 f(R_{I_0^h+\frac{1}{2}}) \varphi(t,\zeta^{n})\Biggr|
\\={}&\nonumber
\Biggl|R_{I_0^h+\frac{1}{2}}  \, \Delta_v(R_{I_0^h+\frac{1}{2}},R_{I_0^h+\frac{3}{2}})  \fint_{x_{I_0^h}}^{x_{I_0^h+1}} \bigl(\varphi(t,x_{I_0^h+1})-\varphi(t,x)\bigr) {\rm{d}} x 
\\&\nonumber
+ R_{I_0^h+\frac{1}{2}}  \, \Delta_v(R_{I_0^h-\frac{1}{2}},R_{I_0^h+\frac{1}{2}}) \fint_{x_{I_0^h}}^{x_{I_0^h+1}} \bigl(\varphi(t,x)-\varphi(t,x_{I_0^h})\bigr) {\rm{d}} x 
\\&\nonumber
+2f(R_{I_0^h+\frac{1}{2}}) \fint_{x_{I_0^h}}^{x_{I_0^h+1}} \bigl(\varphi(t,x)-\varphi(t,\zeta^{n})\bigr) {\rm{d}} x \Biggr|
\\ \leq {}&\nonumber
\frac{\mathcal{L}_v \, \mathcal{L}_{\varphi}}{2} \, \left[ |R_{I_0^h+\frac{1}{2}}-R_{I_0^h+\frac{3}{2}}| + |R_{I_0^h-\frac{1}{2}}-R_{I_0^h+\frac{1}{2}}| \right] \ell
+2 \mathcal{L}_\varphi \, v_{\max} \, R_{I_0^h+\frac{1}{2}} \fint_{x_{I_0^h}}^{x_{I_0^h+1}} |x-\zeta^{n}| \,{\rm{d}} x
\\ \leq {}&
\frac{\mathcal{L}_v \, \mathcal{L}_{\varphi}}{2} \, \left[ |R_{I_0^h+\frac{1}{2}}-R_{I_0^h+\frac{3}{2}}| + |R_{I_0^h-\frac{1}{2}}-R_{I_0^h+\frac{1}{2}}| \right] \ell
+2 \mathcal{L}_\varphi \, v_{\max} \, \ell.
\label{e:MilesDavis}
\end{align}
Therefore, we get
\begin{align*}
|(\spadesuit^n_2)| \leq {}& \mathcal{L}_\varphi \left( \frac{\mathcal{L}_{v}}{2} \int_0^{t_{s+1}} \TV\left( \rho^{n}(t,\cdot\,)\right) \, {\rm{d}} t + 2 v_{\max} \, t_{N+1} \right) \, \ell,
\end{align*}
where the right hand side converges to zero because $\ell$ do so and by \eqref{e:DavidMaximMicic}, hence $(\spadesuit^n_2)\to 0$ as $n\to \infty$ as we claimed. As a consequence, to conclude the proof it is sufficient to show that $( \spadesuit^n_1)\geq 0$.
Since
\begin{align}
\nonumber &- \sum_{i=0}^{I_0^h-1} \sign(R_{i+\frac{1}{2}}-\kappa)  \, \Delta_v(\kappa,R_{i-\frac{1}{2}}) \, \varphi(t,x_{i})
+\sum_{i=-1}^{I_0^h-1} \sign(R_{i+\frac{1}{2}}-\kappa)  \, \Delta_v(\kappa,R_{i+\frac{1}{2}}) \, \varphi(t,x_{i+1})
\\
\nonumber &
- \sign(R_{I_0^h+\frac{1}{2}}-\kappa)  \, \Delta_v(\kappa,R_{I_0^h-\frac{1}{2}}) \, \varphi(t,x_{I_0^h})
\\
\label{e:milan} ={}&
\sum_{i=0}^{I_0^h} \bigl(\sign(R_{i-\frac{1}{2}}-\kappa) - \sign(R_{i+\frac{1}{2}}-\kappa) \bigr)  \, \Delta_v(\kappa,R_{i-\frac{1}{2}}) \, \varphi(t,x_{i}) 
\geq 0,
\\[5pt]
\nonumber &\sum_{i=I_0^h+1}^{n} \sign(R_{i+\frac{1}{2}}-\kappa)  \, \Delta_v(\kappa,R_{i+\frac{1}{2}}) \, \varphi(t,x_{i})
- \sum_{i=I_0^h+1}^{n-1} \sign(R_{i+\frac{1}{2}}-\kappa)  \, \Delta_v(\kappa,R_{i+\frac{3}{2}}) \, \varphi(t,x_{i+1})
\\
\nonumber &
- \sign(R_{I_0^h+\frac{1}{2}}-\kappa)  \, \Delta_v(\kappa,R_{I_0^h+\frac{3}{2}}) \, \varphi(t,x_{I_0^h+1})
\\
\label{e:roma} ={}&
\sum_{i=I_0^h+1}^{n} \bigl(\sign(R_{i+\frac{1}{2}}-\kappa)-\sign(R_{i-\frac{1}{2}}-\kappa)\bigr)  \, \Delta_v(\kappa,R_{i+\frac{1}{2}}) \, \varphi(t,x_{i}) 
\geq 0,
\\[5pt]
\label{e:lazio}&2 \bigl(1 + \sign(R_{I_0^h+\frac{1}{2}}-\kappa)\bigr) v(\kappa)  \varphi(t,\zeta^{n})
\geq 0,
\end{align}
then we have $(\spadesuit^n_1)\geq 0$ and this completes the proof.\qed
\end{proof}

\subsubsection{The initial condition for the density}

As a last step, we show that the solution satisfies the initial condition.
We first prove that the initial datum is achieved in a weak sense.

\begin{lemma}
\label{l:ini}
The approximate density $\{\rho^n(0,\cdot\,)\}_{n}$ converges to $\overline{\rho}$ weakly in $\L1(\mathbb{R})$.
\end{lemma}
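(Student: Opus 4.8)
The plan is to reduce weak $\L1$ convergence to testing against continuous, compactly supported functions, and then to prove that restricted convergence by an elementary quantitative estimate. I would start from the observation that $\rho^n(0,\cdot)=\sum_{i=0}^{n-1}R_{i+\frac{1}{2}}(0)\,\mathbbm{1}_{[\overline{x}_i,\overline{x}_{i+1})}$ (see \eqref{e:disdens}, \eqref{e:Slipknot}) is bounded by $\rho_{\max}$ and supported in the fixed compact set $K:=[\overline{x}_{\min},\overline{x}_{\max}]\subseteq[-1,1]$, so that $\{\rho^n(0,\cdot)\}_n$ is bounded in $\L2(K)$, with $\|\rho^n(0,\cdot)-\overline{\rho}\|_{\L2(K)}\leq 2\rho_{\max}|K|^{1/2}$. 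Given $\psi\in\L\infty(\mathbb{R})$ and $\varepsilon>0$, choosing $\psi_\varepsilon\in\Cc0(\mathbb{R})$ with $\|\psi-\psi_\varepsilon\|_{\L2(K)}<\varepsilon$ (density of continuous functions in $\L2$ of a finite measure space), the Cauchy--Schwarz inequality bounds the contribution of $\psi-\psi_\varepsilon$ to $\int_{\mathbb{R}}\bigl(\rho^n(0,\cdot)-\overline{\rho}\bigr)\psi$ by $2\rho_{\max}|K|^{1/2}\varepsilon$, uniformly in $n$. Hence it would suffice to prove that $\int_{\mathbb{R}}\rho^n(0,x)\,\psi(x)\, {\rm{d}} x\to\int_{\mathbb{R}}\overline{\rho}(x)\,\psi(x)\, {\rm{d}} x$ for each $\psi\in\Cc0(\mathbb{R})$.

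For such a $\psi$, I would denote by $\psi_i$ the mean of $\psi$ over $[\overline{x}_i,\overline{x}_{i+1}]$ and by $\omega_\psi$ the modulus of continuity of $\psi$. Using $R_{i+\frac{1}{2}}(0)\int_{\overline{x}_i}^{\overline{x}_{i+1}}\psi=\ell\,\psi_i$ and $\int_{\overline{x}_i}^{\overline{x}_{i+1}}\overline{\rho}=\ell$ (see \eqref{eq:Periphery}), one gets $\int_{\mathbb{R}}\bigl(\rho^n(0,\cdot)-\overline{\rho}\bigr)\psi=\sum_{i=0}^{n-1}D_i$ with $D_i:=\int_{\overline{x}_i}^{\overline{x}_{i+1}}\overline{\rho}(x)\,\bigl(\psi_i-\psi(x)\bigr)\, {\rm{d}} x$. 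Two complementary bounds are available: on the one hand $|D_i|\leq 2\|\psi\|_\infty\int_{\overline{x}_i}^{\overline{x}_{i+1}}\overline{\rho}=2\ell\|\psi\|_\infty$, and on the other hand $|D_i|\leq\rho_{\max}\int_{\overline{x}_i}^{\overline{x}_{i+1}}|\psi_i-\psi(x)|\, {\rm{d}} x\leq\rho_{\max}\,(\overline{x}_{i+1}-\overline{x}_i)\,\omega_\psi(\overline{x}_{i+1}-\overline{x}_i)$. Splitting the indices according to whether $\overline{x}_{i+1}-\overline{x}_i\leq\delta$ or $>\delta$, and using $\sum_{i}(\overline{x}_{i+1}-\overline{x}_i)=\overline{x}_{\max}-\overline{x}_{\min}\leq 2$ (so at most $2/\delta$ indices are of the second kind), one obtains $\sum_i|D_i|\leq 2\rho_{\max}\,\omega_\psi(\delta)+(4\ell/\delta)\|\psi\|_\infty$. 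Letting $\delta\to0$ and then $\ell=L/n\to0$ would conclude the argument for $\psi\in\Cc0$, and together with the reduction above this would prove the lemma.

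The only genuinely delicate point — and the reason the plain oscillation bound $|D_i|\leq\rho_{\max}(\overline{x}_{i+1}-\overline{x}_i)\,\omega_\psi(\overline{x}_{i+1}-\overline{x}_i)$ does not suffice by itself — is that the cells $[\overline{x}_i,\overline{x}_{i+1}]$ need not all shrink as $n\to\infty$: if $\overline{\rho}$ vanishes on large subintervals of $\mathrm{Conv}(\supp\overline{\rho})$, some cells keep a length of order one. This is handled by noting that such long cells are few (their lengths sum to at most $2$) while on each of them $\overline{\rho}$ carries only mass $\ell$, so the crude mass-weighted estimate $|D_i|\leq 2\ell\|\psi\|_\infty$ becomes negligible after multiplication by the number of long cells. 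An equivalent way to organise the computation is to observe that $\rho^n(0,\cdot)$ is the conditional expectation of $\overline{\rho}$ with respect to Lebesgue measure on $K$ given the $\sigma$-algebra generated by the partition $\{[\overline{x}_i,\overline{x}_{i+1})\}_i$; then $\int_{\mathbb{R}}\rho^n(0,\cdot)\,\psi=\int_{K}\overline{\rho}\,\overline{\psi}^n$ for the piecewise mean $\overline{\psi}^n$ of $\psi$, and the statement amounts to the weak convergence $\overline{\psi}^n\rightharpoonup\psi$.
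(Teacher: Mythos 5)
Your argument is correct, and its core identity coincides with the paper's: you replace $\rho^n(0,\cdot)$ by the cell averages of $\overline{\rho}$ over $[\overline{x}_i,\overline{x}_{i+1})$ and move the averaging onto the test function, so that each cell contributes $D_i=\int_{\overline{x}_i}^{\overline{x}_{i+1}}\overline{\rho}\,(\psi_i-\psi)\,{\rm d}x$ with total mass $\ell$ per cell. The packaging, however, differs in two places. For the reduction from $\L\infty$ test functions to nice ones, you use the uniform $\L2(K)$ bound, Cauchy--Schwarz and density of continuous functions, whereas the paper invokes the Dunford--Pettis theorem and then only needs convergence in $\mathcal D'(\mathbb{R})$; both reductions are legitimate and of comparable weight. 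For the estimate itself, the paper tests against $\varphi\in\Cc\infty$ and uses its Lipschitz constant: each cell is bounded by $\mathcal L_\varphi\,(\overline{x}_{i+1}-\overline{x}_i)\,\ell$, and summing the cell lengths (at most $2$) gives $2\mathcal L_\varphi\,\ell$ in one line — the mass weight $\ell$ combined with the length-proportional oscillation bound absorbs automatically the possibly non-shrinking cells that you rightly single out as the delicate point. Since you only assume $\psi$ continuous, you must replace this by the modulus of continuity together with the short/long-cell splitting; this costs some bookkeeping but buys nothing essential here (you could also note $|D_i|\leq\ell\,\omega_\psi(\delta)$ on short cells, which slightly streamlines your own bound, or approximate by Lipschitz functions and reuse the one-line estimate). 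The concluding remark identifying $\rho^n(0,\cdot)$ with a conditional expectation of $\overline{\rho}$ is accurate and a nice way to see the statement, though the paper does not use it.

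One phrase needs repair: you write ``letting $\delta\to0$ and then $\ell=L/n\to0$'', but at fixed $\ell$ the term $4\ell\|\psi\|_\infty/\delta$ blows up as $\delta\to0$. The limits must be taken in the opposite order (first $n\to\infty$ at fixed $\delta$, then $\delta\to0$), or one may couple the parameters, e.g.\ $\delta=\sqrt{\ell}$. This is a trivial fix and does not affect the validity of the proof.
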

\begin{proof}
Observe that by construction, for all $n$ there holds $\|\rho^n(0,\cdot\,)\|_\infty\leq\rho_{\max}$, moreover, the support of 	$\rho^n(0,\cdot\,)$ is contained in $\overline{\mathfrak{C}}$. The Dunford-Pettis theorem applies, so that $\{\rho^n(0,\cdot\,)\}_n$ is weakly relatively compact in $\L1(\mathbb{R})$. Since the $\mathcal D'$ convergence implies the weak $\L1$ convergence by a standard argument (by contradiction, extracting a subsequence not converging weakly in $\L1$ to $\overline{\rho}$ and applying the compactness claim to this subsequence), it is enough to prove that $\{\rho^n(0,\cdot\,)\}_n$ converges to $\overline{\rho}$ in $\mathcal D'(\mathbb{R})$.

Fix $\varphi \in \Cc\infty(\mathbb{R})$.
From \eqref{eq:Periphery}, \eqref{e:Plini} and \eqref{e:disdens} it follows that
\begin{align*}
&\biggl| \int_{\mathbb{R}} \bigl( \overline{\rho}(x)-\rho^{n}(0,x) \bigr) \varphi(x) \, {\rm{d}}x \biggr|
\leq \sum_{i=0}^{n-1} \biggl| \int_{\overline{x}_i}^{\overline{x}_{i+1}} \bigl( \overline{\rho}(x) - R_{i+\frac{1}{2}}(0) \bigr)  \varphi(x) \, {\rm{d}}x \biggr|
\\
=\ & \sum_{i=0}^{n-1} \biggl| \int_{\overline{x}_i}^{\overline{x}_{i+1}} \biggl[ \overline{\rho}(x) - \fint_{\overline{x}_i}^{\overline{x}_{i+1}} \overline{\rho}(y) \, {\rm{d}}y \biggr]  \varphi(x) \, {\rm{d}}x \biggr|
= \sum_{i=0}^{n-1} \biggl| \int_{\overline{x}_i}^{\overline{x}_{i+1}} \overline{\rho}(x)  \biggl[ \varphi(x) - \fint_{\overline{x}_i}^{\overline{x}_{i+1}} \varphi(y) \, {\rm{d}}y \biggr] {\rm{d}}x \biggr|
\\
\leq\ & \mathcal{L}_{\varphi}  \sum_{i=0}^{n-1} (\bar{x}_{i+1}-\bar{x}_i) \int_{\overline{x}_i}^{\overline{x}_{i+1}} \overline{\rho}(x) \, {\rm{d}} x
= \mathcal{L}_{\varphi}  (\bar{x}_{\max}-\bar{x}_{\min}) \, \ell
\leq 2 \mathcal{L}_{\varphi} \, \ell,
\end{align*}
which vanishes as $n$ goes to infinity.
This concludes the proof.\qed
\end{proof}

\delaynewpage{}
Now we are in the position to prove the $\L1$-continuity near $t=0$. We have 
\begin{proposition}
\label{p:ini}
The limit $\rho$ belongs to $\C0([0,\infty);\L1(\mathbb{R}))$ and satisfies the initial condition $\rho(0,\cdot)=\overline \rho$ on $\mathbb{R}$.
\end{proposition}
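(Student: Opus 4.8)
The plan is to prove the statement in two steps: first I would normalise the a.e.\ limit $\rho$ to a genuinely time-continuous $\L1(\mathbb{R})$-valued function on all of $[0,\infty)$, thereby producing a well-defined initial trace $\rho(0,\cdot)$; then I would identify that trace with $\overline{\rho}$.

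For the first step I would invoke Proposition~\ref{p:entropy-local}: $(\rho,\xi)$ verifies \eqref{e:entro-bis}, so away from the Lipschitz curve $x=\xi(t)$ the function $\rho$ is a local entropy solution of a homogeneous conservation law $\rho_t+(\pm f(\rho))_x=0$ on $\mathbb{R}$. Since $\xi(0)\in\mathfrak{C}$ (a consequence of Proposition~\ref{p:inir} together with the fact that \eqref{e:cost0} forces $\xi$ to take values in $\mathfrak{C}$) and $\xi$ is Lipschitz, for every $\sigma>0$ there is $\delta=\delta(\sigma)>0$ such that the rectangles $(0,\delta)\times(-\infty,\xi(0)-\sigma)$ and $(0,\delta)\times(\xi(0)+\sigma,+\infty)$ do not meet the turning curve; on these, and on the analogous rectangles with $t$ bounded away from $0$ as in Remark~\ref{rem:L1-continuity} (now applied to the open-end formulation, hence without boundary subtleties), I would apply the local time-continuity result of \cite{CancesGallouet}, then let $\sigma\downarrow0$ and cover, obtaining a representative of $\rho$ lying in $\C0([0,\infty);\Lloc1(\mathbb{R}))$. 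Because, for each fixed $T>0$, all the functions $\rho^n(t,\cdot)$, $t\in[0,T]$, are supported in the fixed compact interval $[-1-v_{\max}T,1+v_{\max}T]$, so is $\rho(t,\cdot)$ (a.e., hence everywhere by continuity); together with $0\leq\rho\leq\rho_{\max}$ this upgrades the continuity to $\rho\in\C0([0,\infty);\L1(\mathbb{R}))$. I then set $\rho_0:=\rho(0,\cdot)$.

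For the second step I would fix $T>0$ and $\varphi\in\Cc\infty(\mathbb{R})$ and study the scalar functions $g^n(t):=\int_{\mathbb{R}}\rho^n(t,x)\,\varphi(x)\,{\rm{d}}x$ and $g(t):=\int_{\mathbb{R}}\rho(t,x)\,\varphi(x)\,{\rm{d}}x$. Using the pseudo-inverse representation \eqref{18}, $g^n(t)=\int_0^L\varphi\bigl(X_{\rho^n(t,\cdot)}(z)\bigr)\,{\rm{d}}z$, so Proposition~\ref{p:JakubZytecki} and \eqref{17} give $|g^n(t)-g^n(s)|\leq\mathcal{L}_\varphi\,W_1(\rho^n(t,\cdot),\rho^n(s,\cdot))\leq 2L\,v_{\max}\,\mathcal{L}_\varphi\,|t-s|$; hence $\{g^n\}_n$ is uniformly Lipschitz and bounded on $[0,T]$, and Arzel{\`a}-Ascoli yields a further subsequence converging uniformly to some $g^*\in\C0([0,T])$. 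On the other hand $\rho^n\to\rho$ in $\L1((0,T)\times\mathbb{R})$ forces $g^n\to g$ in $\L1(0,T)$, so $g^*=g$ a.e.\ and, both being continuous (the latter by the first step), $g^*=g$ on $[0,T]$. Evaluating at $t=0$ and using Lemma~\ref{l:ini} gives $\int_{\mathbb{R}}\rho_0\,\varphi=g(0)=g^*(0)=\lim_n g^n(0)=\lim_n\int_{\mathbb{R}}\rho^n(0,\cdot)\,\varphi=\int_{\mathbb{R}}\overline{\rho}\,\varphi$; as $\varphi$ is arbitrary, $\rho_0=\overline{\rho}$ a.e.\ on $\mathbb{R}$.

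The main obstacle I anticipate is that the convergence $\rho^n\to\rho$ holds only in $\L1((0,T)\times\mathbb{R})$, hence slicewise only for a.e.\ $t$, so it misses precisely the instant $t=0$; bridging this gap is exactly what the uniform-in-$n$ $1$-Wasserstein Lipschitz estimate of Proposition~\ref{p:JakubZytecki} (equivalently, the mass control of Lemma~\ref{l:dept}) is for, supplying the equicontinuity of the tested scalars $g^n$ that lets one pass to the limit at $t=0$. A secondary technical point is the treatment of the turning curve near $t=0$ in the time-continuity step, which rests on the a priori fact that $\xi(0)$ sits in the interior of $\mathfrak{C}$, and on the passage from $\Lloc1$ to $\L1$ via the uniform compact-support bound.
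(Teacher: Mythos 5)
Your proposal is correct and follows essentially the same route as the paper: time continuity with values in $\Lloc1(\mathbb{R})$ via the argument of Remark~\ref{rem:L1-continuity}, upgrade to $\L1(\mathbb{R})$ by the uniform compact support of the $\rho^n(t,\cdot)$, and identification of the trace at $t=0$ with $\overline{\rho}$ through Lemma~\ref{l:ini}. Your second step merely spells out, via the $1$-Wasserstein equicontinuity of the tested scalars $g^n$, the bridge between the weak limit of $\rho^n(0,\cdot)$ and the strong initial trace of $\rho$ that the paper compresses into the single assertion that the strong and weak $\L1$ traces coincide.
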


\begin{proof} 
We argue as in Remark~\ref{rem:L1-continuity}, to get the continuity of $\rho$ in time with values in $\Lloc1(\mathbb{R})$. To observe that the local integrability can be replaced with the global one, observe that the supports of $\rho^n(t,\cdot)$ are contained, uniformly in $n$, in the interval $[-1-v_{\max}t, 1+v_{\max}t]$ because particles' maximal velocity is $v_{\max}$. Therefore $\rho(0,\cdot)$ is well defined in the sense of a strong $\L1(\mathbb{R})$ trace which necessarily coincides with its weak $\L1(\mathbb{R})$ trace; by Lemma~\ref{l:ini}, the latter equals $\overline{\rho}$.
\qed
\end{proof}

\subsection{Two applications to non-interacting crowds.}
\label{s:applications}

Applications of Theorem~\ref{t:exist} to existence for the Hughes model \eqref{e:model} are subject to $\mathbf{BV}$ control in space of sequences of approximate solutions generated by 
the many-particle Hughes scheme of Section~\ref{s:mpa}. These estimates are easy to obtain if the particles in the scheme do not switch, in other words, if the turning curve separates the particles into two non-interacting crowds. We examine two cases where this situation occurs.	

Our first application deals with densities that are ``well separated'' from the origin. 
Let $\mathcal{S}_1$ be the space of functions $\rho$ in $\L \infty(\mathfrak{C};[0, \rho_{\max}])$ satisfying \eqref{I} with $L < 2/\alpha$ and having support in $[-1,1] \setminus [-\frac{\alpha\,L}{2},\frac{\alpha\,L}{2}]$.
\begin{corollary}
Consider the cost function \eqref{e:cost} and assume \eqref{V1}.
For any initial datum $\overline{\rho}$ in $\mathcal{S}_1$, the sequence of approximate solutions $( \rho^{n},\zeta^{n})$ of the many-particle Hughes' model of Section~\ref{s:mpa} converges, up to a subsequence, in $\L1((0,T)\times\mathfrak{C})\times \C0([0,T])$ for all $T>0$, to the unique $\mathbf{BV}$-regular entropy solution $(\rho,\xi)$ to the initial-boundary value problem \eqref{e:model} in the sense of Definition~\ref{d:entro}, such that $\rho(t,\cdot\,) \in \mathcal{S}_1$ for all $t>0$.
\label{t:exist1}
\end{corollary}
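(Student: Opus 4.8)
The plan is to exploit the well-separation built into $\mathcal{S}_1$ to show that \emph{no particle ever switches direction} in the many-particle scheme of Section~\ref{s:mpa}; then the scheme degenerates into two uncoupled follow-the-leader approximations of the LWR equation, for which uniform variation bounds are classical, so Theorem~\ref{t:exist} applies (for general $\L\infty$ data one replaces it by Theorem~\ref{t:existBVloc}), and uniqueness follows from Theorem~\ref{t:WS-stability}.

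\emph{No switching.} Fix $\overline{\rho}\in\mathcal{S}_1$ and perform the discretization \eqref{e:Slipknot}. Since $\overline{\rho}$ carries no mass in the closed interval $[-\tfrac{\alpha L}{2},\tfrac{\alpha L}{2}]$, the discretization points split into a left group $\overline{x}_0<\dots<\overline{x}_j<-\tfrac{\alpha L}{2}$ and a right group $\tfrac{\alpha L}{2}<\overline{x}_{j+1}<\dots<\overline{x}_n$ (one of the two groups may be empty), joined by a single ``bridging'' interval $[\overline{x}_j,\overline{x}_{j+1}]$ of mass $\ell$ and width $>\alpha L$. By Lemma~\ref{l:bounxi} and the monotonicity of the mass $M(t)$ inside $\mathfrak{C}$, one has $\zeta^{n}(t)\in[-\tfrac{\alpha}{2}M(t),\tfrac{\alpha}{2}M(t)]\subseteq[-\tfrac{\alpha L}{2},\tfrac{\alpha L}{2}]$ for all $t\geq0$ and $n$; hence $x_j(0)<\zeta^{n}(0)\leq x_{j+1}(0)$, i.e.\ $I_0^0=j$ in the construction of Theorem~\ref{th:discrete-WP}. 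Now run the bootstrap of the proof of Lemma~\ref{lem:switching}: as long as $x_j<\zeta^{n}<x_{j+1}$, the infinite-speed-of-propagation property (Remark~\ref{rem:finite-time-evac}) gives $\dot x_i<0$ for $i\leq j$ and $\dot x_i>0$ for $i\geq j+1$, so the two groups move apart from the hole while $\zeta^{n}$ remains trapped in $[-\tfrac{\alpha L}{2},\tfrac{\alpha L}{2}]$; thus $x_j(t)<-\tfrac{\alpha L}{2}\leq\zeta^{n}(t)\leq\tfrac{\alpha L}{2}<x_{j+1}(t)$ for every $t>0$, the index $I_0^h\equiv j$ never changes, and by Lemma~\ref{lem:switching} and Proposition~\ref{p:Opeth} no particle changes direction (the jumps of $\zeta^{n}$ at exit times stay confined to $[-\tfrac{\alpha L}{2},\tfrac{\alpha L}{2}]$ and cannot reach either group).

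\emph{Compactness and passage to the limit.} With $I_0^h$ frozen, the particles $x_0,\dots,x_j$ solve a standard follow-the-leader system for the LWR equation with flux $-f$ and those $x_{j+1},\dots,x_n$ one with flux $f$, each posed on $\mathbb{R}$ with the corridor exits $x=\pm1$ acting as open ends; moreover $R_{j+\frac12}(t)\leq R_{j+\frac12}(0)<\ell/(\alpha L)\to0$, so the bridging block carries a vanishing mass and contributes a vanishing amount to the total variation. The classical $\mathbf{BV}$ estimates for the follow-the-leader approximation of LWR (cf.\ \cite{DiFrancescoRosini,DiFrancescoFagioliRosini-BUMI}) then yield, for $\overline{\rho}\in\mathbf{BV}(\mathfrak{C})$, a bound $\TV(\rho^{n}(t,\cdot))\leq\mathbf{TV}(T)$ uniform in $t\in[0,T]$ and $n$, which is precisely \eqref{e:DavidMaximMicic}, so Theorem~\ref{t:exist} applies and produces, up to a subsequence, a $\mathbf{BV}$-regular entropy solution $(\rho,\xi)$; for a general $\overline{\rho}\in\mathcal{S}_1$ the same reduction makes the $\mathbf{BV}_{\text{loc}}$ argument of Section~\ref{s:existBVloc} directly applicable, so Theorem~\ref{t:existBVloc} gives an entropy solution, $\mathbf{BV}$-regular on every $(\varepsilon,\infty)$ by the one-sided Oleinik estimate for LWR. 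The genuinely delicate point is exactly this uniform variation bound: one has to check that the known total-variation estimates for the follow-the-leader scheme survive the particles leaving through $x=\pm1$ and the presence of the vanishing bridging block; everything else in the proof is bookkeeping built on Lemma~\ref{l:bounxi}, Remark~\ref{rem:finite-time-evac} and the cited theorems.

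\emph{$\mathcal{S}_1$-invariance, well-separation and uniqueness.} For $x\in[-\tfrac{\alpha L}{2},\tfrac{\alpha L}{2}]$ one has $\rho^{n}(t,x)\leq R_{j+\frac12}(t)\to0$, while for $|x|>\tfrac{\alpha L}{2}$ the support of $\rho^{n}(t,\cdot)$ on each side stays on the corresponding side of $x_j$, resp.\ $x_{j+1}$; letting $n\to\infty$ gives $\supp\rho(t,\cdot)\subseteq[-1,1]\setminus[-\tfrac{\alpha L}{2},\tfrac{\alpha L}{2}]$, i.e.\ $\rho(t,\cdot)\in\mathcal{S}_1$. Since $\xi$ satisfies \eqref{e:cost0} it also lies in $[-\tfrac{\alpha L}{2},\tfrac{\alpha L}{2}]$, hence in the ``hole'', so the one-sided traces $\rho(t,\xi(t)^\pm)$ vanish and $(\rho,\xi)$ is well-separated; Theorem~\ref{t:WS-stability} then applies (its $\mathbf{BV}$-regularity hypothesis being met) and gives uniqueness within the class of $\mathbf{BV}$-regular well-separated solutions, so that in fact the whole sequence $\{(\rho^{n},\zeta^{n})\}_n$ converges to $(\rho,\xi)$, not merely a subsequence.
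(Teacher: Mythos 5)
Your proposal is correct and follows essentially the same route as the paper: Lemma~\ref{l:bounxi} confines $\zeta^{n}$ to $\mathfrak{C}\cap[-\tfrac{\alpha L}{2},\tfrac{\alpha L}{2}]$, which contains no particles, so no switching occurs; the scheme then reduces to two one-sided follow-the-leader LWR systems whose classical variation bounds from \cite{DiFrancescoRosini,DiFrancescoFagioliRosini-BUMI} give \eqref{e:DavidMaximMicic}, Theorem~\ref{t:exist} yields the limit, and well-separation plus Theorem~\ref{t:WS-stability} give uniqueness and convergence of the whole sequence. The only caveat is your fallback to Theorem~\ref{t:existBVloc} for non-$\mathbf{BV}$ data in $\mathcal{S}_1$, which would require the extra hypothesis \eqref{Vbis} not assumed in the corollary; the paper's own proof tacitly works with $\mathbf{BV}$ data (as required by Theorem~\ref{t:exist}), so this side remark is unnecessary and, as stated, not available.
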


As a second result, we provide the analogous result for the ``symmetric'' case already investigated in \cite{AmadoriGoatinRosini,DiFrancescoFagioliRosiniRusso}.
Let $\mathcal{S}_2$ be the space of functions $\rho$ in $\L \infty(\mathfrak{C};[0, \rho_{\max}])$ satisfying \eqref{I} that are even, i.e., $\rho (x)= \rho (-x)$ for a.e.~$x \in\mathfrak{C}$.

\begin{corollary}
The result of Corollary~\ref{t:exist1} holds true with $\mathcal{S}_1$ replaced by $\mathcal{S}_2$.	
\label{t:exist2}
\end{corollary}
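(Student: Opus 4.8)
The plan is to run exactly the same argument as for Corollary~\ref{t:exist1}, the only new ingredient being the mechanism that prevents particles from switching direction in the scheme of Section~\ref{s:mpa}. For data in $\mathcal{S}_1$ this was ensured by the separation of the support of $\overline{\rho}$ from the origin together with the a priori localization of $\zeta^n$ from Lemma~\ref{l:bounxi}; for data in $\mathcal{S}_2$ it will instead be ensured by the symmetry of the discretized configuration. Once no particle switches, the many-particle system \eqref{e:FTL}--\eqref{e:disdens} splits into two uncoupled copies of the follow-the-leader approximation of the LWR model, for which uniform-in-$n$ bounds on $\TV(\rho^n(t,\cdot\,))$ over $[0,T]$ are available from the analysis of \cite{DiFrancescoRosini,DiFrancescoFagioliRosini-BUMI}. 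Hence \eqref{e:DavidMaximMicic} holds, and Theorem~\ref{t:exist} applies, yielding convergence (up to a subsequence) of $\{(\rho^n,\zeta^n)\}_n$ in $\L1((0,T)\times\mathfrak{C})\times\C0([0,T])$ to a $\mathbf{BV}$-regular entropy solution $(\rho,\xi)$.

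To carry this out, I would first pass to the subsequence of odd values of $n$, so that the construction \eqref{e:Slipknot} applied to an even $\overline{\rho}$ produces a strictly symmetric set of interfaces, $\overline{x}_i=-\overline{x}_{n-i}$, with no interface at the origin: $\overline{x}_{(n-1)/2}<0<\overline{x}_{(n+1)/2}=-\overline{x}_{(n-1)/2}$. Next I would check that the reflected $(n+2)$-tuple $\bigl((-x_{n-i}(\cdot\,))_{i},-\zeta^n(\cdot\,)\bigr)$ again solves \eqref{e:FTL}--\eqref{e:disdens}: the density \eqref{e:disdens} is invariant under $x\mapsto-x$, the reflection exchanges $Z_-(t,\cdot\,)$ with $Z_+(t,-\,\cdot\,)$ (so the relation \eqref{e:turning} is respected), and the equations \eqref{e:FTL} are covariant under this reflection, once one uses the left-continuous normalization of $\dot x_i$ and $\zeta^n$ at switching times that is built into the notion of solution on page~\pageref{pageC1}. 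By the uniqueness part of Theorem~\ref{th:discrete-WP} the solution therefore coincides with its reflection, i.e.\ $x_i(t)=-x_{n-i}(t)$ and $\zeta^n(t)\equiv0$. Since particles with $x_i<0$ move leftwards and those with $x_i\ge0$ move rightwards, and no particle ever lies at the origin (we are using odd $n$), no particle crosses $x=0$ and hence none switches direction; consequently $\rho^n(t,\cdot\,)$ is even for every $t$, and on each of $\{x<0\}$, $\{x>0\}$ the scheme is precisely the follow-the-leader scheme of \cite{DiFrancescoRosini}. Passing to the limit as in Theorem~\ref{t:exist} and using $\zeta^n\to\xi$ from Corollary~\ref{p:Korn}, one gets $\xi\equiv0$, $\rho(t,\cdot\,)$ even, and $\rho$ restricted to each half of $\mathfrak{C}$ a Kruzhkov entropy solution of $\rho_t\pm f(\rho)_x=0$ with the homogeneous Dirichlet condition at the corresponding exit.

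For the uniqueness claim one observes that \eqref{e:cost0}, together with $c\ge1>0$, forces $\xi\equiv0$ for any entropy solution with even density, so among even $\mathbf{BV}$-regular solutions the problem decouples into two standard, well-posed LWR initial--boundary value problems; that no non-even $\mathbf{BV}$-regular solution arises from even data is exactly the symmetric-case well-posedness already established in \cite{AmadoriGoatinRosini,DiFrancescoFagioliRosiniRussoKRM}, which we invoke here. Since the limit is thereby unique, the whole sequence $\{(\rho^n,\zeta^n)\}_n$ converges, and one may drop the restriction to odd $n$ (for even $n$ the discretized configuration is symmetric up to $O(1/n)$ and $\zeta^n=O(1/n)$, so the same limit is obtained). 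The genuinely new point — and the only place requiring care — is the discrete symmetry propagation together with the verification that it rules out switching, the mildly delicate item being the compatibility of the reflection with the $x_i\ge\zeta^n$ convention at switching times; everything else (the uniform $\mathbf{BV}$ bound, consistency, and passage to the limit) is inherited verbatim from the proofs of Theorem~\ref{t:exist} and Corollary~\ref{t:exist1}. If one insists on treating general $\overline{\rho}\in\mathcal{S}_2$ that is merely an $\mathbf{L}^\infty$ function, one first approximates it by even $\mathbf{BV}$ data and identifies the common limit through the aforementioned symmetric-case uniqueness.
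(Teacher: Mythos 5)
Your proposal is correct and follows the same overall architecture as the paper (no switching $\Rightarrow$ uniform $\mathbf{BV}$ bound from \cite{DiFrancescoRosini,DiFrancescoFagioliRosini-BUMI} on each side of the turning curve $\Rightarrow$ Theorem~\ref{t:exist} $\Rightarrow$ uniqueness via \cite{AmadoriGoatinRosini}), but the mechanism you use for the crucial ``no switching'' step is genuinely different. The paper derives it from Proposition~\ref{p:Opeth}: a particle can change direction only when exactly one particle leaves $\mathfrak{C}$, and in the symmetric case particles exit in mirror pairs, so switching is excluded. You instead propagate the discrete reflection symmetry explicitly (reflection invariance of \eqref{e:FTL}--\eqref{e:disdens} plus the uniqueness part of Theorem~\ref{th:discrete-WP} give $x_i\equiv -x_{n-i}$ and $\zeta^n\equiv 0$) and then conclude from the repulsive character of the dynamics about $\zeta^n\equiv0$ that no particle ever crosses the origin. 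Your route is more self-contained and makes transparent \emph{why} the discrete model is symmetric — something the paper merely asserts — at the price of restricting to odd $n$ so that no interface sits at the origin; you correctly identify the one delicate point, namely that the convention $x_i\geq\zeta^n$ in \eqref{e:FTL} is not reflection-covariant when a particle sits exactly on the turning curve (harmless here since, once symmetry is established, this never happens for odd $n$; the apparent circularity can be removed via the remark after Lemma~\ref{lem:switching} that such coincidences occur only at isolated times). Two minor caveats: your $O(1/n)$ treatment of even $n$ does not by itself yield the $\mathbf{BV}$ bound \eqref{e:DavidMaximMicic} along the even-$n$ subsequence (the paper's argument via Proposition~\ref{p:Opeth} covers even $n$ as well, since the unpaired middle particle exits last and alone, when nobody is left to switch), though this only matters for upgrading subsequential to full-sequence convergence; and when quoting the $\mathbf{BV}$ bound you should, as the paper does, add a term $2R_{\max}$ accounting for the jump of $\rho^n$ across the gap straddling $\zeta^n$, which is not controlled by the one-sided LWR estimates.
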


\begin{proof}[of Corollaries~\ref{t:exist1}, \ref{t:exist2}]
To assess the claims, let us observe that no particle switches direction, that is, no particle interacts with the approximate turning curve $\zeta^{n}$, implicitly defined by \eqref{e:turning} (and therefore at the limit, we fall into the framework of entropy solutions without non-classical shocks, studied in \cite{DiFrancescoFagioliRosiniRussoKRM,AmadoriGoatinRosini}).
More precisely, with reference to Corollary~\ref{t:exist1}, in Lemma~\ref{l:bounxi} we see that the approximate turning point $\zeta^{n}$ takes values in $\mathfrak{C}\cap [-\frac{\alpha\,L}{2},\frac{\alpha\,L}{2}]$; as a consequence, if no particle is initially in such interval, then the approximate turning curve cannot reach any of them.
Moreover, in Proposition~\ref{p:Opeth} we have proved that if a particle changes direction, then exactly one particle leaves $\mathfrak{C}$; however, with reference to Corollary~\ref{t:exist2},  in the symmetric case this cannot occur because the many-particle Hughes model is also symmetric.

Consequently, the $\mathbf{BV}$ estimate \eqref{e:DavidMaximMicic} of Theorem~\ref{t:exist} holds with $\hbox{\bf TV} := \TV(\overline{\rho})+2R_{\max}$ due to the application of the results of \cite{DiFrancescoRosini,DiFrancescoFagioliRosini-BUMI} to the particles situated on each side from $\zeta^n$, with the term $2R_{\max}$ to control the variation of $\rho^n$ between the particles that surround $\zeta^n$.
Finally, in the well-separated case  it is easily seen that the limit $\rho$ of (any subsequence of) the many-particle Hughes scheme yields solutions that are not only $\mathbf{BV}$-regular but also well-separated in the sense of Definition~\ref{d:well-separated}; by Theorem~\ref{t:WS-stability}, such solution is unique and therefore one can bypass extraction of a subsequence. The same holds true in the symmetric case with $R_{\max}<\rho_{\max}$, but notice that the uniqueness of a solution in the symmetric case was proved in \cite{AmadoriGoatinRosini}. Therefore also in this case the whole sequence $\{(\rho^n,\zeta^n) \}_n$ generated by the many-particle Hughes' model converges to the unique solution of the continuous Hughes model \eqref{e:model}.
\qed
\end{proof}

\section{Existence via a local compactness argument}\label{s:existBVloc} 

In this section, we prove our second existence result in the framework of entropy solutions which authorizes the formation of non-classical shocks at the location of the turning curve.
The proof is a refinement of the one of Theorem~\ref{t:exist}.
In particular, we replace the uniform $\mathbf{BV}$ compactness and convergence arguments of Section~\ref{s:prma}, which rely upon condition \eqref{e:DavidMaximMicic}, by a localized compactness and convergence arguments. The latter ones rely upon a localized spatial $\mathbf{BV}$ bound and is based on the Oleinik-kind one-sided Lipschitz regularization known for scalar conservation laws with convex (or concave) flux and also for their follow-the-leader particle approximation, see \cite{DiFrancescoFagioliRosini-BUMI}.
Our result in Theorem~\ref{t:existBVloc} holds in case the velocity map satisfies also the additional assumption \eqref{Vbis}, while the only requirement for the initial datum $\overline{\rho}$ is to satisfy \eqref{I}. We argue by reduction, away from the turning curve, to the following known result concerning the many-particle approximation of the standard LWR model.

\begin{theorem}{(adapted from \cite{DiFrancescoFagioliRosini-BUMI})}~ \label{th:OleinikCompactness}
Assume \eqref{V1}, \eqref{Vbis}.
Fix $L>0$ and, for $n\in \mathbb{N}$, set $\ell:=L/n$ and consider a sequence of many-particle approximation corresponding to some  $\overline{\tilde{x}}_i$, ${i \in \llbracket 0 , n \rrbracket}$, verifying  
$$
-1\leq \overline{\tilde{x}}_{0}<\overline{\tilde{x}}_{n}\leq 1 \text{\; and}\quad  
\overline{\tilde{x}}_{i+1}-\overline{\tilde{x}}_{i}\geq \ell/\rho_{\max} \text{\;\;for $i\in \llbracket 0 , n-1 \rrbracket$, }
$$
and evolved according to the ODE system
\begin{equation}
\label{e:FTL-basic}
\left\{\begin{array}{@{}l@{\qquad}l@{\quad}l@{}}
\dot{\tilde{x}}_{n}(t) = v_{\max}, &\\
\dot{\tilde{x}}_{i}(t) = v\left(\frac{\ell}{\tilde{x}_{i+1}(t)-\tilde{x}_{i}(t)}\right) ,&  i \in \llbracket 0 , n-1 \rrbracket,
\\
\tilde{x}_{i}(0) = \overline{\tilde{x}}_{i}, & i \in \llbracket 0 , n \rrbracket.
\end{array}\right.
\end{equation}
Let $\tilde{\rho}^n$ be the corresponding approximate density, i.e., 
\begin{equation}
\label{tildedisdens}
\tilde{\rho}^{n}(t,y) := \sum_{i = 0}^{n-1}\frac{\ell}{\tilde{x}_{i+1}(t)-\tilde{x}_i(t)} \, \mathbbm{1}_{[\tilde{x}_{i}(t),\tilde{x}_{i+1}(t))}(y).
\end{equation}
Then, for any fixed $\varepsilon>0$ and  $T>0$, there exist two positive constants $C_1$ and $C_2$, depending only on $T$ and $v_{\max}$, such that
\begin{equation}\label{e:EricClapton}
\TV\left(\tilde{\rho}^n(t,\cdot\,);\mathbb{R}\right) \leq C_1+\frac{C_2}{\varepsilon} \quad \text{for all\; } t \in [\varepsilon,T].
\end{equation}
Moreover,  $\{\tilde{\rho}^n\}_n$ converges, up to a subsequence, almost everywhere and strongly in $\L1((0,T)\times \mathbb{R})$, to an entropy solution of $\rho_t+(\rho \, v(\rho))_x=0$.
\end{theorem}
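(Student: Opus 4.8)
The plan is to adapt the three-ingredient scheme --- uniform $\L\infty$ bound, time-localized $\mathbf{BV}$ bound, and time-continuity estimate --- that drives the proof of Theorem~\ref{t:exist}, but now for the ``turning-curve-free'' particle system \eqref{e:FTL-basic}--\eqref{tildedisdens}. First I would observe that the discrete maximum principle of Lemma~\ref{l:maxpri} carries over verbatim to the closed system \eqref{e:FTL-basic}: the leader $\tilde x_n$ moving at the constant speed $v_{\max}=v(0)$ behaves precisely like a particle with an empty (vacuum) interval ahead of it, so that $0\le\tilde\rho^n(t,\cdot)\le\rho_{\max}$, $\|\tilde\rho^n(t,\cdot)\|_{\L1(\mathbb{R})}=L$ and $\supp\tilde\rho^n(t,\cdot)\subseteq[-1-v_{\max}t,\,1+v_{\max}t]$ for all $t\ge0$ and all $n$. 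Likewise, the $1$-Wasserstein Lipschitz-in-time estimate of Proposition~\ref{p:JakubZytecki} holds verbatim, its proof using only $|\dot{\tilde x}_i|\le v_{\max}$ and the relation $\tilde\rho^n=\ell/(\tilde x_{i+1}-\tilde x_i)$ on each cell; thus $W_1(\tilde\rho^n(t,\cdot),\tilde\rho^n(s,\cdot))\le 2L\,v_{\max}\,|t-s|$.

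The heart of the matter is the estimate \eqref{e:EricClapton}, which rests on the Oleinik-type one-sided Lipschitz regularization of the follow-the-leader scheme established in \cite{DiFrancescoFagioliRosini-BUMI}; this is exactly where assumption \eqref{Vbis} is used, since it makes $\rho\mapsto\rho\,v'(\rho)$ non-increasing and hence reinforces the strict concavity of $f=\rho\,v(\rho)$, which is what the monotonicity comparisons in that reference require. Concretely, that estimate provides a constant $C=C(v)>0$, independent of $n$, controlling the discrete density in the direction compatible with (upward, entropy-admissible) shocks: summing it over the cells and using the uniform bound $\tilde x_n(t)-\tilde x_0(t)\le 2+2v_{\max}t$ on the length of $\supp\tilde\rho^n(t,\cdot)$, one obtains, for every $t\in(0,T]$ and every $n$,
\[
\mathrm{Var}^-\!\bigl(\tilde\rho^n(t,\cdot);\mathbb{R}\bigr)\;\le\;\frac{C\,(2+2v_{\max}t)}{t},
\]
where $\mathrm{Var}^-$ denotes the total mass of the negative part of the distributional derivative --- the discrete counterpart of $\rho_x\ge-C/t$ for conservation laws with concave flux. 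Since $\tilde\rho^n(t,\cdot)$ is compactly supported and vanishes at the endpoints of its support, its distributional derivative has equal positive and negative total masses, so $\TV(\tilde\rho^n(t,\cdot);\mathbb{R})=2\,\mathrm{Var}^-(\tilde\rho^n(t,\cdot);\mathbb{R})$; restricting to $t\in[\varepsilon,T]$ yields \eqref{e:EricClapton} with constants $C_1,C_2$ depending only on $T$ and $v_{\max}$ (through $C$ and the support bound). The only genuinely new point relative to \cite{DiFrancescoFagioliRosini-BUMI} is to check that the leader driven at $v(0)$ does not spoil the comparison argument there; this is immediate upon treating $\tilde x_n$ as a phantom particle with zero density ahead.

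With these uniform bounds in hand, I would fix $\varepsilon>0$ and $T>\varepsilon$ and apply the generalized Aubin--Lions Theorem~\ref{t:AL} on the time slab $(\varepsilon,T)$: hypothesis~\ref{i:A} holds with $I=[-1-v_{\max}T,\,1+v_{\max}T]$, hypothesis~\ref{i:B} by \eqref{e:EricClapton}, and hypothesis~\ref{i:C} by the Wasserstein estimate; hence $\{\tilde\rho^n\}_n$ is relatively compact in $\L1((\varepsilon,T)\times\mathbb{R})$. A diagonal extraction over $\varepsilon\downarrow0$ and $T\uparrow\infty$ produces a subsequence (not relabeled) converging a.e.\ on $(0,\infty)\times\mathbb{R}$ to some $\rho$, and the uniform bound $0\le\tilde\rho^n\le\rho_{\max}$ together with the uniformly bounded supports upgrades this, by dominated convergence, to convergence in $\L1((0,T)\times\mathbb{R})$ for every $T>0$. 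Finally, to identify $\rho$ as an entropy solution of $\rho_t+(\rho\,v(\rho))_x=0$, I would repeat the discrete entropy computation of the proof of Proposition~\ref{p:entropy-local} with all turning-curve terms deleted (the index $I_0^h$ and all contributions evaluated at $\zeta^n$ disappear, and the telescoping and sign arguments \eqref{e:milan}--\eqref{e:roma} remain valid): this gives, for every $\kappa\in[0,\rho_{\max}]$ and every $0\le\varphi\in\Cc\infty((0,\infty)\times\mathbb{R})$,
\[
0\;\le\;\int_0^\infty\!\!\int_{\mathbb{R}}\Bigl(|\tilde\rho^n-\kappa|\,\varphi_t+\sign(\tilde\rho^n-\kappa)\bigl(f(\tilde\rho^n)-f(\kappa)\bigr)\varphi_x\Bigr)\,{\rm{d}} x\,{\rm{d}} t\;+\;E_n,
\]
where $|E_n|=O\!\bigl(\ell\int_\varepsilon^T\TV(\tilde\rho^n(t,\cdot);\mathbb{R})\,{\rm{d}} t\bigr)$ for any $\varepsilon$ with $\supp\varphi\subset\{t\ge\varepsilon\}$; by \eqref{e:EricClapton} the integral is a finite constant, so $E_n=O(\ell)\to0$, and the strong $\L1$ convergence passes the remaining terms to the limit, yielding the Kruzhkov entropy inequality \cite{Kruzhkov} for $\rho$.

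I expect the main obstacle to be the second paragraph, i.e.\ the transfer of the Oleinik-type estimate of \cite{DiFrancescoFagioliRosini-BUMI} to the present scheme and the extraction from it of the $1/t$-type $\mathbf{BV}$ bound \eqref{e:EricClapton} uniform in $n$; everything else is a routine reuse of tools already developed in Sections~\ref{s:mpa}--\ref{s:prma}. Let me also note that if the initial points $\overline{\tilde x}_i$ are the equal-mass partition nodes \eqref{e:Slipknot} of a datum $\overline{\tilde\rho}$ satisfying \eqref{I}, then arguing exactly as in Lemma~\ref{l:ini} and Proposition~\ref{p:ini} identifies the strong initial trace of $\rho$ as $\overline{\tilde\rho}$ and gives $\rho\in\C0([0,\infty);\L1(\mathbb{R}))$; by Kruzhkov's uniqueness the whole sequence then converges, although this refinement is not needed for the statement as formulated.
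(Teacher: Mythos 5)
Your proposal is correct and follows essentially the same route as the paper: the paper's own justification consists precisely in invoking the Oleinik-type estimate of \cite[Proposition~3]{DiFrancescoFagioliRosini-BUMI} (this is where \eqref{Vbis} enters), noting that the uniform support bound $[-1-v_{\max}T,\,1+v_{\max}T]$ turns the interval-dependent constants into constants depending only on $T$ and $v_{\max}$, and then following the compactness-and-consistency argumentation of that reference, which is what you reproduce via Proposition~\ref{p:JakubZytecki}, Theorem~\ref{t:AL} and the turning-curve-free version of the computation in Proposition~\ref{p:entropy-local}. Your closing remarks on the initial datum and non-uniqueness of the limit also match the paper's explicit caveat that no claim is made on the initial trace or on convergence of the whole sequence.
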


\noindent
The proof, fully based upon the estimate of \cite[Proposition~3]{DiFrancescoFagioliRosini-BUMI}, follows the argumentation of \cite{DiFrancescoFagioliRosini-BUMI} with the only difference that the initial condition does not originate from the many-particle approximation of a given initial datum; note that we make no claim concerning the initial datum for the limit of the extracted subsequence (but this issue can be handled through the arguments of Lemma~\ref{l:ini}), nor about the uniqueness of the limit, nor about convergence of the whole sequence $\{\tilde{\rho}^n\}_n$. The conclusion of \cite[Proposition~3]{DiFrancescoFagioliRosini-BUMI} permits to control the variation of solution for $x\in [a,b]$ with $a,b$-dependent constants $C_1$, $C_2$; but in our case, the support of $\rho^n(t,\cdot)$ for $t\leq T$ is included in $[a_T,b_T]:=[-1-Tv_{\max}, 1+Tv_{\max}]$. Therefore the constants $C_1$, $C_2$ depend solely on $T$, $v_{\max}$.

\begin{remark}\label{rem:leftwardLWR}
A fully analogous result holds for the system of particles moving leftward, i.e., with $v$ replaced by $-v$ and $\tilde{x}_0$ playing the role of $\tilde{x}_n$.
\end{remark}

\subsection{Compactness for the approximate density and localized $\mathbf{BV}$ bounds}

To start with, observe that Proposition~\ref{p:DanWieten} and Lemmas~\ref{l:1}, \ref{l:zeta-xi} leading to Corollary~\ref{p:Korn} hold without any assumption on the total variation of the initial datum and of the approximate solution. Therefore, upon extraction of a subsequence (we will not relabel the extracted subsequences in what follows), we can define  $\xi$ as the limit, in the topology of the locally uniform convergence on $[0,\infty)$, of $\{\zeta^n\}_n$.	 We introduce
\begin{equation}
\label{e:O}
\mathcal{O} := \left\{(t,x) \in(0,\infty)\times\mathbb{R} : x\neq \xi(t)\right\}.
\end{equation}
Then, we are able to establish the localized $\mathbf{BV}$ bound and compactness for $\{\rho^n\}_n$, more precisely we have the following result.

\begin{proposition}
Let $(t_*,x_*) \in \mathcal{O}$. Then the following holds.

\noindent
{\bf (i)} There exists an open neighborhood $\mathcal{O}_{(t_*,x_*)}\subset \mathcal{O}$ of $(t_*,x_*)$ such that, up to extraction of a subsequence, $\{\rho^n\}_{n}$ converges a.e.\ on $\mathcal{O}_{(t_*,x_*)}$.

\noindent
{\bf(ii)} The limit $\rho$ of any such subsequence of $\{\rho^n\}_{n}$ is an entropy solution on $\mathcal{O}_{(t_*,x_*)}$ in the sense of Definition~\ref{d:entro-bis} with $\varphi \in \Cc\infty(\mathcal{O}_{(t_*,x_*)};[0,\infty))$.

\noindent Moreover, we have

\noindent {\bf (iii)} 
There exists $\beta>0$, depending only on $v_{\max}$ and $\xi$, such that, given any $\varepsilon>0$ and $T>0$,
for all $t\in[\varepsilon,T]$ and large enough $n$ it holds that the total variation of $\rho^n(t,\cdot\,)$ on $(-\infty,\xi(t)-\beta\varepsilon]\cup [\xi(t)+\beta\varepsilon,\infty)$ is bounded by a constant that depends on $T$ and $\varepsilon$, but is independent of $t\in [0,T]$ and $n$.
\label{prop:rectangle_}
\end{proposition}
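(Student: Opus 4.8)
The plan is to localize around $(t_*,x_*)\in\mathcal O$, exploit the uniform Lipschitz bound on $\{\zeta^n\}_n$ to trap the approximate turning curves away from a space-time rectangle containing $(t_*,x_*)$, and then recognize that, inside such a rectangle, the particles $\{x_i\}$ form (locally) exactly a follow-the-leader approximation of the homogeneous LWR model with flux $\pm f$, so that Theorem~\ref{th:OleinikCompactness} (and Remark~\ref{rem:leftwardLWR}) applies. First I would fix $\beta>0$ depending only on $v_{\max}$ and the Lipschitz constant $\Lambda_\xi$ of $\xi$ (coming from Proposition~\ref{p:DanWieten}, via Lemmas~\ref{l:1},~\ref{l:2}); the precise choice of $\beta$ is dictated by the requirement that a backward light-cone of slope $v_{\max}$ issued from a point at distance $\beta\varepsilon$ from the curve $x=\xi(t)$ does not reach the curve before time $\varepsilon/2$, say. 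Since $\zeta^n\to\xi$ uniformly on compact time intervals (Corollary~\ref{p:Korn} and Lemma~\ref{l:zeta-xi}), for $n$ large the sets $\{x\le \xi(t)-\beta\varepsilon\}$ and $\{x\ge \xi(t)+\beta\varepsilon\}$ stay uniformly separated from $\zeta^n$.

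For part {\bf(iii)}: say we work on the right side $x\ge\xi(t)+\beta\varepsilon$ for $t\in[\varepsilon,T]$. By the finite speed of information propagation for the particle system (each $\dot x_i$ bounded by $v_{\max}$ in modulus), and by the fact established in Section~\ref{s:mpa} that a particle once on the right of $\zeta^n$ cannot switch back unless it interacts with $\zeta^n$, the collection of particles lying in a slightly larger right neighborhood of $\zeta^n$ during $[\varepsilon/2,T]$ evolves precisely according to \eqref{e:FTL-basic} (with $v$, i.e., rightward motion), up to the harmless relabelling/truncation at the leftmost particle of this sub-collection — which plays the role of $\tilde x_0$ — and the rightmost particle playing the role of $\tilde x_n$ (it moves with $v_{\max}$ once it is alone, or is bounded by $v_{\max}$ in any case). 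Then Theorem~\ref{th:OleinikCompactness}, applied with the initial time shifted to $\varepsilon/2$ and with the Oleinik one-sided Lipschitz estimate \eqref{e:EricClapton} of \cite{DiFrancescoFagioliRosini-BUMI}, yields
\[
\TV\bigl(\rho^n(t,\cdot);[\xi(t)+\beta\varepsilon,\infty)\bigr)\le C_1+\frac{C_2}{\varepsilon-\varepsilon/2}=C_1+\frac{2C_2}{\varepsilon},\qquad t\in[\varepsilon,T],
\]
with $C_1,C_2$ depending only on $T$ and $v_{\max}$; the mirror argument (Remark~\ref{rem:leftwardLWR}) handles $(-\infty,\xi(t)-\beta\varepsilon]$. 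This gives {\bf(iii)}.

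For parts {\bf(i)} and {\bf(ii)}: given $(t_*,x_*)\in\mathcal O$, choose $\varepsilon$ small with $\varepsilon<t_*$ and $\beta\varepsilon<|x_*-\xi(t_*)|$; by continuity of $\xi$, there is an open rectangle $\mathcal O_{(t_*,x_*)}=(t_*-\delta,t_*+\delta)\times(x_*-\delta,x_*+\delta)\subset\mathcal O$ which, for $n$ large, lies entirely in the region where $\rho^n$ solves the homogeneous LWR law with flux $+f$ or $-f$ (whichever sign corresponds to the side). The $\mathbf{BV}_{\rm loc}$ bound of {\bf(iii)} together with the uniform time-continuity in the $1$-Wasserstein distance (Proposition~\ref{p:JakubZytecki}) feeds into the generalized Aubin–Lions Theorem~\ref{t:AL} applied on a slightly smaller sub-rectangle, yielding a subsequence converging in $\L1$ and a.e.\ on $\mathcal O_{(t_*,x_*)}$; this is {\bf(i)}. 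For {\bf(ii)}, the passage to the limit in the local Kruzhkov entropy inequalities is the classical one for follow-the-leader schemes of \cite{DiFrancescoRosini,DiFrancescoFagioliRosini-BUMI} — it is in fact a strict simplification of the computation already carried out in the proof of Proposition~\ref{p:entropy-local}, because test functions are now supported away from the turning curve, so the terms involving $\zeta^n$, $\varphi(t,\zeta^n)$ and the boundary terms at $x_{I_0^h}$, $x_{I_0^h+1}$ simply do not appear, and only the analogues of $(\spadesuit^n_1)\ge0$ and $(\spadesuit^n_2)\to0$ survive. Hence the limit satisfies \eqref{e:entro-bis} restricted to $\varphi\in\Cc\infty(\mathcal O_{(t_*,x_*)};[0,\infty))$.

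The main obstacle I expect is the bookkeeping in {\bf(iii)}: verifying carefully that the sub-collection of particles which one wishes to compare with system \eqref{e:FTL-basic} is \emph{closed under the dynamics} on the relevant time window, i.e., that over $[\varepsilon/2,T]$ no particle from outside this sub-collection crosses into the cone of dependence of the region $\{x\ge\xi(t)+\beta\varepsilon\}$, and no particle inside it switches direction. This requires combining the finite-speed estimate with the no-re-entry property and with the uniform closeness $\|\zeta^n-\xi\|_{\infty,[0,T]}\to0$, and choosing $\beta$ (and the safety margin $\varepsilon/2$) so that these are compatible; once this is set up correctly, the quantitative $\mathbf{BV}$ estimate is an immediate citation of \cite{DiFrancescoFagioliRosini-BUMI}. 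A secondary subtlety is that the leftmost particle of the right sub-collection is not literally governed by $\dot{\tilde x}_0=v(\ell/(\tilde x_1-\tilde x_0))$ but by the Hughes dynamics with a genuine leader; however, since the Oleinik estimate of \cite{DiFrancescoFagioliRosini-BUMI} only needs an \emph{upper} bound on the relevant discrete slopes and a lower bound on inter-particle distances (the discrete maximum principle, Lemma~\ref{l:maxpri}), the precise law of the boundary particle is immaterial, exactly as in the quoted reference.
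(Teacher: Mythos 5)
Your overall strategy --- localize away from the turning curve using the uniform convergence $\zeta^n\to\xi$ and the Lipschitz bound on $\xi$, recognize the rightward (resp.\ leftward) movers as a follow-the-leader subsystem for the LWR model, and import the Oleinik-type $\mathbf{BV}$ bound of \cite{DiFrancescoFagioliRosini-BUMI} --- is exactly the paper's. The difference is in how the boundary of the subsystem is handled. The paper does not apply Theorem~\ref{th:OleinikCompactness} to a sub-collection: it builds an auxiliary system of all $n+1$ particles, keeping $\tilde x_i=x_i$ for $i\geq J_0$ and re-initializing the remaining ones at artificial positions far to the left (so that $\tilde x_{J_0}-\tilde x_{J_0-1}\geq \varepsilon v_{\max}$), and then proves two quantitative comparisons, $\|\tilde\rho^n-\rho^n\|_{\L1(\mathcal{O}_{(t_*,x_*)})}=O(\ell)$ and a $\TV$ discrepancy of at most $\rho_{\max}$ on the relevant half-line. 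This lets it cite Theorem~\ref{th:OleinikCompactness} verbatim for the $\mathbf{BV}$ bound \emph{and} for compactness and consistency. Note also that your ``secondary subtlety'' is stated backwards: a rightward-moving Hughes particle is literally governed by $\dot x_i=v(\ell/(x_{i+1}-x_i))$, i.e.\ exactly the law in \eqref{e:FTL-basic}. The genuine issues are rather (a) that membership in the sub-collection can change when $\zeta^n$ jumps and captures a particle --- which you do flag --- and (b) that the density on the cell $[x_{J_0-1},x_{J_0})$, which may intersect the neighborhood, is governed by a particle outside the sub-collection; the paper disposes of (b) by showing this cell has length at least $d/4$, hence carries density $O(\ell)$.

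The genuine gap is in part {\bf(i)}. You propose to feed the local $\mathbf{BV}$ bound and the $1$-Wasserstein time-continuity of Proposition~\ref{p:JakubZytecki} into Theorem~\ref{t:AL} ``on a slightly smaller sub-rectangle'', but Theorem~\ref{t:AL} is a global statement for densities on $\mathbb{R}$ with fixed total mass $L$; the Wasserstein continuity does not localize (restriction to a sub-rectangle destroys mass conservation, and $W_1$ is only defined between measures of equal mass), so the theorem cannot be invoked locally as stated. This is precisely what the paper's auxiliary construction buys: $\tilde\rho^n$ is a bona fide global FTL density of mass $L$, to which the compactness part of Theorem~\ref{th:OleinikCompactness} applies, and the a.e.\ convergence is then transferred to $\rho^n$ on the neighborhood via the $O(\ell)$ comparison. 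Your version could be repaired (for instance by combining the local spatial $\mathbf{BV}$ bound with the weak time-continuity of Lemma~\ref{l:dept} and a Fr\'echet--Kolmogorov argument), but as written the compactness step does not follow from the results you cite.
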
		

\begin{proof}
By assumption and definition \eqref{e:O} we have $x_*\neq\xi(t_*)$, hence by the symmetry of the problem it is sufficient to consider the case $x_*>\xi(t_*)$ (for handling the symmetric case, we should rely upon Remark~\ref{rem:leftwardLWR} for the LWR model with a non-positive velocity field).
We first notice that the graph of $\xi$ is closed and $(t_*,x_*)$ does not belong to this graph. 
Moreover, we have that $t_*>0$, hence there exists a constant $d = d(t_*,x_*)>0$ such that
\[[t_*-d,t_*+d] \times [x_*-d,\infty) \subset \mathcal{O}.\]
We then set 
\begin{equation}\label{eq:neighb_}
\mathcal{O}_{(t_*,x_*)}:=( \tau_0,\tau_1 ) \times(x_*-\varepsilon,\infty),
\end{equation}
see \figurename~\ref{f:rectangles}, where 
\begin{align*}
\tau_0 := t_*-\varepsilon, \quad \tau_1:= t_*+\varepsilon, \quad \varepsilon = \varepsilon(t_*,x_*) := \frac{d}{16\left(v_{\max}+\mathcal{L}_{\xi}+1\right)},
\end{align*}
with $\mathcal{L}_{\xi}$
being the Lipschitz constant for $\xi$ (recall that $\xi$ is the limit of uniformly Lipschitz curves $\xi^n$, see Lemma~\ref{l:2} and Proposition~\ref{p:DanWieten}).

The claims of the proposition will follow by applying Theorem~\ref{th:OleinikCompactness} to a well-chosen set of particles $\tilde{x}_i$, $i\in \llbracket 0, n \rrbracket$, corresponding to the standard LWR model defined for times $t \geq \tau_0$ and which gives rise to the density $\tilde{\rho}^n$ defined in \eqref{tildedisdens} and verifying the two properties
\begin{equation}
\label{e:GeorgeHarrison}
\left\|\tilde\rho^n-\rho^n\right\|_{\L1(\mathcal{O}_{(t_*,x_*)})} \to 0 \;\;\text{as $n\to\infty$},
\end{equation}
\begin{equation}
\label{e:RingoStarr}
\bigl| \TV\bigl(\tilde{\rho}^n(t,\cdot\,) ; [x_*-\varepsilon,\infty)\bigr)- \TV\bigl(\rho^n(t,\cdot\,) ; [x_*-\varepsilon,\infty)\bigr) \bigr|\leq \rho_{\max} \quad \forall t\in [\tau_0,\tau_1].
\end{equation}
Indeed, with \eqref{e:GeorgeHarrison}, if $\{\tilde\rho^n\}_n$ admits a subsequence which converges  a.e.\ on $\mathcal{O}_{(t_*,x_*)}$, than so does $\{\rho^n\}_n$, and the limit $\rho$ is an entropy solution of the standard LWR equation in $\mathcal{O}_{(t_*,x_*)}$, see Theorem~\ref{th:OleinikCompactness}: this will justify the claims {\bf (i)} and {\bf (ii)} of the proposition. 
As to property \eqref{e:RingoStarr}, it will be combined in the final part of the proof with the variation bound \eqref{e:EricClapton} for the set of particles $\tilde{x}_0,\dots, \tilde{x}_n$, which will permit to assess the claim {\bf (iii)}.

Therefore, let us show how to construct a proper set of particles $\tilde{x}_i$, $i\in \llbracket 0, n \rrbracket$, which satisfies conditions \eqref{e:GeorgeHarrison} and \eqref{e:RingoStarr}. By the convergence of $\{\zeta^n\}_{n}$ to $\xi$, we can assume that $|\xi(t)-\zeta^n(t)|\leq d/4$ for all $t \in [\tau_0,\tau_1]$, by restricting our attention to large enough values of $n$. 
Then, $\mathcal{O}_{(t_*,x_*)}$ lies entirely to the right from the curve $x=\xi(t)+d/4$, because we have chosen $\varepsilon < d/16 < 3d/4 = d-d/4$. 
Now, we define $J_0$ as the smallest value $i \in \llbracket 0 , n \rrbracket$ such that the path $x_{J_0}$ crosses $\mathcal{O}_{(t_*,x_*)}$. 
Two cases may occur depending on the behaviour of the particle $x_{J_0-1}$, see \figurename~\ref{f:rectangles}.

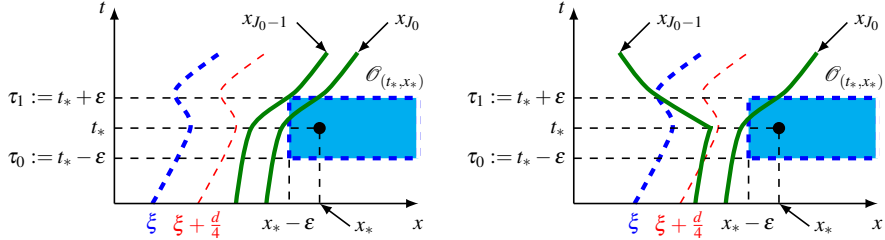
\begin{figure}[!htbp]
\centering\resizebox{!}{!}{
\begin{tikzpicture}[x=20mm, y=20mm, semithick]

\draw [blue, dashed, ultra thick] plot [smooth]	coordinates { (.25,0) (.5,.5) (.4,.75) (.7,1) };
\node[blue,below] at (.25,0) {\strut$\xi$};
\begin{scope}[shift={(.3,0)}]
\draw [red, dashed] plot [smooth]	coordinates { (.25,0) (.5,.5) (.4,.75) (.7,1) };
\node[red,below] at (.25,0) {\strut$\xi+\frac{d}{4}$};
\end{scope}

\draw[ultra thick,dashed,blue,fill=cyan] (1.15,.3) rectangle (2,.7);
\draw[ultra thick,white] (2,.3) -- (2,.7);
\draw[dashed] (0,.3) node[left] {\strut$\tau_0:=t_*-\varepsilon$} -- (1.15,.3) -- (1.15,0) node[below] {\strut$x_*-\varepsilon$};
\draw[dashed] (0,.5) node[left] {\strut$t_*$} -- (1.35,.5) -- (1.35,0);
\draw[latex-] (1.35,0) -- (1.55,-.15);
\node[right] at (1.5,-.15) {\strut\ $x_*$};
\draw[dashed] (0,.7) node[left] {\strut$\tau_1:=t_*+\varepsilon$} -- (1.15,.7);
\node[above left] at (2.1,.7) {\strut$\mathcal{O}_{(t_*,x_*)}$};
\draw[black, fill=black] (1.35,.5) circle (2pt);

\draw [green!50!black, ultra thick] plot [smooth]	coordinates { (1,0) (1.1,.5) (1.4,.75) (1.6,1) };
\draw[latex-] (1.6,1) -- (1.8,1.2) node[right] {$x_{J_0}$};
\begin{scope}[shift={(-.2,0)}]
\draw [green!50!black, ultra thick] plot [smooth]	coordinates { (1,0) (1.1,.5) (1.4,.75) (1.6,1) };
\draw[latex-] (1.6,1) -- (1.4,1.2) node[left] {$x_{J_0-1}$};
\end{scope}

\draw[-latex] (0,0) -- (2,0) node[below] {\strut$x$};
\draw[-latex] (0,0) -- (0,1.3) node[left] {$t$};

\end{tikzpicture}
\quad
\begin{tikzpicture}[x=20mm, y=20mm, semithick]

\begin{scope}[shift={(.15,0)}]
\draw [blue, dashed, ultra thick] plot [smooth]	coordinates { (.25,0) (.5,.5) (.4,.75) (.7,1) };
\node[blue,below] at (.25,0) {\strut$\xi$};
\begin{scope}[shift={(.3,0)}]
\draw [red, dashed] plot [smooth]	coordinates { (.25,0) (.5,.5) (.4,.75) (.7,1) };
\node[red,below] at (.25,0) {\strut$\xi+\frac{d}{4}$};
\end{scope}
\end{scope}

\draw[ultra thick,dashed,blue,fill=cyan] (1.15,.3) rectangle (2,.7);
\draw[ultra thick,white] (2,.3) -- (2,.7);
\draw[dashed] (0,.3) node[left] {\strut$\tau_0:=t_*-\varepsilon$} -- (1.15,.3) -- (1.15,0) node[below] {\strut$x_*-\varepsilon$};
\draw[dashed] (0,.5) node[left] {\strut$t_*$} -- (1.35,.5) -- (1.35,0);
\draw[latex-] (1.35,0) -- (1.55,-.15);
\node[right] at (1.5,-.15) {\strut\ $x_*$};
\draw[dashed] (0,.7) node[left] {\strut$\tau_1:=t_*+\varepsilon$} -- (1.15,.7);
\node[above left] at (2.1,.7) {\strut$\mathcal{O}_{(t_*,x_*)}$};
\draw[black, fill=black] (1.35,.5) circle (2pt);

\draw [green!50!black, ultra thick] plot [smooth]	coordinates { (1,0) (1.1,.5) (1.4,.75) (1.6,1) };
\draw[latex-] (1.6,1) -- (1.8,1.2) node[right] {$x_{J_0}$};
\begin{scope}[shift={(-.2,0)}]
\begin{scope}
\clip (.9,0) rectangle (1.111,.511);
\draw [green!50!black, ultra thick] plot [smooth]	coordinates { (1,0) (1.1,.5) (1.4,1)};
\end{scope}
\draw [green!50!black, ultra thick] plot [smooth]	coordinates { (1.1,.5) (.7,.75) (.5,1) };
\draw[latex-] (.5,1) -- (.7,1.2) node[right] {$x_{J_0-1}$};
\end{scope}

\draw[-latex] (0,0) -- (2,0) node[below] {\strut$x$};
\draw[-latex] (0,0) -- (0,1.3) node[left] {$t$};

\end{tikzpicture}}

\caption{With reference to the proof of Proposition~\ref{prop:rectangle_}, on the left the case described in Case~1, on the right the case described in Case~2.}
\label{f:rectangles}
\end{figure}

\begin{enumerate}[label={\bf{Case} $\pmb{\arabic*}$},wide=0pt]
\item[\bf{Case}~$\pmb{1}$]
Suppose that the path of $x_{J_0-1}$ for $t \in [\tau_0,\tau_1]$ lies to the right from the curve $x=\xi(t)+d/4$ or that $J_0=0$. 
In this case, for $t\geq \tau_0$ we consider the particle system $\tilde{x}_0,\ldots,\tilde{x}_n$ evolving according to the follow-the-leader system of the form \eqref{e:FTL-basic} but with shifted initial time:
\begin{equation*}
\begin{cases}
\dot{\tilde{x}}_n =v_{\max},
\\
\dot{\tilde{x}}_i (t)=v\biggl(\frac{\ell}{\tilde{x}_{i+1}(t)-\tilde{x}_i(t)}\biggr), & i \in \llbracket 0 , n-1 \rrbracket,
\\
\tilde{x}_i(\tau_0)= x_i(\tau_0), & i \in \llbracket 0 , n \rrbracket.
\end{cases}
\end{equation*}
We highlight that the evolution of $\tilde{x}_i$, $i \in \llbracket 0 , J_0-2 \rrbracket$, does not influence the evolution of particles $\tilde{x}_i$, $i \in \llbracket J_0-1 , n \rrbracket$.
Thus, for $i\geq J_0-1$, we have $\tilde{x}_i(t)\equiv x_i(t)$ for all $t\in [\tau_0,\tau_1]$. Moreover, by the definition of $J_0$, $\mathcal{O}_{(t_*,x_*)}$ lies entirely to the right from the path of $x_{J_0-1}$, therefore within $\mathcal{O}_{(t_*,x_*)}$ we have that the density $\tilde{\rho}^{n}$ defined in \eqref{tildedisdens} coincides with the density $\rho^n$ defined in \eqref{e:disdens} for the original particle system.
Hence, in this case it holds $\left\|\tilde\rho^n-\rho^n\right\|_{\L1(\mathcal{O}_{(t_*,x_*)})}=0$.
Moreover, for $t\in[\tau_0,\tau_1]$ the variations of $\tilde{\rho}^n(t,\cdot\,)$ and $\rho^n(t,\cdot\,)$ over $[x_*-\varepsilon,\infty)$ coincide, thus both \eqref{e:GeorgeHarrison} and \eqref{e:RingoStarr}  hold true.

\item[\bf{Case}~$\pmb{2}$]
Suppose that there exists $s \in [\tau_0,\tau_1]$ such that $x_{J_0-1}(s)\leq \xi(s)+d/4$. 
In this case, for $t\geq \tau_0$ we consider the follow-the-leader particle system involving particles $\tilde{x}_{0},\dots,\tilde{x}_n$ and given by the following analogue of \eqref{e:FTL-basic}
\begin{equation*}
\begin{cases}
\dot{\tilde{x}}_n =v_{\max},
\\
\dot{\tilde{x}}_i (t)=v\biggl(\frac{\ell}{\tilde{x}_{i+1}(t)-\tilde{x}_i(t)}\biggr), & i \in \llbracket 0 , n-1 \rrbracket,
\\
\tilde{x}_i(\tau_0)= \overline{\tilde{x}}_i, & i \in \llbracket 0 , J_0-1 \rrbracket,
\\
\tilde{x}_i(\tau_0)= x_i(\tau_0), & i \in \llbracket J_0 , n \rrbracket,
\end{cases}
\end{equation*}
where $\overline{\tilde{x}}_i$, $i\in \llbracket 0 , J_0-1 \rrbracket$, are chosen so that $\overline{\tilde{x}}_{i+1} - \tilde{\overline{x}}_i \geq \ell/\rho_{\max}$ and $x_{J_0}(\tau_0)-\tilde{\overline{x}}_{J_0-1} \geq 3 \, \varepsilon \, v_{\max}$.
Note that we also ensured that $\|\tilde{\rho}^n(\,\cdot,t)\|_{\L1(\mathbb{R})}=L$ for any $t\in [\tau_0,\tau_1]$, with $\tilde{\rho}^n$ defined as in \eqref{tildedisdens}. As in Case~1, the evolution of $\tilde{x}_i$ (now for $i \in \llbracket 0 , J_0-1 \rrbracket$) does not influence the evolution of particles $\tilde{x}_i$,  for $i \in \llbracket J_0, n \rrbracket$; for $i\geq J_0$, we have $\tilde{x}_i(t)\equiv x_i(t)$ for all $t\in [\tau_0,\tau_1]$, but note that the path of $x_{J_0-1}(t)$ differs from the path of $\tilde{x}_{J_0-1}(t)$. 
Since  $\mathcal{O}_{(t_*,x_*)}$ lies entirely to the right from the path of $x_{J_0-1}$, the density $\tilde\rho^n$ differs from $\rho^n$ only in 
$$\mathcal{O}_{(t_*,x_*)}\cap \left\{(t,x) : t\in (\tau_0,\tau_1),\, x_{J_0-1}(t)\leq x \leq x_{J_0}(t)\right\},$$ 

in particular they differ over $[x_*-\varepsilon,\infty)$ only by their leftmost states. Since all states take values in $[0,\rho_{\max}]$, \eqref{e:RingoStarr} holds true.
It remains to show that \eqref{e:GeorgeHarrison} holds true, i.e., the difference between $\tilde{\rho}^n$ and $\rho^n$ is negligible in the $\L1$ norm on $\mathcal{O}_{(t_*,x_*)}$. 
Focusing first on $\rho^n$, we notice that, for all $t \in[\tau_0,\tau_1]$, there holds 
\begin{align*}
x_{J_0-1}(t)-\xi(t)& = x_{J_0-1}(s)-\xi(s)+ \int_{s}^t \left(\dot{x}_{J_0-1}(\tau)-\dot{\xi}(\tau)\right)\, {\rm{d}} \tau\\
& \leq \frac{d}{4} + |t-s| \cdot \|\dot{x}_{J_0-1}-\dot{\xi}\|_\infty 
\leq \frac{d}{4}+ 2\varepsilon \left(v_{\max}+\mathcal{L}_{\xi}\right).
\end{align*}
On the other hand, notice that the path of $x_{J_0}$ crosses $\mathcal{O}_{(t_*,x_*)}$, which lies at distance $\geq 3d/4$ to the right from the curve $\xi$ for $t \in [\tau_0,\tau_1]$, hence an analogous argument yields 
$$
x_{J_0}(t)-\xi(t)\geq \frac{3d}{4}-2\varepsilon\left(v_{\max}+\mathcal{L}_{\xi}\right) \quad \text{for all } t \in[\tau_0,\tau_1]. 
$$
As a consequence, for all such $t$, by the choice of $\varepsilon$ it holds
$$ 
x_{J_0}(t)-x_{J_0-1}(t)\geq \frac{d}{2}-4\varepsilon\left(v_{\max}+\mathcal{L}_{\xi}\right) > \frac{d}{4},
$$	
hence, for $(t,x) \in \mathcal{O}_{(t_*,x_*)}$ such that $x<x_{J_0}(t)$, we have
\begin{equation}
\label{e:estrhon}
\rho^n(t,x)=\frac{\ell}{x_{J_0}(t)-x_{J_0-1}(t)} < \frac{4\ell}{d}.
\end{equation} 
Turning to $\tilde{\rho}^n$, by the choice of $\overline{\tilde{x}}_{J_0-1}$ we have that
\begin{align*}
\tilde{x}_{J_0}(t)-\tilde{x}_{J_0-1}(t)&= x_{J_0}(\tau_0)-\overline{\tilde{x}}_{J_0-1}+\int_{\tau_0}^t \left( \dot{\tilde{x}}_{J_0}(\tau) - \dot{\tilde{x}}_{J_0-1}(\tau) \right) \, {\rm{d}} \tau
\\&\geq 3 \, \varepsilon \, v_{\max}- 2 \, \varepsilon \, v_{\max} = \varepsilon \, v_{\max}.
\end{align*}
Hence, for $(t,x) \in \mathcal{O}_{(t_*,x_*)}$ such that $x<x_{J_0}(t)$, we have that
\begin{equation}
\label{e:estrhon2}
\tilde\rho^n(t,x)=\frac{\ell}{\tilde{x}_{J_0}(t)-\tilde {x}_{J_0-1}(t)}\leq \frac{\ell}{\varepsilon v_{\max}}.
\end{equation}
Furthermore, by construction $|\tilde{x}_{J_0}(t)-(x_*-\varepsilon)| \leq 1+v_{\max} \cdot 2\varepsilon - x_* + \varepsilon$.
This inequality, together with \eqref{e:estrhon} and \eqref{e:estrhon2}, ensures that \[\left\|\tilde\rho^n-\rho^n\right\|_{\L1(\mathcal{O}_{(t_*,x_*)})}
\leq 2 \, \varepsilon \, \left(1+2\,\varepsilon\,v_{\max} - x_* + \varepsilon\right) \, \left( \frac{4}{d} + \frac{1}{\varepsilon\,v_{\max}}\right) \, \ell,\]
where we recall that $d$ and $\varepsilon$ are independent on $n$, while $\ell\to 0$ as $n\to \infty$.
Hence \eqref{e:GeorgeHarrison} holds true and this concludes the proof of {\bf (i)} and {\bf (ii)}.
\end{enumerate}

It remains to prove {\bf (iii)}. Let us fix $T>0$ and define $\beta:=16\left(v_{\max}+\mathcal{L}_{\xi}+1\right)$. Let us arbitrarily choose $\varepsilon>0$, and then take $d:=\beta\varepsilon$. With this choice, one can fix any $t_*\in [\varepsilon,T]$ and take $x_*:=\xi(t_*) +d$ so that $[t_*-\varepsilon,t_*+\varepsilon]\times (x_*,\infty)\subset \mathcal{O}_{(t_*,x_*)}$.
The above construction of $\tilde{\rho}^n$ can be used in the so defined $\mathcal{O}_{(t_*,x_*)}$: in particular, the estimate \eqref{e:EricClapton} is valid for $\tilde \rho^n(t_*,\cdot\,)$ because $t_*-\tau_0=t_*-(t_*-\varepsilon)=\varepsilon$, while due to \eqref{e:RingoStarr} we infer the bound of the form  $\rho_{\max}+C_1+C_2/\varepsilon$ on the space variation of $\rho^n(t_*,\cdot\,)$ over $[x_*,\infty)$, with $C_1, C_2>0$ independent on $t_*$ and $n$. This concludes the proof of {\bf (iii)}.\qed
\end{proof} 

The previous proposition yields to the compactness of $\{\rho^n\}_{n}$ on $(0,\infty) \times \mathbb{R}$. Indeed, we have
\begin{proposition}
\label{p:compact}
Let $\xi$ be the limit, in the topology of the locally uniform convergence, of $\{\xi^n\}_{n}$. 
Then, up to a subsequence, $\{\rho^n\}_{n}$ converges a.e.\ on $(0,\infty) \times \mathbb{R}$. 
Moreover, the limit $\rho$ satisfies the entropy inequality~\eqref{e:entro-bis} on the open domain $\mathcal{O}$, i.e. with 
test functions $\varphi$ supported in $\mathcal{O}$.
\end{proposition}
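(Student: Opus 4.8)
The plan is to globalize the local convergence and consistency statements of Proposition~\ref{prop:rectangle_} by a covering and diagonal-extraction argument. First I would note that $\mathcal{O}$, being an open subset of $\mathbb{R}^2$, is a Lindel\"of space; hence the open cover $\mathcal{O}=\bigcup_{(t_*,x_*)\in\mathcal{O}}\mathcal{O}_{(t_*,x_*)}$ provided by Proposition~\ref{prop:rectangle_}{\bf (i)} admits a countable subcover $\mathcal{O}=\bigcup_{k\in\mathbb{N}}U_k$, with $U_k:=\mathcal{O}_{(t_k,x_k)}$ for suitable $(t_k,x_k)\in\mathcal{O}$. Applying Proposition~\ref{prop:rectangle_}{\bf (i)} on $U_1$ yields a subsequence of $\{\rho^n\}_n$ converging a.e.\ on $U_1$; extracting successively a further subsequence converging a.e.\ on $U_2$, then on $U_3$, and so on, the diagonal subsequence --- which we do not relabel --- converges a.e.\ on each $U_k$. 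Since the set where convergence fails is contained in a countable union of Lebesgue-null sets, $\{\rho^n\}_n$ converges a.e.\ on $\mathcal{O}=\bigcup_k U_k$; denote the limit by $\rho$. Finally, as $\xi$ is Lipschitz continuous, its graph $\{(t,\xi(t)):t>0\}$ is Lebesgue-negligible in $\mathbb{R}^2$, so $\mathcal{O}$ has full measure in $(0,\infty)\times\mathbb{R}$; extending $\rho$ arbitrarily to the graph, we obtain a.e.\ convergence of $\{\rho^n\}_n$ on the whole of $(0,\infty)\times\mathbb{R}$.

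It then remains to verify the entropy inequality \eqref{e:entro-bis} for test functions supported in $\mathcal{O}$. Fix $\varphi\in\Cc\infty(\mathcal{O};[0,\infty))$ and set $K:=\supp\varphi$, a compact subset of $\mathcal{O}$. By the covering above, $K\subset U_{k_1}\cup\dots\cup U_{k_m}$ for finitely many indices, and I would choose a smooth partition of unity $\{\chi_j\}_{j=1}^m$ with $\chi_j\geq0$, $\supp\chi_j\subset U_{k_j}$ and $\sum_{j=1}^m\chi_j\equiv1$ on a neighborhood of $K$. Then $\varphi=\sum_{j=1}^m\chi_j\varphi$, with each $\chi_j\varphi\in\Cc\infty(U_{k_j};[0,\infty))$. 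Since the (diagonal) subsequence converges a.e.\ on $U_{k_j}$, Proposition~\ref{prop:rectangle_}{\bf (ii)} applies and shows that $\rho$ is an entropy solution on $U_{k_j}$ in the sense of Definition~\ref{d:entro-bis} restricted to test functions supported in $U_{k_j}$; in particular \eqref{e:entro-bis} holds with test function $\chi_j\varphi$. Summing these $m$ inequalities yields \eqref{e:entro-bis} for $\varphi$, which is the desired conclusion.

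The argument is essentially bookkeeping, and I expect no genuine obstacle; the only point requiring a little care is that, after the diagonal extraction, Proposition~\ref{prop:rectangle_}{\bf (ii)} must still be applicable on each $U_{k_j}$. This is indeed the case, since a subsequence of a subsequence converging a.e.\ on $U_{k_j}$ again converges a.e.\ there to the same limit, and part {\bf (ii)} is formulated for any such subsequence. Note also that no consistency check for the patched limit on overlaps $U_j\cap U_k$ is needed, as $\rho$ is the a.e.\ limit of one and the same sequence, and that the turning-curve contribution $2\int_0^\infty f(\kappa)\,\varphi(t,\xi(t))\,{\rm{d}} t$ in \eqref{e:entro-bis} automatically vanishes for $\varphi$ supported in $\mathcal{O}$, a fact already incorporated in the statement of Proposition~\ref{prop:rectangle_}{\bf (ii)}.
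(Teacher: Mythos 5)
Your argument is correct and coincides with the paper's proof: a countable covering of $\mathcal{O}$ by the neighborhoods $\mathcal{O}_{(t_*,x_*)}$ of Proposition~\ref{prop:rectangle_}, a diagonal extraction giving a.e.\ convergence on $\mathcal{O}$ (hence on $(0,\infty)\times\mathbb{R}$, the graph of the Lipschitz curve $\xi$ being negligible), and the local entropy inequalities of part {\bf (ii)} to conclude. The only difference is that you spell out, via a partition of unity, the localization of an arbitrary test function supported in $\mathcal{O}$, a step the paper leaves implicit; this is a welcome but inessential refinement, legitimate since \eqref{e:entro-bis} is linear in $\varphi$.
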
 

\begin{proof}
The proof of the statement relies on {\bf (i)} of Proposition~\ref{prop:rectangle_} and on a covering argument, which yields global compactness in the sense of the a.e.\ convergence. 
Indeed, $\mathcal{O}$ defined as in \eqref{e:O} can be represented as the countable union of neighborhoods of the form $\mathcal{O}_{(t_*,x_*)}$ as in \eqref{eq:neighb_}, where, up to a subsequence, $\{\rho^n\}_n$ converges  a.e.\ by Proposition~\ref{prop:rectangle_}.
As a consequence, $\mathcal{O}$ is covered by a countable number of sets on which, up to a subsequence, $\{\rho^n\}_n$ converges  a.e.\ and a diagonal extraction argument permits to conclude that $\{\rho^n\}_n$ converges to some limit $\rho$  a.e.\ on $\mathcal{O}$, whence it converges   a.e.\ on $\mathbb{R}\times(0,\infty)$. Finally, the limit $\rho$ is an entropy solution because by Proposition~\ref{prop:rectangle_} it is so on each $\mathcal{O}_{(t_*,x_*)}$.\qed
\end{proof}

\subsection{Consistency of the approximation}

Our final goal is to show that the limit $(\rho,\xi)$ is indeed an entropy solution for the initial-boundary value problem \eqref{e:model} in the sense of Definition~\ref{d:entro}. We first notice that condition \eqref{e:cost0} holds for a.e.\ $t>0$ due to Proposition~\ref{p:inir}.  We also recall that we do not need to take care of boundary conditions thanks to Proposition~\ref{p:refo}. Moreover, according to Proposition~\ref{p:compact}, local entropy inequalities in $\mathcal{O}$ are satisfied by $\rho$: this is enough to ensure that Proposition~\ref{p:ini} concerning the initial datum holds true. Therefore, in order to prove the consistency claim we only need to assess \eqref{e:entro-bis} with test functions supported in $(0,\infty)\times \mathbb{R}$.

\begin{proposition}
The limit $(\rho,\xi)$ of the subsequence $\{(\rho^{n},\xi^{n})\}_{n}$ verifies the entropy condition \eqref{e:entro-bis} of Definition~\ref{d:entro-bis} with the choice of test functions restricted to $\Cc \infty\left((0, \infty) \times \mathbb{R}; [0, \infty)\right)$.
\end{proposition}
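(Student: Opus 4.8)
The plan is to mimic the consistency computation already carried out in the proof of Proposition~\ref{p:entropy-local} in the $\mathbf{BV}$-case, but to localize it so that the global total variation bound \eqref{e:DavidMaximMicic} is never invoked. Recall that one wants to show that, for every fixed $\kappa\in[0,\rho_{\max}]$ and every non-negative $\varphi\in\Cc\infty((0,\infty)\times\mathbb{R})$, the quantity
\begin{equation*}
(\spadesuit^n):=\int_0^\infty\!\!\int_\mathbb{R}\bigl(|\rho^n-\kappa|\,\varphi_t+\Phi(t,x,\rho^n,\kappa,\zeta^n)\varphi_x\bigr)\,{\rm d}x\,{\rm d}t+2\int_0^\infty f(\kappa)\,\varphi(t,\zeta^n)\,{\rm d}t
\end{equation*}
has non-negative limit inferior as $n\to\infty$; indeed, by the $\L1$-convergence of $\rho^n$, by Corollary~\ref{p:Korn}, by the identity $\int_\mathbb{R}|\sign(x-\xi(t))-\sign(x-\zeta^n(t))|\,{\rm d}x=2|\xi(t)-\zeta^n(t)|$, and by Lipschitz continuity of $\varphi$ and $f$, the right-hand side of \eqref{e:entro-bis} is precisely $\lim_n(\spadesuit^n)$. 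As in the earlier proof we split $(\spadesuit^n)=\kappa(\spadesuit^n_1)+(\spadesuit^n_2)$ using the algebraic identities following from \eqref{e:FTL}, with $(\spadesuit^n_1)$, $(\spadesuit^n_2)$ as in \eqref{e:spade_1}, \eqref{e:spade_2}. The term $(\spadesuit^n_1)$ is a sum of manifestly non-negative contributions, exactly as displayed in \eqref{e:milan}, \eqref{e:roma}, \eqref{e:lazio}; this part of the argument is unchanged and requires no variation bound. The whole difficulty is therefore concentrated in showing $(\spadesuit^n_2)\to0$, since in the $\mathbf{BV}$-case this was deduced from $|(\spadesuit^n_2)|\leq\mathcal L_\varphi(\tfrac{\mathcal L_v}{2}\int_0^{t_{s+1}}\TV(\rho^n(t,\cdot))\,{\rm d}t+2v_{\max}t_{N+1})\,\ell$ together with \eqref{e:DavidMaximMicic}.

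The main step is hence to re-estimate $(\spadesuit^n_2)$ using only the \emph{localized} variation bound provided by Proposition~\ref{prop:rectangle_}\,{\bf(iii)}. First I would use the fact that $\supp\varphi$ is compact, hence contained in some $(0,T)\times[-A,A]$ with $\varphi(t,\cdot)\equiv0$ for $t\notin[\varepsilon_0,T]$ for some $\varepsilon_0>0$; this lets us restrict all time integrals to $[\varepsilon_0,T]$, avoiding a neighbourhood of $t=0$, which is essential since the bound \eqref{e:EricClapton} degenerates as $\varepsilon\to0$. Next, the key observation is that the factors $\varphi(t,x_i)$, $\varphi(t,x_{i+1})$, $\varphi(t,\zeta^n)$ appearing in \eqref{e:spade_2} vanish unless the corresponding particle lies in $\supp\varphi$; and by Lemma~\ref{l:bounxi} the turning point $\zeta^n(t)$ stays in a fixed compact subinterval of $\mathfrak{C}$, well inside $[-A,A]$. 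So in each of the three bracketed blocks of \eqref{e:spade_2} only finitely many indices $i$ contribute, and — crucially — the particle associated with index $i$ contributes only when it is in the region where $\varphi\neq0$. The terms in the first block (indices $i<I_0^h$, particles moving leftward) are controlled, as in \eqref{e:ChickCorea}, by $\tfrac{\mathcal L_v\mathcal L_\varphi}{2}\ell\sum_i|R_{i-1/2}-R_{i+1/2}|$ where the sum is over those $i$ with $x_i(t)\in\supp\varphi$; since $\supp\varphi$ keeps a positive distance from the turning curve on its time-slices only away from a thin strip, I would instead argue: for $t\in[\varepsilon_0,T]$, the particles with $x_i(t)\in\supp\varphi$ that move leftward all lie in $(-\infty,\xi(t)-\beta\varepsilon_0/2]$ or else in the $O(\ell)$-thin strip around $\zeta^n(t)$ where only $O(1)$ particles sit, the latter contributing $O(\ell)$; for the former, $\sum_i|R_{i-1/2}-R_{i+1/2}|\leq\TV(\rho^n(t,\cdot);(-\infty,\xi(t)-\beta\varepsilon]\cup[\xi(t)+\beta\varepsilon,\infty))$ with $\varepsilon$ chosen $\leq\varepsilon_0/2$ adapted to $\supp\varphi$, and Proposition~\ref{prop:rectangle_}\,{\bf(iii)} bounds this by a constant $C(T,\varepsilon)$ independent of $t$ and $n$. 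Symmetrically for the rightward block (indices $i>I_0^h$), using Remark~\ref{rem:leftwardLWR}/Proposition~\ref{prop:rectangle_}. The middle block, involving $R_{I_0^h+1/2}$ and $\varphi(t,\zeta^n)$, involves only a bounded number of particles adjacent to $\zeta^n$, and its estimate \eqref{e:MilesDavis} already yields an $O(\ell)$ bound without any variation input. Summing, one obtains
\begin{equation*}
|(\spadesuit^n_2)|\leq \mathcal L_\varphi\Bigl(\tfrac{\mathcal L_v}{2}\,(T-\varepsilon_0)\,\bigl(\rho_{\max}+C_1+\tfrac{C_2}{\varepsilon}\bigr)+2v_{\max}\,(T-\varepsilon_0)+O(1)\Bigr)\ell\;\xrightarrow[n\to\infty]{}\;0,
\end{equation*}
since $\ell=L/n\to0$ while the bracket is bounded independently of $n$.

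The remaining bookkeeping is routine: one must make sure the decomposition of the time axis into the intervals $(t_h,t_{h+1})$ between switching times is still valid (it is, by Theorem~\ref{th:discrete-WP}, and the number of switches is at most $n$, but this number does not enter the final estimate because it is absorbed into the single integral $\int_{\varepsilon_0}^{T}\TV(\rho^n(t,\cdot);\ldots)\,{\rm d}t$ after telescoping the $\frac{\rm d}{{\rm d}t}$ terms exactly as in the displayed identity $\sum_h\int_{t_h}^{t_{h+1}}\sum_i\frac{\rm d}{{\rm d}t}(\cdots)\,{\rm d}t=0$ of the earlier proof), and one should also handle the degenerate sub-cases where there is no particle on one side of the turning curve, which only simplifies the sums. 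I expect the \textbf{main obstacle} to be the careful localization of the variation estimate: unlike in Proposition~\ref{p:entropy-local}, where a single global $\TV$ bound on all of $\mathbb{R}$ is available, here one must verify that every term of $(\spadesuit^n_2)$ carrying a variation weight is genuinely supported away from the turning curve — i.e. inside a region of the type $(-\infty,\xi(t)-\beta\varepsilon]\cup[\xi(t)+\beta\varepsilon,\infty)$ covered by Proposition~\ref{prop:rectangle_}\,{\bf(iii)} for a suitable $\varepsilon>0$ depending on $\supp\varphi$ — while the finitely many ``bad'' terms straddling the turning curve contribute only $O(\ell)$ by direct inspection. Once this splitting is set up, the passage to the limit and the non-negativity of $(\spadesuit^n_1)$ go through verbatim as in Section~\ref{s:cons}, completing the proof that $(\rho,\xi)$ satisfies \eqref{e:entro-bis} with arbitrary test functions in $\Cc\infty((0,\infty)\times\mathbb{R};[0,\infty))$, hence — combining with Propositions~\ref{p:inir}, \ref{p:ini}, \ref{p:compact} — is an entropy solution in the sense of Definition~\ref{d:entro-bis}, and by Proposition~\ref{p:refo} in the sense of Definition~\ref{d:entro}.
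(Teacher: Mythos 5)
Your overall strategy --- reuse the decomposition $(\spadesuit^n)=\kappa(\spadesuit^n_1)+(\spadesuit^n_2)$ from Proposition~\ref{p:entropy-local}, keep the sign argument for $(\spadesuit^n_1)$, and control $(\spadesuit^n_2)$ with the localized variation bound of Proposition~\ref{prop:rectangle_}\,{\bf(iii)} instead of \eqref{e:DavidMaximMicic} --- is the right one and coincides with the paper's up to one point. That point, however, is precisely where the difficulty of the theorem lies, and your treatment of it contains a genuine error. You claim that the leftward-moving particles in $\supp\varphi$ either lie in $(-\infty,\xi(t)-\beta\varepsilon_0/2]$ or sit in an ``$O(\ell)$-thin strip around $\zeta^n(t)$'' containing ``only $O(1)$ particles'', so that the latter contribute $O(\ell)$ to $(\spadesuit^n_2)$. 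This is false: the region not covered by Proposition~\ref{prop:rectangle_}\,{\bf(iii)} is the strip $(\xi(t)-\beta\varepsilon,\xi(t)+\beta\varepsilon)$, whose width is \emph{fixed} (independent of $n$), and since consecutive particles may be as close as $\ell/\rho_{\max}$ it can contain of order $\varepsilon\,\rho_{\max}/\ell=O(n)$ particles, whose density variation $\sum_i|R_{i-\frac{1}{2}}-R_{i+\frac{1}{2}}|$ is completely uncontrolled --- this is exactly where the non-classical shock forms. Hence your final displayed bound of the form $|(\spadesuit^n_2)|\leq C\,\ell$ does not follow, and the proof as written breaks down.

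The paper resolves this by a density argument on the test functions: one first proves \eqref{e:entro-bis} for test functions satisfying \eqref{eq:phisigma-prop}, i.e.\ constant in $x$ on the strip $\{|x-\xi(t)|<\sigma\}$ (such functions are dense in ${\bf W^{1,1}}$), so that the averages $\fint_{x_i}^{x_{i+1}}\bigl(\varphi(t,x)-\varphi(t,x_i)\bigr)\,{\rm{d}} x$ appearing in the analogue of \eqref{e:ChickCorea} vanish identically for particles inside the strip; only the particles straddling its edges contribute, producing the extra term $2R_{\max}\,\ell$. Your route could in principle be repaired without this device: each strip term is individually bounded by $\frac{\mathcal{L}_v\,\mathcal{L}_\varphi}{2}\rho_{\max}\,\ell$ (because $R_{i+\frac{1}{2}}(x_{i+1}(t)-x_i(t))=\ell$), so the strip contributes at most $O(\varepsilon)$ uniformly in $n$ and $t$, and one may then conclude by letting $n\to\infty$ first and $\varepsilon\to0$ afterwards. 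But that requires an additional limit which your write-up does not perform, and it is incompatible with your final estimate, in which every term is multiplied by $\ell$.
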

\begin{proof}
We  revisit 
the proof of Proposition~\ref{p:entropy-local}  
in the context of the local $\mathbf{BV}$ spatial bound of Proposition~\ref{prop:rectangle_} in the place of the global $\mathbf{BV}$ in space bound \eqref{e:DavidMaximMicic}.
The idea is to adapt the choice of test functions using a density argument.
First, due to $\L\infty$ boundedness of $\rho^n$, we can work with ${\bf W^{1,\pmb \infty}}((0,\infty)\times \mathbb{R})$ test functions in the place of $\Cc\infty$ ones. Second, observe that any smooth compactly supported test function $\varphi$ is the limit, in the ${\bf W^{1,1}}((0,\infty)\times\mathbb{R})$ topology, of the sequence $\varphi_\sigma$ of ${\bf W^{1,\pmb \infty}}$ functions defined by
\begin{equation*}
\varphi_\sigma(t,x):=\begin{cases} \varphi(t,x+\sigma) & \text{if~} x\leq \xi(t)-\sigma,
\\
\varphi(t,\xi(t)) & \text{if~ } \xi(t)-\sigma< x< \xi(t)+\sigma,
\\
\varphi(t,x-\sigma) & \text{if~ } x\geq \xi(t)+\sigma.
\end{cases}
\end{equation*} 
Indeed,  $\varphi_\sigma$, $\partial_t\varphi_\sigma$ and $\partial_x\varphi_\sigma$ are uniformly bounded by the $\C1$ norm of $\varphi$ and they converge a.e. to $\varphi$, $\varphi_t$ and $\varphi_x$, respectively, as $\sigma\to 0$, due to the uniform continuity of $\varphi$, $\varphi_t$ and $\varphi_x$. In conclusion, if we prove \eqref{e:entro-bis} with test function $\varphi_\sigma$ for all sufficiently small $\sigma$, at the limit we obtain \eqref{e:entro-bis} with test function $\varphi$. By this argument, for the sequel we can assume without loss of generality that there exists $\sigma>0$ such that
\begin{equation}\label{eq:phisigma-prop}
\text{$\partial_x \varphi(t,x)=0$ for all $(t,x)$ with $t>0$, $x\in (\xi(t)-\sigma,\xi(t)+\sigma).$}
\end{equation}

We then proceed as for the proof of Proposition~\ref{p:entropy-local}. By the $\L1$-convergence of $\{\rho^{n}\}_{n}$ to $\rho$,  using Corollary~\ref{p:Korn} together with the equality \[\int_{\mathbb{R}} \bigl|\sign\bigl(x-\xi(t)\bigr)-\sign\bigl(x-\zeta^{n}(t)\bigr)\bigr| \, {\rm{d}} x = 2 |\xi(t)-\zeta^{n}(t)|,\]
it is sufficient to prove that, for a fixed $\sigma>0$ and for all $\kappa \in [0,\rho_{\max}]$, the limit as $n$ goes to infinity of
\[(\spadesuit^n) :=
\int_0^{ \infty} \int_{\mathbb{R}} \bigl( | \rho^{n}-\kappa| \, \partial_t\varphi + \Phi(t,x, \rho^{n},\kappa,\zeta^n) \, \partial_x\varphi \bigr) {\rm{d}} x \, {\rm{d}} t 
+ 2 \int_0^{ \infty} f(\kappa) \varphi(t,\zeta^{n}) \, {\rm{d}} t\]
is non-negative,  where the test function $\varphi\in {\bf W^{1,1}}((0,\infty)\times\mathbb{R})$ is supported in $[\tau,T]\times \mathbb{R}$ for some $0<\tau<T$, and $\varphi$ satisfies \eqref{eq:phisigma-prop}.

We first notice that Corollary \ref{p:Korn} implies that $|\xi(t)-\zeta^n(t)|<\sigma$ for all $t\in (0,T)$, upon  restricting our attention to large enough values of $n$. 
Further, we can choose $\varepsilon>0$ small enough such that $\varepsilon\leq \tau$ and $\beta\varepsilon\leq \sigma$, where $\beta:=16(v_{\max}+\mathcal{L}_{\xi}+1)$ is the same as in Proposition~\ref{prop:rectangle_}. As a consequence, for all $n$ we have
\begin{equation}\label{eq:TVeps}
\TV\bigl({\rho}^n(t,\cdot\,);\mathbb{R}\setminus (\xi(t)-\sigma,\xi(t)+\sigma)\bigr)\leq \textbf{TV}_{\varepsilon,T} \quad \text{for all $t\in [\varepsilon,T]$} 
\end{equation}	 
where $\textbf{TV}_{\varepsilon,T}$ is independent of $t\in [\epsilon,T]$ and $n$.

We are intended to demonstrate that the localized variation bound \eqref{eq:TVeps} combined with the property \eqref{eq:phisigma-prop} of the test function can replace the global variation bound \eqref{e:DavidMaximMicic} in the context of the proof of Proposition~\ref{p:entropy-local}.

Let us assume the existence of two indices $J_0, K_0\in \llbracket 0,n-1 \rrbracket$ such that $J_0\leq I_0^h< I_0^h+1\leq  K_0$ and $x_i(t)\in [\xi(t)-\sigma,\xi(t)+\sigma]$ for $i\in \llbracket J_0, K_0 \rrbracket$. Note that $J_0$ and $K_0$ vary with $n$ and with $t$. 
This case is not generic but the other cases, that is when there is less of two particles in the interval $[\xi(t)-\sigma,\xi(t)+\sigma]$, are similar and are therefore omitted.

With the same computation as in the proof of Proposition~\ref{p:entropy-local}, we can show that $(\spadesuit^n)=\kappa(\spadesuit^n_{1})+(\spadesuit^n_{2})$, where $(\spadesuit^n_{1})$ and $(\spadesuit^n_{2})$ are given respectively by \eqref{e:spade_1} and \eqref{e:spade_2} with $\varphi$ satisfying \eqref{eq:phisigma-prop}. 

We first prove that $(\spadesuit^n_{2})\to 0$ as $n\to \infty$. For this purpose, we write the analogue of \eqref{e:ChickCorea}, \eqref{e:JohnColtrane} and \eqref{e:MilesDavis}. 
With the same notation introduced in \eqref{e:notations}, by the above definition of the indices $J_0$ and $K_0$, which replace the index $I_0$ in the proof of Proposition~\ref{p:entropy-local}, and due to \eqref{eq:phisigma-prop}, the estimate \eqref{e:ChickCorea} becomes
\begin{align*}
&\left| - \sum_{i=0}^{I_0^h-1} \dot{R}_{i+\frac{1}{2}} \int_{x_{i}}^{x_{i+1}} \varphi(t,x) \, {\rm{d}} x
+ \sum_{i=0}^{I_0^h-1} R_{i+\frac{1}{2}}  \, \Delta_v(R_{i+\frac{1}{2}},R_{i-\frac{1}{2}}) \, \varphi(t,x_{i})  \right|
\\={}&
\Biggl| \sum_{i=0}^{J_0-2} R_{i+\frac{1}{2}}  \, \Delta_v(R_{i-\frac{1}{2}}-R_{i+\frac{1}{2}}) \, \fint_{x_{i}}^{x_{i+1}} \bigl( \varphi(t,x) - \varphi(t,x_{i}) \bigr) {\rm{d}} x 
\\
&+  R_{J_0-\frac{1}{2}} \frac{\Delta_v(R_{J_0-\frac{3}{2}},R_{J_0-\frac{1}{2}})}{x_{J_0}-x_{J_0-1}} \biggl(\int_{x_{J_0-1}}^{\xi-\sigma} \varphi(t,x) \, {\rm{d}} x+\int_{\xi-\sigma}^{x_{J_0}} \varphi(t,\xi) \, {\rm{d}} x\biggr)
\\
&-  R_{J_0-\frac{1}{2}}  \, \Delta_v(R_{J_0-\frac{3}{2}},R_{J_0-\frac{1}{2}}) \, \varphi(t,x_{J_0-1})
\Biggr|
\\ \leq {}&
\mathcal{L}_v \, \mathcal{L}_\varphi \left( \sum_{i=0}^{J_0-2} |R_{i-\frac{1}{2}}-R_{i+\frac{1}{2}}| \, R_{i+\frac{1}{2}} \fint_{x_{i}}^{x_{i+1}} ( x-x_{i} ) \, {\rm{d}} x \right)
\\&
+\Biggl|  R_{J_0-\frac{1}{2}}  \, \Delta_v(R_{J_0-\frac{3}{2}},R_{J_0-\frac{1}{2}}) \fint_{x_{J_0-1}}^{x_{J_0}} \bigl(\varphi(t,x)-\varphi(t,x_{J_0-1})\bigr) {\rm{d}} x \Biggr|
\\&
+ \Biggl| R_{J_0-\frac{1}{2}}  \frac{\Delta_v(R_{J_0-\frac{3}{2}},R_{J_0-\frac{1}{2}})}{x_{J_0}-x_{J_0-1}}\int_{\xi-\sigma}^{x_{J_0}} \bigl(\varphi(t,\xi)-\varphi(t,x)\bigr) {\rm{d}} x \Biggr|
\\
\leq {}&
\mathcal{L}_v \, \mathcal{L}_\varphi \left( \sum_{i=0}^{J_0-2} |R_{i-\frac{1}{2}}-R_{i+\frac{1}{2}}| \, R_{i+\frac{1}{2}} \, \frac{x_{i+1}-x_{i}}{2}\right)
+ \mathcal{L}_v \, \mathcal{L}_\varphi |R_{J_0-\frac{3}{2}}-R_{J_0-\frac{1}{2}}| \, R_{J_0-\frac{1}{2}} \, \frac{x_{J_0}-x_{J_0-1}}{2}
\\
&+ \mathcal{L}_v \, \mathcal{L}_\varphi |R_{J_0-\frac{3}{2}}-R_{J_0-\frac{1}{2}}|\, R_{J_0-\frac{1}{2}}\,  \frac{\bigl(x_{J_0}-(\xi-\sigma)\bigr)^2}{2(x_{J_0}-x_{J_0-1})}
\\
\leq{}&\frac{\mathcal{L}_v \, \mathcal{L}_\varphi}{2} \, \left( \sum_{i=0}^{J_0-2} |R_{i-\frac{1}{2}}-R_{i+\frac{1}{2}}| +2\, R_{\max}\right) \, \ell.
\end{align*}
\delaynewpage{3}
In the same way, \eqref{e:JohnColtrane} and \eqref{e:MilesDavis} become respectively
\begin{align*}
&\left| -\sum_{i=I_0^h+1}^{n-1} \dot{R}_{i+\frac{1}{2}} \int_{x_{i}}^{x_{i+1}} \varphi(t,x) \, {\rm{d}} x 
- \sum_{i=I_0^h+1}^{n-1}\!\!  R_{i+\frac{1}{2}} \Delta_v(R_{i+\frac{3}{2}},R_{i+\frac{1}{2}}) \bigr) \varphi(t,x_{i+1}) \right|
\\
\leq{}& \frac{\mathcal{L}_v \, \mathcal{L}_\varphi}{2} \, \left( \sum_{i=K_0+1}^{n-1} |R_{i+\frac{3}{2}}-R_{i+\frac{1}{2}}| +2 \, R_{\max}\right) \, \ell,
\\
&\Biggl|- \dot{R}_{I_0^h+\frac{1}{2}} \int_{x_{I_0^h}}^{x_{I_0^h+1}} \varphi (t,x) \, {\rm{d}} x 
- R_{I_0^h+\frac{1}{2}}  \, \Delta_v(R_{I_0^h+\frac{3}{2}},R_{I_0^h+\frac{1}{2}}) \, \varphi (t,x_{I_0^h+1}) 
\\&
- R_{I_0^h+\frac{1}{2}}  \, \Delta_v(R_{I_0^h-\frac{1}{2}},R_{I_0^h+\frac{1}{2}}) \, \varphi (t,x_{I_0^h})
-2 f(R_{I_0^h+\frac{1}{2}}) \varphi(t,\zeta^n)\Biggr|
\\={}&
\Biggl|R_{I_0^h+\frac{1}{2}} \bigl(v(R_{I_0^h+\frac{3}{2}})+v(R_{I_0^h-\frac{1}{2}})\bigr) \fint_{x_{I_0^h}}^{x_{I_0^h+1}} \varphi (t,x) \, {\rm{d}} x 
- R_{I_0^h+\frac{1}{2}}  \, \Delta_v(R_{I_0^h+\frac{3}{2}},R_{I_0^h+\frac{1}{2}}) \, \varphi (t,x_{I_0^h+1}) 
\\&
- R_{I_0^h+\frac{1}{2}}  \, \Delta_v(R_{I_0^h-\frac{1}{2}},R_{I_0^h+\frac{1}{2}}) \, \varphi (t,x_{I_0^h})
-2 f(R_{I_0^h+\frac{1}{2}}) \varphi(t,\zeta^n)\Biggr|
\\={}&
\Biggl|R_{I_0^h+\frac{1}{2}} \bigl(v(R_{I_0^h+\frac{3}{2}})+v(R_{I_0^h-\frac{1}{2}})\bigr) \varphi(t,\xi)
- R_{I_0^h+\frac{1}{2}}  \, \Delta_v(R_{I_0^h+\frac{3}{2}},R_{I_0^h+\frac{1}{2}}) \, \varphi(t,\xi)
\\&
- R_{I_0^h+\frac{1}{2}} \, \Delta_v(R_{I_0^h-\frac{1}{2}},R_{I_0^h+\frac{1}{2}}) \, \varphi(t,\xi)
-2 \, R_{I_0^h+\frac{1}{2}}\, v(R_{I_0^h+\frac{1}{2}}) \varphi(t,\xi)\Biggr|= 0.
\end{align*}
Therefore, by the definitions of $J_0$ and $K_0$ we get
\begin{align*}
|(\spadesuit^n_{2})|\!\leq\! \frac{\mathcal{L}_{v}\mathcal{L}_{\varphi}}{2} \biggl(&\!\int_{0}^{T} \!\!\!\!\!\TV\bigl(\rho^{n}(t,\cdot\,); \mathbb{R}\setminus (\xi(t)\!-\!\sigma, \xi(t)\!+\!\sigma) \bigr)   {\rm d} t + 2 R_{\max}   T \biggr)   \ell
\end{align*}
and hence, due to \eqref{eq:TVeps}, we have
\begin{equation*}
|(\spadesuit^n_{2})|\leq \frac{\mathcal{L}_{v}\, \mathcal{L}_{\varphi}}{2}  T\bigl({\bf TV_{\varepsilon,T}} + 2 R_{\max}  \bigr)   \ell,
\end{equation*}
where the right-hand side converges to zero as $n\to \infty$ because $\ell$ do so, while ${\bf TV_{\varepsilon}}$ is independent on $n$.
As a consequence, to conclude the proof it is sufficient to show that $(\spadesuit^n_{1})\geq 0$. As we did for $(\spadesuit^n_{2})$, we write the analogue of \eqref{e:milan}, \eqref{e:roma} and \eqref{e:lazio}. By the above definitions of the indices $J_0$ and $K_0$ and due to \eqref{eq:phisigma-prop}, we have that \eqref{e:milan}, \eqref{e:roma} and \eqref{e:lazio} become respectively
\begin{align*}
&- \sum_{i=0}^{I_0^h-1}  \sign(R_{i+\frac{1}{2}}-\kappa)  \, \Delta_v(\kappa,R_{i-\frac{1}{2}}) \, \varphi(t,x_{i})
+\sum_{i=-1}^{I_0^h-1} \sign(R_{i+\frac{1}{2}}-\kappa)  \, \Delta_v(\kappa,R_{i+\frac{1}{2}}) \, \varphi(t,x_{i+1})
\\&
-  \sign(R_{I_0^h+\frac{1}{2}}-\kappa) \, \Delta_v(\kappa,R_{I_0^h-\frac{1}{2}}) \, \varphi(t,x_{I_0^h})
\\={}&
\sum_{i=0}^{J_0-1} \bigl( \sign(R_{i-\frac{1}{2}}-\kappa) - \sign(R_{i+\frac{1}{2}}-\kappa) \bigr) \, \Delta_v(\kappa,R_{i-\frac{1}{2}}) \, \varphi(t,x_{i}) 
\geq 0,
\\[5pt]
&\sum_{i=I_0^h+1}^{n} \sign(R_{i+\frac{1}{2}}-\kappa) \, \Delta_v(\kappa,R_{i+\frac{1}{2}}) \, \varphi(t,x_{i})
- \sum_{i=I_0^h+1}^{n-1}  \sign(R_{i+\frac{1}{2}}-\kappa) \, \Delta_v(\kappa,R_{i+\frac{3}{2}}) \, \varphi(t,x_{i+1})
\\&
- \sign(R_{I_0^h+\frac{1}{2}}-\kappa) \, \Delta_v(\kappa,R_{I_0^h+\frac{3}{2}}) \, \varphi(t,x_{I_0^h+1})
\\={}&
\sum_{i=K_0+1}^{n} \bigl(\sign(R_{i+\frac{1}{2}}-\kappa)-\sign(R_{i-\frac{1}{2}}-\kappa)\bigl) \, \Delta_v(\kappa,R_{i+\frac{1}{2}}) \, \varphi(t,x_{i}) 
\geq 0,
\\[5pt]
&\hspace{-0.3cm}2 \bigl(1 + \sign(R_{I_0^h+\frac{1}{2}}\!-\!\kappa)\bigl) v(\kappa) \varphi(t,\zeta^{n})
=2 \bigl(1 + \sign(R_{I_0^h+\frac{1}{2}}\!-\!\kappa)\bigl) v(\kappa) \varphi(t,\xi)\geq 0,
\end{align*}
then we have $(\spadesuit^n_{1})\geq 0$ and hence the proof is completed.\qed 
\end{proof}

\section{A numerical case study}
\label{s:numsche}

\delaynewpage{1}
In this section we consider the simplest affine choice of $v$:
\begin{align*}
v( \rho ) &:= v_{\max} \left(1-\frac{\rho}{ \rho_{\max}}\right),&
f( \rho ) &:= \rho \,v( \rho ),&
v_+( \rho ) &:= [v( \rho )]_+.
\end{align*}
We propose the following fully discrete version of the many-particle Hughes model of Section~\ref{s:mpa}.
Let $\mathtt{T}$ be the target time horizon.
Fix $\Delta t>0$ sufficiently small and $n \in{\mathbb{N}}$ sufficiently large.
Introduce the following notation
\begin{align*}
t^h &:= h\,\Delta t,&
\mathtt{x}_i^h&:\approx x_{i}(t^h),&
&h \in \llbracket0,\mathtt{H}\rrbracket,&
&i \in\llbracket0,n\rrbracket,
\end{align*}
where $\mathtt{H} := \lceil\mathtt{T}/\Delta t\rceil$.
The approximation $\mathtt{x}_i^h$ of the position for the $i$-th particle at time $t^h$ is obtained by applying the following numerical scheme
\begin{align*}
\mathtt{x}_i^0 &:= \overline{x}_i,&\hspace{-30mm}i \in\llbracket0,n\rrbracket,
\\
\mathtt{x}_0^{h+1} &:=\mathtt{x}_0^{h} - v_{\max} \, \Delta t = \overline{x}_0 - v_{\max} \, t^{h+1},
\\
\mathtt{x}_i^{h+1} &:= \left\{\begin{array}{@{}l@{\quad}l@{}}
\mathtt{x}_i^{h} - v_+\Bigl(\frac{\ell}{\mathtt{x}_i^h-\mathtt{x}_{i-1}^h}\Bigr) \, \Delta t&\hbox{if }
\begin{aligned}[t]
\frac{2}{\alpha\,\ell} \, \mathtt{x}_i^{h} < \ 
&\# \{ j \in \llbracket0,n\rrbracket : \mathtt{x}_i^h < \mathtt{x}_j^{h} < 1\}
\\&- \# \{ j \in \llbracket0,n\rrbracket : -1 < \mathtt{x}_j^{h} < \mathtt{x}_i^h\},
\end{aligned}
\\
\mathtt{x}_i^{h} +v_+\Bigl(\frac{\ell}{\mathtt{x}_{i+1}^h-\mathtt{x}_{i}^h}\Bigr) \, \Delta t&\displaystyle
\hbox{otherwise},
\end{array}\right.
\\&&\hspace{-30mm}i \in\llbracket1,n-1\rrbracket,
\\
\mathtt{x}_n^{h+1} &:=\mathtt{x}_n^{h} + v_{\max} \, \Delta t = \overline{x}_n + v_{\max} \, t^{h+1},
\end{align*}
where $\overline{x}_i$ are defined in \eqref{e:Slipknot}.
We stress that the above scheme for the particles position $\mathtt{x}_i^h$ is decoupled from the scheme for the turning point $\xi^h$.
This choice allows for faster simulations, at least if we are interested only in the particles path, which is the case if, for instance, we are interested in computing the evacuation time.

In the following proposition we deduce a sort of CFL condition by requiring the order preservation of the particles. This stability result is the key step towards the convergence analysis of the scheme. However, in order not to overload the paper, we do not address the convergence, as $\Delta t\to 0$, of the scheme towards the unique solution of the model of Section~\ref{s:mpa}.

\begin{proposition}
If $\Delta t$ and $n$ are such that
\[\Delta t \leq \frac{L}{ \rho_{\max} \, v_{\max} \, n},\]
then for any $h \in\llbracket0,\mathtt{H}-1\rrbracket$ and $i \in\llbracket0,n-1\rrbracket$ we have
\begin{equation}
\mathtt{x}_{i+1}^{h} - \mathtt{x}_{i}^{h} \geq \ell/ \rho_{\max}
\quad\Longrightarrow\quad
\mathtt{x}_{i+1}^{h+1} - \mathtt{x}_{i}^{h+1} \geq \ell/ \rho_{\max}.
\label{e:Disturbed}
\end{equation}
\end{proposition}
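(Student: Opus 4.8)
The plan is to verify the order-preservation implication \eqref{e:Disturbed} by a direct case analysis on the signs of the updates of $\mathtt{x}_i^h$ and $\mathtt{x}_{i+1}^h$, exactly as in the classical follow-the-leader stability argument of \cite{DiFrancescoRosini}. First I would observe that the CFL bound $\Delta t \leq \ell/(\rho_{\max} v_{\max})$ together with $0\leq v_+\leq v_{\max}$ guarantees that each single displacement $v_+(\ell/(\mathtt{x}_{i+1}^h-\mathtt{x}_i^h))\,\Delta t$ is at most $\ell/\rho_{\max}$, which is the length needed to ``close'' a gap of minimal size; this is the quantitative ingredient that will be used repeatedly. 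Note that the same bound applies to the extremal particles, which move at the constant speed $v_{\max}$.

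Next, under the inductive hypothesis $\mathtt{x}_{i+1}^h-\mathtt{x}_i^h\geq \ell/\rho_{\max}$, I would split into the four combinations according to whether the scheme selects the ``leftward'' branch (velocity $-v_+(\ell/(\mathtt{x}_i^h-\mathtt{x}_{i-1}^h))$) or the ``rightward'' branch (velocity $+v_+(\ell/(\mathtt{x}_{i+1}^h-\mathtt{x}_i^h))$) for each of the particles $\mathtt{x}_i^h$ and $\mathtt{x}_{i+1}^h$. The favourable cases are those where particle $i$ moves left and particle $i+1$ moves right (the gap only increases), or where both move in the same direction; in the latter situation the new gap is $\mathtt{x}_{i+1}^h-\mathtt{x}_i^h \pm \Delta t\bigl(v_+(R)-v_+(R')\bigr)$ for appropriate arguments $R,R'$, and monotonicity of $v_+$ together with the ordering of the arguments (a consequence of the inductive hypothesis applied to the relevant consecutive gaps) forces the correction to keep the gap above $\ell/\rho_{\max}$; here the argument is structurally identical to \cite[Lemma~1]{DiFrancescoRosini}. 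The only genuinely delicate case is the ``repulsive'' one, where particle $i$ moves right with speed $v_+(\ell/(\mathtt{x}_{i+1}^h-\mathtt{x}_i^h))$ and particle $i+1$ moves left with speed $v_+(\ell/(\mathtt{x}_{i+2}^h-\mathtt{x}_{i+1}^h))$: here the gap decreases, and I would bound the new gap from below by $\mathtt{x}_{i+1}^h-\mathtt{x}_i^h - v_+(\ell/(\mathtt{x}_{i+1}^h-\mathtt{x}_i^h))\,\Delta t$, then use the elementary fact that the map $d\mapsto d - v_+(\ell/d)\,\Delta t$ is non-decreasing on $[\ell/\rho_{\max},\infty)$ under the CFL condition (its derivative is $1 - \tfrac{\ell\Delta t}{d^2}v_+'(\ell/d)\cdot(-1)\geq 1$ wherever $v_+>0$, using $v'\leq 0$), so that evaluating at $d=\ell/\rho_{\max}$ and observing $v_+(\rho_{\max})=0$ yields exactly $\ell/\rho_{\max}$.

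A small point to treat carefully is the switching criterion built into the scheme: the direction chosen for $\mathtt{x}_i^h$ is dictated by the discrete inequality comparing $\tfrac{2}{\alpha\ell}\mathtt{x}_i^h$ with the difference of particle counts on the two sides, and a priori particles $i$ and $i+1$ at the same time step may be governed by opposite branches. The case analysis above is designed to be exhaustive precisely in $(\text{branch of }i)\times(\text{branch of }i+1)$, so the switching rule does not need to be analysed further — all four sign patterns are covered, and in each the CFL condition and monotonicity of $v_+$ suffice. I expect the main obstacle to be purely bookkeeping: making sure that in each case the correct consecutive gap (either $\mathtt{x}_{i+1}^h-\mathtt{x}_i^h$, or $\mathtt{x}_i^h-\mathtt{x}_{i-1}^h$, or $\mathtt{x}_{i+2}^h-\mathtt{x}_{i+1}^h$) to which the inductive hypothesis is applied is indeed $\geq \ell/\rho_{\max}$; since the statement of the proposition is an implication for a fixed $h$ rather than a full induction, I would phrase the proof as ``assume all gaps at level $h$ are $\geq\ell/\rho_{\max}$'' — which is the natural reading consistent with Lemma~\ref{l:maxpri} — or, more conservatively, note that the repulsive estimate for the pair $(i,i+1)$ only invokes the gap $\mathtt{x}_{i+1}^h-\mathtt{x}_i^h$ itself, so the one-gap hypothesis literally as stated is enough for that worst case, and handle the remaining cases under the global hypothesis.
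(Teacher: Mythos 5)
Your overall strategy (case analysis on the branches selected for particles $i$ and $i+1$, combined with the CFL condition and the degeneracy $v(\rho_{\max})=0$) matches the paper's, and your treatment of the concordant and divergent cases is essentially the computation the paper carries out for its one displayed case. However, there is a genuine gap in the ``repulsive'' case, which is precisely the one you single out as delicate. First, a bookkeeping error: in the scheme a particle taking the leftward branch uses the gap \emph{behind} it, so if $\mathtt{x}_{i+1}^h$ moves left its speed is $v_+\bigl(\ell/(\mathtt{x}_{i+1}^h-\mathtt{x}_i^h)\bigr)$, not $v_+\bigl(\ell/(\mathtt{x}_{i+2}^h-\mathtt{x}_{i+1}^h)\bigr)$. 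More seriously, in this configuration \emph{both} displacements shrink the gap, so the new gap equals $d-2v_+(\ell/d)\,\Delta t$ with $d:=\mathtt{x}_{i+1}^h-\mathtt{x}_i^h$; your proposed lower bound $d-v_+(\ell/d)\,\Delta t$ silently drops one of the two negative contributions and is not a valid lower bound for the new gap. If one corrects it, the requirement $d-2v_+(\ell/d)\,\Delta t\geq \ell/\rho_{\max}$ amounts, for the affine $v$, to $\Delta t\leq d/(2v_{\max})$, which the stated CFL condition does \emph{not} guarantee when $d$ is close to $\ell/\rho_{\max}$ — it is off by a factor of $2$. So the argument as written fails in exactly the case you flag as the worst one.

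The repair is the point you explicitly set aside: the switching criterion must be analysed. The quantity $\frac{2}{\alpha\ell}\,x-\bigl(\#\{j:x<\mathtt{x}_j^h<1\}-\#\{j:-1<\mathtt{x}_j^h<x\}\bigr)$ is non-decreasing in $x$, so at each time level the particles taking the leftward branch form an initial segment of indices and those taking the rightward branch a final segment; consequently the configuration ``$\mathtt{x}_i^h$ moves right while $\mathtt{x}_{i+1}^h$ moves left'' never occurs, and the only non-trivial cases are the concordant ones, which reduce to $d-v_+(\ell/d)\,\Delta t\geq\ell/\rho_{\max}$ and are handled exactly as in the paper. A further minor slip: the derivative of $d\mapsto d-v_+(\ell/d)\,\Delta t$ is $1+\Delta t\,\frac{\ell}{d^2}\,v'(\ell/d)\leq 1$ (not $\geq 1$), since $v'\leq 0$; the map is indeed non-decreasing on $[\ell/\rho_{\max},\infty)$ under the CFL condition, but establishing this uses the quantitative bound $|v'|=v_{\max}/\rho_{\max}$ of the affine velocity, not merely the sign of $v'$.
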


\begin{proof} 
If $\mathtt{x}_{i+1}^{h}$ is moving backward and $\mathtt{x}_{i+1}^{h} - \mathtt{x}_{i}^{h} \geq \ell/ \rho_{\max}$, then we have
\begin{align*}
\mathtt{x}_{i+1}^{h+1} - \mathtt{x}_{i}^{h+1}
&=
\mathtt{x}_{i+1}^{h} - \mathtt{x}_{i}^{h} - v_+\left(\frac{\ell}{\mathtt{x}_{i+1}^h-\mathtt{x}_{i}^h}\right) \, \Delta t + v_+\left(\frac{\ell}{\mathtt{x}_i^h-\mathtt{x}_{i-1}^h}\right) \, \Delta t
\\&\geq
\mathtt{x}_{i+1}^{h} - \mathtt{x}_{i}^{h} - v\left(\frac{\ell}{\mathtt{x}_{i+1}^h-\mathtt{x}_{i}^h}\right) \, \Delta t,
\end{align*}
and therefore, using in particular the affine choice of $v$, we have
\begin{align*}
\mathtt{x}_{i+1}^{h+1} - \mathtt{x}_{i}^{h+1} \geq \frac{\ell}{ \rho_{\max}}
&\Longleftarrow
\mathtt{x}_{i+1}^{h} - \mathtt{x}_{i}^{h} - v\left(\frac{\ell}{\mathtt{x}_{i+1}^h-\mathtt{x}_{i}^h}\right) \, \Delta t
\geq \frac{\ell}{ \rho_{\max}}
\\\Longleftrightarrow
\Delta t 
\leq \frac{\mathtt{x}_{i+1}^{h} - \mathtt{x}_{i}^{h} - \frac{\ell}{ \rho_{\max}}}{v\left(\ell/(\mathtt{x}_{i+1}^h-\mathtt{x}_{i}^h)\right)}
= \frac{\mathtt{x}_{i+1}^{h} - \mathtt{x}_{i}^{h}}{v_{\max}}
&\Longleftarrow
\Delta t \leq \frac{\ell}{ \rho_{\max} \, v_{\max}} = \frac{L}{ \rho_{\max} \, v_{\max} \, n}.
\end{align*}
The remaining cases are analogous and are therefore omitted.\qed
\end{proof} 
\noindent
Notice that for $h=0$ the assumption in \eqref{e:Disturbed} holds true for any $i \in\llbracket0,n-1\rrbracket$ by \eqref{e:Marillion}.
\medskip

Let us recall the physical meaning of the parameter $\alpha\geq0$.
The case $\alpha=0$ corresponds to pedestrians moving towards the closest exit, regardless of the overall density distribution.
This is a typical behaviour in panic situations, see \cite{MR3076426} and the references therein.
On the other hand, as $\alpha$ grows, so does the importance of dividing the crowd into two groups with the same number of pedestrians.
Such mechanism penalizes regions with high number of pedestrians in the choice of the nearest exit.

\begin{figure}[!htbp]
\centering
\begin{tikzpicture}[x=.5mm, y=.5mm]
\node[anchor=south west,inner sep=0pt] at (0,0) { \includegraphics[height=25mm]{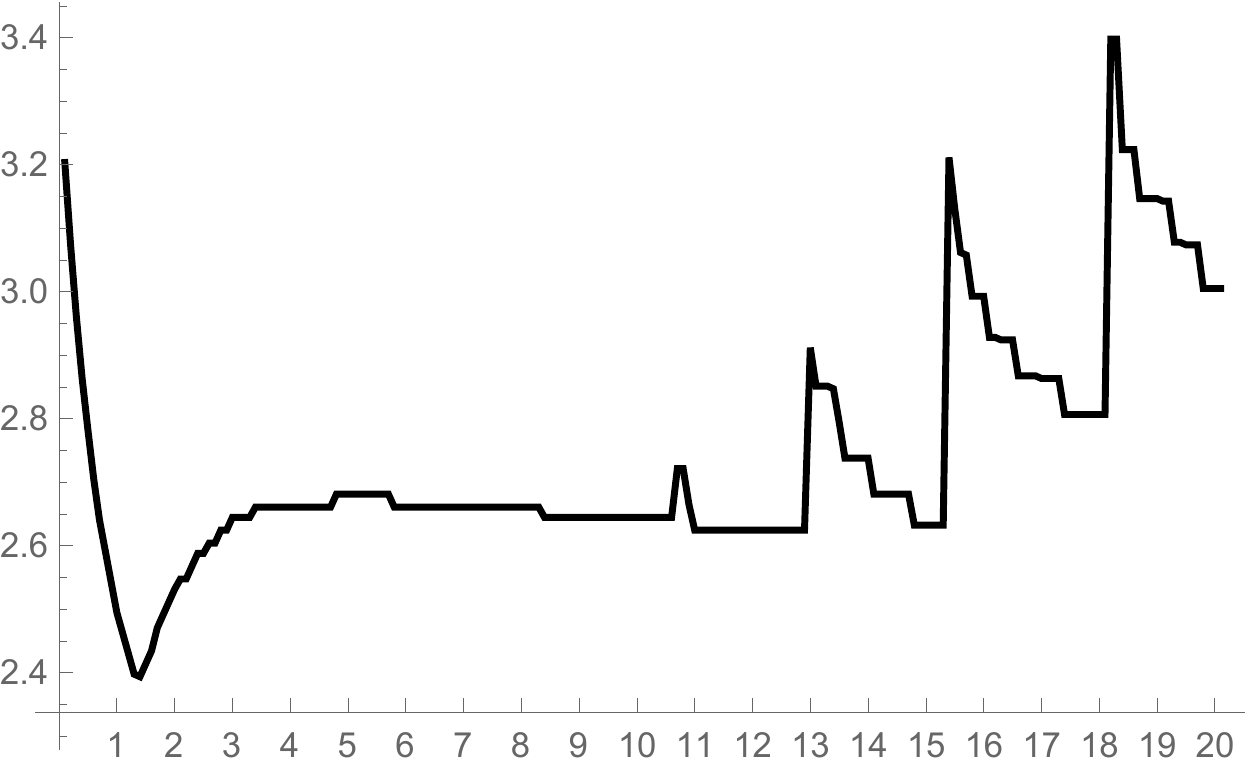}};
\node at (83,0) {\strut$\alpha$};
\node[left] at (0,50) {$T_{\rm mic}$};
\end{tikzpicture}
\caption{Microscopic evacuation time as a function of $\alpha$.}
\label{f:eva}
\end{figure}
These considerations lead us to study the microscopic evacuation time $T_{\rm mic}$ as a function of $\alpha$, i.e., $T_{\rm mic}=T_{\rm mic}(\alpha)$ (recall that the evacuation time of the model of Section~\ref{s:mpa} was proved to be finite, see Remark~\ref{rem:finite-time-evac}).
To this aim, we fix $v_{\max}=1$, $\rho_{\max}=1$, consider the initial datum
\[\overline{\rho}(x) :=
\left\{\begin{array}{@{}l@{\quad}l@{}}
0.9&\hbox{if }-1 \leq x<-0.5,
\\
0.9&\hbox{if }-0.4 \leq x<0,
\\
0&\hbox{otherwise},
\end{array}\right.
\]
and take $n=200$, $\ell = 0.00405$ and $\Delta t=L/( \rho_{\max} \, v_{\max} \, n) = 0.00405$.
By letting then $\alpha$ vary in $[0,20]$ with step $0.1$, we obtain the graph represented in \figurename~\ref{f:eva}.
Some comments on this figure are in order.
First, for $\alpha$ sufficiently small $T_{\rm mic}$ seems to have a Lipschitz dependence on $\alpha$.
On the other hand, there exist $\alpha_1<\alpha_2<\alpha_3<\ldots$ across which $T_{\rm mic}$ has discontinuities, whose amplitude grows for higher values of $\alpha$.
Second, we notice that $T_{\rm mic}$ has in $[0,\alpha_1]$ a graph analogous to that obtained, for instance, in \cite[\figurename s~5, 8]{MR3554543}, \cite[\figurename~9]{MR3460619}, \cite[\figurename~12]{MR3277564} and \cite[\figurename s~8, 9, 10]{ZHAO2020104517}, even if the variable and the setup for the simulations differ from our.
On the other hand, even if discontinuous evacuation times have been already considered in the literature, see for instance \cite[\figurename~9]{MR3554543}, \cite[\figurename~13]{MR2668282}, \cite[\figurename~6]{MR2817547}, to the best of authors' knowledge, the behaviour in \figurename~\ref{f:eva} (with more than one jump discontinuity) is new.
At last, the minimum in \figurename~\ref{f:eva} is achieved for $\alpha\approx1.3$ and the corresponding evacuation time is $T_{\rm mic}\approx2.39355$.

We further investigate this example by picturing some sample solutions of the scheme that we enrich with an approximation of the turning curve.
\begin{figure}[!htbp]\centering
\resizebox{\linewidth}{!}{
\begin{tikzpicture}[x=1mm, y=1mm]
\node[anchor=south west,inner sep=0pt] at (0,0) { \includegraphics[height=50mm]{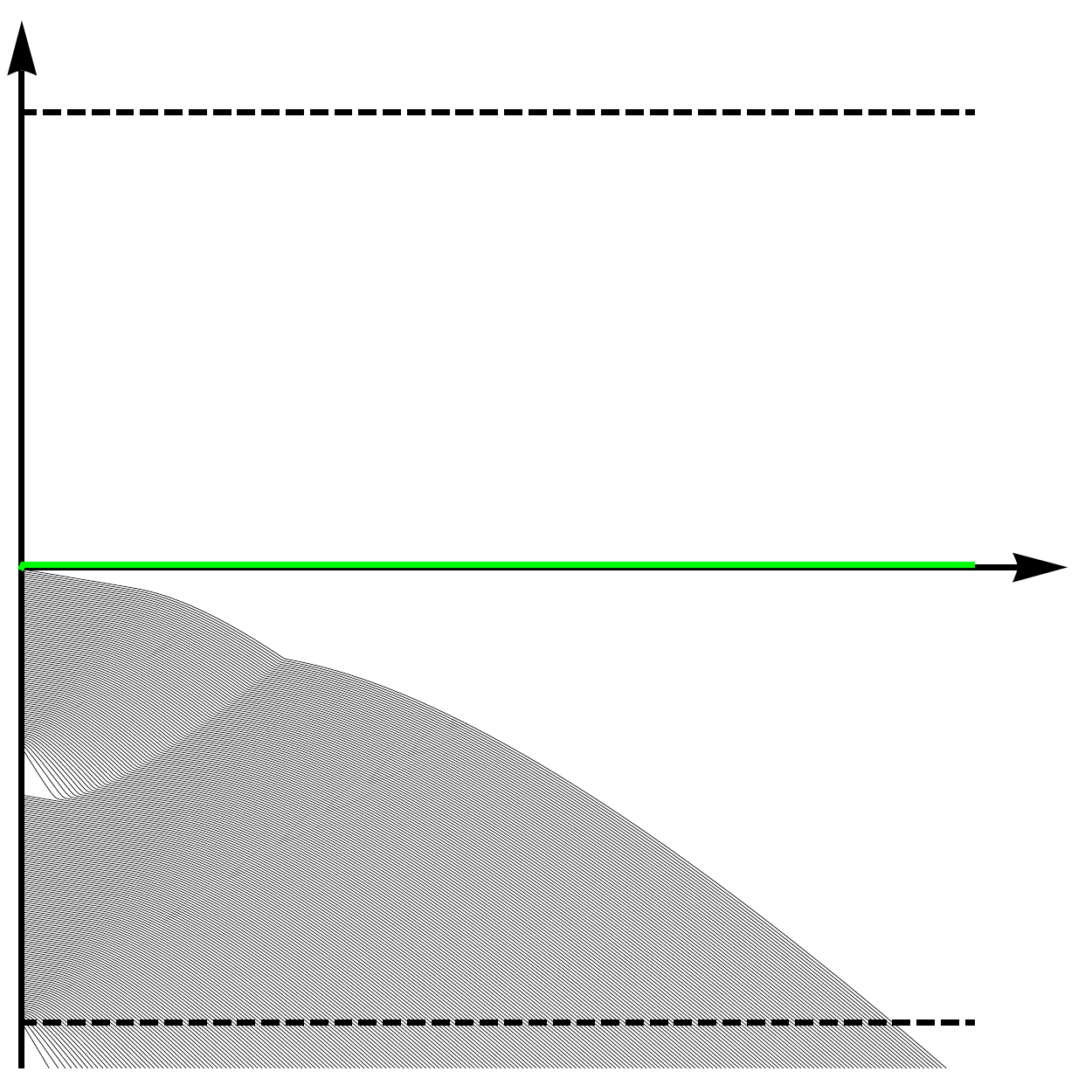}};
\node at (48,20) {\strut$t$};
\node[left] at (0,48) {$x$};
\end{tikzpicture}
\begin{tikzpicture}[x=1mm, y=1mm]
\node[anchor=south west,inner sep=0pt] at (0,0) { \includegraphics[height=50mm]{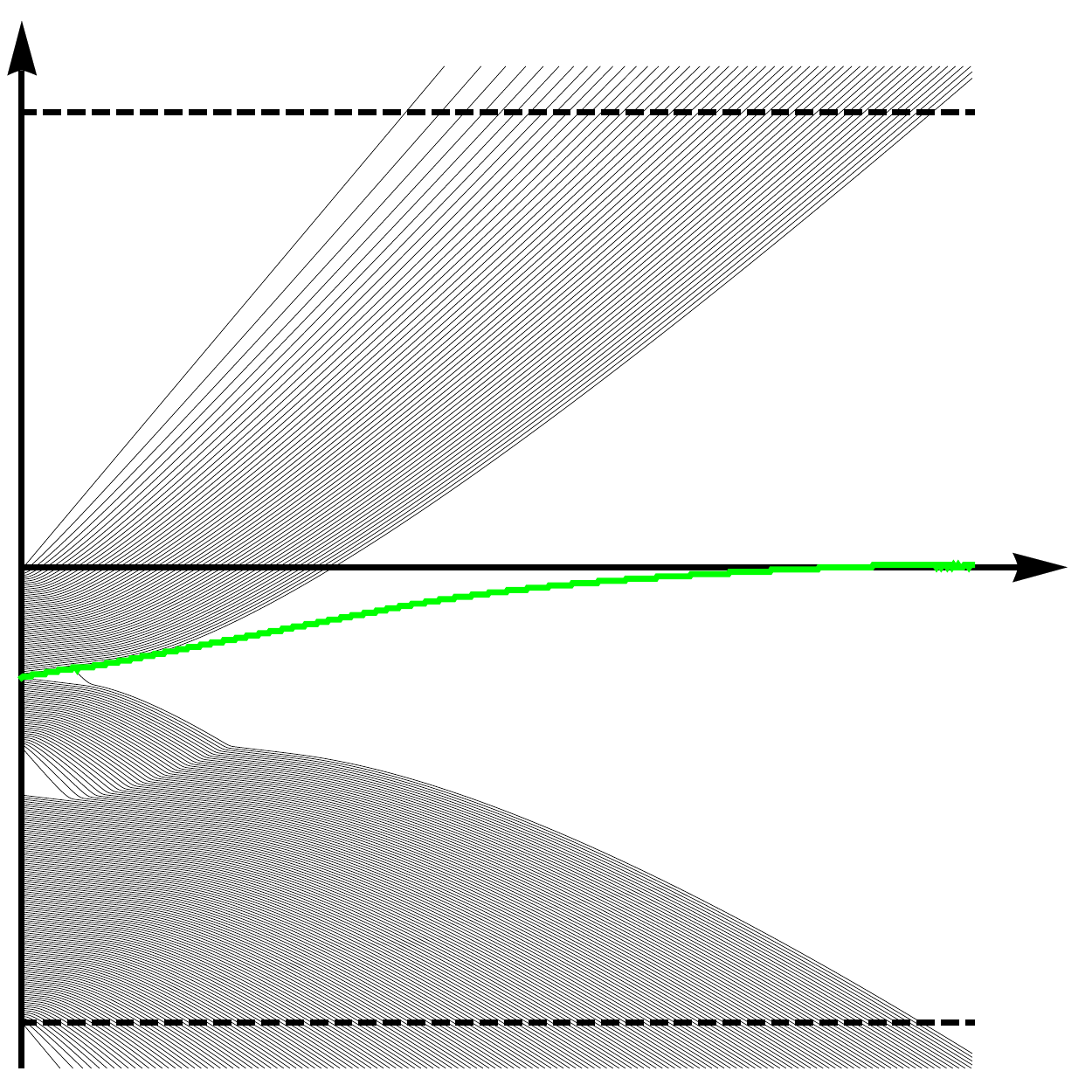}};
\node at (48,20) {\strut$t$};
\node[left] at (0,48) {$x$};
\end{tikzpicture}
\begin{tikzpicture}[x=1mm, y=1mm]
\node[anchor=south west,inner sep=0pt] at (0,0) { \includegraphics[height=50mm]{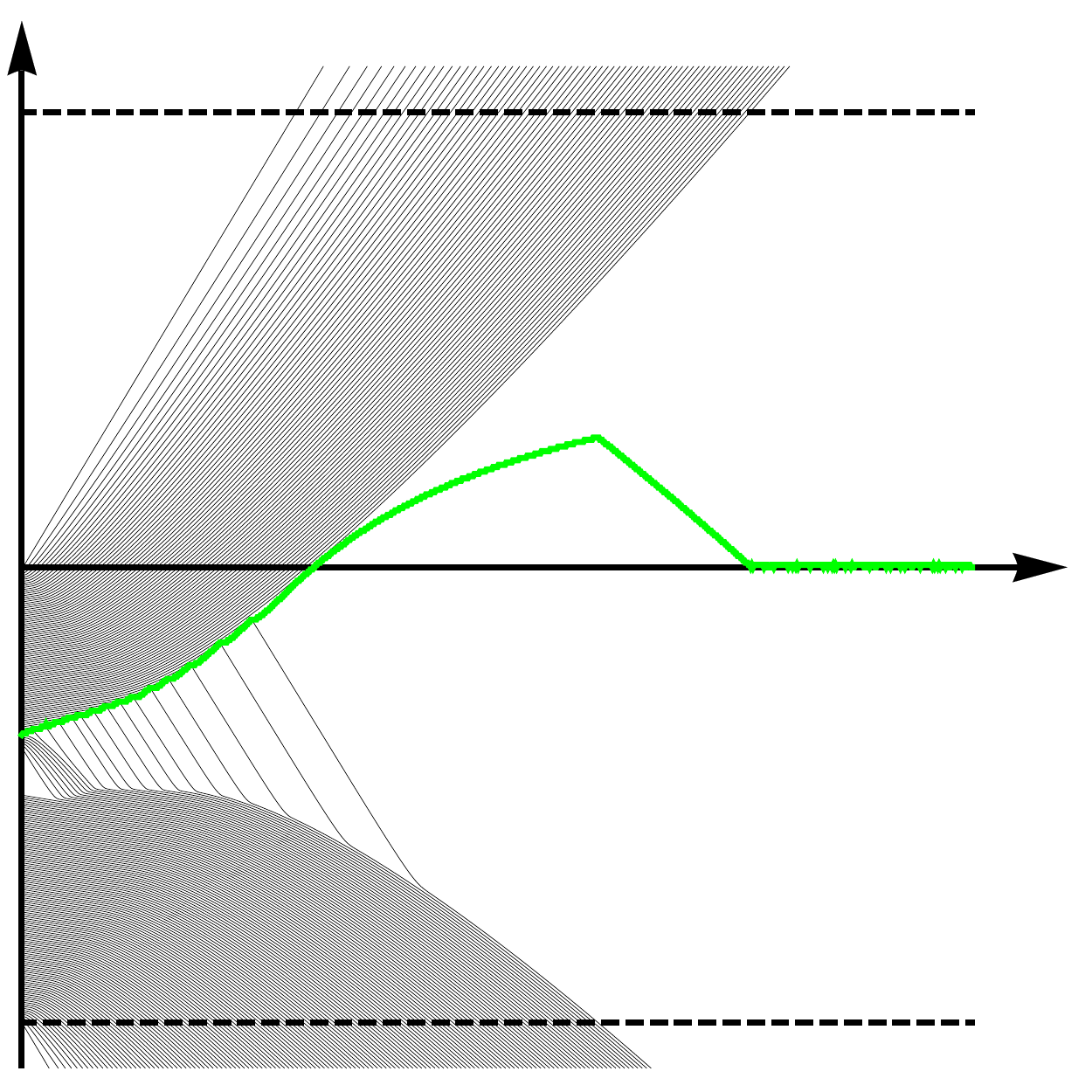}};
\node at (48,20) {\strut$t$};
\node[left] at (0,48) {$x$};
\end{tikzpicture}
\begin{tikzpicture}[x=1mm, y=1mm]
\node[anchor=south west,inner sep=0pt] at (0,0) { \includegraphics[height=50mm]{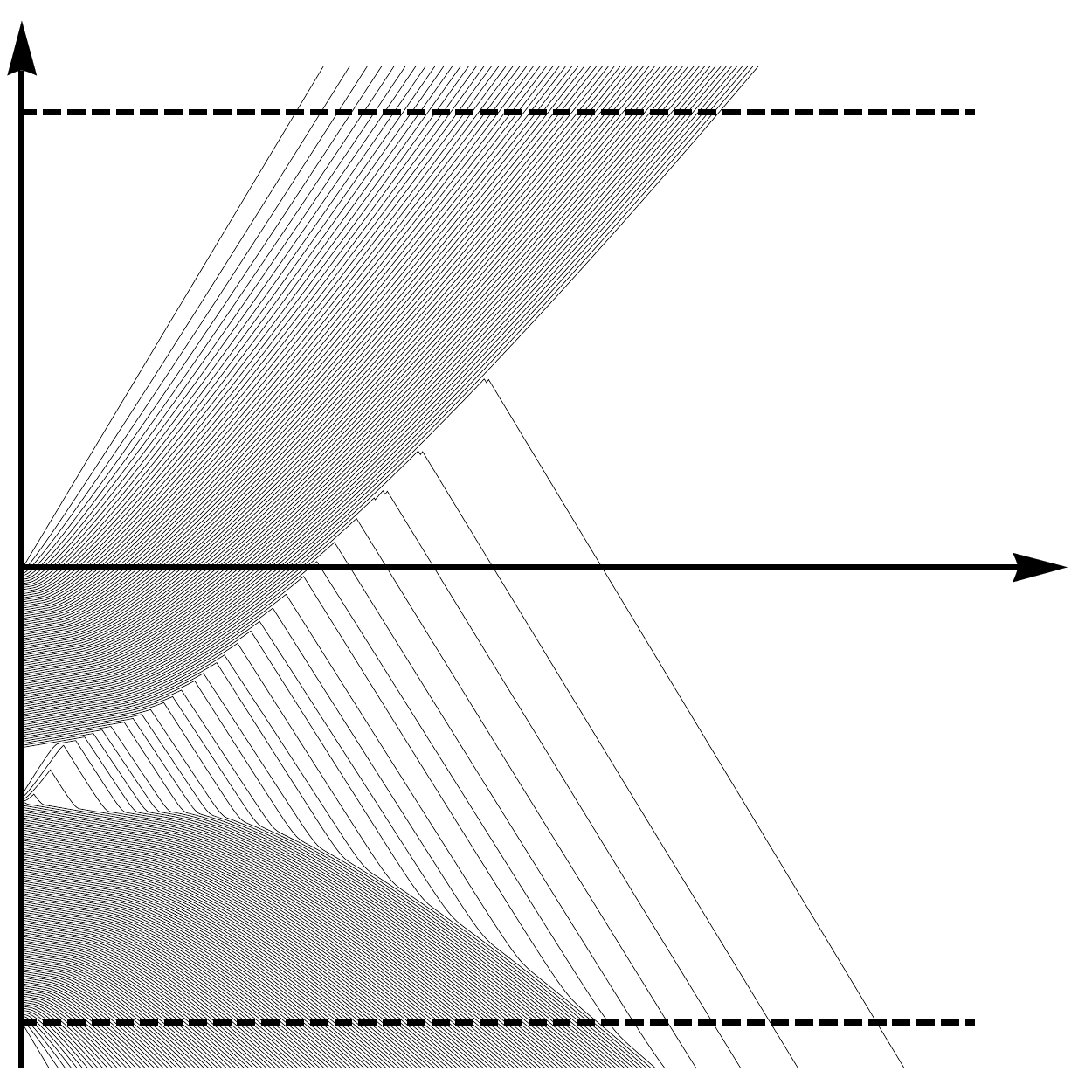}};
\node at (48,20) {\strut$t$};
\node[left] at (0,48) {$x$};
\end{tikzpicture}}
\\
\resizebox{\linewidth}{!}{
\includegraphics[height=50mm]{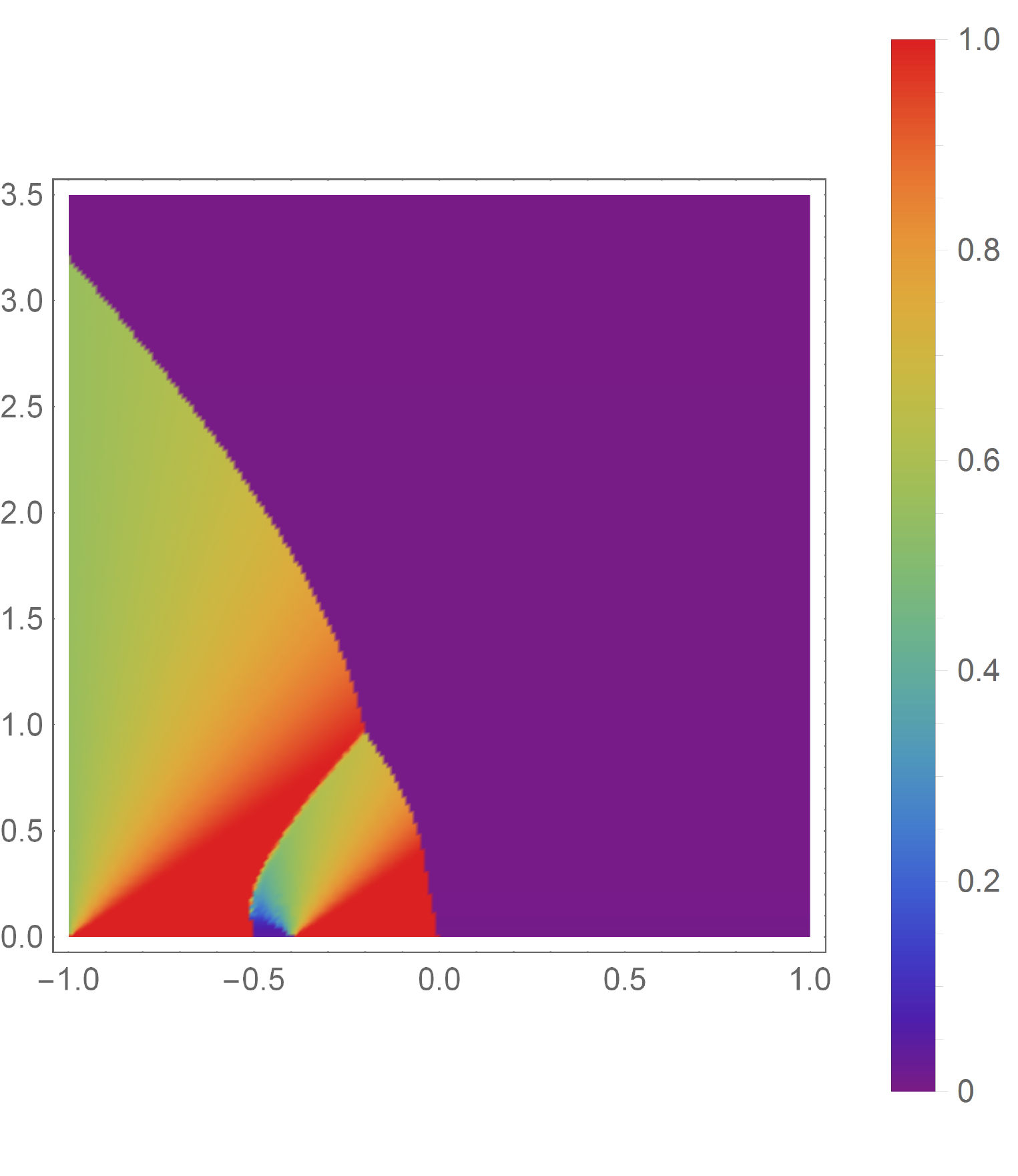}
\includegraphics[height=50mm]{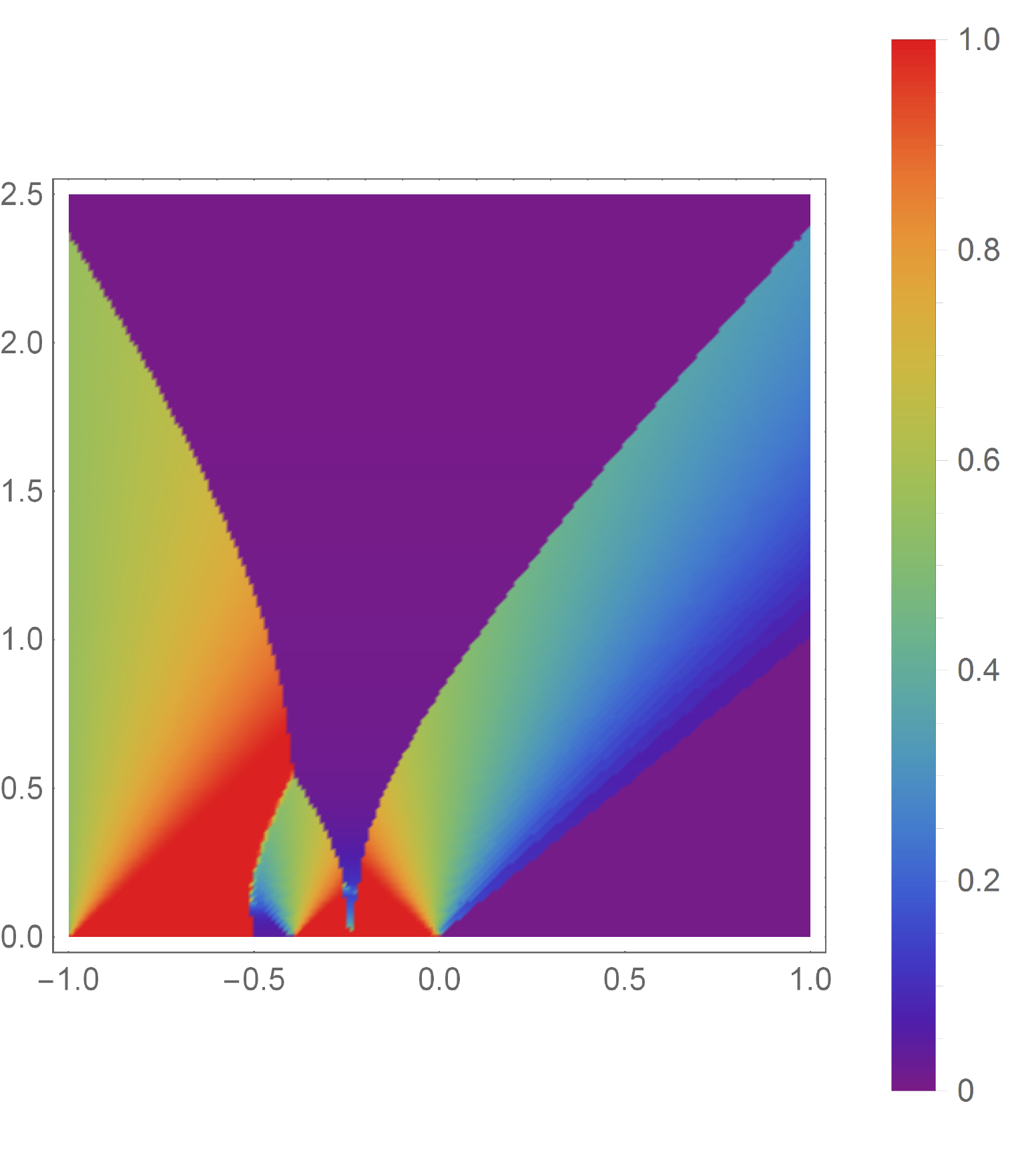}
\includegraphics[height=50mm]{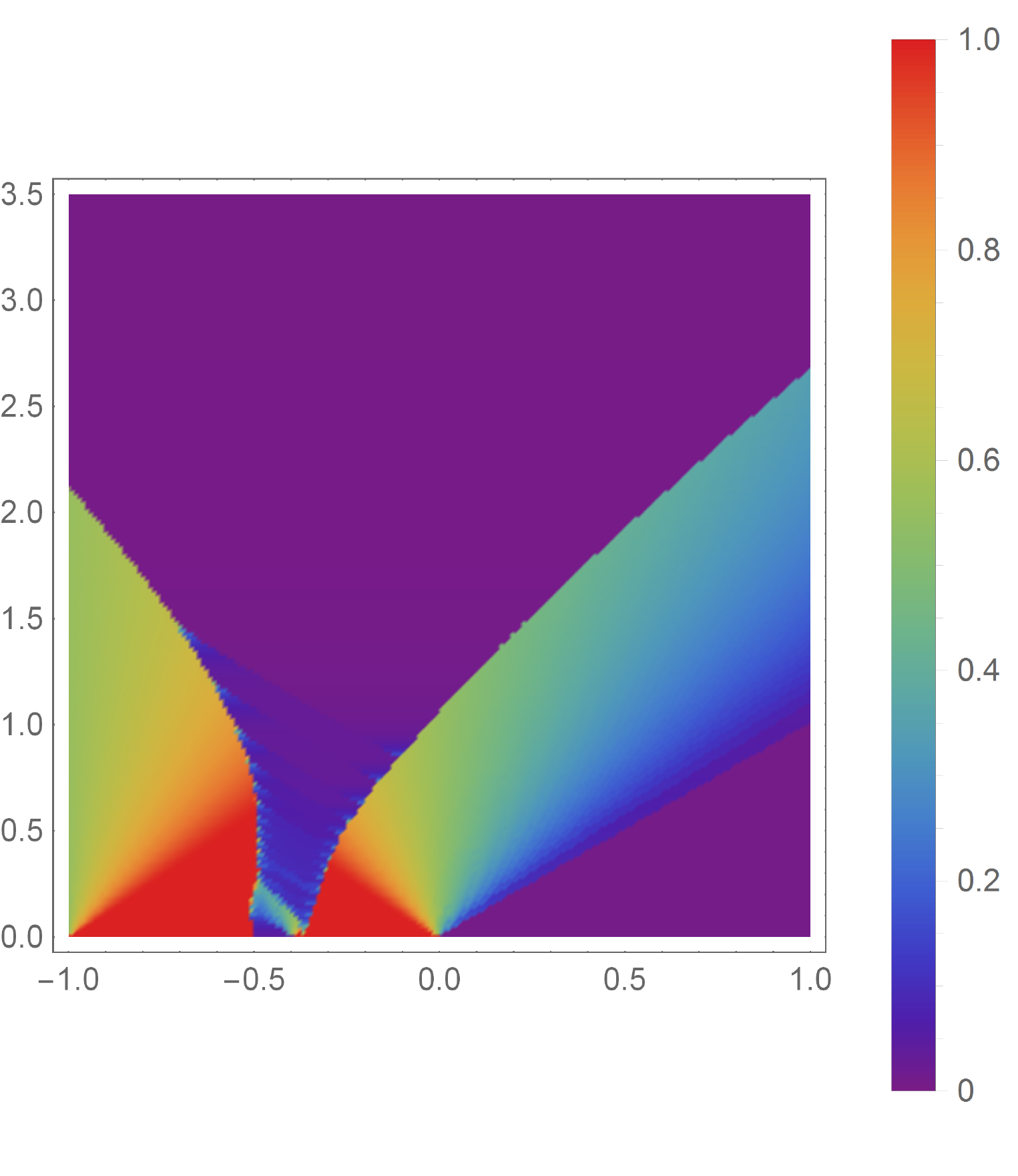}
\includegraphics[height=50mm]{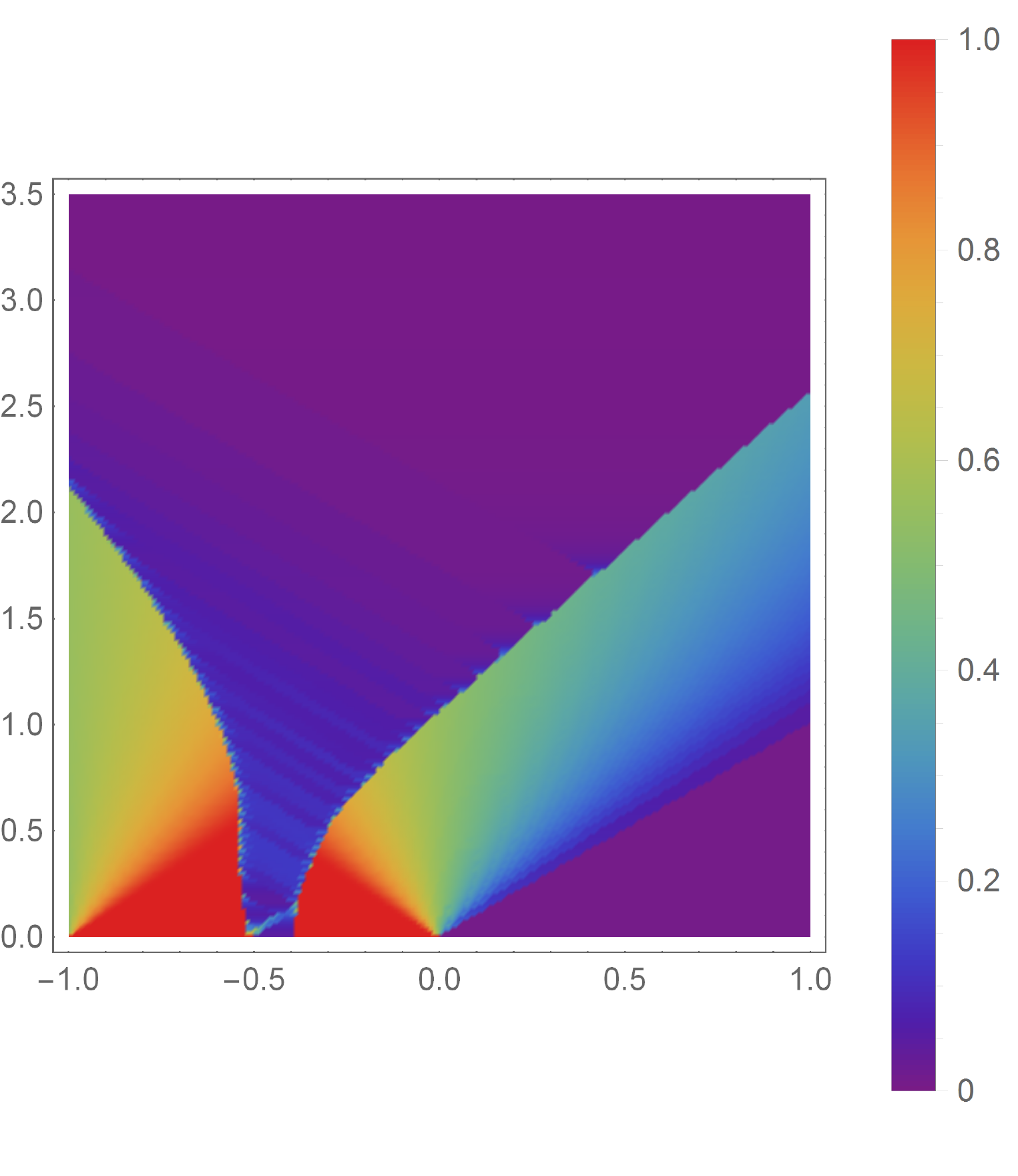}}
\caption{The particle paths and the turning curve, above, and the corresponding approximate macroscopic density, below, for $\alpha=0$, $\alpha=1.3$, $\alpha=5$ and $\alpha=18.9$, respectively.
The green lines are the turning curves.}
\label{f:cases}
\end{figure}
In order to avoid interpolation issues near the discontinuity points, we choose to display the continuous version $\xi^n$ of the turning curve rather than the exact turning curve $\zeta^n$ implicitly computed in the algorithm.
Here, $\xi^n$ is approximated by numerically solving the equation
\begin{align*}
&\frac{\xi^h}{\ell} + \alpha \Biggl( 
\begin{aligned}[t]
\# \{ i \in \llbracket0,n\rrbracket : -1 \leq \mathtt{x}_i^h < \xi^h\} - 1
&+ \frac{\xi^h-\mathtt{x}_i^h}{\mathtt{x}_{i+1}^h-\mathtt{x}_i^h}\Bigg\vert_{i=\# \{ i \in \llbracket0,n\rrbracket : \mathtt{x}_i^h < \xi^h\}}\\
&+ \frac{\mathtt{x}_{i+1}^h+1}{\mathtt{x}_{i+1}^h-\mathtt{x}_i^h}\Bigg\vert_{i=\# \{ i \in \llbracket0,n\rrbracket : \mathtt{x}_i^h < -1\}}
\Biggr)
\end{aligned}
\\={}&
\frac{\alpha}{2}
\Biggl( 
\begin{aligned}[t]
\# \{ i \in \llbracket0,n\rrbracket : -1 \leq \mathtt{x}_i^h < 1\} - 1
&+ \frac{1-\mathtt{x}_i^h}{\mathtt{x}_{i+1}^h-\mathtt{x}_i^h}\Bigg\vert_{i=\# \{ i \in \llbracket0,n\rrbracket : \mathtt{x}_i^h < 1\}}\\
&+ \frac{\mathtt{x}_{i+1}^h+1}{\mathtt{x}_{i+1}^h-\mathtt{x}_i^h}\Bigg\vert_{i=\# \{ i \in \llbracket0,n\rrbracket : \mathtt{x}_i^h < -1\}}
\Biggr).
\end{aligned}
\end{align*}
The above equation is a discrete approximation of the equation
\[\xi(t)+\alpha \int_{-1}^{\xi(t)} \rho(t,y) \, {\rm{d}} y = \frac{\alpha}{2} \int_{-1}^{1} \rho(t,y) \, {\rm{d}} y,\]
which is equivalent to \eqref{e:cost0}, but is numerically simpler because the unknown $\xi$ appears in just one integral.
The approximate macroscopic density is defined by
\[ \rho(t,x) := \sum_{i=0}^{n} \left[ \frac{\ell}{\mathtt{x}_{i+1}^h-\mathtt{x}_i^h} \, \mathbbm{1}_{[\mathtt{x}_i^h,\mathtt{x}_{i+1}^h)}(x) \right]_{h=\lfloor t/\Delta t\rfloor} .
\]
In \figurename~\ref{f:cases} we illustrate the evacuation behavior of the many-particle Hughes model by displaying the particle paths together with the turning curve, and the corresponding approximate macroscopic density for $\alpha \in\{0,1.3,5,18.9\}$.

\begin{acknowledgements}
The authors thank Marco Di Francesco for valuable discussions, for a review of our manuscript and particularly for the valuable comments and suggestions that have significantly improved it.
The second and the third authors are members of GNAMPA.
MDR acknowledges the support of the National Science Centre, Poland, Project \lq\lq Mathematics of multi-scale approaches in life and social sciences\rq\rq\ No.~2017/25/B/ST1/00051, by the INdAM-GNAMPA Project 2020 \lq\lq Dalla Buona Posizione alla Teoria dei Giochi nelle Leggi di Conservazione\rq\rq\ and by University of Ferrara, FIR Project 2019 \lq\lq Leggi di conservazione di tipo iperbolico: teoria ed applicazioni\rq\rq.  This paper has been supported by the RUDN University Strategic Academic Leadership Program.
\end{acknowledgements}

%
%

\bibliographystyle{spmpsci}      

\end{document}